\documentclass{article}
\usepackage[utf8]{inputenc}
\usepackage[margin=1.5in]{geometry}
\usepackage{amsmath}
\usepackage{amsthm, ifthen, amsfonts, amssymb,
srcltx, amsopn, xcolor, enumerate, mathabx, ulem}
\usepackage{mathtools}
\usepackage{enumerate}
\usepackage[colorlinks=true, pdfstartview=FitV, linkcolor=blue, citecolor=blue, urlcolor=blue]{hyperref}
\usepackage[nameinlink]{cleveref}
\usepackage{url}
\usepackage{graphicx}
\usepackage{subcaption}
\usepackage{quiver}
\usepackage{tikz-cd}
\usepackage{overpic}

\newtheorem{thm}{Theorem}[section]
\newtheorem{prop}[thm]{Proposition}
\newtheorem{lem}[thm]{Lemma}
\newtheorem{cor}[thm]{Corollary}
\theoremstyle{definition}
\newtheorem{defn}[thm]{Definition}
\newtheorem{remark}[thm]{Remark}

\newtheorem{claim}[thm]{Claim}

\newtheorem{ques}{Question}
\newenvironment{claimproof}[1][Proof of Claim]{%
  \begin{proof}[#1]%
}{%
  \end{proof}%
}

\DeclarePairedDelimiter\ceil{\lceil}{\rceil}

\newcommand{\ov}{\overline}
\newcommand{\cay}{\mathrm{Cay}(G,  S)}
\newcommand{\pres}{\ensuremath{\langle S \mid R\rangle} }

\newcommand{\gpres}{\ensuremath{\langle S\mid \bigcup_{i=1}^\infty R_i \rangle}}
\newcommand{\npres}{\ensuremath{\langle S\cup T \mid \bigcup_{i=1}^\infty Q_i\rangle}}
\newcommand{\rint}[1]{\ensuremath{\rho_{\mathrm{rel}, #1}}}

\newcommand*\mc[1]{\mathcal{#1}}
\newcommand{\N}{\mathbb{N}}

\newcommand{\R}{\mathbb{R}}
\newcommand{\Z}{\mathbb{Z}}
\newcommand{\mb}{\partial_*}

\DeclarePairedDelimiter\abs{\lvert}{\rvert}%
\newcommand{\dist}{\textup{\textsf{d}}}
\newcommand{\diam}{\textup{\textsf{diam}}}
\newcommand{\Cay}{\text{Cay}}

\newcounter{scomments}

\newcounter{ccomments}

\newcounter{dcomments}

\newcounter{tcomments}

\newcommand{\eps}{\varepsilon}

\title{Connections between the topology of the Morse boundary, the Morse local-to-global property and acylindrical hyperbolicity}

\author{Carolyn Abbott and Stefanie Zbinden}
\date{}

\begin{document}

\maketitle

\begin{abstract}
We relate the topology of the Morse boundary of a group to geometric and algorithmic properties of the group. In particular, we show that a group has $\sigma$-compact Morse boundary if and only if it is Morse local-to-global. We also provide tools such as the geodesic Morse local-to-global property to show that groups are (not) Morse local-to-global. Our strategy generalizes tools from small cancellation theory, such as the intersection of relators, to arbitrary finitely generated groups. Further, we introduce a class of groups akin to graded small-cancellation groups and show that, for groups in this class, a geodesic is Morse if and only if its intersection with relators grows sublinearly in the length of the relators.

We use this to construct the first example of a non-virtually cyclic Morse local-to-global group with an infinite-order Morse element that is not acylindrically hyperbolic. 
\end{abstract}

\section{Introduction}
Since the introduction of hyperbolic and relatively hyperbolic groups by Gromov \cite{Gromov-hyp}, many different notions of negative curvature for groups have been introduced. For instance, being (weakly) acylindrically hyperbolic, having contracting geodesics, admitting a loxodromic WPD element in an action on a hyperbolic space, having hyperbolically embedded subgroups, being a non-product hierarchically hyperbolic group, or having the Morse local-to-global property without being Morse limited \cite{O:acylindrical, H:cohom, S:random_walk, DGO:rotating_families, BF:cohomMCG, BBF:cohom, BHS2, russellsprianotran:thelocal}.   
Many of these properties are known to  imply or not imply others \cite{BBF:cohom, S:hypemb, BBFS, HHS1}, and several have been shown to be equivalent to acylindrical hyperbolicity by Osin in his landmark paper \cite{O:acylindrical}.

One common feature among all the above notions of negative curvature is the existence of Morse geodesics. In a hyperbolic space, all geodesics are Morse, a property that, in fact, characterizes hyperbolic spaces \cite{Gromov-hyp}. In light of this, Morse geodesics are often thought of as the ``hyperbolic-like" directions in a space, and negatively or non-positively curved spaces that are not hyperbolic typically have a combination of Morse and non-Morse geodesics. The collection of Morse geodesics can be organized using the Morse boundary of the space \cite{C:Morse}. Modeled on the Gromov boundary of a hyperbolic space, the Morse boundary consists of equivalence classes of Morse geodesic rays in (a Cayley graph of) a finitely generated group. The condition that a group contains a Morse ray is equivalent to having non-empty Morse boundary. However, a non-empty Morse boundary alone is not sufficient to ensure that the group satisfies any of the above kinds of negative curvature. For example, Osin, Olshanskii, and Sapir construct torsion groups that Fink shows contain Morse geodesics \cite{OlshankiiOsinSapir:lacunary, fink:morse}.

\begin{ques}\label{ques:connection?}
    Are there topological conditions on the Morse boundary that detect any of the above generalizations of hyperbolicity?
\end{ques}

We give a positive answer to this question by connecting the topology of the Morse boundary to the geometry of the group and to its language theory.

\begin{thm}\label{thm:main1}
    For a finitely generated group $G = \pres$, the following are equivalent.
    \begin{enumerate}[(1)]
        \item \label{item:StrSigmaCpt} $G$ has strongly $\sigma$--compact Morse boundary
        \item \label{item:IPSC} $G$ does not satisfy the general IPSC condition.
        \item \label{item:GeodMLTG} $G$ is geodesic Morse local-to-global.
        \item \label{item:MLTG} $G$ is Morse local-to-global.
        \item \label{item:SigmaCpt} $G$ has $\sigma$--compact Morse boundary.
        \item \label{item:RegularLang} Morse geodesics in $\cay$ form regular languages. That is, for every Morse gauge $M$, there exists a Morse gauge $M'$ and regular language $L_M$ consisting of $M'$--Morse geodesics $\cay$ which contains all $M$--Morse geodesics in $\cay$.
    \end{enumerate}
\end{thm}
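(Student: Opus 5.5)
We will prove the equivalences by running the cycle (1) $\Rightarrow$ (5) $\Rightarrow$ (2) $\Rightarrow$ (3) $\Rightarrow$ (4) $\Rightarrow$ (1). The regular-language condition (6) will be attached to the cycle by showing (3) $\Rightarrow$ (6) $\Rightarrow$ (5). Here ``strongly $\sigma$--compact'' means that the Morse boundary $\partial_* G$ is a countable union of compact sets with a cofinal property: for every Morse gauge $M$ there is a compact $K$, and a gauge $M'$, such that $\partial_*^M G \subseteq K \subseteq \partial_*^{M'} G$. Under this reading, (1) $\Rightarrow$ (5) is immediate.

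For (5) $\Rightarrow$ (2) we argue by contrapositive. Suppose $G$ satisfies the general IPSC condition. We expect this to mean the following: for some gauge $M$ and all gauges $M'$, there are $M$--Morse geodesic segments that can be concatenated into bi-infinite (or infinite) local geodesics which fail to be $M'$--Morse, while the concatenation points allow infinitely many independent choices. From this we build a Cantor-set-like family of rays whose sequential limits escape every stratum $\partial_*^{N} G$. A Baire category argument then shows that $\partial_* G$ is not $\sigma$--compact. The precise input is the standard fact that every compact subset of $\partial_* G$ lies in a single stratum $\partial_*^N G$ (Cordes--Durham/Murray). So if $\partial_* G = \bigcup K_i$, then each $K_i$ lies in some $\partial_*^{N_i}G$. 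A diagonal construction, choosing at stage $i$ a branch avoiding $\partial_*^{N_i}G$ but remaining a genuine Morse ray in the limit, yields a Morse ray lying in no $K_i$. This is a contradiction.

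For (2) $\Rightarrow$ (3): if the IPSC condition fails, then concatenations of $M$--Morse geodesics which are locally $M$--Morse on long enough windows must be globally $M'$--Morse. We prove this by a compactness or induction argument on the number of pieces, using earlier lemmas about intersections of geodesics with ``relator-like'' non-Morse regions. (3) $\Rightarrow$ (4) follows by the standard reduction of local Morse quasi-geodesics to local geodesics: coarse tightening in $\cay$ replaces a $(L;M;k)$--local Morse quasi-geodesic by a concatenation of Morse geodesics, with bounded change in gauge. For (4) $\Rightarrow$ (1) we use the Russell--Spriano--Tran result, assumed available earlier, that Morse local-to-global groups have Morse boundaries with a combing or automaton structure. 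More concretely, fix $M$ and let $K$ be the closure of $\partial_*^M G$ in the direct limit topology. Local-to-global gives a gauge $M'$ such that limits of $M$--Morse rays, which are locally $M$--Morse, are $M'$--Morse. Hence $K\subseteq\partial_*^{M'}G$, and $K$ is compact by Arzelà--Ascoli in $\partial_*^{M'}G$.

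For (3) $\Rightarrow$ (6), let $L_M$ be the set of words all of whose subwords of length at most $C$ (with $C$ the local-to-global scale) are $M$--Morse geodesics. This is a locally testable, hence regular, language, and by (3) it consists of $M'$--Morse geodesics. For (6) $\Rightarrow$ (5), a regular language of geodesics has finitely many states. Rays read off infinite paths in the automaton form a compact set contained in $\partial_*^{M'}G$ and containing $\partial_*^{M}G$, which gives (1) and in particular (5). The main obstacle will be (5) $\Rightarrow$ (2): building from IPSC a single Morse ray that escapes every compact set requires careful gauge control, so that infinitely many bad concatenations still leave a genuinely Morse limit ray. We would first try to do this by spacing the bad pieces at super-exponentially growing distances.
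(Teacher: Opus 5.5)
Several arrows in your plan have genuine gaps, mostly because it does not engage with what the hypotheses actually say.

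The general IPSC condition (Definition~\ref{def:generalIPSC}) is not a statement about concatenating Morse segments. Your guessed version is essentially the negation of MLTG, which would make (2)$\Rightarrow$(3) nearly tautological and put all the content into (5)$\Rightarrow$(2). What IPSC actually asks for is $\ell$--projective geodesics $\gamma$ (Definition~\ref{def:ProjQuadrangle}) with $\ell\geq n_i$ and $\abs{\gamma}\geq \ell/i$, for arbitrarily large $i$, that are all $\rho$--intersecting for one sublinear $\rho$. The bridge to Morseness is Lemma~\ref{lem:sublinvsMorse}: a geodesic is Morse iff its intersection function is sublinear, quantitatively. Your plan never uses it.

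Consequently your (2)$\Rightarrow$(3) is not an argument. The paper proves it by contrapositive. It picks projective loops of minimal $\lceil\ell\rceil$ whose geodesic is $L$--locally $\rho_0$--intersecting but not $\rho$--intersecting. It trims them with Lemma~\ref{lem:making-custom-loops} and then builds the witness function $g$ by hand.

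Your (5)$\Rightarrow$(2) would additionally have to glue a sequence of uniformly Morse segments, each failing a different gauge, into one Morse quasi-geodesic with uniform constants. That is exactly Lemma~\ref{lem:concatenations}, the technical core of the paper. There, non-hyperbolicity supplies a sufficiently non-Morse segment $\alpha$, and translates of a bi-infinite Morse geodesic are used to build Morse connectors of growing length. You leave precisely this open. The paper proves (5)$\Rightarrow$(1) with that lemma, and then (1)$\Rightarrow$(2) by a short diagonal argument through Lemma~\ref{lem:sublinvsMorse}.

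Next, geodesic MLTG only speaks about geodesics. In (3)$\Rightarrow$(4), replacing a local Morse quasi-geodesic by a concatenation of Morse geodesics produces nothing to which (3) applies. The missing idea is that such a path stays uniformly close to $[\gamma^-,\gamma^+]$. The paper gets this from Lemma~\ref{lem:Morse_for_combing_lines} together with the minimal-counterexample argument of Claim~\ref{claim:tight-containement}. Only then is that geodesic itself locally Morse, so (3) can be invoked.

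Likewise, in (3)$\Rightarrow$(6) your locally testable language consists of $C$--local geodesics, which need not be geodesics. So it does not witness (6), and (3) says nothing about its words. Certifying geodesicity with finite memory is the non-trivial content of \cite[Theorem~D]{CordesRussellSprianoZalloum:regularity}, which the paper uses for (4)$\Rightarrow$(6).

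You also miss the point of strong $\sigma$--compactness: there must be a single \emph{countable} family $(M_n)_n$ of gauges that is cofinal for every $M$. Your (4)$\Rightarrow$(1) only shows that $\partial_*^M G$ lies in a compact set. That holds in any proper geodesic space, since strata are compact and limits of $M$--Morse rays are already $M$--Morse. So the MLTG hypothesis is never used and no countable family is produced.

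In (6)$\Rightarrow$(5) the omitted observation is that there are only countably many regular languages over a finite alphabet. Choosing one gauge per regular language of uniformly Morse geodesics therefore gives a countable cofinal family. This countability is what makes (6)$\Rightarrow$(1) work, which is why the paper routes through (4)$\Rightarrow$(6)$\Rightarrow$(1), citing \cite{HeSprianoZbinden:sigma-compact} for the last step.
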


The Morse boundary of a group is \textit{(strongly) $\sigma$-compact} if it is the union (direct limit) of countably many compact sets; see Definition~\ref{def:stronglysigmacpt}.  Compact subsets of the Morse boundary correspond to closed subsets of Morse strata \cite{CH:stable_asdim, CD:stable, cordes2024corrigendum}, so having a $\sigma$--compact Morse boundary means that there is a countable collection of Morse gauges $M_i$ such that every Morse geodesic is $M_i$--Morse for some $i$.

The \emph{IPSC condition} is a technical condition introduced in \cite{Zbinden:small-cancellation} for small cancellation groups that controls intersection patterns of relators. In this paper, we introduce \textit{projective geodesics} in an arbitrary finitely generated group, which generalize the notion of intersection of relators; see Definition~\ref{def:ProjQuadrangle}. The \emph{general IPSC} condition is an analogue of the IPSC condition for arbitrary finitely generated groups that controls projective geodesics; see Definition~\ref{def:generalIPSC}.

Hyperbolic groups are characterized by a local-to-global property: every local quasi-geodesic is a global quasi-geodesic \cite{Gromov-hyp}. \emph{Morse local-to-global groups}, defined by Russell, Spriano, and Tran \cite{russellsprianotran:thelocal}, are characterized by the same property for local quasi-geodesics that are also locally Morse, that is, those that locally look like a quasi-geodesic in a hyperbolic space; see Definition~\ref{def_morse-local-to-global}. A group is \emph{geodesic Morse local-to-global} (Definition~\ref{def:geodesicMLTG}) if all \textit{geodesics} which are locally Morse are globally Morse.  In practice, the geodesic Morse local-to-global property is often easier to check than the Morse local-to-global property. 

Being Morse local-to-global has several strong consequences, including a combination theorem for stable subgroups, a growth gap and excellent algorithmic properties \cite{russellsprianotran:thelocal, CordesRussellSprianoZalloum:regularity, DrutuSprianoZbinden:weak}. Moreover, non-elementary Morse local-to-global groups always admit a stable free subgroup \cite[Corollary~I]{russellsprianotran:thelocal}. Stable subgroups of a finitely generated group are a natural generalization of quasi-convex subgroups of hyperbolic groups. Stable subgroups that are virtually $\mathbb F_2$ appear in many of the notions of negative curvature described above, for example, as the subgroup generated by two independent loxodromic WPD elements in an acylindrically hyperbolic group.  

\begin{cor}\label{cor:sigma-cmpact-f2} If the Morse boundary of a finitely generated group $G$ is $\sigma$-compact and has cardinality at least $3$, then $G$ has a stable $\mathbb F_2$ subgroup.
\end{cor}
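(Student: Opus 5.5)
By Theorem~\ref{thm:main1}, the implication \ref{item:SigmaCpt}$\Rightarrow$\ref{item:MLTG} shows that $G$ is Morse local-to-global. The plan is to apply \cite[Corollary~I]{russellsprianotran:thelocal}: every non-elementary Morse local-to-global group contains a stable free subgroup, and in particular a stable $\mathbb F_2$. What remains is to show that $G$ is non-elementary in the sense required there. In \cite{russellsprianotran:thelocal}, "non-elementary" means that $G$ is not virtually cyclic and contains a Morse element (an infinite-order element whose orbit is a Morse quasi-geodesic).

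First I check the hypotheses. The Morse boundary has at least $3$ points, so it is non-empty and $G$ has a Morse geodesic ray. The boundary of a virtually cyclic group has at most $2$ points, so $G$ is not virtually cyclic. The same hypothesis also rules out finite groups and two-ended groups.

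Next I produce a Morse element. Cordes' boundary is a quasi-isometry invariant, and by the $\sigma$-compactness hypothesis we may fix a Morse gauge $M$ such that the $M$-stratum contains at least three points. From three points I form a Morse bi-infinite geodesic triangle, or simply a Morse bi-infinite geodesic $\gamma$. In Morse local-to-global groups, \cite{russellsprianotran:thelocal} show that the existence of a Morse ray or line together with cocompactness yields a Morse element. The sketch is as follows. Translate long subsegments of $\gamma$ back to the identity; by finiteness of balls there are $g$ and two far-apart points $\gamma(s),\gamma(t)$ with $g\gamma(s)$ close to $\gamma(t)$ and matching local picture. The concatenation of the $g^n$-translates of $[\gamma(s),\gamma(t)]$ is then a local Morse quasi-geodesic at a scale that can be made as large as we like by choosing $t-s$ large. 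By the Morse local-to-global property it is a global Morse quasi-geodesic, so $g$ is a Morse element.

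The main obstacle is making this last step precise: choosing the scale so that the local-to-global constants apply, and checking that the gluing at the junctions is locally Morse. This is where $\sigma$-compactness is genuinely used, since it provides the uniform gauge. If \cite{russellsprianotran:thelocal} already contains a lemma stating that Morse local-to-global groups with a Morse ray contain a Morse element, I would cite it directly. Otherwise I would run the pigeonhole argument above. Finally I apply Corollary~I to obtain a stable $\mathbb F_2$.
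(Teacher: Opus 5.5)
Your proof is correct and follows the paper's route: Theorem~\ref{thm:main1} turns $\sigma$--compactness into the Morse local-to-global property, and $|\partial_* G|\ge 3$ rules out the elementary cases (virtually cyclic, or no Morse ray), so \cite[Corollary~I]{russellsprianotran:thelocal} gives the stable $\mathbb F_2$. Your detour that produces a Morse element is not needed, because in the paper's formulation non-elementary only asks for a Morse ray. Also, $\sigma$--compactness is used only to obtain the Morse local-to-global property; it does not supply the gauge $M$ for a single geodesic, which exists regardless.
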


Corollary~\ref{cor:sigma-cmpact-f2} fits into a body of work showing that the Morse boundary detects algebraic and subgroup properties of groups \cite{KarrerMiraftabZbinden:subgroups, CD:stable}. Corollary~\ref{cor:sigma-cmpact-f2} suggests that when the Morse boundary of $G$ is $\sigma$--compact, it should be possible to use the action of the $G$ on its Morse boundary to play ping-pong. Hence we expect the study of the dynamics of the action on the Morse boundary in this context to be profitable.

Theorem~\ref{thm:main1} is known for $C'(1/9)$--small-cancellation groups \cite{HeSprianoZbinden:sigma-compact}. Specifically, \eqref{item:MLTG}$\implies$\eqref{item:RegularLang} and \eqref{item:RegularLang}$\implies$\eqref{item:StrSigmaCpt} are shown in \cite{CordesRussellSprianoZalloum:regularity} and \cite{HeSprianoZbinden:sigma-compact}, respectively, for all finitely generated groups, and \eqref{item:StrSigmaCpt}$\implies$\eqref{item:SigmaCpt} follows from the definition. The implications \eqref{item:SigmaCpt}$\iff$\eqref{item:IPSC} and \eqref{item:IPSC}$\implies$\eqref{item:GeodMLTG}$\implies$\eqref{item:MLTG} are shown in \cite{Zbinden:small-cancellation} and \cite{HeSprianoZbinden:sigma-compact}, respectively, for $C'(1/9)$--small-cancellation groups. This is illustrated in Figure~\ref{fig:known-and-new-equivalences}. The proofs in the small cancellation setting  rely heavily on the geometric restrictions coming from being a small-cancellation group, specifically on the description of Morse geodesics in such groups as those whose intersection with relators is sublinear \cite[Corollary~4.14]{arzhantseva2019negative}. 

In this paper, we generalize these geometric restrictions from small cancellation groups to arbitrary finitely generated groups.  One key tool, mentioned above, is the notion of a \textit{projective loop} and a \textit{projective geodesic}; see Definition~\ref{def:ProjQuadrangle}.  We also define the \emph{intersection function} of a geodesic, which measures the length of subsegments of the geodesic that are projective geodesics in a projective loop; see Definition~\ref{def:IntFcn}.
Using \cite[Theorem~1.4]{ACHG:contraction_morse_divergence}, we prove the following key lemma.

\begin{lem}[Key lemma]\label{lem:key-lemma1}
     A geodesic in a geodesic metric space is Morse if and only if its intersection function is sublinear.
\end{lem}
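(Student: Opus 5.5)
The plan is to reduce both directions to \cite[Theorem~1.4]{ACHG:contraction_morse_divergence}. That result says that a geodesic $\gamma$ in a geodesic metric space is Morse if and only if it is \emph{sublinearly contracting}: there is a sublinear function $\kappa$ such that every ball $B$ of radius $r$ disjoint from $\gamma$ has closest-point projection $\pi_\gamma(B)$ of diameter at most $\kappa(r)$. The Morse gauge and $\kappa$ determine each other. So it suffices to show that the intersection function $\rho_\gamma$ of Definition~\ref{def:IntFcn} is sublinear if and only if $\gamma$ is sublinearly contracting, with each function controlled by the other up to multiplicative and additive constants.

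For ``contracting $\Rightarrow$ sublinear intersection'', take a projective loop of length $n$ in which a subsegment $\lambda\subseteq\gamma$ is a projective geodesic. By Definition~\ref{def:ProjQuadrangle}, the complementary part of the loop is a path $\eta$ of length at most $n$ joining the endpoints of $\lambda$, whose points project to $\gamma$ in a controlled way. Chop $\eta$ into pieces according to the distance of each point to $\gamma$, using the standard dyadic argument: a piece at distance about $2^k$ from $\gamma$ has length at least about $2^k$ or lies in a ball of radius about $2^k$ disjoint from $\gamma$. Each such piece therefore moves the projection by at most $\kappa(2^k)$. Pieces near $\gamma$ contribute boundedly, since the projection of a short path is short. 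Summing, $|\lambda|\lesssim \sum_k (\text{number of pieces at level }k)\,\kappa(2^k)$, and the number of level-$k$ pieces is at most $n/2^k$. Sublinearity of $\kappa$ then gives $|\lambda| = o(n)$, uniformly in the loop, so $\rho_\gamma(n)=o(n)$.

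For the converse, ``sublinear intersection $\Rightarrow$ contracting'', suppose $B=B(x,r)$ is disjoint from $\gamma$ and $y\in B$ has projection $\pi(y)$ far from $\pi(x)$, say $d(\pi(x),\pi(y))=D$. Build the loop from the geodesics $[\pi(x),x]$, $[x,y]$, $[y,\pi(y)]$ and the subsegment $\lambda=[\pi(x),\pi(y)]\subseteq\gamma$. Its non-$\gamma$ part has length at most $d(x,\gamma)+r+d(y,\gamma)\le 4r+\dots$. The segments $[x,\pi(x)]$ and $[y,\pi(y)]$ are closest-point geodesics. If the projective-loop definition is phrased in terms of such perpendiculars, as I expect, then $\lambda$ is a projective geodesic of a projective loop of length $\le Cr$, and hence $D\le \rho_\gamma(Cr)$, which is sublinear in $r$. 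If the definition instead requires a projective quadrangle with extra conditions, for instance that the side $[x,y]$ project into $\lambda$ or that the loop be minimal, I would first pass to a sub-quadrangle, replacing $x,y$ by points along $[x,y]$ where the projection jumps. This costs only a constant factor.

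The main obstacle is the first direction. There one must make sure the path $\eta$ in the projective loop genuinely stays outside suitable balls, so that the contraction estimate applies piece by piece. One must also check that the bookkeeping of the dyadic decomposition yields a sublinear bound uniformly over all projective loops of length $n$, rather than only along a sequence. The key point here is that a sum of the form $\sum_k (n/2^k)\kappa(2^k)$ is $o(n)$ whenever $\kappa$ is sublinear; the terms with small $k$ are handled by splitting the sum at a threshold $k_0$ chosen so that $\kappa(2^k)/2^k<\varepsilon$ for $k\ge k_0$.
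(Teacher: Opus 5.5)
Your first direction (Morse $\Rightarrow$ sublinear intersection) has a genuine gap. The summation you call the key point is false. For sublinear $\kappa$, the sum $\sum_{k\le\log_2 n}(n/2^k)\kappa(2^k)$ is not $o(n)$:
the terms with $k<k_0$ already contribute $n\sum_{k<k_0}\kappa(2^k)/2^k$, which is linear in $n$. The terms with $k_0\le k\le\log_2 n$ contribute up to $\varepsilon n\log_2 n$; for $\kappa(r)=r/\log_2(r+1)$ the whole sum is of order $n\log\log n$.

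Moreover, your argument never actually uses the projective-loop conditions, and without them the conclusion is false even in a tree, where all geodesics are uniformly Morse. The loop formed by $\lambda$ followed by $\lambda$ traversed backwards has $|\lambda|=n/2$. Your assertion that pieces near $\gamma$ ``contribute boundedly'' is exactly where the argument would break. Note also that conditions (3) and (4) of Definition~\ref{def:ProjQuadrangle} concern closest-point projection onto the projective geodesic $\lambda$ itself, not onto the ambient $\gamma$. So the contraction you need is that of $\lambda$.

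The missing idea is that no decomposition is needed. Since $\lambda$ is a subsegment of an $M$--Morse geodesic, it is itself $M$--Morse. Hence it is $\kappa$--contracting for a non-decreasing sublinear $\kappa$ depending only on $M$, by Lemma~\ref{lem:sublinearandMorse}. Condition (2) says $d(x,y)\le d(x,\lambda^+)=d(x,\lambda)$, i.e.\ $y$ lies in the contraction ball around $x$. Conditions (3) and (4) say $\lambda^+\in\pi_\lambda(x)$ and $\lambda^-\in\pi_\lambda(y)$. A single application of contraction then gives $|\lambda|=d(\lambda^+,\lambda^-)\le\kappa(d(x,\lambda))\le\kappa(n)$, uniformly over all loops of length at most $n$. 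Use the formulation of contraction in which $d(x,y)\le d(x,\pi(x))$ is allowed, rather than open balls disjoint from $\gamma$; this is exactly the paper's proof.

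Your second direction is fine and is essentially the paper's argument. For $d(x,y)\le d(x,\gamma)$, the triple $(x,y,[\pi(y),\pi(x)]_\gamma)$ is already a projective loop, so no passage to a sub-quadrangle is needed. Its loop length must include $|\lambda|$, but since $|\lambda|\le d(x,\pi(x))+d(x,y)+d(y,\pi(y))\le 4d(x,\gamma)$, the total is at most $8d(x,\gamma)$.
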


In light of Lemma~\ref{lem:key-lemma1}, projective loops should be thought of as analogues of relators from small-cancellation groups, and projective geodesics roughly capture intersections with relators. 
With these generalizations and the key lemma in hand, the proofs of \eqref{item:SigmaCpt}$\implies$\eqref{item:IPSC}$\implies$\eqref{item:GeodMLTG} for small-cancellation groups can be adapted to the setting of finitely generated groups.

For \eqref{item:GeodMLTG}$\implies$\eqref{item:MLTG}, a main ingredient is that any locally Morse locally quasi-geodesic path that stays ``somewhat'' close to the geodesic between its endpoints has to stay ``very close''; see Lemma~\ref{lem:Morse_for_combing_lines}. Finally, for \eqref{item:SigmaCpt}$\implies$\eqref{item:StrSigmaCpt}, we carefully study the concatenation of quasi-geodesics.  Our main tool is the following lemma.

\begin{lem}%[Key lemma 2]
\label{lem:key-lemma2}
     Let $G$ be a non-hyperbolic finitely generated group with non-empty Morse boundary, and let $X = \cay$ for some finite generating set $S$ of $G$. There exists a constant $C$ such that for any sequence $(\gamma_n)_n$ of geodesic segments in $X$, there exists a $C$--quasi-geodesic $\eta$ such that
    \begin{enumerate}
        \item for all $n\in \N$, there exists a translate of $\gamma_n$ which is a subsegment of $\eta$; and
        \item if $\gamma_n$ is $M$--Morse for all $n\in \N$, then $\eta$ is $M'$--Morse for a Morse gauge $M'$ only depending on $M$.\label{prop:morse-carries}
    \end{enumerate}
\end{lem}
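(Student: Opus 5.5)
Our approach is to build $\eta$ by concatenating translates of the $\gamma_n$, separated by short connecting segments chosen so that consecutive pieces cannot backtrack into each other. Since $G$ has non-empty Morse boundary and is not hyperbolic (so not virtually cyclic with trivial behaviour), we first fix an infinite-order Morse element, or at least a bi-infinite $M_0$--Morse geodesic $\beta$ through the identity; Morse elements exist in groups with non-empty Morse boundary that are not virtually cyclic by standard arguments, and we take this as the starting point. We also fix a finite set $F\subset G$ of ``turning'' elements, for instance two independent Morse elements $g,h$ with $\langle g,h\rangle$ a stable free subgroup. This subgroup should come from a ping-pong argument, since the Morse boundary has at least three points as $G$ is non-hyperbolic with non-empty boundary. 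Its purpose is to give us, for any pair of directions, a bounded-length Morse segment that makes consecutive concatenations ``transverse''.

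We then set $\eta=\gamma_1\cdot w_1\cdot\gamma_2\cdot w_2\cdots$, placing the translate $g_n\gamma_n$ so that it starts at the endpoint of the previous piece. Here $w_n$ is a word of length $L$ in $F$ (say $g^{k}$ or $h^{k}$ with $k$ fixed), chosen from a finite list so that the Gromov products at the junctions are bounded. Concretely, the backtracking of $w_n$ into $\gamma_n^{-1}$ or into $\gamma_{n+1}$ should be at most a constant $D$. This choice is possible because among three pairwise independent Morse directions at most one can fellow-travel a given geodesic for long. With the Morse lemma for $\beta$-translates and the fact that any geodesic fellow-travels at most one of $g^{\pm\infty},h^{\pm\infty}$ for length exceeding a constant, we pick $w_n$ to avoid both adjacent pieces.

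Next we show $\eta$ is a uniform quasi-geodesic. The first step is a local-to-global statement for concatenations of geodesics with bounded Gromov products at the junctions, where every other piece ($w_n$) is Morse with a fixed gauge and has length $L$ large compared to $D$. The standard argument, as in concatenation lemmas for contracting/Morse segments, uses the strong contraction of $w_n$ (via \cite[Theorem~1.4]{ACHG:contraction_morse_divergence}, Morse equals sublinear contraction). Geodesics between points on opposite sides of $w_n$ must pass near $w_n$; iterating this gives that the geodesic between any two points of $\eta$ passes close to all intermediate $w_i$. Summing lengths then gives the quasi-geodesic constant $C$, depending only on $L,D$ and the gauge of $F$, hence only on $G,S$. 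This proves item (1).

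For item (2), we verify the Morse property of $\eta$ through the Key Lemma~\ref{lem:key-lemma1}. For a projective loop intersecting $\eta$ in a long projective geodesic, the intersection cannot cross a separator $w_i$ deeply, by the contraction of $w_i$. It therefore essentially lives in one $\gamma_n$ plus bounded neighbourhoods. The sublinear intersection bound for $\gamma_n$, which depends only on $M$, thus transfers to $\eta$ with additive constants. Alternatively, a quasi-geodesic with endpoints on $\eta$ must pass near each intermediate $w_i$, and between consecutive ones it is controlled by the $M$--Morse property of $\gamma_n$. The main obstacle is the junction step: guaranteeing uniformly bounded backtracking and that contraction of the short fixed separators dominates, independently of how long and how unrelated the $\gamma_n$ are. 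The non-hyperbolicity hypothesis is used only to get enough independent Morse directions (and indeed the hyperbolic case would be handled trivially).
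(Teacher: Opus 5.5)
Your proposal has two genuine gaps: the Morse elements you start from need not exist, and fixed-length separators give no uniform control over the concatenation.

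\textbf{The starting objects are not available.} You assume infinite-order Morse elements, and even two independent ones generating a stable free subgroup via ping-pong. The claim that such elements exist ``by standard arguments'' whenever $\partial_* G\neq\emptyset$ and $G$ is not virtually cyclic is false. The hypotheses of the lemma admit the infinite torsion groups of \cite{OlshankiiOsinSapir:lacunary}, which contain Morse geodesics \cite{fink:morse}. These groups are non-hyperbolic and have no infinite-order elements at all. In this paper a stable $\mathbb{F}_2$ appears only as an output (Corollary~\ref{cor:sigma-cmpact-f2}), under $\sigma$--compactness and via the MLTG property.

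The paper uses a single bi-infinite $M_1$--Morse geodesic $\eta$ instead. Lemma~\ref{claim:Morse-projection2} shows that one of its two rays can always be attached to a given quasi-geodesic with uniform constants; this replaces your choice from a finite list of directions. Non-hyperbolicity is used to fix a geodesic $\alpha$ that is not $M_2$--Morse, inserted as a ``twist'' between $\eta_1[0,L']$ and $g\eta_2[L',0]$. Because $\alpha$ fails to be $M_2$--Morse, the connecting geodesic $[\beta^+,g\gamma^-]$ must track $\eta_1$ near $\beta^+$ and $g\eta_2$ near $g\gamma^-$ (Claim~\ref{claim:close-enough}). This is what substitutes for independence of Morse directions.

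\textbf{Fixed-length separators do not work as argued.} Suppose such elements did exist. Morse only gives \emph{sublinear} contraction (Lemma~\ref{lem:sublinearandMorse}), not strong contraction. At distance $R$ from the axis, projections of balls have diameter up to $\rho(R)$, and this exceeds any fixed separator length $L$ once $R$ is large. The $\gamma_n$ are arbitrary and arbitrarily long, and item~1 must hold with a uniform $C$ even when the $\gamma_n$ are not Morse. So nothing in your argument forces a geodesic between far-apart points of $\gamma_n$ and $\gamma_{n+1}$ to pass near $w_n$; the concatenation lemmas you allude to need strong contraction.

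Bounded Gromov products at the junctions are a purely local condition. For instance, in small cancellation groups a Morse axis can meet relators of length $\ell$ in segments of length $\rho(\ell)\to\infty$, so a fixed separator can be a short arc of a huge isometrically embedded loop. Taking $\gamma_n,\gamma_{n+1}$ to be the adjacent arcs gives zero backtracking at both junctions, yet an almost closed path. Your proposal offers no mechanism ruling this out uniformly, and your route to item~2 via Lemma~\ref{lem:key-lemma1} relies on the same unproven separator claim.

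The missing idea is to let the connectors grow. In Lemma~\ref{lem:single-concatenation} the paper makes $[\beta_i^+,g_{i+1}\gamma_{i+1}^-]$ of length at least $L_i=2(i-1)(\abs{\eta_{i-2}}+1)$, so that each connector dwarfs everything built before it. The global quasi-geodesic inequality then reduces, via the triangle inequality, to consecutive pairs $\beta_{i-1}\ast\beta_i$ being uniform quasi-geodesics (Claim~\ref{claim:con_qg}). The Morse gauge is then controlled by an induction over the pieces (Claim~\ref{claim:con_morse2}) that again uses that $L_{i-1}$ dominates all earlier lengths.
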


\subsection{Connections to acylindrical hyperbolicity}

We next investigate the connection between the Morse local-to-global property and acylindrical hyperbolicity. Not all Morse local-to-global groups are acylindrically hyperbolic: there are straightforward obstructions, such as the group being virtually cyclic or not containing a Morse element. 
We show that, even avoiding these obstructions, acylindrical hyperbolicity does not follow from being Morse local-to-global, answering a question of Russell, Spriano, and Tran \cite[Question~5]{russellsprianotran:thelocal}.

\begin{thm}\label{thm:main2}
    There exists a finitely generated, non-virtually cyclic Morse local-to-global group with an infinite-order Morse element that is not acylindrically hyperbolic.
\end{thm}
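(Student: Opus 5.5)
The plan is to build the example as an infinitely presented, graded small-cancellation-type group. Following the abstract, I will work in the class of groups ``akin to graded small-cancellation groups'' introduced later in the paper. For that class I will use the characterization that a geodesic is Morse if and only if its intersection with relators grows sublinearly in the length of the relators.

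Concretely, I would start from a group $H$ that is hyperbolic, for instance a free product $\mathbb{F}_2 * \Z$ or simply $\mathbb{F}_3 = \langle a,b,t\rangle$. I then add a sequence of relators $R_i$ of rapidly increasing lengths, satisfying a graded small-cancellation condition over $H$. The relators are designed to kill acylindrical hyperbolicity while keeping the element $t$ Morse, and the natural approach is to make every element of $G$ commensurate with something central or with a normal subgroup that destroys WPD. Since acylindrical hyperbolicity forbids infinite-order elements from being ``almost central'' in a non-virtually-cyclic group, I would use relators of the form $[t^{n_i}, w_i]$, where $w_i$ enumerate words in $a,b$. Then $t^{n_i}$ commutes with ever more of the group. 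The aim is to show that the centralizers force every loxodromic of any hyperbolic action to fail WPD: any cobounded acylindrical action would force $t$'s powers to have a virtually cyclic centralizer, but $C(t^{n!})$ contains infinitely many independent $w_i$. Alternatively, I could arrange an infinite normal subgroup with amenable or torsion quotient.

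The key steps, in order, are as follows.
\begin{enumerate}[(i)]
\item Choose the exponents $n_i$ (or the lengths of the $w_i$) growing super-exponentially. This ensures that the family $\{R_i\}$ satisfies the graded small-cancellation hypothesis, so that earlier results apply, and that $G$ is not virtually cyclic.
\item Show that $t$ is Morse. Using the sublinear-intersection characterization, a geodesic $\langle t\rangle$-axis meets the relator $[t^{n_i},w_i]$ in a segment of length about $n_i$, while the relator has length about $2n_i+2|w_i|$. So I must instead choose $|w_i| \gg n_i$, making the ratio $n_i/|R_i|\to 0$ and hence giving sublinear intersection.
\item Show that $G$ is Morse local-to-global. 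By Theorem~\ref{thm:main1} it suffices to show that $G$ fails the general IPSC condition, or directly that it is geodesic Morse local-to-global. Sparsity of the relator lengths should give a countable set of gauges, which would yield $\sigma$-compact Morse boundary.
\item Show that $G$ is not acylindrically hyperbolic, via the centralizer obstruction above.
\end{enumerate}

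I expect step (iii) to be the main obstacle. Checking the IPSC-type condition requires controlling how projective geodesics, meaning long pieces of the relators, can be concatenated. Locally Morse geodesics might run along the $t^{n_i}$ portions of many different relators, producing Morse geodesics with unboundedly bad gauges. To handle this, I would first try choosing $n_i$ so sparse that any geodesic meets at most one scale $n_i$ substantially at any location. Then the intersection function is uniformly controlled by a single gauge determined by local data, which gives geodesic Morse local-to-global directly.
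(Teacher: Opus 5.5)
There is a genuine gap, and it is structural: your mechanism for killing acylindrical hyperbolicity is incompatible with $t$ being Morse. A relation $[t^{n_i},w_i]=1$ puts $w_i$ into the subgroup $E(t)=\{h\in G : ht^mh^{-1}=t^{\pm m} \text{ for some } m\neq 0\}$. For $h\in E(t)$, the translate $h\langle t\rangle$ is at finite Hausdorff distance from $\langle t\rangle$. If $t$ is Morse, $h\langle t\rangle$ is a bi-infinite Morse quasi-geodesic with the same gauge and constants as $\langle t\rangle$. Two such quasi-geodesics at finite Hausdorff distance are within Hausdorff distance $D$, where $D$ depends only on the gauge and constants. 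Hence $E(t)\subseteq\mc N_D(\langle t\rangle)$ is virtually cyclic.

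So a single relator $[t^{n},w]$, with $w$ of infinite order and not commensurable with $t$, already destroys Morseness of $t$. With $w_i$ enumerating all words in $a,b$, you get $E(t)\supseteq\langle a,b,t\rangle=G$, so $G$ would be virtually cyclic. Steps (ii) and (iv) therefore contradict each other. Your estimate in (ii) misses this because it compares the $t$--axis with one relator at a time. But $t^{kn_i}w_it^{-kn_i}w_i^{-1}=1$ for every $k$, so the $t$--axis fellow-travels its $w_i$--translate forever.

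Step (i) fails independently. In $r=t^{n_i}w_it^{-n_i}w_i^{-1}$, the word $w_i$ is a common prefix of the cyclic conjugate $w_it^{-n_i}w_i^{-1}t^{n_i}$ and of $r^{-1}=w_it^{n_i}w_i^{-1}t^{-n_i}$, hence a piece. With your choice $\abs{w_i}\gg n_i$, this piece has length close to $\abs{r}/2$. No choice of exponents gives the $C(\eps_n,\mu,\rho)$ condition with $\mu\le 0.001$ from Definition~\ref{def:ExpandingGSC}, so Proposition~\ref{prop:morse-geodesics} is unavailable. The centralizer argument in (iv) also has problems as written:
\begin{itemize}
\item For fixed $m$, only finitely many $n_i$ divide $m$, so $C(t^{n!})$ contains only finitely many $w_i$.
\item It addresses only $t$, not every potential WPD loxodromic.
\item An infinite normal subgroup with amenable or torsion quotient is no obstruction at all; free groups have many.
\end{itemize}

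The Morse element must fail to be loxodromic in every action on a hyperbolic space while staying Morse in the Cayley graph. So the obstruction has to be coarse (sublinear displacement), not algebraic. The paper does exactly this by proving property (NL).
\begin{itemize}
\item It starts from a $C'(4/N)$ group $H$ satisfying (H1)--(H3).
\item At stage $i$ it takes the next word $\chi$ of an enumeration. For every proper fraction $u$ of every $r\in R_{k_i}$, it adds a relator $a_u$ representing $b_u^Rxd_uyb_u^L$. Here $d_u$ represents $\chi^{f'(\abs{r}/L)}$, and $b_u$ is $2N$ copies of $u$ separated by pairwise distinct words over $T$ of length $M_i=\log(6N\abs{r}^2)$.
\item By Lemma~\ref{lem:shortsubpaths}, in any action on a $\delta$--hyperbolic space the cycle $r$ has a proper fraction $u$ with $\log^2\abs{r}$--close endpoints. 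Then $\chi^{f'(\abs{r}/L)}$ moves a basepoint by $O(N\log^2\abs{r})$, which is sublinear in the exponent $f'(\abs{r}/L)\asymp\abs{r}^{1/4}$. So no element is loxodromic.
\item Because the separating $T$--words are short and distinct, the relators satisfy expanding graded small cancellation. The axis of a generator in $T$ meets relators only sublinearly, so that generator is Morse.
\item The explicit family $\rho_i(s)=i+2\log(6N^3s^2)+f(s)$, combined with (H3), bounds the relator intersection function of every Morse geodesic. This gives $\sigma$--compactness, hence MLTG by Theorem~\ref{thm:main1}.
\end{itemize}
Your step (iii), which only hopes that sparsity will produce countably many gauges, has nothing corresponding to this explicit uniform bound.
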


The construction of the group in Theorem~\ref{thm:main2} is inspired by the authors' construction of a small cancellation group with a Morse element that cannot be loxodromic in any action on a hyperbolic space \cite{AZ}.  However, small cancellation groups are always acylindrically hyperbolic, so the construction from \cite{AZ} cannot produce a Morse local-to-global group that is not acylindrical hyperbolic. 

In this paper, we define a generalization of the small cancellation condition, which we call \textit{expanding graded small cancellation} (see Definition~\ref{def:ExpandingGSC}), that is closely related to graded small cancellation as defined by Olshanskii, Osin, and Sapir \cite{OlshankiiOsinSapir:lacunary}. The group $G$ we construct to prove Theorem~\ref{thm:main2} is an expanding graded small cancellation group.

We generalize tools introduced by Arzhantseva, Cashen, Gruber, and Hume \cite{arzhantseva2019negative} to understand Morse geodesics in expanding graded small-cancellation groups. In particular, we define an analogue of the intersection function in small cancellation groups that measures the length of the intersection of a geodesic with relators, which we call the \textit{relator intersection function}; see Definition~\ref{defn:relator-intersection-function}.

\begin{prop}\label{prop:morse-geodesics}
   Let $G = \pres = \gpres$ be a balanced expanding graded small cancellation group. A geodesic in $\cay$ is Morse if and only if its relator intersection function is sublinear. 
\end{prop}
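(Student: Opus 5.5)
The plan is to reduce the statement to the Key Lemma~\ref{lem:key-lemma1}: a geodesic is Morse if and only if its intersection function, which measures projective subgeodesics of projective loops, is sublinear. It therefore suffices to show that, in a balanced expanding graded small cancellation group, the intersection function and the relator intersection function of a geodesic $\gamma$ control each other up to a linear change of variables. Concretely, we aim for constants $A,B$ depending only on the presentation with
\[ \rho_{\mathrm{rel}}(r)\le A\,\rho(Ar+B)+B \quad\text{and}\quad \rho(r)\le A\,\rho_{\mathrm{rel}}(Ar+B)+B. \]
Sublinearity is preserved under such comparisons, so this would finish the proof.

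The first inequality should be the easier direction. A relator $R\in R_i$ is a closed loop in $\cay$. By the small cancellation condition at its grade, a subpath of $R$ of length at most a fixed fraction of $|R|$ is geodesic, and long subpaths are uniformly quasi-geodesic. Balancedness should make these constants uniform across grades, and this is where we plan to use it. If $\gamma$ shares a segment of length $\ell$ with $R$, we expect that the loop formed by that segment and the complementary arc of $R$ is a projective loop in which the shared segment is a projective geodesic, possibly after trimming a bounded proportion. The scale of this loop is comparable to $|R|$, so the intersection function at scale $\asymp |R|$ is at least a constant times $\ell$. This gives $\rho_{\mathrm{rel}}\preceq\rho$.

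The converse is the heart of the argument. Take a projective loop $\alpha$ of size $r$ whose projective geodesic subpath lies on $\gamma$ and has length $\ell$. Fill $\alpha$ by a van Kampen diagram over the graded presentation. Since the grades are expanding, we can pass to a reduced diagram and apply a graded version of Greendlinger's lemma, in the spirit of Arzhantseva--Cashen--Gruber--Hume \cite{arzhantseva2019negative} and Olshanskii--Osin--Sapir \cite{OlshankiiOsinSapir:lacunary}. This lemma should give a face $\Pi$ with a large proportion of its boundary on $\partial\alpha$. Because $\alpha$ is projective (its non-geodesic sides are far from $\gamma$ relative to their length), the bulk of the contribution along the $\gamma$-side must come from relators $R$ of length $\lesssim r$ meeting $\gamma$ in long pieces. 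We then iterate or induct on the number of faces, as in the $C'(1/9)$ argument of \cite[Corollary~4.14]{arzhantseva2019negative}, to get a single relator of length $\le Ar$ meeting $\gamma$ in length $\ge \ell/A - B$.

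The main obstacle will be this diagrammatic step. Relators from lower grades can be short and numerous, and they could account for a long stretch of $\gamma$ without any single relator having large intersection. The expanding hypothesis is meant to prevent this: lower-grade relators are hyperbolically small relative to higher grades, so low-grade layers behave like a hyperbolic group, where every geodesic is uniformly Morse and projective geodesics are short. We would first try to handle the lower grades by a hyperbolicity-of-lower-layers argument, showing that their contribution to projective geodesics is bounded linearly by a sublinear error. We would then run the Greendlinger argument only on the top grade appearing in the diagram.
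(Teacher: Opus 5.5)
Your reduction matches the paper's: Proposition~\ref{prop:morse-geodesics} follows from Lemma~\ref{lem:sublinvsMorse} together with a two-sided comparison of the two intersection functions (Proposition~\ref{prop:connection-intersecting-relator-intersecting}). Your easy direction is also essentially right, but it needs neither balancedness nor trimming. Relators are isometrically embedded in $G$ (Lemma~\ref{lem:relators-are-isometrically-embedded}), so a common subpath $\gamma'$ of $\gamma$ and an embedded relator $r$ is $\abs{r}$--projective, taking $x=y$ to be the antipodal point. This gives $\rint{\gamma}\le\rho_\gamma$ outright.

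Balancedness is needed in the other direction, where you assert without justification that the relevant relators have length $\lesssim r$. Greendlinger's lemma controls only the Greendlinger cell. It is Lemma~\ref{lem:boundary-proportion}, using $\abs{r}\le C\abs{r'}$ within a rank, that bounds every cell of a minimal filling of an $\ell$--projective loop by $2C\ell$.

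Your target with an additive constant $B$ is stronger than this kind of argument delivers. The paper gets $\rho_\gamma(\ell)\le 4\rint{\gamma}(2C\ell)+400f(2C\ell)$. The error comes from the hyperbolicity constants $\eps_{\mathfrak r}$ of the lower layers and from the non-good portions of relators. A sublinear error is all you need, so aim for that.

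The genuine gap is the diagrammatic step. Write the loop as $\gamma\ast\beta_1\ast\eta\ast\beta_2$ with $\beta_1=[p_1,x_1]$, $\eta=[x_1,x_2]$, $\beta_2=[x_2,p_2]$. A Greendlinger cell has $(1-24\mu)$ of its boundary on $\partial D$, but at most half of it on the geodesic $\gamma$. Nothing forces it to touch $\gamma$ at all; it may sit at the corner between $\beta_1$ and $\eta$. Removing it destroys the projective-loop structure, so inducting on faces does not go through. Nor is the top rank present in $D$ the relevant one.

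The missing idea is to cut $D$ along the cells nearest $\gamma$ whose boundaries meet two of $\beta_1,\eta,\beta_2$. Then use the projection conditions: points of $\beta_i$ project to $p_i$, and points $z\in\eta$ satisfy $d(z,x_2)\le d(z,\gamma)$. These show that every Greendlinger cell, hence every cell, of the remaining subdiagram $D'$ adjacent to $\gamma$ has rank below the maximal rank $\mathfrak r$ of the cut cells. So $\partial D'$ is a thin octagon in the $\eps_{\mathfrak r}$--hyperbolic group $G_{\mathfrak r-1}$, with $\eps_{\mathfrak r}\le f(2C\ell)$. The parts of $\gamma$ near $\beta_i$ or $\eta$ have length $O(\eps_{\mathfrak r})$, and the rest fellow-travels the boundaries of the at most two cut cells.

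Even then, fellow-travelling is not sharing a subword, which is what $\rint{\gamma}$ counts. To upgrade you must use (T\ref{cond:T4}): each relator of rank $n$ has an $(n-1)$--good cyclic subsegment of length at least $\abs{r}-f(\abs{r})$. Combine this with Proposition~\ref{prop:good-geodesics}\eqref{prop2_contiguity-diagrams}, which forces a good geodesic to coincide with a nearby geodesic up to an $O(\eps)$ error. Pigeonholing then yields a common subword of length at least $\abs{\gamma}/4-100f(2C\ell)$ with a relator of length at most $2C\ell$. Your sketch never invokes this goodness condition, and this is exactly the step that needs it.
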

    
The term \textit{balanced} in the statement of the proposition is a technical condition; see the discussion after Proposition~\ref{prop:connection-intersecting-relator-intersecting} for the definition.  Using Proposition~\ref{prop:morse-geodesics}, we show that the group $G$ contains an infinite-order Morse element. Moreover, we show that its Morse boundary is $\sigma$--compact by giving a sequence of explicit sublinear functions bounding the relator intersection function of each Morse geodesic. This is sufficient to conclude that $G$ is Morse local-to-global by Theorem~\ref{thm:main1}(4)$\iff$(5).

To show the group $G$ we build is not acylindrically hyperbolic, we show that it has the 
stronger property that \textit{no} element is a loxodromic isometry in an action of $G$ on a hyperbolic space. This property was introduced by Balasubramanya, Fournier-Facio, and Genevois and is called property (NL) \cite{balasubramanya2022property}. 

\begin{thm}\label{thm:MLTGNL}
    There exists a finitely generated, non-virtually cyclic Morse local-to-global group with an infinite order Morse element that has property (NL). 
\end{thm}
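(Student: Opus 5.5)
We will prove Theorem~\ref{thm:MLTGNL} by constructing an explicit balanced expanding graded small cancellation group $G=\gpres$ in the spirit of \cite{AZ}, and then checking the four required properties separately. We take $S=\{a,b,c\}$, or a similar small alphabet, and choose relators in stages. At stage $i$, $R_i$ consists of words of length roughly $n_i$, with $n_i$ growing super-exponentially. These words encode that a very high power of a candidate element is conjugate to a smaller power, or lies in a bounded-type subgroup. This will force property (NL). At the same time we keep a designated element $g$, say $g=a$, whose axis meets every relator only in pieces of length $o(n_i)$.

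First, we verify the expanding graded small cancellation and balanced conditions (Definition~\ref{def:ExpandingGSC}). This is a counting and genericity argument: we choose the relators at each stage so that their pieces with all earlier relators, and with each other, have length at most $\lambda n_i$ with $\lambda\to 0$. The expanding condition then follows from the growth of $n_i$.

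Second, we show the group has the required basic properties. The group is infinite and not virtually cyclic by standard graded small cancellation consequences: Greendlinger-type lemmas show that short words are nontrivial and that there are many independent elements.

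Third, we show $g$ is Morse. By Proposition~\ref{prop:morse-geodesics}, it suffices that the relator intersection function of the axis of $g$ is sublinear. This holds by design, since the intersection of $\langle g\rangle$ with a relator of length $n_i$ has length at most $f(n_i)$ with $f$ sublinear.

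Fourth, we show $G$ is Morse local-to-global. By Theorem~\ref{thm:main1} (4)$\iff$(5), it is enough to show the Morse boundary is $\sigma$-compact. So we exhibit countably many sublinear functions $\rho_k$ such that every Morse geodesic has relator intersection function bounded by some $\rho_k$; Proposition~\ref{prop:morse-geodesics} then gives countably many Morse gauges covering all Morse geodesics. The mechanism is a dichotomy. Because the relators at stage $i$ are rigidly structured, a geodesic that meets relators of length $n_i$ in more than a $\rho_k(n_i)$ portion for infinitely many $i$ must in fact meet them linearly. The proof would establish this by a pigeonhole argument along the graded structure: the possible intersection lengths with $R_i$ only take values in a sparse set, for example $\{0\}\cup[\epsilon n_i^{1/2},\cdot]$ paired with linear jumps. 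We expect this step to be the main obstacle, since it is exactly where a general group fails to have $\sigma$-compact boundary (the IPSC phenomenon). The relators must be chosen so that they avoid the general IPSC configuration, meaning no nested family of projective geodesics with slowly growing intersection. We would first try relators built from concatenations of long powers $a^{m}$ and $b^{m}$ with $m$ only taking values $n_i^{1/2}$ or $\Theta(n_i)$.

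Finally, we prove property (NL). Take any action of $G$ on a hyperbolic space and any element $h$. The relators impose that $h^{N}$ is conjugate to $h^{M}$ with $M\ne\pm N$, or more generally impose distortion incompatible with positive stable translation length. A loxodromic $h$ would have $\tau(h^N)=N\tau(h)$, and conjugation preserves $\tau$, so $\tau(h)=0$; hence no element is loxodromic. Each element of $G$ is represented by a word, and we must ensure every such word, up to conjugacy, receives such a relator at some stage. So we enumerate all words and handle the $i$-th word at stage $i$. We then check that including these relations is compatible with the small cancellation estimates and with keeping $g$ Morse; $g$ itself also receives such a relation, but only involving powers of length $\gg n_i$ in a sublinear-intersection manner, following the trick of \cite{AZ}.
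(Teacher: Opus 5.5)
\textbf{The gap is in your mechanism for property (NL).} It fails structurally, not just technically. The computation $\abs{N}\tau(h)=\tau(h^N)=\tau(h^M)=\abs{M}\tau(h)$ never uses hyperbolicity. It shows $\tau(h)=0$ for \emph{every} isometric action of $G$ on \emph{any} metric space, including the action of $G$ on its own Cayley graph. So any element that receives such a relation has zero stable word length and is distorted in $G$; the same holds for any relation that ``imposes distortion'' directly.

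An infinite-order Morse element $g$ is undistorted, because $\{g^i\}$ is a quasi-geodesic. Hence $g$ can never receive a relation of this kind, whatever the sizes of the powers. Your remark that $g$ gets such a relation ``in a sublinear-intersection manner'' is therefore self-contradictory. Yet (NL) must also hold for $g$, and $g$ appears in your enumeration of all words. What you are missing is a relation whose effect on translation lengths is visible only in \emph{hyperbolic} spaces and is vacuous in the non-hyperbolic Cayley graph.

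This is exactly what the paper's construction supplies. It takes long relators $r$ of an auxiliary $C'(4/N)$ group $H$, which stay isometrically embedded in $G$ by Lemma~\ref{lem:relators-are-isometrically-embedded}. For \emph{every} proper fraction $u$ of $r$, it adds a relator $a_u$ equating the sublinear power $\chi^{f'(\abs{r}/L)}$ with a word built from $2N$ copies of $u$, separated by distinct $T$-words of logarithmic length. In the Cayley graph this word has length linear in $\abs{r}$, while the power is only $f'(\abs{r}/L)$. So no distortion is created, and $t_1$ stays Morse. In any hyperbolic $X$, however, Lemma~\ref{lem:shortsubpaths} gives some proper fraction $u$ of the embedded cycle $r$ with $d_X(u^-,u^+)\le \log^2\abs{r}$. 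Hence $d_X(x_0,\chi^{f'(\abs{r}/L)}x_0)=O(\log^2\abs{r})=o(f'(\abs{r}/L))$, and $\chi$ is not loxodromic. Relators are added for all proper fractions, so this works whichever $u$ the action happens to shorten.

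Two secondary issues:
\begin{enumerate}
\item Relators of the form $c h^N c^{-1} h^{-M}$ cannot satisfy (C3) with $\mu\le 0.001$. The word $c$ is a piece with a cyclic shift of the inverse relator, and long powers of a fixed $h$ produce self-overlap pieces of size comparable to the relator. The paper avoids this by keeping $d_u$ of sublinear length and making the bulk $b_u$ aperiodic via distinct $T$-words.
\item Your $\sigma$-compactness step is only heuristic. In the paper it rests on property (H3) of $H$. That property forces a geodesic with sublinear relator intersection function to meet $H$-relators only in pieces of bounded length $T_1$. Hence it meets each $a_u$ in at most $2T_1+2M_j+f(\abs{u})$ letters.
\end{enumerate}
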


\begin{figure}
    \centering

    \begin{overpic}[width=\textwidth]{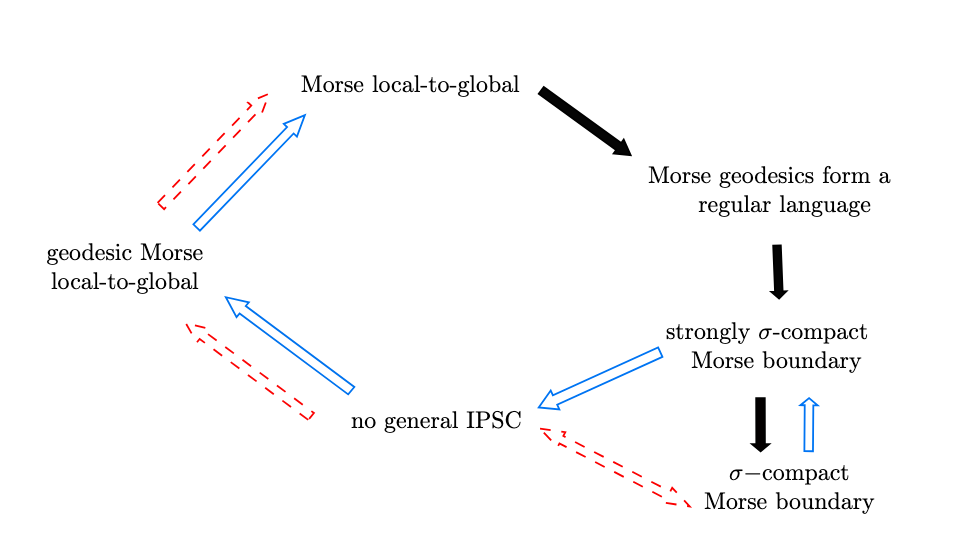}
    \put(40,45){\eqref{item:MLTG}}
    \put(23,30){\eqref{item:GeodMLTG}}
    \put(95,38){\eqref{item:RegularLang}}
    \put(92,21){\eqref{item:StrSigmaCpt}}
    \put(93,7){\eqref{item:SigmaCpt}}
    \put(44,10){\eqref{item:IPSC}}
    \put(10,43){\cite{HeSprianoZbinden:sigma-compact}}
    \put(14,17){\cite{HeSprianoZbinden:sigma-compact}}
    \put(53,6.5){\cite{Zbinden:small-cancellation}}
    \put(73,13){\text{def.}}
    \put(64,47){\cite{CordesRussellSprianoZalloum:regularity}}
    \put(83,29.5){\cite{HeSprianoZbinden:sigma-compact}}
    \put(28,37){Sec.~\ref{sec:MLTG}}
    \put(31,23){Sec.~\ref{sec:GeodMLTG}}
    \put(54.5,20.5){Sec.~\ref{sec:notIPSC}}
    \put(86,13){Sec.~\ref{sec:CptToStrCpt}}

     \end{overpic}
     \caption{Diagram of implications. The black solid arrows are implications already known for all finitely generated groups, the dotted red arrows are implications known for $C'(1/9)$--small-cancellation groups and the blue arrows are implications we prove in this paper for all finitely generated groups.}
    \label{fig:known-and-new-equivalences}
\end{figure}

\subsection{Further directions} 

Theorem~\ref{thm:main1} provides new tools to check whether groups are Morse local-to-global.  In this paper, these tools were used to probe the connection between the Morse local-to-global property and acylindrical hyperbolicity. Another interesting direction would be to investigate the connection with the Dehn function of a group. 

\begin{ques}
    Can the tools developed in this paper be used to show that all groups with quadratic Dehn function are Morse local-to-global? 
\end{ques}

To generalize the proof of Theorem~\ref{thm:main1} from small-cancellation groups to finitely generated groups, we introduce projective loops, which exhibit  behavior similar to relators in small-cancellation groups. One can ask how far-reaching this correspondence between projective loops and relators is.   We say that a group admits a \textit{projective loop presentation} if all relators are projective loops.

\begin{ques}
    Which groups admit a projective loop presentation?  Which admit a \textit{finite} projecive loop presentation?
\end{ques}
 
Given the strong algebraic, geometric, and algorithmic consequences of admitting a small-cancellation presentation, one may expect similar consequences for groups admitting a finite projective loop presentation.   
For example:

\begin{ques}
    If $G$ admits a finite projective loop presentation, is $G$ Morse local-to-global? Does $G$ have solvable word problem?
\end{ques}

Projective loop presentations have geometric connections to the group, and so it is natural to ask whether it is possible to algorithmically compute such a presentation. While such a computation is likely difficult to do in general, perhaps it becomes more tractable if one starts with a presentation that already has good algorithmic properties, such as a Dehn presentation. 

\begin{ques}
    Are there classes of groups for which one can algorithmically compute a projective loop presentation? Can one compute a projective loop presentation from a Dehn presentation?   
\end{ques}

\paragraph{Outline.}
We discuss the preliminaries in Section~\ref{sec:prelim}. Section~\ref{sec:circle} is dedicated to the proof of Theorem~\ref{thm:main1}; see also Figure~\ref{fig:known-and-new-equivalences} for an illustration. We define the general IPSC condition in Section~\ref{sec:IPSC}, and the remainder of the section is devoted to proving the remaining equivalences in Theorem~\ref{thm:main1}. In particular, we prove \eqref{item:StrSigmaCpt}$\implies$ \eqref{item:IPSC} in Section~\ref{sec:notIPSC}; \eqref{item:IPSC}$\implies$\eqref{item:GeodMLTG} in Section~\ref{sec:GeodMLTG}; \eqref{item:GeodMLTG}$\implies$\eqref{item:MLTG} in Section~\ref{sec:MLTG}; and \eqref{item:SigmaCpt}$\implies$\eqref{item:StrSigmaCpt} in Section~\ref{sec:CptToStrCpt}.  In Section~\ref{sec:nonahgroup}, we introduce expanding graded small-cancellation groups and develop tools surrounding these groups. We then construct a particular expanding graded small-cancellation group that has $\sigma$-compact Morse boundary but is not acylindrically hyperbolic in Section~\ref{sec:nonAHgp}, proving Theorem~\ref{thm:main2}.

\subsection*{Acknowledgements} 

We would like to thank Rémi Coulon and Chris Cashen for helpful discussions.  The first author was partially supported by NSF grant DMS-2340341. The second author is supported by the Postdoc Mobility grant \#P500PT\_230322 of the Swiss National Science Foundation, the European Union (ERC, SATURN, 101076148), and the Deutsche Forschungsgemeinschaft (DFG, German Research Foundation) under Germany's Excellence Strategy - EXC-2047/1 - 390685813.

\section{Preliminaries}\label{sec:prelim}
Given a metric space $X$ and a subset $Y\subseteq X$, we denote the closed $r$--neighbourhood of $Y$ by $\mc N_r(Y)$. 

A map $f\colon (X,d_X)\to (Y,d_Y)$ of metric spaces is a \emph{$(\lambda,c)$--quasi-isometric embedding} for constants $\lambda \geq 1$ and $c\geq 0$ if for all $x,y\in X$, we have that
$$\lambda^{-1}\dist_X(x,y)-c\le \dist_Y(f(x),f(y))\le \lambda \dist_X(x,y)+c.$$ 
A \emph{quasi-isometry} is a quasi-isometric embedding which is also \emph{coarsely surjective}, meaning that $Y\subseteq \mc N_R(f(X))$ for some constant $R\ge 0$. 

A \emph{$(\lambda,c)$--quasigeodesic} in $X$ is a $(\lambda,c)$--quasi-isometric embedding of an interval into $X$. When the constants $\lambda$ and $c$ are the same, we simply call such path a $\lambda$-quasigeodesic. A \emph{geodesic} is a $(1,0)$-quasigeodesic, that is, an isometric embedding of an interval.

A metric space is \emph{geodesic} (resp. $(\lambda, c)$-quasigeodesic) if any two points are connected by a geodesic (resp. $(\lambda, c)$-quasigeodesic). For $\delta\geq 0$, a geodesic metric space $X$ is \emph{$\delta$--hyperbolic} if, for every three points $x,y,z\in X$, we have $[x,y] \subseteq \mc{N}_\delta( [x,z] \cup [z,y])$; we say that geodesic triangles in a $\delta$-hyperbolic space are $\delta$-\emph{slim}. If the particular choice of $\delta$ is not important, we simply say that $X$ is \emph{hyperbolic}.

Let $X$  be a geodesic metric space. For points $x, y\in X$, we denote by $[x, y]$ a fixed choice of geodesic from $x$ to $y$. By an abuse of notation, we denote the image $\mathrm{Im}(p)$ of a path $p\colon I \to X$   by $p$ and its initial and terminal endpoints by $p^-$ and $p^+$, respectively. Given $x, y\in p$, we denote by $[x, y]_p$ a choice of subsegment $p[s, t]$ such that $p(s) = x$ and $p(t) = y$ and $s\leq t$. We denote the length of (the domain of) $p$ by $\abs{p}$.

If $S$ denotes a finite set of formal variables, then ${ S}^{-1}$ denotes its formal inverses, and $\ov{ S}$ denotes the symmetrised set $  S \cup { S}^{-1}$. A word $w$ over $ S$ (respectively $\ov{ S}$) is a finite sequence of elements in $ S$ (respectively $\ov{ S}$). By an abuse of notation, we sometimes allow words to be infinite.

A word $w$ over $\ov{S}$ is \emph{cyclically reduced} if it is reduced and all its cyclic shifts are reduced. Given a set $ R$ of cyclically reduced words, we denote by $\overline{R}$ the cyclic closure of $ R\cup  R^{-1}$. If $ R = \{w\}$ we sometimes denote $\ov{{R}}$ by $\ov w$. Given a cyclically reduced word $r$, we say that $w$ is a \emph{cyclic subword} of $r$ if $w$ is a subword of a word in $\ov{r}$.

Let $\Cay(G,S)$ be the Cayley graph of $G$ with respect to the generating set $S$.  By a path $p$ in $\Cay(G,S)$, we always mean a combinatorial path.  The label of $p$ is a word in $\ov S$; note that the label of $p$ also represents a word in $G$.

 Given a cycle $K$, we say a 1--Lipschitz map $C\colon K \to X$ is an \textit{embedded cycle}. If $G=\pres$ acts by isometries on a metric space $X$, then we can view each $r\in R$ as an embedded cycle in $X$ in the following way.  The orbit map $G\to X$ sending $g$ to $gx_0$ for some fixed basepoint $x_0\in X$ allows us to view $r=s_1s_2\dots s_k$ as a sequence of points $x_0,s_1x_0, s_1s_2x_0,\dots, s_1s_2\dots s_{k-1}x_0$, which we connect, in order, with geodesics in $X$.  Let $M=\max\{d_X(x_0,sx_0)\mid s\in S\}$, and modify $\cay$ so that each edge has length $M$. Then the orbit map restricted to $r$ is a 1--Lipschitz map.

\subsection{The Morse local-to-global property}\label{sec:mltg}

In this section, we define the Morse local-to-global property, a property introduced in \cite{russellsprianotran:thelocal}.

A function $M\colon \R_{\geq 1}\to \R_{\geq 0}$ is  a \emph{Morse gauge} if it is non-decreasing and continuous.
A quasi-geodesic $\gamma$ is  \emph{$M$-Morse} for some Morse gauge $M$ if every $Q$--quasi-geodesic $\lambda$ with endpoints $\gamma(s)$ and $\gamma(t)$ stays in the closed $M(Q)$--neighbourhood of $\gamma[s, t]$. A quasi-geodesic is  \emph{Morse} if it is $M$--Morse for some Morse gauge $M$.  It is clear from the definition that every subpath of an $M$--Morse quasi-geodesic is again $M$--Morse.  An element $a$ of a finitely generated group $G$ is \textit{Morse} if the subset $\{a^i\}_{i\in \mathbb Z}$ in some (equivalently, any) Cayley graph of $G$ is a Morse quasi-geodesic.

We say a path $p \colon I\to X$ \emph{$L$--locally satisfies a property $(P)$} if for every $s, t \in I$ with $\abs{t - s} \leq L$ the subpath $p[s, t]$ has the property $(P)$. The following property, introduced by Russell, Spriano, and Tran in \cite{russellsprianotran:thelocal}, generalizes the property of Gromov hyperbolic spaces that every local quasi-geodesic is a global quasi-geodesic.  

\begin{defn}[Morse local-to-global]\label{def_morse-local-to-global}
    A metric space $X$ satisfies the \emph{Morse local-to-global} (MLTG) property if the following holds. For any constant $Q\geq 1$ and Morse gauge $M$, there exists a scale $L$, a constant $Q'\geq 1$ and a Morse gauge $M'$ such that every path that is $L$--locally an $M$--Morse $Q$--quasi-geodesic is an $M'$--Morse $Q'$--quasi-geodesic. 
\end{defn}

We say that a MLTG group is \emph{elementary} if its virtually cyclic or does not contain any Morse ray.

We now recall some basic facts about (local) quasi-geodesics in metric spaces.  
\begin{lem}[{\cite[Lemma~2.6]{DrutuSprianoZbinden:weak}}]\label{lem:reverse_inclusion_QG_nbhd}
Let $\gamma_i$, $i=1,2$, be two $C$--quasi-geodesic segments with endpoints at distance $d$. Then for all $\mu \geq d$ there exists $\mu'$ such that if $\gamma_1 \subseteq \mc {N}_\mu(\gamma_2)$  then $\gamma_2\subseteq \mc {N}_{\mu'}(\gamma_1)$.
\end{lem}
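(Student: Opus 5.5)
Assume without loss of generality that $\gamma_1 \subseteq \mc N_\mu(\gamma_2)$. The goal is to show every point of $\gamma_2$ lies within a uniform distance $\mu'$ of $\gamma_1$, where $\mu'$ depends only on $C$ and $\mu$; note $d\le\mu$. The plan is a standard continuity argument. Parametrize $\gamma_1\colon[a,b]\to X$. For each $s\in[a,b]$, choose a parameter $\phi(s)$ in the domain of $\gamma_2$ with $\dist(\gamma_1(s),\gamma_2(\phi(s)))\le\mu$. At the endpoints choose $\phi(a)$ and $\phi(b)$ to be the endpoints of $\gamma_2$; this is allowed because $d\le\mu$.

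\textbf{Step 1: $\phi$ has bounded jumps.} The quasi-geodesic inequalities give control on how far $\phi$ can move. For $|s-s'|\le 1$ we have
\[
\dist\bigl(\gamma_2(\phi(s)),\gamma_2(\phi(s'))\bigr)\le 2\mu+2C.
\]
The lower quasi-geodesic bound for $\gamma_2$ then gives $|\phi(s)-\phi(s')|\le C(2\mu+3C)=:J$. The points $a=s_0<s_1<\dots<s_n=b$ with mesh at most $1$ therefore have images $\phi(s_i)$ that run from one endpoint of $\gamma_2$'s domain to the other in steps of size at most $J$.

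\textbf{Step 2: coverage.} Any parameter $t$ of $\gamma_2$ lies between two consecutive values $\phi(s_i)$ and $\phi(s_{i+1})$. This holds even though $\phi$ need not be monotone: the discrete sequence starts at one end and finishes at the other, so it must cross $t$. Hence $|t-\phi(s_i)|\le J$, and so
\[
\dist(\gamma_2(t),\gamma_1(s_i))\le \mu + CJ + C.
\]
Take $\mu' = \mu + C J + C$.

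The only subtle step is the crossing argument in Step 2. It requires that $\gamma_1$'s domain be connected and that the endpoints be matched. If the endpoint matching were unavailable, one would first use the endpoint distance $d\le\mu$ to adjust $\phi$ at the ends, at the cost of an additional constant.
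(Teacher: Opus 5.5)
Your argument is correct. The paper does not prove this lemma; it quotes it from \cite[Lemma~2.6]{DrutuSprianoZbinden:weak}, so there is no in-paper proof to compare against.

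Your discrete coarse-connectedness argument is a standard way to prove it. You take a coarse projection $\phi$ with matched endpoints, show its jumps are bounded, and then use a crossing argument. This gives $\mu' = \mu + C^2(2\mu+3C) + C$, which depends only on $C$ and $\mu$.

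One small technicality needs fixing. In this paper quasi-geodesics need not be continuous, so the image of $\gamma_2$ need not be closed. Then $\dist(\gamma_1(s),\gamma_2)\le\mu$ need not be attained by a point of $\gamma_2$. Choose $\phi(s)$ with $\dist(\gamma_1(s),\gamma_2(\phi(s)))\le\mu+1$ instead, keeping the exact endpoint choices; this only shifts the constants.

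Apart from that, your proof never uses continuity of either path, which is what this setting requires.
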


The following is a well-known fact about Morse quasi-geodesic and follows from \cite[Lemma~2.8 ix)]{Z:manifold}.  We note that the result in \cite{Z:manifold} is stated for quasi-geodesic triangles, but the proof generalizes to quasi-geodesic quadrangles by dividing the quadrangle into two triangles.

\begin{lem}\label{ref:triangles}
    Let $X$ be a geodesic metric space, let $M$ be a Morse gauge, and let $C\geq 1$ be a constant. There exists a Morse gauge $M''$ such that the following holds for all $C$--quasi-geodesic quadrangles $\Delta = (\gamma_1, \gamma_2, \gamma_3, \gamma_4)$, that is, for all quadruples of $C$--quasi-geodesics $\gamma_1, \ldots \gamma_4$ such that $\gamma_i^+ = \gamma_{i+1}^-$ for all $1\leq i \leq 4$ (here we define $\gamma_5^- = \gamma_1^-$). If $\gamma_1, \gamma_2$ and $\gamma_3$ are $M$--Morse, then $\gamma_4$ is $M'$--Morse.
\end{lem}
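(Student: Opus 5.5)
Here $M'$ denotes the gauge $M''$ produced by the lemma (the statement uses both names). The plan is to reduce to triangles, using the fact cited from \cite[Lemma~2.8 ix)]{Z:manifold}. That fact says: for every Morse gauge $M$ and constant $C\ge 1$ there is a Morse gauge $M_1$ such that, in any $C$--quasi-geodesic triangle with two $M$--Morse sides, the third side is $M_1$--Morse.

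Let $\Delta=(\gamma_1,\gamma_2,\gamma_3,\gamma_4)$ be the quadrangle. Write $x=\gamma_1^-=\gamma_4^+$ and $z=\gamma_2^+=\gamma_3^-$. Join $x$ and $z$ by a geodesic $\delta=[x,z]$, which is in particular a $C$--quasi-geodesic.

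\textbf{Step 1.} Consider the triangle $(\gamma_1,\gamma_2,\bar\delta)$, where $\bar\delta$ is $\delta$ traversed from $z$ to $x$. Its sides $\gamma_1$ and $\gamma_2$ are $M$--Morse. The triangle fact therefore makes $\bar\delta$ $M_1$--Morse, with $M_1=M_1(M,C)$. The Morse property is symmetric under reversing a path, since the defining condition concerns quasi-geodesics between two points on it. Hence $\delta$ is $M_1$--Morse as well.

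\textbf{Step 2.} Consider the triangle $(\delta,\gamma_3,\gamma_4)$: it runs from $x$ to $z$ along $\delta$, from $z$ along $\gamma_3$, and back to $x$ along $\gamma_4$. Its sides $\delta$ and $\gamma_3$ are $M_2$--Morse, where $M_2=\max(M,M_1)$; the maximum of two Morse gauges is again non-decreasing and continuous, so $M_2$ is a Morse gauge. Applying the triangle fact once more, with the gauge $M_2$ and the constant $C$, shows that $\gamma_4$ is $M''$--Morse. Here $M''$ depends only on $M_2$ and $C$, hence only on $M$ and $C$.

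The main point needing care is uniformity: the fact must be applied with constants that do not depend on the particular quadrangle. This holds because both auxiliary triangles consist only of $C$--quasi-geodesics, since we chose a geodesic diagonal. The gauges $M_1$ and $M''$ are thus determined by $(M,C)$ alone, and no degenerate cases arise. If the cited triangle lemma were stated with different quasi-geodesic constants, we would simply replace $C$ by the larger of the two constants, since every $C$--quasi-geodesic is a $C'$--quasi-geodesic for $C'\ge C$.
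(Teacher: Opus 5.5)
Your proposal is correct and follows the paper's approach. The paper gives no separate argument: it cites the triangle version of the fact and remarks that the quadrangle case follows by dividing the quadrangle into two triangles. That is exactly your two-step argument along a geodesic diagonal, including the uniformity of the gauges in $M$ and $C$.
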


For geodesic quadrangles, the following lemma follows from \cite[Lemma~2.2]{C:Morse}. A quadrangle is \textit{$\delta$--slim} if each point on one of the sides is contained in the $\delta$--neighborhood of one of the other three sides.
\begin{lem}\label{ref:delta}
    Let $X$ be a geodesic metric space, let $M$ be a Morse gauge, and let $C\geq 1$ be a constant.  There exists $\delta\geq 0$ such that any $C$--quasi-geodesic quadrangle $\Delta=(\gamma_1,\gamma_2,\gamma_3,\gamma_4)$ such that each $\gamma_i$ is $M$--Morse is $\delta$--slim.
\end{lem}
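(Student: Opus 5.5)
The statement to prove is Lemma~\ref{ref:delta}: quadrangles whose four sides are $M$--Morse $C$--quasi-geodesics are uniformly $\delta$--slim. I would prove it by splitting each quadrangle into two triangles with a diagonal. Then I apply the triangle version of this fact, which is \cite[Lemma~2.2]{C:Morse} and is extended to quasi-geodesic sides via Lemma~\ref{ref:triangles}.

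Let $\Delta=(\gamma_1,\gamma_2,\gamma_3,\gamma_4)$ be the quadrangle and let $x=\gamma_1^-$, $z=\gamma_2^+=\gamma_3^-$. Choose a geodesic $\sigma=[x,z]$; it is in particular a $C$--quasi-geodesic. The first step is to show $\sigma$ is Morse with a gauge depending only on $M$ and $C$. Consider the quadrangle $(\gamma_1,\gamma_2,\bar\sigma,c)$, where $\bar\sigma$ is $\sigma$ reversed and $c$ is a constant path at $x$. Alternatively, treat the triangle $(\gamma_1,\gamma_2,\bar\sigma)$ directly, since the proof of \cite[Lemma~2.8]{Z:manifold} covers triangles. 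Either way, Lemma~\ref{ref:triangles} shows $\sigma$ is $M'$--Morse with $M'$ depending only on $M$ and $C$. The constant side is trivially Morse, and the reversal does not affect the Morse property.

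The second step is to show that a triangle with $M'$--Morse $C$--quasi-geodesic sides is $\delta_0$--slim, with $\delta_0=\delta_0(M',C)$. Cordes's Lemma~2.2 states this for geodesic triangles with Morse sides. To pass to quasi-geodesic sides, I would replace each side $\gamma_i$ by a geodesic $[\gamma_i^-,\gamma_i^+]$. Since $\gamma_i$ is $M$--Morse, this geodesic lies in $\mc N_{M(1)}(\gamma_i)$. By Lemma~\ref{lem:reverse_inclusion_QG_nbhd}, applied with $d=0$, $\gamma_i$ lies in a uniform neighbourhood of the geodesic. The geodesic is itself Morse with a controlled gauge, because quasi-geodesics between its endpoints are also quasi-geodesics between the endpoints of $\gamma_i$. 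Slimness of the geodesic triangle then transfers to the original triangle at the cost of a uniform additive constant.

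To finish, take a point $p$ on $\gamma_1$ or $\gamma_2$. Slimness of $(\gamma_1,\gamma_2,\bar\sigma)$ places $p$ within $\delta_0$ of the other side or of $\sigma$. In the second case, slimness of $(\sigma,\gamma_3,\gamma_4)$ puts that point of $\sigma$ within $\delta_0$ of $\gamma_3\cup\gamma_4$, so $p$ is within $2\delta_0$ of another side. Points on $\gamma_3$ or $\gamma_4$ are handled symmetrically, so $\delta=2\delta_0$ works.

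The main obstacle is the bookkeeping in the second step: checking that the Morse gauges stay uniform when quasi-geodesic sides are replaced by geodesics. The neighbourhood constants coming from Lemma~\ref{lem:reverse_inclusion_QG_nbhd} must depend only on $C$ and $M$, and not on the quadrangle.
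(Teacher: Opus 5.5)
Your proposal is correct and follows the paper's proof. You reduce to triangles by cutting along a diagonal, losing a factor of $2$, and you handle a triangle by replacing its sides with geodesics, invoking \cite[Lemma~2.2]{C:Morse}, and transferring slimness back via Lemma~\ref{lem:reverse_inclusion_QG_nbhd}. Your explicit use of Lemma~\ref{ref:triangles} to show that the diagonal is uniformly Morse fills in a step the paper leaves implicit.

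One small correction concerns why the replacement geodesic $[\gamma_i^-,\gamma_i^+]$ is Morse. Your stated reason only controls quasi-geodesics joining its two endpoints, but the Morse property concerns quasi-geodesics between arbitrary pairs of its points. You should either cite \cite[Lemma~2.8 x]{Z:manifold}, as the paper does, or add the short argument that joins such points to $\gamma_i$ by segments of length at most $M(1)$.
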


\begin{proof}
    We show the result holds for triangles with some constant $\delta'$, and then by dividing $\Delta$ into two triangles we obtain the result with $\delta=2\delta'$.  

    Let $(\alpha_1,\alpha_2,\alpha_3)$ be a $C$--quasi-geodesic triangle with $M$--Morse sides, and let $\beta_i$ be a geodesic with the same endpoints as $\alpha_i$.  Then $\beta_i$ is contained in the $M$--neighborhood of $\alpha_i$ and hence is $M'$--Morse for some $M'$ depending only on $M$ and $C$ \cite[Lemma~2.8 x]{Z:manifold}.  By \cite[Lemma~2.2]{C:Morse}, there is a constant $\delta''$ depending only on $M'$ such that the geodesic triangle $(\beta_1,\beta_2,\beta_3)$ is $\delta''$--slim.  By Lemma~\ref{lem:reverse_inclusion_QG_nbhd}, there is some $\mu$ depending on $C$ and $M$ such that each $\beta_i$ is contained in the $\mu$--neighborhood of $\alpha_i$.  It follows that the quasi-geodesic triangle $(\alpha_1,\alpha_2,\alpha_3)$ is $(\delta + 2\max\{\mu,M\})$--slim. 
\end{proof}

The next two lemmas give critera for when the concatenations of certian quasi-geodesics is a quasi-geodesic.  The first is well-known; a similar version appears in, for example, \cite{qingrafi:quasiredirecting}.

\begin{lem}\label{lem:Concat_with_npp}
    Let $p,q\in X$, and let $\gamma\colon [0,T]\to X$ be a $(K,C)$--quasi-geodesic.  Let $t,t'\in [0,T]$ be such that $\gamma(t)$ and $\gamma(t')$ are points on $\gamma$ closest to $p$ and $q$, respectively.  Let $\alpha$ be a geodesic from $p$ to $\gamma(t)$ and $\beta$ a geodesic from $\gamma(t')$ to $q$.  Then 
    \begin{enumerate}
        \item the path $\alpha * \gamma[t,T]$ is a $(3K,C)$--quasi-geodesic; and
        \item if $\abs{t-t'}\geq 3K(d(p,\gamma(t)) + d(q,\gamma(t'))$, then $\alpha * \gamma[t,t'] * \beta$ is a $(3K,C)$--quasi-geodesic. 
    \end{enumerate}
\end{lem}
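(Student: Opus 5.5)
The plan is to verify the two quasi-geodesic inequalities directly, using a single nearest-point observation to get the lower bound and splitting into cases according to which pieces of the concatenation the two points lie on. The upper bound is automatic: $\alpha$ and $\beta$ are geodesics and $\gamma$ is a $(K,C)$--quasi-geodesic. Hence the distance between any two points of the concatenation is at most $K$ times the parameter difference plus $C$, by the triangle inequality through the junction points. So only the lower bound $\dist(x,y)\ge \frac{1}{3K}(\text{parameter difference})-C$ needs work.

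\textbf{Key observation.} If $x$ lies on $\alpha$, then $\gamma(t)$ is still a closest point of $\gamma$ to $x$. Indeed, for any $y\in\gamma$,
\[
\dist(x,\gamma(t))=\dist(p,\gamma(t))-\dist(p,x)\le \dist(p,y)-\dist(p,x)\le \dist(x,y).
\]
The same holds for points of $\beta$ with respect to $\gamma(t')$.

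\textbf{Part (1).} Let $x\in\alpha$ with $a=\dist(x,\gamma(t))$, and let $y=\gamma(s)$ with $s\ge t$. The parameter difference is $a+(s-t)$. The observation gives $\dist(x,y)\ge a$. The triangle inequality gives
\[
\dist(x,y)\ge \dist(\gamma(t),\gamma(s))-a\ge \tfrac{s-t}{K}-C-a.
\]
Adding twice the first inequality to the second gives $3\dist(x,y)\ge a+\tfrac{s-t}{K}-C\ge \tfrac{a+s-t}{K}-C$. Therefore $\dist(x,y)\ge \frac{a+s-t}{3K}-C$. The cases where both points lie on $\alpha$, or both lie on $\gamma$, are immediate.

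\textbf{Part (2).} Pairs of points lying on $\alpha\cup\gamma[t,t']$ are covered by part (1). Pairs lying on $\gamma[t,t']\cup\beta$ are covered by part (1) applied to the reversed path, using the observation for $q$. The remaining case is $x\in\alpha$ and $y\in\beta$, with $a=\dist(x,\gamma(t))$, $b=\dist(y,\gamma(t'))$ and $L=\abs{t-t'}$. The triangle inequality gives
\[
\dist(x,y)\ge \tfrac{L}{K}-C-a-b.
\]
The hypothesis gives $a+b\le \dist(p,\gamma(t))+\dist(q,\gamma(t'))\le \frac{L}{3K}$. Hence $\frac{L}{K}-(a+b)\ge\frac{2L}{3K}$. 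On the other hand,
\[
\frac{L+a+b}{3K}\le \frac{L(1+1/(3K))}{3K}\le \frac{4L}{9K}<\frac{2L}{3K}.
\]
So $\dist(x,y)\ge \frac{a+b+L}{3K}-C$, as required.

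\textbf{Main obstacle.} The only genuinely non-routine step is the key observation that nearest points persist along $\alpha$ and $\beta$. Everything else is bookkeeping of constants.
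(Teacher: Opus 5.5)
Your argument is correct and complete. The paper gives no proof of this lemma. It calls the statement well known and points to a similar version in Qing--Rafi, so there is no argument in the paper to compare against.

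Your key step is that $\gamma(t)$ remains a nearest point of $\gamma$ for every point of $\alpha$, and likewise $\gamma(t')$ for every point of $\beta$. Combined with your case split, this is the standard argument. It even gives the slightly better additive constant $C/3$ in part (1).

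Two cosmetic remarks:
\begin{enumerate}
\item The final comparison $\frac{4L}{9K}<\frac{2L}{3K}$ should be non-strict, so that it covers $L=0$. In that case the hypothesis forces $a=b=0$, and the estimate is trivial.
\item For the sub-cases of part (2), you are really applying part (1) to the truncated path $\alpha\ast\gamma[t,t']$. For the $\beta$ side, and whenever $t>t'$, you are applying it to the reversed quasi-geodesic $s\mapsto\gamma(T-s)$. Both uses are legitimate, because your proof of part (1) only uses $\gamma$ on $[t,s]$ and does not depend on orientation.
\end{enumerate}
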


The next lemma states that local quasi-geodesics contained in a uniform neighbourhood of a geodesic are, in fact, global quasi-geodesics.
\begin{lem}[{\cite[Lemma~2.14]{russellsprianotran:thelocal}}]\label{lemma:close-to-geodesic-implies-quasi-geodesic}
Let $\gamma\colon I \to X $ be an $L$--locally $Q$--quasi-geodesic and $C \geq 0$. Let $s, t\in I$. If $L > Q(3C+Q+2)$ and $\gamma[s,t]$ is contained in the $C$--neighbourhood of a geodesic from $\gamma(s)$ to $\gamma(t)$, then $\gamma[s, t]$ is a $Q'$--quasi-geodesic where $Q'$ depends only on $Q$ and $C$. 
\end{lem}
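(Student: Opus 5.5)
The plan is to compare $\gamma[s,t]$ with a fixed geodesic $g\colon[0,D]\to X$ from $\gamma(s)$ to $\gamma(t)$ through a coarse "projection" of parameters. For each $u\in[s,t]$, choose $\varphi(u)\in[0,D]$ with $d(\gamma(u),g(\varphi(u)))\le C$, taking $\varphi(s)=0$ and $\varphi(t)=D$. Then for all $u,v$,
\[
\bigl|\varphi(u)-\varphi(v)\bigr|-2C\;\le\; d(\gamma(u),\gamma(v))\;\le\;\bigl|\varphi(u)-\varphi(v)\bigr|+2C,
\]
since $g$ is a geodesic. So it suffices to show that $\varphi$ is a quasi-isometric embedding of $[s,t]$ into $[0,D]$, with constants depending only on $Q$ and $C$.

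\textbf{Upper bound.} Subdivide $[u,v]$ into at most $\lceil |u-v|/L\rceil$ pieces of length $\le L$. On each piece, the $L$--local $Q$--quasi-geodesic inequality applies. The triangle inequality then gives
\[
d(\gamma(u),\gamma(v))\le 2Q|u-v|+2Q,
\]
using $L>Q$. This direction is routine.

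\textbf{Local progress.} The heart of the argument is the lower bound. Fix the step size $\ell=L/2$. For any $u$ with $u+\ell\le t$, the local inequality gives
\[
d(\gamma(u),\gamma(u+\ell))\ge \ell/Q-Q.
\]
Hence
\[
|\varphi(u+\ell)-\varphi(u)|\ge \ell/Q-Q-2C.
\]
The hypothesis $L>Q(3C+Q+2)$ should make this quantity bounded below by a positive constant comparable to $C+1$. Thus every step of length $\ell$ along $\gamma$ moves the projection by a definite amount.

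\textbf{Main obstacle: no backtracking.} The step I expect to be hardest is showing that the projection always moves \emph{in the same direction}. I would first try an interval-of-length-$L$ argument. Suppose $\varphi$ increases from $u$ to $u+\ell$ and then decreases from $u+\ell$ to $u+2\ell$. Then some point of $\gamma[u+\ell,u+2\ell]$ projects within roughly $C$ of a point of $\gamma[u,u+\ell]$. Two such points are at parameter distance at most $L$, so the local inequality forces their parameter distance to be at most $Q(2C+Q)$ plus a small amount coming from the discretization. To make the discretization rigorous, I would use the coarse continuity of $\varphi$ that follows from the upper bound.

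For the inequality $Q(2C+Q)<\ell$ to produce the contradiction, the relevant threshold should be exactly where $L>Q(3C+Q+2)$ enters. If the margin from the stated constant is too tight, I would refine the step size $\ell$ and use several intermediate points rather than just $u,u+\ell,u+2\ell$.

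\textbf{Global direction and conclusion.} Once local monotonicity holds, the direction of progress is globally constant along $[s,t]$. It must be increasing because $\varphi(s)=0$ and $\varphi(t)=D$. Summing the definite progress over the $\lfloor |u-v|/\ell\rfloor$ steps gives
\[
|\varphi(u)-\varphi(v)|\ge c\,|u-v|-c',
\]
with $c,c'$ depending only on $Q$ and $C$. Combined with the displayed comparison above, this yields the lower quasi-geodesic inequality. Together with the upper bound, $\gamma[s,t]$ is a $Q'$--quasi-geodesic with $Q'=Q'(Q,C)$.
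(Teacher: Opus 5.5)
There is a genuine gap, and it sits at the step you yourself flagged. Your projection set-up and the upper bound are fine. The no-backtracking step, which carries all the content, is not established, and the choice $\ell=L/2$ also breaks the progress step that feeds into it. The paper quotes this lemma from Russell--Spriano--Tran without proof, so your argument has to stand on its own.

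\textbf{Why the argument fails.} The hypothesis $L>Q(3C+Q+2)$ is equivalent to $L/Q-Q>3C+2$. It therefore gives $d(\gamma(u),\gamma(v))>3C+2$, hence $|\varphi(u)-\varphi(v)|>C+2$, only for parameter separations in $(Q(3C+Q+2),L]$, an interval that may be arbitrarily short.
\begin{itemize}
\item With $\ell=L/2$ you only get $\ell/Q-Q-2C>1-\tfrac12(C+Q)$, which is negative once $C+Q>2$. For example, $Q=2$, $C=1$, $L=15$ satisfies the hypothesis and gives $\ell/Q-Q-2C=-1/4$, so there is no definite progress per step.
\item Any three-point argument with $u,u+\ell,u+2\ell$ in one window needs $2\ell\le L$ together with $\ell/Q-Q-2C>0$, i.e.\ $L>2Q(Q+2C)$. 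This is exactly your condition $Q(2C+Q)<\ell$, and it is not implied by the hypothesis when $C+Q>2$ (in the example above, $7.5<8$). So the margin you worried about is genuinely absent.
\item Refining the step size cannot help, since steps of length at most $Q(Q+2C)$ carry no directional information at all.
\item The estimate "projects within roughly $C$" is itself unjustified. Since $\varphi$ is an arbitrary choice of near point, it can jump by $2C$ even where $\gamma$ is continuous. In general it can jump by up to $Q+2C$, from $|\varphi(u)-\varphi(v)|\le Q|u-v|+Q+2C$. So the intermediate-value step does not produce points whose projections are within $C$ of each other.
\item Summing a fixed amount of order $C+1$ per step of length of order $L$ gives a rate depending on $L$, whereas $Q'$ may depend only on $Q$ and $C$.
\end{itemize}

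\textbf{The missing idea: a sliding window at one fixed separation.} We may assume $t-s>L$. Fix $\tau$ with $Q(3C+Q+2)<\tau\le\min\{L,2Q(3C+Q+2)\}$; the upper bound is what keeps all constants independent of $L$.
\begin{itemize}
\item For $x$ in the $C$--neighbourhood of $g$, let $P(x)=\{r\in[0,D] : d(x,g(r))\le C\}$. This is a closed set of diameter at most $2C$.
\item Since $d(\gamma(u),\gamma(u+\tau))>3C+2$, every element of $P(\gamma(u+\tau))$ differs from every element of $P(\gamma(u))$ by more than $C+2$. The diameter bound then forces $P(\gamma(u+\tau))$ to lie entirely to the right or entirely to the left of $P(\gamma(u))$. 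This direction does not depend on any choice.
\item For a continuous path (as in the applications), the set of $u\in[s,t-\tau]$ where the direction is \emph{right} is closed, and so is the set where it is \emph{left}; pass to limits of near-point parameters. By connectedness the direction is constant, and it is \emph{right} at $u=s$ because $0\in P(\gamma(s))$.
\item Chaining steps of length $\tau$ gives $\varphi(u+k\tau)-\varphi(u)\ge k(\tau/Q-Q-2C)\ge k\tau(C+2)/(Q(3C+Q+2))$.
\item The leftover piece of length less than $\tau$ changes $\varphi$ by an amount bounded in terms of $Q$ and $C$. Your comparison inequality then yields the lower quasi-geodesic bound.
\end{itemize}
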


Further, a quasi-geodesic that stays close to a local Morse quasi-geodesic is itself locally Morse. 
\begin{lem}[{\cite[Lemma~2.12]{HeSprianoZbinden:sigma-compact}}]\label{lem:close-to-local-implies-local}
    Let $M$ be a Morse gauge and let $Q\geq 1$ be a constant. There exists a Morse gauge $M'$ such that the following holds. Let $L'\geq 0$ be a scale. There exists a scale $L$ such that any $Q$--quasi-geodesic in the $Q$--neighbourhood of an $L$--locally $M$--Morse $Q$--quasi-geodesic is $L'$--locally $M'$--Morse.
\end{lem}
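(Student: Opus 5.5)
The statement to prove is Lemma~\ref{lem:close-to-local-implies-local}. It is cited from \cite[Lemma~2.12]{HeSprianoZbinden:sigma-compact}, but here is how I would prove it directly.

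Let $\gamma$ be an $L$--locally $M$--Morse $Q$--quasi-geodesic, and let $\eta$ be a $Q$--quasi-geodesic contained in $\mc N_Q(\gamma)$. Fix a subsegment $\eta[s,t]$ with $\abs{t-s}\le L'$; we must show it is $M'$--Morse for a gauge $M'$ depending only on $M$ and $Q$. The plan is to pick points on $\gamma$ near the endpoints, show the corresponding piece of $\gamma$ is short, and then transfer Morseness through a quadrangle argument.

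\textbf{Step 1: choose nearby points on $\gamma$.} Choose parameters $a,b$ with $d(\gamma(a),\eta(s))\le Q$ and $d(\gamma(b),\eta(t))\le Q$. We have $d(\eta(s),\eta(t))\le QL'+Q$. Since $\gamma$ is only a local quasi-geodesic, $\abs{b-a}$ cannot be bounded globally in a direct way. Instead, cover $\eta[s,t]$ by points $\eta(s_i)$ spaced at distance $1$, and pick $\gamma(a_i)$ within $Q$ of each.

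Consecutive points $\gamma(a_i),\gamma(a_{i+1})$ are at distance at most $2Q+2Q$. This alone does not bound $\abs{a_{i+1}-a_i}$, because $\gamma$ could return close to itself after a long excursion. This is the main obstacle. I would handle it by choosing the $a_i$ greedily and monotonically, always taking the closest admissible parameter to $a_{i-1}$. Then, as long as $L$ is large compared with $QL'$, the local quasi-geodesic property at scale $L$ applies to $\gamma[a_0,a_i]$ by induction. A subsegment of length at most $L$ that is a $Q$--quasi-geodesic has diameter comparable to its length. Hence, if the parameters stayed in a window of size at most $L$, we get $\abs{a_i-a_0}\le Q(d(\gamma(a_0),\gamma(a_i))+Q)\le Q(QL'+3Q+Q)$, provided $L$ exceeds this quantity. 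A continuity argument shows the parameters never leave the window: the first time they would, the bound already contradicts it. Setting $L=Q(QL'+4Q)+10Q$ then gives $\abs{b-a}\le L$, so $\gamma[a,b]$ is an $M$--Morse $Q$--quasi-geodesic.

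\textbf{Step 2: transfer Morseness.} Form the quadrangle with sides $\eta[s,t]$, a geodesic $[\eta(t),\gamma(b)]$, the reversed segment $\gamma[a,b]$, and a geodesic $[\gamma(a),\eta(s)]$. The two short sides have length at most $Q$, so they are $M_0$--Morse for a gauge $M_0$ depending only on $Q$; indeed, a quasi-geodesic with endpoints at distance $\le Q$ stays in a neighbourhood bounded in terms of its constants. Taking the pointwise maximum of $M$ and $M_0$, three sides of this $\max(Q,1)$--quasi-geodesic quadrangle are Morse. Lemma~\ref{ref:triangles} then says the fourth side, $\eta[s,t]$, is $M'$--Morse with $M'$ depending only on $M$ and $Q$, as required. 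The only delicate point is Step 1; Step 2 is routine.
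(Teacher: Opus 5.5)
\textbf{Gap in Step 1.} Step 1 has a genuine gap, and under your reading of the hypothesis it cannot be closed. The greedy/continuity argument contains no actual continuity, because nothing bounds $\abs{a_i-a_{i-1}}$. For a path that is only locally quasi-geodesic, knowing $d(\gamma(a_{i-1}),\gamma(a_i))\le 4Q+1$ says nothing about the parameters; this is the very obstacle you named. There also need not be any admissible parameter anywhere near $a_{i-1}$, so the ``closest admissible'' one can be arbitrarily far away. At the first index with $\abs{a_i-a_0}>L$, the $L$--local quasi-geodesic inequality does not apply to $\gamma[a_0,a_i]$, so no contradiction arises.

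\textbf{The version you are proving is false.} The statement with $\gamma$ only $L$--locally a quasi-geodesic fails. Take a flat square $[0,L']\times[0,L']$ and let $\eta$ be its bottom edge. Join each $(k,0)$ to $(k+1,0)$ by an arc $A_k$ of length $2D$ with $D\ge L^3$, and let $\gamma=A_0\ast A_1\ast\cdots\ast A_{L'-1}$. At scale $L$ the points $(k,0)$ are cut points. Hence $\gamma$ is $L$--locally geodesic and $L$--locally $M_0$--Morse, with $M_0(K)=O(K^3)$ independent of $L$, and $\eta\subset\mc N_{1/2}(\gamma)$. Yet the path from $(0,0)$ to $(L'/2,L'/2)$ to $(L',0)$ is a $2$--quasi-geodesic reaching distance $L'/2$ from $\eta$. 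So $\eta$ is not $M'$--Morse once $L'>2M'(2)$, however large $L$ is chosen.

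\textbf{The intended reading and the fix.} The lemma is meant with $\gamma$ a \emph{global} $Q$--quasi-geodesic that is $L$--locally $M$--Morse. The paper gives no proof of its own and only cites \cite{HeSprianoZbinden:sigma-compact}. It does, however, use the lemma this way in the proof of Theorem~\ref{thm:geo_mltg-implies-mltg}: Lemma~\ref{lemma:close-to-geodesic-implies-quasi-geodesic} is invoked first precisely to make $\gamma'$ a global $C'$--quasi-geodesic. With that hypothesis, Step 1 is one line:
$\abs{b-a}\le Q\bigl(d(\gamma(a),\gamma(b))+Q\bigr)\le Q(QL'+4Q)$.
So $L:=Q(QL'+4Q)$ makes $\gamma[a,b]$ an $M$--Morse $Q$--quasi-geodesic. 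Your Step 2 then completes the proof as written: the short sides are Morse with a gauge depending only on $Q$, and Lemma~\ref{ref:triangles} applies.
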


He, Spriano, and the second author introduced the following version of the MLTG property \cite{HeSprianoZbinden:sigma-compact}.
\begin{defn}[Geodesic MLTG]\label{def:geodesicMLTG}
A geodesic metric space $X$ satisfies the \emph{geodesic MLTG} property if for every Morse gauge $M$, there exists a constant $L$ and a Morse gauge $M'$ such that every geodesic which is $L$--locally $M$--Morse is $M'$--Morse.
\end{defn}

\subsection{The Morse boundary}

Defined by Cordes \cite{C:Morse}, the \emph{Morse boundary} $\partial_*X$ of a geodesic metric space $X$ is the set of all Morse geodesic rays, up to bounded Hausdorff distance.  A discussion of the topology on $\partial_*X$ can be found in \cite{C:Morse}. The Morse boundary is a quasi-isometry invariant, and thus we can define the Morse boundary  of a finitely generated group $G$ to be $\partial_*G=\partial_* \Cay(G,S)$ for some (equivalently, any) finite generating set $S$ of $G$.  

A topological space $Y$ is \textit{$\sigma$--compact} if it is the union of countably many compact subsets. In the case of the Morse boundary of a proper geodesic metric space, this is equivalent to the following definition, by {\cite[Lemma~2.7]{HeSprianoZbinden:sigma-compact}}.
\begin{defn}\label{lem:sigma_cpt}
    The Morse boundary $\partial_*X$ of a proper geodesic metric space $X$ is \textit{$\sigma$--compact} if there exists an increasing sequence $(M_n)_{n\in \mathbb N}$ of Morse gauges such that every Morse geodesic ray is $M_n$--Morse for some $n$ possibly depending on the ray. 
\end{defn}

\begin{lem}\label{lem:sigma_cpt2}
     Let $G = \pres$ be a finitely generated group with non-empty Morse boundary and let $q\geq 1$,  $Q\geq 0$ be constants. The Morse boundary $\partial_*G$ of $G$ is $\sigma$--compact if and only if there exists an increasing sequence $(M_n)_{n}$ of Morse gauges such that any Morse $(q, Q)$--quasi-geodesic is $M_n$--Morse for some $n$ possibly depending on the quasi-geodesic.
\end{lem}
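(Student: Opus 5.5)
The backward direction is immediate: a Morse geodesic ray is in particular a Morse $(q,Q)$--quasi-geodesic (since $q\geq 1$ and $Q\geq 0$), so any sequence $(M_n)_n$ as in the statement also witnesses Definition~\ref{lem:sigma_cpt}. The content is therefore the forward direction. There I would first reduce from quasi-geodesics to geodesic segments, and then from segments to rays.

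\textbf{Reduction to geodesic segments.} Assume $(M_n)_n$ witnesses $\sigma$--compactness for rays. Let $\gamma$ be an $M$--Morse $(q,Q)$--quasi-geodesic, finite or infinite. Any geodesic $\beta$ joining two points of $\gamma$ lies in the $M(\max\{q,Q\})$--neighbourhood of the corresponding subpath, and the reverse inclusion holds by Lemma~\ref{lem:reverse_inclusion_QG_nbhd}. Consequently $\gamma$ is $M'$--Morse whenever all such geodesics $\beta$ are $N$--Morse, with $M'$ depending only on $N$, $q$ and $Q$ (using \cite[Lemma~2.8]{Z:manifold}). So it suffices to produce gauges $N_n$ such that every Morse geodesic segment is $N_n$--Morse for some $n$, uniformly along $\gamma$. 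To get uniformity, I would first handle bi-infinite and one-sided rays: a Morse quasi-geodesic ray $\gamma$ is at bounded Hausdorff distance from a Morse geodesic ray, which is $M_n$--Morse for some $n$. The subtle point is that the Hausdorff distance depends on the gauge of $\gamma$, but Morse-ness of a geodesic ray $\rho$ with $\gamma\subseteq\mc N_\mu(\rho)$ forces $\gamma$ to be $M'_n$--Morse with $M'_n$ depending only on $M_n,q,Q$. This holds because a quasi-geodesic with endpoints on $\rho$ stays $M_n$--close to $\rho$, and $\rho$ is uniformly close to $\gamma$ via Lemma~\ref{lem:reverse_inclusion_QG_nbhd} applied with $\mu$ bounded in terms of $M_n$ (the closest-point projections of points of $\gamma$ onto $\rho$ give a quasi-geodesic with endpoints on $\rho$).

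\textbf{Finite segments, the main obstacle.} A finite Morse geodesic segment need not sit on any Morse ray, so its gauge is not a priori controlled by the $M_n$. I would extend it using the non-empty Morse boundary. Fix a Morse geodesic ray $\rho_0$ based at $1$ that is $M_{n_0}$--Morse. Given an $M$--Morse segment $\sigma=[x,y]$, choose group elements $g,h$ so that $g\rho_0$ and $h\rho_0$ emanate near $x$ and $y$. Using the finitely many generators, pick translates pointing ``away'' from $\sigma$, then apply Lemma~\ref{lem:Concat_with_npp} to make the concatenation $g\bar\rho_0 * \sigma * h\rho_0$ (after closest-point adjustments) a uniform quasi-geodesic. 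Lemma~\ref{ref:triangles} then shows it is Morse, with gauge depending on $M$ and $M_{n_0}$. A geodesic ray in its class is $M_n$--Morse for some $n$, and the previous step transfers this to a gauge depending only on $M_n$ for the bi-infinite line, hence for its subsegment $\sigma$.

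The hard part is guaranteeing that a suitable direction of $\rho_0$ exists at each end with bounded backtracking, independent of $\sigma$. I would try a pigeonhole argument: if $\rho_0$ fellow-travels the start of $\sigma$ for long, then the translate $\rho_0^{-1}$ cannot also do so, by Morseness of $\rho_0$ and since $\rho_0$ is not periodic-backtracking. Using two independent directions, which exist once $\partial_*G$ has more than two points (the virtually cyclic case being trivial), one of them makes a uniformly large angle. This is where I would expect the real work to be.
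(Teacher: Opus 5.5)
Your backward direction and your handling of Morse quasi-geodesic rays are fine; the latter is the same appeal to \cite[Lemma~2.8]{Z:manifold} that the paper makes. The step you single out as the main obstacle, however, is a genuine gap. The extension of a finite Morse segment $\sigma$ to a bi-infinite Morse quasi-geodesic is never carried out, and the pigeonhole sketch is not an argument as it stands. A ray has no meaningful ``$\rho_0^{-1}$''. You also assume, without justification, that two independent Morse directions exist whenever $\partial_*G$ has more than two points, with all remaining cases virtually cyclic. If you insisted on this route, the right tool would be Lemma~\ref{claim:Morse-projection2}: one of the two halves of a bi-infinite Morse geodesic through an endpoint of $\sigma$ concatenates with $\sigma$ to a uniform quasi-geodesic. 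That lemma drives the much heavier Lemma~\ref{lem:concatenations}, which the paper only needs later, for strong $\sigma$--compactness, where uniformity over a whole family genuinely matters.

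The missing idea is that no uniformity is required here: $n$ may depend on the quasi-geodesic, and you are free to enlarge the sequence. A $(q,Q)$--quasi-geodesic whose endpoints are at distance at most $k$ has domain length at most $q(k+Q)$. So any $K$--quasi-geodesic with endpoints on it has diameter bounded in terms of $K,k,q,Q$ alone, which gives one gauge $N_k$ valid for all such paths. Replacing $M_n$ by $\max\{M_n,N_1,\dots,N_n\}$ then disposes of every finite piece at once. The paper does the same thing via countability. There are countably many pairs of vertices, so after enlarging the $M_n$, every geodesic between vertices is $M_n$--Morse for some $n$. Then, for a Morse $(q,Q)$--quasi-geodesic $\gamma$, it takes a geodesic $\gamma'$ with the same endpoints at infinity, or with vertex endpoints within distance $1$ of those of $\gamma$. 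It applies \cite[Lemma~2.8]{Z:manifold} in both directions: $\gamma'$ is Morse, hence $M_n$--Morse for some $n$, hence $\gamma$ is $M_n'$--Morse with $M_n'$ depending only on $M_n$, $q$ and $Q$.
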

\begin{proof}
    The only if part is immediate. We now prove the if part. Assume $\mb G$ is $\sigma$-compact. That is, there exists a sequence $(M_n)_n$ of Morse gauges such that every Morse geodesic ray is $M_n$--Morse for some $n$. Since there are only countable many pairs of vertices, we may assume (by potentially increasing the $M_n$) that any geodesic segment between a pair of vertices is $M_n$--Morse for some $n$.
    
    By \cite[Lemma~vi),~ix)~and~x)]{Z:manifold} there exists a sequence $(M_n')_n$ of Morse gauges such that if $\gamma$ is an $M_n$--Morse geodesic, and $\gamma'$ is a $(q, Q)$--quasi-geodesic whose endpoints coincide with the endpoints of $\gamma$ if they are in the boundary, or have distance at most 1 from the endpoints of $\gamma$ if they are in $\cay$, then $\gamma'$ is $M_n'$--Morse. 

    Let $\gamma$ be a Morse $(q, Q)$--quasi-geodesic and let $\gamma'$ be a geodesic such that each endpoint is either a vertex of $\cay$, in which case it has distance at most $1$ form the corresponding endpoint of $\gamma$, or the endpoint is in the boundary, in which case is coincides with the corresponding endpoint of $\gamma$. If $\gamma$ is Morse, then by \cite[Lemma~vi),~ix)~and~x)]{Z:manifold}, so is $\gamma'$. By the definition of $(M_n')_n$, we have that $\gamma'$ is $M_n$--Morse for some $n$, which in turn implies that $\gamma$ is $M_n'$--Morse, concluding the proof.
\end{proof}

The statement of Lemma~\ref{lem:sigma_cpt2} restricted to geodesics states that $\partial_* X$ is $\sigma$--compact if and only if there exists an increasing sequence $(M_n)_{n}$ of Morse gauges such that any Morse geodesic is $M_n$--Morse for some $n$.  Given two $M$--Morse geodesics, one of which is $M_n$--Morse and one of which is $M_{n'}$--Morse, by taking the maximum of the two Morse gauges, we see that they are both (without loss of generality) $M_n$--Morse.  However, given an infinite sequence of $M$--Morse geodesics, this process does not work, and, a priori, there may not be an $n$ such they are \textit{all} $M_n$--Morse. There is also the stronger notion of strong $\sigma$--compactness of the Morse boundary, which is due to He, Spriano, and the second author\cite{HeSprianoZbinden:sigma-compact}, which requires there to be such an $n$.

\begin{defn}\label{def:stronglysigmacpt}
    A metric space $X$ has \textit{strongly $\sigma$--compact} Morse boundary if there exists a sequence of Morse gauges $(M_n)_{n\in \mathbb N}$ such that for every Morse gauge $M$ there exists $n=n(M)$ such that all $M$--Morse geodesic rays are $M_n$--Morse.
\end{defn}

If $X$ is the Cayley graph of a group, then having strongly $\sigma$-compact Morse boundary also allows us to control the Morseness of geodesic segments as opposed to geodesic rays.  

\begin{lem}\label{lem:strongly-sigma-compact}
    Let $G = \pres$ be a finitely generated group with non-empty Morse boundary. The Morse boundary $\mb G$ is strongly $\sigma$-compact if and only if there exists a sequence of Morse gauges $(M_n)_n$ such that for every Morse gauge $M$, there exists $n=N(M)$ such that all $M$--Morse geodesics segments are $M_n$--Morse.
\end{lem}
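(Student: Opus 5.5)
We prove the two directions separately; the reverse direction is immediate and the forward direction is the real content. If such a sequence $(M_n)_n$ controls all $M$--Morse geodesic segments, it controls rays as well. Every subsegment of an $M$--Morse ray is an $M$--Morse segment, hence $M_n$--Morse for $n=N(M)$. The Morse property of a ray is checked on quasi-geodesics with endpoints on finite subsegments, so the ray is itself $M_n$--Morse. This gives Definition~\ref{def:stronglysigmacpt}.

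For the forward direction, assume $\mb G$ is strongly $\sigma$--compact with gauges $(M_n)_n$. Fix $M$ and an $M$--Morse geodesic segment $\gamma$. The plan is to embed $\gamma$ uniformly into an $M'$--Morse ray, where $M'$ depends only on $M$, and then inherit a gauge from the ray.

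Since $\mb G\neq\emptyset$, fix a Morse geodesic ray $\rho$ based at the identity; say $\rho$ is $M_0$--Morse. Translate $\gamma$ so that its terminal point $\gamma^+$ is a vertex. We then want to attach to $\gamma^+$ a translate $g\rho$ of the ray in such a way that the concatenation $\gamma * g\rho$ is a uniform quasi-geodesic. The difficulty is that $g\rho$ might backtrack along $\gamma$. I would handle this with a standard ``choose a good direction'' argument. Consider the translates of $\rho$ starting at $\gamma^+$, or a few rays pointing in different directions, for example $\rho$ and $h\rho$ for an element $h$ moving the endpoint if the boundary has at least two points. Then use Lemma~\ref{lem:Concat_with_npp} and slimness (Lemma~\ref{ref:delta}) to find one whose closest-point projection to $\gamma$ lies within bounded distance of $\gamma^+$.

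The cleanest alternative would be the following. In the hyperbolic case everything is uniform and the lemma is trivial. In the non-hyperbolic case, Lemma~\ref{lem:key-lemma2} applied to the constant sequence $\gamma_n=\gamma$ already gives a $C$--quasi-geodesic $\eta$ containing a translate of $\gamma$ and which is $M'$--Morse with $M'=M'(M)$. However, $\eta$ may be a segment rather than a ray, and that lemma appears later, so I would prefer the direct concatenation.

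Once $\gamma*g\rho$ is a $(3,C)$--quasi-geodesic with $M$--Morse and $M_0$--Morse pieces, it is $M''$--Morse for $M''$ depending only on $M$ and $M_0$. This follows from Lemma~\ref{ref:triangles}, using the closing geodesic argument from Lemma~\ref{lem:sigma_cpt2}. Let $\gamma'$ be a geodesic ray from $\gamma^-$ to the same boundary point. It stays uniformly close to $\gamma*g\rho$, so it is $M'''(M)$--Morse. Strong $\sigma$--compactness then gives $n_0=n(M''')$ with $\gamma'$ being $M_{n_0}$--Morse.

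Since $\gamma$ is a segment whose endpoints lie within uniform distance of $\gamma'$, it is contained in a uniform neighbourhood of a subsegment of $\gamma'$. The standard stability results (\cite[Lemma~2.8]{Z:manifold}, as used in Lemma~\ref{lem:sigma_cpt2}) then show $\gamma$ is $\widetilde M_{n_0}$--Morse, where $\widetilde M_k$ depends only on $M_k$. Setting the new sequence to be $(\widetilde M_n)_n$ and $N(M)=n(M''')$ finishes the proof.

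The main obstacle is the uniform extension step: guaranteeing that some translate of a fixed Morse ray leaves $\gamma^+$ without backtracking more than a bounded amount. I would try the following first. Take two rays $\rho_1,\rho_2$ from $e$ with distinct endpoints, which is possible when $G$ is not virtually cyclic; the virtually cyclic case is trivial. Their Gromov product is bounded by some $D$, so by slimness at most one of $g\rho_1,g\rho_2$ can fellow-travel $\gamma$ backwards for longer than about $D+\delta$. The other one then has its projection near $\gamma^+$, and Lemma~\ref{lem:Concat_with_npp} applies.
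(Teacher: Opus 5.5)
Your argument is correct in substance and has the same architecture as the paper's proof. The reverse direction is identical. In the forward direction, the paper also replaces a segment $\gamma$ by a geodesic ray $\lambda_\gamma$ based at $\gamma^-$ with two-sided control: if either of $\gamma,\lambda_\gamma$ is $M$--Morse, the other is $\Phi(M)$--Morse. It then applies strong $\sigma$--compactness to $\lambda_\gamma$ and uses the sequence $(\Phi(M_n))_n$. The difference is that the paper imports $\lambda_\gamma$ as a black box from \cite[Lemma~2.28]{Z:free_product}, whereas you construct it by hand as the ray asymptotic to $\gamma\ast g\rho$. Your non-backtracking step is essentially the mechanism of Lemma~\ref{claim:Morse-projection2}. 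So your route is self-contained with the tools of this paper, at the price of tracking the constants yourself.

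\textbf{The main constraint: uniformity in $M$.} Your back-transfer needs $\widetilde M_k$ to depend only on $M_k$. For that, the quasi-geodesic constants of $\gamma\ast g\rho$ must be independent of $M$. Then $\gamma\ast g\rho$ lies uniformly close to the $M_{n_0}$--Morse ray $\gamma'$: project far points of $\gamma'$ onto it and apply Lemma~\ref{lem:Concat_with_npp}.

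\textbf{How to get the no-backtracking without $M$.} Do not obtain it from Lemma~\ref{ref:delta} applied to triangles containing $\gamma$, since that $\delta$ depends on $M$. Use only the Morse property of the rays, as in Case~3b of Lemma~\ref{claim:Morse-projection2}.
\begin{itemize}
\item Take $\rho_1,\rho_2$ to be the two halves of a bi-infinite $M_0$--Morse geodesic through $e$. Such a geodesic exists as soon as $\mb G\neq\emptyset$, as used in Lemma~\ref{lem:single-concatenation}, so no virtually cyclic case split is needed.
\item Suppose some $g\rho_i(t)$ has a closest point $\pi\in\gamma$ with $d(\pi,\gamma^+)>D$. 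Then $[g\rho_i(t),\pi]\ast[\pi,\gamma^+]_\gamma$ is a $(3,0)$--quasi-geodesic with endpoints on $g\rho_i$, so $[\pi,\gamma^+]_\gamma\subseteq\mathcal N_{M_0(3)}(g\rho_i)$.
\item This cannot happen for both $i$ once $D>2M_0(3)$. For the remaining $i$, Lemma~\ref{lem:Concat_with_npp} makes $\gamma\ast g\rho_i$ a quasi-geodesic with constants depending only on $M_0$.
\end{itemize}

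\textbf{A small fix.} An endpoint that is not a vertex cannot be translated to one; move it by at most $1/2$ instead.

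\textbf{The direction choice is optional.} For any $M_0$--Morse ray $g\rho$ at $\gamma^+$, you can use the triangle version of Lemma~\ref{ref:triangles}, which says two Morse sides force the third to be Morse.
\begin{itemize}
\item Forward transfer: apply it to $\gamma$, $g\rho[0,t]$, $[\gamma^-,g\rho(t)]$, then pass to a limit ray.
\item Backward transfer: let $g\rho(T)$ be a closest point of $g\rho$ to $\gamma^-$. Then $[\gamma^-,g\rho(T)]\ast g\rho[T,\infty)$ is a $(3,0)$--quasi-geodesic ray uniformly close to $\gamma'$, so $[\gamma^-,g\rho(T)]$ is uniformly Morse. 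Applying the triangle version to $\gamma$, $g\rho[0,T]$, $[\gamma^-,g\rho(T)]$ finishes.
\end{itemize}
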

\begin{proof}
    $\implies\colon$ Let $X = \cay$. Assume that $\mb G$ is strongly $\sigma$-compact. That is, there exists a sequence $(M_n)_n$ of Morse gauges such that for all $M$, there exists $n=n(M)$ such that $\partial_{x_0}^{M} X\subseteq \partial_{x_0}^{M_n} X$ for all choices of basepoints $x_0\in X$. Next we use the following, see \cite[Lemma~2.28]{Z:free_product}: every finite segment $\gamma$ starting at a basepoint $x_0$ has an associated geodesic ray $\lambda_\gamma$ starting at $x_0$ such that the following holds: if $\lambda_\gamma$ or $\gamma$ are $M$--Morse, then the other is $M'$--Morse for a Morse gauge $\Phi(M)$ only depending on $M$. We show that the second part of the equivalence holds for the sequence $(\Phi(M_n))_n$. Let $M$ be a More gauge and let $\gamma$ be an $M$--Morse geodesic. By translating $\gamma$ we may assume that $\gamma$ starts at $x_0$. Hence the associated geodesic ray $\lambda_\gamma$ is $\Phi(M)$--Morse. Since $\mb G$ is strongly $\sigma$-compact, there exists $n = n(\Phi(M))$ such that $\lambda_\gamma$ is $M_n$--Morse. The latter implies that $\gamma$ is $\Phi(M_n)$--Morse.

    \smallskip

    $\Longleftarrow\colon$ Let $(M_n)_n$ be a sequence of Morse gauges satisfying the right hand side of the equivalence and let $M$ be a Morse gauge. There exists an integer $n=n(M)$ such that $M$--Morse all finite geodesic segments are $M_n$--Morse. In particular, if $\gamma$ is an $M$--Morse geodesic ray starting at some basepoint $x_0$, all its finite subsegments are $M$--Morse and hence $M_n$--Morse. This implies that $\gamma$ is $M_n$--Morse. Since this works for any $\gamma$ $\partial_{x_0}^{M} X\subseteq \partial_{x_0}^{M_n} X$, implying that $\mb G$ is strongly $\sigma$-compact.
\end{proof}

\section{Circle of equivalences of MLTG}\label{sec:circle}

The goal of this section is to prove Theorem~\ref{thm:main1}. We prove \eqref{item:StrSigmaCpt}$\implies$\eqref{item:IPSC} in Section~\ref{sec:notIPSC}, \eqref{item:IPSC}$\implies$\eqref{item:GeodMLTG} in Section~\ref{sec:GeodMLTG}, and \eqref{item:GeodMLTG}$\implies$\eqref{item:MLTG} in Section~\ref{sec:MLTG}. The implication \eqref{item:MLTG}$\implies$\eqref{item:RegularLang} is due to \cite[Theorem~D]{CordesRussellSprianoZalloum:regularity} and \eqref{item:RegularLang}$\implies$\eqref{item:StrSigmaCpt} follows from work of the second author with He and Spriano: in the proof \cite[Theorem~A]{HeSprianoZbinden:sigma-compact}, they only use \eqref{item:RegularLang} to conclude strong $\sigma$-compactness of the Morse boundary. Finally, we prove \eqref{item:SigmaCpt}$\implies$\eqref{item:StrSigmaCpt} in Section~\ref{sec:CptToStrCpt}, and  \eqref{item:StrSigmaCpt}$\implies$\eqref{item:SigmaCpt} follows by definition. This series of implications is also depicted in Figure~\ref{fig:known-and-new-equivalences}.

Several of the equivalences of Theorem~\ref{thm:main1} were already known in the special case of small cancellation groups.  In particular, He, Spriano, and the second author showed the equivalence of \eqref{item:SigmaCpt}, \eqref{item:StrSigmaCpt}, and \eqref{item:MLTG} in the case of a $C'(1/9)$ small cancellation group \cite[Theorem~C]{HeSprianoZbinden:sigma-compact}. Further, Dru\c tu, Spriano, and the second author showed the equivalence \eqref{item:MLTG} and \eqref{item:SigmaCpt} when the group admits a bounded quasi-geodesic bicombing \cite{DrutuSprianoZbinden:weak}. In this section, we show how to generalize common techniques from small cancellation to arbitrary finitely generated groups.  In Section~\ref{sec:IPSC}, we define a \emph{projective geodesic}, which can be thought of as an analogue of a subsegment of a relator.  We use projective geodesics to introduce the general \emph{increasing partial small cancellation condition (IPSC)}, which was originally defined in the context of $C'(1/6)$ small cancellation groups using relators \cite{Z:free_product} . For arbitrary finitely generated groups, we define IPSC using projective geodesics.

\subsection{Definition of general IPSC}\label{sec:IPSC}

The \emph{increasing partial small-cancellation condition (IPSC)} was introduced by the second author in \cite{Z:free_product} for $C'(1/6)$ small cancellation groups $G=\langle S\mid R\rangle$.  Roughly speaking, given a sufficiently nice function $f$, IPSC quantifies the property of having sufficiently long subwords $w$ of longer and longer relators such that every piece of a relator $r\in R$ that is a subword of $w$ has length at most $\abs{r}/f(\abs{r})$.  

In this section, we generalize the IPSC condition to arbitrary finitely generated groups that are not necessarily small cancellation groups. To do so, we first generalize the notion of a subsegment of a relator; see Figure~\ref{fig:loopvsrelator}. 

\begin{defn}[Projective geodesic]\label{def:ProjQuadrangle}
    A geodesic $\gamma$ is \textit{$r$--projective} if there exist two points $x, y\in X$ such that 
    \begin{enumerate}
        \item $r = d(x, \gamma^+) + d(x, y) + d (y, \gamma^-) + \abs{\gamma}$;
        \item $d(\gamma^+, x) \geq d(x, y)$;
        \item $\gamma^+$ is a closest point projection of $x$ onto $\gamma$; and 
        \item $\gamma^-$ is a closest point projection of $y$ onto $\gamma$.
    \end{enumerate}
    We call the triple $(x, y, \gamma)$ a \emph{projective loop} and $r$  the \emph{loop length}.
\end{defn}

\begin{figure}
    \centering
    \def\svgwidth{3.5in}
    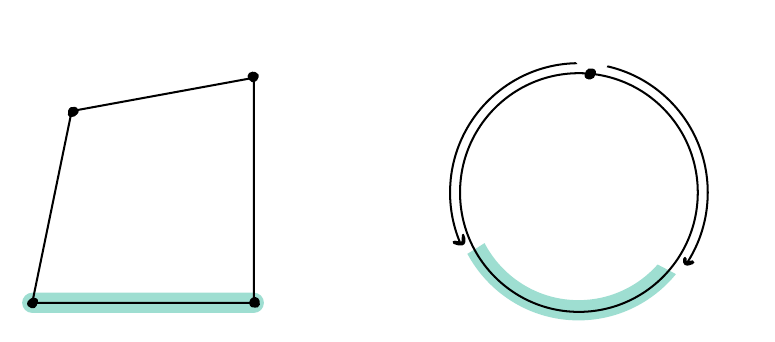
    \caption{Left: a projective loop.  Right: A short subsegment $\gamma$ of a relator in a small cancellation group satisfies the definition of a projective geodesic by choosing $x=y$ to be an antipodal point on the relator.}
    \label{fig:loopvsrelator}
\end{figure}

In small cancellation groups, an intersection function of a geodesic roughly measures the length of the longest intersection with a relator of length at most $t$.  Here, we generalize this  notion to arbitrary finitely generated groups using projective geodesics.

\begin{defn}[Intersection function]\label{def:IntFcn}
    The \textit{intersection function} $\rho_\gamma \colon \N\to \R_{\geq 0}$ of a geodesic $\gamma$ is defined as follows:
    \begin{align*}
        \rho_\gamma(t) = \max\{ \abs{\gamma'} \mid \text{$\gamma'$ is an $r$-projective subpath of $\gamma$ for some $r\leq t$} \}.
    \end{align*}
Given a function $\rho$, a geodesic $\gamma$ is \textit{$\rho$--intersecting} if its intersection function $\rho_\gamma$ is at most $\rho$.
\end{defn}

Arzhantseva, Cashen, Gruber, and Hume showed that a geodesic in a small cancellation group is Morse if and only if its intersection function (in the small cancellation sense) is sublinear \cite[Corollary~4.14]{arzhantseva2019negative}.  We will show the same holds for a general finitely generated group with this more general notion of intersection function.  We will use the following result, which gives a connection between Morse and contracting geodesics.

\begin{lem}[{\cite[Theorem~1.4]{ACHG:contraction_morse_divergence}}]\label{lem:sublinearandMorse}  
Let $Y$ be a subspace of a geodesic metric space $X$.  The following are equivalent.
\begin{enumerate}
    \item There exists a Morse gauge $M$ such that $Y$ is $M$--Morse.
    \item There exists a non-decreasing sublinear function $\rho$ such that $Y$ is $(r,\rho)$--contracting.
\end{enumerate}
\end{lem}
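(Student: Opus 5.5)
This statement is quoted from \cite[Theorem~1.4]{ACHG:contraction_morse_divergence}, so in the paper it is used as a black box. Still, here is how I would prove it directly, using the standard notion: $Y$ is $(r,\rho)$--contracting if for every $x$ with $R=d(x,Y)$ and every $y$ with $d(x,y)\le R$ (the ball of radius $r(x)=R$), the closest point projections satisfy $\diam(\pi_Y(x)\cup\pi_Y(y))\le \rho(R)$. The two implications are handled separately.

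\emph{(2)$\Rightarrow$(1).} Let $\lambda$ be a $Q$--quasi-geodesic with endpoints on $Y$, and suppose it leaves $\mc N_D(Y)$. Take a maximal subpath $\lambda'$ outside $\mc N_D(Y)$, with endpoints $a,b$ at distance about $D$ from $Y$. Subdivide $\lambda'$ greedily into consecutive pieces: starting at a point $z$, the next piece ends at the first point at distance $d(z,Y)$ from $z$. Each piece has endpoints whose projections are $\rho(d(z,Y))$--close. Each piece also has quasi-geodesic length comparable to $d(z,Y)\ge D$, so there are at most about $Q|\lambda'|/D+1$ pieces. The per-piece bound grows with $d(z,Y)$, which is controlled by the piece length plus $D$. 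Summing with sublinearity of $\rho$ should give $d(\pi(a),\pi(b))\le \epsilon(D)|\lambda'|+O(\rho(D))$, where $\epsilon(D)\to 0$. On the other hand, the quasi-geodesic inequality gives $|\lambda'|\le Q(d(\pi(a),\pi(b))+2D)+Q^2$. For $D$ large in terms of $Q$ and $\rho$ these are incompatible unless $|\lambda'|$ is $O(D)$, and then $\lambda'$ lies within $O(QD)$ of $Y$. This yields a gauge $M(Q)$.

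\emph{(1)$\Rightarrow$(2).} Define $\rho(R)$ to be the supremum of $\diam(\pi_Y(x)\cup\pi_Y(y))$ over the configurations above with $d(x,Y)\le R$. This is non-decreasing by construction, and $\rho(R)\le 4R$ trivially. It remains to show $\rho(R)/R\to 0$. Fix $\epsilon>0$ and suppose $p\in\pi_Y(x)$ and $q\in\pi_Y(y)$ satisfy $d(p,q)\ge \epsilon R$. The concatenation $[p,x]*[x,y]*[y,q]$ has length at most $4R\le (4/\epsilon)d(p,q)$. Using that $p$ and $q$ are closest points, as in Lemma~\ref{lem:Concat_with_npp}, I would show it is a $Q(\epsilon)$--quasi-geodesic with $Q$ depending only on $\epsilon$. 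Its endpoints lie on $Y$, so $M$--Morseness forces $x$ into $\mc N_{M(Q(\epsilon))}(Y)$, i.e. $R\le M(Q(\epsilon))$. Hence $\rho(R)\le\epsilon R$ for all $R>M(Q(\epsilon))$, which is sublinearity.

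The main obstacle is the uniform quasi-geodesic estimate in (1)$\Rightarrow$(2). Subpaths mixing the three segments must be controlled using only the closest-point property and the lower bound $d(p,q)\ge\epsilon R$. If necessary, I would first replace $Y$ by a geodesic $[p,q]$, which is Morse with a controlled gauge since it stays near $Y$. I would then argue on a quasi-geodesic quadrilateral with Morse sides using Lemma~\ref{ref:delta}.
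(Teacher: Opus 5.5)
The paper does not prove this statement; it imports it from \cite[Theorem~1.4]{ACHG:contraction_morse_divergence}. Your argument for (2)$\Rightarrow$(1) is the standard subdivide-and-sum argument and is fine up to routine care (coarse closest-point projections, quasi-geodesics that need not be continuous). There is a genuine gap in (1)$\Rightarrow$(2).

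\textbf{The key quasi-geodesic estimate is false as stated.} You claim the following: using only $p\in\pi_Y(x)$, $q\in\pi_Y(y)$ and $d(p,q)\ge\eps R$, the path $[p,x]\ast[x,y]\ast[y,q]$ is a $Q(\eps)$--quasi-geodesic. But Lemma~\ref{lem:Concat_with_npp} only controls a junction where a geodesic meets a quasi-geodesic at a closest point \emph{of that quasi-geodesic}. The junctions at $x$ and $y$ are not of this type, and nothing prevents backtracking there.

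Here is a concrete counterexample. Let $X$ be the Euclidean plane with a segment $h$ of length $R/2$ glued at one end to $z=(\eps R,R)$, where $0<\eps\le 1/2$. Let $Y$ be the horizontal axis, $x=(0,R)$, and $y$ the free end of $h$. Then $d(x,y)=\eps R+R/2\le R=d(x,Y)$. Every path from $y$ passes through $z$, so $\pi_Y(x)=\{(0,0)\}$ and $\pi_Y(y)=\{(\eps R,0)\}$, giving $d(p,q)=\eps R$. Yet $[x,y]\ast[y,q]$ runs out along $h$ and back. This is a subpath of length $R$ with coinciding endpoints, so no constant depending only on $\eps$ works once $R$ is large.

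This $Y$ is not Morse, but your step never invokes the Morse hypothesis, so the example refutes it. Once the hypothesis is invoked, configurations with $d(p,q)\ge\eps R$ and $R$ large cannot exist at all (that is the theorem). So verifying the estimate for this particular path is no easier than the statement itself. The Morse property has to be used to \emph{build} a test path instead: a quasi-geodesic with endpoints on (or uniformly near) $Y$, constants depending only on $\eps$ and $M$, passing at distance comparable to $R$ from $Y$. One way is to trim at closest-point junctions and control how the perpendiculars $[p,x]$ and $[y,q]$ interact. That construction is the real content of the cited theorem, and it is missing from your proposal.

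Your fallback does not close the gap. Lemma~\ref{ref:delta} requires all four sides of the quadrangle to be $M$--Morse, but only $[p,q]$ could be. Even for $[p,q]$, ``it stays near $Y$'' does not by itself make it Morse when $Y$ is a general subspace. The sides $[p,x]$, $[x,y]$, $[y,q]$ are arbitrary geodesics far from $Y$ and need not be Morse with any uniform gauge.

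A minor point: $\rho(R)$ is a supremum over all $x$ with $d(x,Y)\le R$. So your conclusion should read $\rho(R)\le\max\{\eps R,\,4M(Q(\eps))\}$ rather than $\rho(R)\le\eps R$; this still gives sublinearity.
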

We are now ready to show that a geodesic is Morse if and only if its intersection function is sublinear.
\begin{lem}[Sublinearly intersecting versus Morse]\label{lem:sublinvsMorse}
    A geodesic is Morse if and only if its intersection function is sublinear. Moreover, this equivalence is quantitative in the following sense. 
    \begin{itemize}
        \item For every Morse gauge $M$, there exists a sublinear function $\rho$ such that all $M$--Morse geodesics are $\rho$--intersecting.
        \item For every (non-decreasing) sublinear function $\rho$, there exists a Morse gauge $M$ such that all $\rho$--intersecting geodesics are $M$--Morse.
    \end{itemize}
\end{lem}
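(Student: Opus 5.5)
We prove both quantitative bullets; the qualitative equivalence follows from them. Throughout we use Lemma~\ref{lem:sublinearandMorse}, which lets us trade $M$--Morse for $(r,\rho')$--contraction with $\rho'$ sublinear and non-decreasing, quantitatively in both directions. Recall the convention: $\gamma$ is $\rho'$--contracting if every ball $B$ disjoint from $\gamma$ with radius $r$ at distance $r$ from $\gamma$ has closest point projection onto $\gamma$ of diameter at most $\rho'(r)$ (up to the usual reformulations of \cite{ACHG:contraction_morse_divergence}).

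\textbf{Morse implies sublinearly intersecting.} Let $\gamma$ be $M$--Morse and let $\gamma'\subseteq\gamma$ be $r$--projective via a projective loop $(x,y,\gamma')$. Since subpaths of $M$--Morse geodesics are $M$--Morse, we may assume $\gamma=\gamma'$.

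Consider the quadrangle formed by $\gamma'$, a geodesic $[\gamma'^+,x]$, a geodesic $[x,y]$, and a geodesic $[y,\gamma'^-]$. Because $\gamma'^+$ is a closest point projection of $x$, the concatenation $[x,\gamma'^+]*\gamma'$ is a $(3,0)$--quasi-geodesic by Lemma~\ref{lem:Concat_with_npp}; similarly for the other end.

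We aim to show $\abs{\gamma'}\le \rho(r)$ for a sublinear $\rho$ depending only on $M$, arguing by contraction. Set $a=d(x,\gamma'^+)$, $b=d(y,\gamma'^-)$, and $c=d(x,y)\le a$. The path $[\gamma'^+,x]*[x,y]*[y,\gamma'^-]$ has length $r-\abs{\gamma'}$ and joins the endpoints of $\gamma'$. We then split into two cases.
\begin{itemize}
\item If the geodesic $[x,y]$ stays at distance at least about $c$ from $\gamma'$, cover $[x,y]$ by balls of radius comparable to their distance to $\gamma'$. The standard contraction estimate (as in the proof that contracting implies Morse) bounds the projection of $[x,y]$ by a sublinear function of $r$. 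The projections of $x$ and $y$ are the endpoints of $\gamma'$, so $\abs{\gamma'}\le \rho_1(r)$.
\item Otherwise $[x,y]$ comes within distance at most $c\le a\le r$ of $\gamma'$ at some point $z$. Then $x$ is within $a+c$ of a point $p\in\gamma'$. Condition~(2) and the closest point property then force $d(\gamma'^+,p)$ to be controlled by contraction applied to the geodesic $[x,p']$: the projection of a geodesic of length $\ell$ staying far from $\gamma'$ has diameter $\rho'(\ell)$, and near $\gamma'$ we use the triangle inequality.
\end{itemize}
Summing up the pieces gives $\abs{\gamma'}\le C\rho'(r)+C$, which is sublinear in $r$ and depends only on $M$. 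The bookkeeping of this case split is the main obstacle. The role of condition~(2), $d(\gamma^+,x)\ge d(x,y)$, is exactly to prevent $\abs{\gamma'}$ from being comparable to $r$ when $x$ and $y$ are close to each other but far from $\gamma'$. Without it, a long geodesic with $x=y$ very far away would be a counterexample.

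\textbf{Sublinearly intersecting implies Morse.} Suppose $\gamma$ is $\rho$--intersecting but not $(r,\rho'')$--contracting for a suitable sublinear $\rho''$ to be chosen from $\rho$. Then there is a ball $B(z,R)$ at distance $R$ from $\gamma$ whose projection has two points $\gamma(s),\gamma(t)$, $s<t$, with $t-s> \rho''(R)$; choose $x,y\in B$ projecting to $\gamma(t)$ and $\gamma(s)$ respectively.

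The intended move is to take $\gamma'=\gamma[s,t]$, so that $(x,y,\gamma')$ is a projective loop. Closest point projections to $\gamma$ restrict to closest point projections to $\gamma'$, which gives conditions~(3) and~(4). For condition~(2) we have $d(x,y)\le 2R\le 2d(x,\gamma(t))$. This is off by a factor of $2$, so we fix it by replacing $y$ with the point of $[x,y]$ at distance $\le R$ from $x$ and re-projecting; alternatively, we shrink the ball and pass to a subsegment.

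The loop length then satisfies $r\le 2(2R)+2R+\abs{\gamma'}$, roughly $6R+(t-s)$. The $\rho$--intersecting hypothesis gives $t-s\le \rho(6R+(t-s))$. Since $\rho$ is sublinear, this forces $t-s\le \rho''(R)$ for an explicit sublinear $\rho''$; for example, $\rho''(R)=\sup\{u: u\le \rho(6R+u)\}$, which is sublinear. This is a contradiction, so $\gamma$ is $(r,\rho'')$--contracting. Lemma~\ref{lem:sublinearandMorse} then yields a Morse gauge $M$ depending only on $\rho$.
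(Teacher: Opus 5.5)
Both directions have a gap, and both are closed by the same observation: conditions (2)–(4) of a projective loop are exactly the hypothesis in the definition of sublinear contraction. By (3), $d(x,\gamma')=d(x,\gamma'^+)$, so (2) says $d(x,y)\le d(x,\gamma')$. In other words, $y$ lies in the ball of radius $d(x,\gamma')$ about $x$, and $\gamma'^+,\gamma'^-$ lie in the projections of $x,y$.

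\textbf{Morse implies sublinearly intersecting.} You never use this observation, and the case split you use instead is not carried out. The second case refers to an undefined point $p'$ and produces no estimate, and you yourself flag the bookkeeping as unresolved. As written, this direction is not proved.

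It is also unnecessary. The geodesic $\gamma'$ is a subgeodesic of an $M$--Morse geodesic, so it is $M$--Morse and hence $(r,\rho)$--contracting with $\rho$ depending only on $M$, by Lemma~\ref{lem:sublinearandMorse}. Applying the contraction property to the pair $(x,y)$ gives
$\abs{\gamma'}=d(\gamma'^+,\gamma'^-)\le\rho(d(x,\gamma'^+))\le\rho(r)$
in one line; this is the paper's proof.

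Your explanation of condition (2) is also wrong. The choice $x=y$ satisfies (2) trivially and is harmless, because a single point projects to a set of diameter at most $\rho(d(x,\gamma))$. What (2) rules out is $x$ and $y$ far apart. Without it, the triple $(\gamma'^+,\gamma'^-,\gamma')$ would make every geodesic $2\abs{\gamma'}$--projective, and no intersection function would be sublinear.

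\textbf{Sublinearly intersecting implies Morse.} The step $d(x,y)\le 2R\le 2d(x,\gamma(t))$ fails for arbitrary $x,y$ in a ball $B(z,R)$ with $d(z,\gamma)=R$. Only the center is guaranteed to be at distance $R$ from $\gamma$; a point of the ball can be arbitrarily close to $\gamma$. Your own loop-length estimate uses $d(x,\gamma(t))\le 2R$, which is consistent with this convention. So $(x,y,\gamma[s,t])$ need not be a projective loop. Replacing $y$ by a point of $[x,y]$ within $R$ of $x$ does not help when $d(x,\gamma)<R$.

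The fix is to always pair the center with another point. For $y\in B(z,R)$ and closest points $p_z,p_y$ on $\gamma$, we have $d(z,y)\le R=d(z,p_z)$, so $(z,y,[p_y,p_z]_\gamma)$ is a projective loop. Since $d(y,p_y)\le 2R$ and $d(p_y,p_z)\le 4R$, its loop length is at most $R+R+2R+4R=8R$. Hence $d(p_y,p_z)\le\rho(8R)$. By the triangle inequality through $p_z$, the whole ball projects to a set of diameter at most $2\rho(8R)$. So $\gamma$ is contracting with $\rho'(t)=2\rho(8t)$, and Lemma~\ref{lem:sublinearandMorse} finishes. This is the paper's argument. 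It also makes your implicit inequality $u\le\rho(6R+u)$ unnecessary, though that part of your reasoning was fine.
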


\begin{proof}
    For the first bullet point, let $M$ be a Morse gauge, and let $\gamma$ be an $M$--Morse geodesic. Recall that all subsegments of $\gamma$ are $M$--Morse.  By Lemma~\ref{lem:sublinearandMorse}, there exists a non-decreasing sublinear function $\rho$ such that every $M$--Morse geodesic is $(r,\rho)$--contracting. Let $(x, y, \gamma')$ be an $\ell$--projective loop such that $\gamma'$ is a subsegment of $\gamma$. Since $\gamma'$ is $(r,\rho)$--contracting, we have that $\abs{\gamma'}\leq \rho(d(x, \gamma'^+)) \leq \rho(\ell)$. Hence, the intersection function $\rho_{\gamma}$ of $\gamma$ satisfies $\rho_\gamma\leq \rho$.

    For the second bullet point, let $\rho$ be a (non-decreasing) sublinear function, and let $\gamma$ be a geodesic whose intersection function $\rho_{\gamma}$ satisfies $\rho_{\gamma}\leq \rho$. Let $x\in X$, let $y\in B_{d(x, \gamma)}(x)$, and let $p_x, p_y$ be closest point projections of $x$ and $y$ onto $\gamma$. Then $(x, y, [p_y, p_x]_{\gamma})$ is a projective loop. Our choice of $y$ ensures that $d(x, y)\leq d(x, [p_y, p_x]_{\gamma})$, and hence $d(y, p_y) = d(y, [p_y, p_x]_{\gamma})\leq 2d(x, p_x)$. Hence, by the triangle inequality, the loop length of $(x, y, [p_y, p_x]_{\gamma})$ is at most $8d(x, p_x)$. Consequently $d(p_x, p_y)\leq \rho(8d(x, p_x))$. This inequality holds for any point $y\in B_{d(x, \gamma)}(x)$, so by the triangle inequality, $d(p_z, p_w)\leq 2\rho(8d(x, p_x))$ for all points $z, w\in B_{d(x, \gamma)}(x)$ and their closest point projections $p_z, p_w$ onto $\gamma$. This shows that $\gamma$ is $(r, \rho')$--contracting for the function $\rho'$ defined via $\rho'(x) = 2\rho(8x)$. Thus, Lemma~\ref{lem:sublinearandMorse} yields a Morse gauge $M$ depending only on $\rho$ such that $\gamma$ is $M$--Morse.
\end{proof}

We are now ready to define the general increasing partial small cancellation condition.  The main difference between this and the original definition of IPSC is that here we use $\ell$--projective geodesics instead of relators.  In this paper, we will always be considering this condition in the context of a finitely generated group, and hence we continue to simply call this condition IPSC if no confusion is possible.
\begin{defn}[General IPSC]\label{def:generalIPSC}
     A finitely generated group $G = \pres$ satisfies the \emph{general increasing partial small-cancellation condition (IPSC)} if for every sequence $(n_i)_{i\in \N}$ of positive integers, there exists a non-decreasing sublinear function $\rho$ such that the following holds. For all $K\geq 0$, there exists $i\geq K$ and an $\ell$-projective geodesic $\gamma$ such that 
    \begin{enumerate}[(\roman*)]
        \item $\ell\geq n_i$,
        \item $\abs{\gamma}\geq \ell/i$, and
        \item the geodesic $\gamma$ is $\rho$--intersecting.
    \end{enumerate}
\end{defn}

\subsection{Strongly $\sigma$--compact Morse boundary implies not IPSC}\label{sec:notIPSC}
In this subsection, we prove Theorem~\ref{thm:main1}\eqref{item:StrSigmaCpt}$\implies$\eqref{item:IPSC}: no group with strongly $\sigma$-compact Morse boundary satisfies the IPSC condition; see Theorem~\ref{thm:StrSigmaCpt=>noIPSC}. 

We first use the results from Section~\ref{sec:IPSC} to relate having strongly $\sigma$-compact Morse boundary and having countably many sublinear functions that capture the intersection function of all geodesic segments. 

\begin{lem}\label{lem:CountIntFcns}
    Let $G=\pres$ be a finitely generated group whose Morse boundary is strongly $\sigma$--compact.  Then there exists a countable collection $(\rho_n)_{n}$ of non-decreasing sublinear functions with $\rho_{n-1}\leq \rho_n$ such that the following holds. For every non-decreasing sublinear function $\rho$, there exists $n=n(\rho)$ such that all $\rho$--intersecting geodesic segments in $\cay$ are $\rho_n$--intersecting.
\end{lem}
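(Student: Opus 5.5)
The plan is to chain together the two quantitative directions of Lemma~\ref{lem:sublinvsMorse} with the segment version of strong $\sigma$--compactness in Lemma~\ref{lem:strongly-sigma-compact}. First I would dispose of a degenerate case. If $\mb G$ is empty, Lemma~\ref{lem:strongly-sigma-compact} is not directly available. In that case I would check whether the conclusion can still be obtained. For instance, if $G$ is finite, all geodesics have bounded length and any sublinear $\rho_n$ dominating a constant works. Otherwise I would note that strong $\sigma$--compactness is vacuous there, and handle it separately or assume nonempty boundary as in the lemmas it is used with. The main case is $\mb G\neq\emptyset$. Here Lemma~\ref{lem:strongly-sigma-compact} provides a sequence of Morse gauges $(M_n)_n$ such that for every Morse gauge $M$ there is $N(M)$ with all $M$--Morse geodesic segments being $M_{N(M)}$--Morse. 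Replacing $M_n$ by $\max_{k\le n} M_k$, I may assume the sequence is increasing.

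Next, for each $n$, the first bullet of Lemma~\ref{lem:sublinvsMorse} gives a sublinear function $\rho'_n$ such that every $M_n$--Morse geodesic is $\rho'_n$--intersecting. I need these to be non-decreasing and increasing in $n$. For monotonicity in $t$, I would use that the intersection function $\rho_\gamma(t)$ is itself non-decreasing in $t$, being a max over a growing set. Alternatively, the proof of Lemma~\ref{lem:sublinvsMorse} already produces a non-decreasing $\rho$ from Lemma~\ref{lem:sublinearandMorse}. Then set $\rho_n=\max_{k\le n}\rho'_k$. A finite max of non-decreasing sublinear functions is again non-decreasing and sublinear, and $\rho_{n-1}\le\rho_n$ holds by construction.

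Now let $\rho$ be any non-decreasing sublinear function. By the second bullet of Lemma~\ref{lem:sublinvsMorse}, there is a Morse gauge $M=M(\rho)$ such that every $\rho$--intersecting geodesic is $M$--Morse. This applies in particular to geodesic segments. Put $n(\rho)=N(M(\rho))$. Then any $\rho$--intersecting segment $\gamma$ is $M$--Morse, hence $M_{n}$--Morse, hence $\rho'_n$--intersecting, and therefore $\rho_n$--intersecting.

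The only delicate point is the bookkeeping between "for some sublinear function" and the uniform, monotone version: that the functions produced are non-decreasing and can be made nested. The argument itself is otherwise a direct composition of the cited lemmas. The empty-boundary case is the one place where I would need to be careful about what the hypothesis means.
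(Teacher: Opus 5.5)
Your proposal is correct and is essentially the paper's proof: you use the segment form of strong $\sigma$--compactness (Lemma~\ref{lem:strongly-sigma-compact}), turn each $M_n$ into a non-decreasing sublinear $\rho_n$ via the first bullet of Lemma~\ref{lem:sublinvsMorse} (nested by pointwise maxima), and set $n(\rho)=N(M(\rho))$ using the second bullet. The empty-boundary case you flag is silently skipped in the paper, and it closes easily: if there were arbitrarily long $M$--Morse segments, a limiting argument in the locally finite Cayley graph would produce a bi-infinite $M$--Morse geodesic, so each $M$ bounds the length of $M$--Morse segments and the constant functions $\rho_n\equiv n$ already work.
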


\begin{proof}
    By Lemma~\ref{lem:strongly-sigma-compact}, there exists a sequence of Morse gauges $(M_n)_n$ such that for every Morse gauge $M$, there exists $n=n(M)$ such that any $M$--Morse geodesic segment in $\cay$ is $M_n$--Morse. By Lemma~\ref{lem:sublinvsMorse} there exists a sequence of non-decreasing sublinear functions $(\rho_n)_n$ such that any $M_n$--Morse geodesic is $\rho_n$--intersecting. By potentially replacing $\rho_n$ with the point-wise maximum of $\rho_0, \ldots, \rho_n$ we can assume that $\rho_{n-1}\leq \rho_{n}$ for all $n$.

    Let $\rho$ be a non-decreasing sublinear function. By Lemma~\ref{lem:sublinvsMorse}, there exists a Morse gauge $M$ such that all $\rho$--intersecting geodesics are $M$--Morse. Hence, choosing $n(\rho) = n(M)$, we obtain that all $\rho$--intersecting geodesics segments in $\cay$ are $M_n$--Morse and hence $\rho_n$--intersecting.
\end{proof}

We are now ready to show the main result of this subsection. 

\begin{thm}\label{thm:StrSigmaCpt=>noIPSC}
    Let $G$ be a finitely generated group. If $\mb G$ is strongly $\sigma$--compact, then  $G$ does not satisfy the general IPSC condition.
\end{thm}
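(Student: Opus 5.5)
We argue by contradiction: assume $\mb G$ is strongly $\sigma$--compact and that $G$ satisfies the general IPSC condition. The plan is to use Lemma~\ref{lem:CountIntFcns} to get the countable increasing family $(\rho_n)_n$ of non-decreasing sublinear functions, and then feed IPSC a sequence $(n_i)_i$ chosen by a diagonal argument against this family. The point is that any $\rho$--intersecting geodesic is $\rho_{n(\rho)}$--intersecting. Moreover, an $\ell$--projective geodesic $\gamma$ is itself an $\ell$--projective subpath of itself, so $\abs{\gamma}\le \rho_\gamma(\ell)\le \rho_{n}(\ell)$. We will make this incompatible with $\abs{\gamma}\ge \ell/i$ for large $\ell$.

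Concretely, since each $\rho_i$ is sublinear, for each $i$ we choose $n_i$ so large that $\rho_i(t) < t/i$ for all $t\ge n_i$. Applying IPSC to this sequence gives a non-decreasing sublinear $\rho$, and Lemma~\ref{lem:CountIntFcns} gives $N=n(\rho)$ such that every $\rho$--intersecting geodesic segment is $\rho_N$--intersecting. We then apply IPSC with $K=N$. This produces $i\ge N$ and an $\ell$--projective, $\rho$--intersecting geodesic $\gamma$ with $\ell\ge n_i$ and $\abs{\gamma}\ge \ell/i$.

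By the choice of $N$, the geodesic $\gamma$ is $\rho_N$--intersecting, and hence $\rho_i$--intersecting since $\rho_N\le\rho_i$ for $i\ge N$. Because $(x,y,\gamma)$ witnesses that $\gamma$ is an $\ell$--projective subpath of itself, we get $\abs{\gamma}\le\rho_\gamma(\ell)\le\rho_i(\ell)<\ell/i$, using $\ell\ge n_i$. This contradicts $\abs{\gamma}\ge \ell/i$.

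Two points need care. The first is the monotonicity $\rho_{n-1}\le\rho_n$ from Lemma~\ref{lem:CountIntFcns}, which is needed to pass from index $N$ to $i\ge N$. The second is confirming that the intersection function of $\gamma$ is evaluated at $\ell$ itself, which holds since the definition takes $r\le t$ with $t=\ell$. The step that looks most delicate, though it is straightforward, is the order of quantifiers in the diagonal choice of $(n_i)$. The sequence must be fixed before $\rho$ is produced, and it must handle every possible $N$ at once, which is why $n_i$ is tied to $\rho_i$ with threshold $t/i$. If segments with $\ell$ possibly non-integer cause issues, we would replace $\rho_i$ by its composition with ceiling, which is harmless.
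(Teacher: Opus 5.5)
Your proof is correct and is essentially the paper's argument. It uses the same diagonal choice of $n_i$ with $\rho_i(t)<t/i$ for $t\ge n_i$, the same appeal to Lemma~\ref{lem:CountIntFcns} and its monotonicity, and the same contradiction $\abs{\gamma}\le\rho_\gamma(\ell)\le\rho_i(\ell)<\ell/i$. The paper instead extracts a whole sequence $\gamma_i$ with indices $k_i\ge i$ and notes that no single $\rho_j$ can dominate them all, whereas you apply IPSC once with $K=n(\rho)$, which is an equivalent and slightly more direct bookkeeping. Your remark about non-integer loop length $\ell$ handles a point the paper passes over silently.
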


\begin{proof}
    Suppose towards a contradiction that $\mb G$ is strongly $\sigma$-compact and satisfies the general IPSC condition. By Lemma~\ref{lem:CountIntFcns}, there are countably many intersection functions $(\rho_i)_{i}$ with $\rho_{i-1}\leq \rho_{i}$ such that for every intersection function $\rho$ there exists $i$ such that every $\rho$--intersecting geodesic is $\rho_i$--intersecting.
    
    We will construct an intersection function $\rho$, a sequence of $\rho$--intersecting geodesics $(\gamma_i)_i$ and an increasing sequence of integers $(k_i)_i$ such that for each $i$, the geodesic $\gamma_{i}$ is not $\rho_{k_i}$ intersecting. Since  $\rho_i\geq \rho_{i-1}$ for all $i$, this contradicts the existence of an integer $j$ such that all $\rho$--intersecting geodesic segments are $\rho_j$--intersecting. 
    
    For each $i\geq 1$, choose $n_i\geq i$ such that $\rho_i(t)< t/i$ for all $t\geq n_i$; such an integer always exists since $\rho_i$ is sublinear. Since $\pres$ satisfies the general IPSC condition, there exist a non-decreasing and sublinear function $\rho$, a sequence of $\ell_i$--projective geodesics $\gamma_i$, and a sequence of indices $k_i\geq i$ such that: 
    
    \begin{enumerate}
        \item $\ell_i\geq n_{k_i}$,
        \item $\abs{\gamma_i}\geq \ell_i/k_i$, and
        \item the geodesic $\gamma_i$ is $\rho$--intersecting.
    \end{enumerate}

    By construction of the sequence $(n_i)_{i}$, we have that $\rho_i(t)<t/i$ for all $t\geq n_i$. Moreover, for all $i$, the geodesic segment $\gamma_i$ is $\ell_i$--projective for some $\ell_i\geq n_{k_i}$. Hence $\rho_{\gamma_i}(\ell_i) = \abs{\gamma_i}\geq \ell_i/k_i > \rho_{k_i}(\ell_i)$, concluding the proof.
\end{proof}

\subsection{Not IPSC implies geodesic MLTG}\label{sec:GeodMLTG}
The main result of this section is Theorem~\ref{thm:main1}\eqref{item:IPSC}$\implies$\eqref{item:GeodMLTG}: groups that do not satisfy the IPSC condition satisfy the geodesic MLTG property. We follow the proof of \cite[Proposition~3.2]{HeSprianoZbinden:sigma-compact}, where He, Spriano and the second author prove the implication in the case of small cancellation groups.

While most of the proof works for projective geodesics instead of subsegments of relators, there is one crucial difference: while a subsegment of a subsegment of a relator of length $r$ is clearly a subsegment of a relator of length $r$, a subsegment of an $\ell$--projective geodesic is not necessarily $\ell$--projective.  To overcome this difficulty, we use following auxiliary lemmas, which allow us to find ``good'' subsegments of projective geodesics.

\begin{lem}\label{lem:making-shorter-loops}
    Let $X$ be a geodesic metric space, and let $(x, y, \gamma)$ be an $\ell$--projective loop in $X$. There exists an $\ell'$--projective loop $(x', y', \gamma')$ for some $\ell'\leq \ell$ such that $\gamma'$ is a subgeodesic of $\gamma$ with $\abs{\gamma}/4\leq \abs{\gamma'}\leq 3\abs{\gamma}/4$.
\end{lem}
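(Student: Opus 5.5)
The plan is to keep the original points $x$ and/or $y$ and shrink $\gamma$ by restricting the closest point projection to a sub-interval of $\gamma$. Parametrize $\gamma\colon[0,T]\to X$ with $T=\abs{\gamma}$, so $\gamma(0)=\gamma^-$ and $\gamma(T)=\gamma^+$. The basic observation is this: if $p$ is a closest point to a point $z$ on a subsegment $\gamma[a,b]$, then $p$ is still a closest point to $z$ on any smaller subsegment containing $p$. Also, a closest point on the full $\gamma$ stays a closest point on every subsegment containing it. I will produce two candidate loops and show that at least one of them works.

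\emph{Candidate A (cut from the left).} Let $p=\gamma(s)$ be a closest point projection of $y$ onto $\gamma[T/4,T]$, and set $\gamma'=\gamma[s,T]$, $x'=x$, $y'=y$.
\begin{itemize}
\item Projections: $x$ still projects to $\gamma'^+=\gamma^+$, and $y$ projects to $\gamma'^-=p$ by the observation above.
\item Condition (2) is inherited unchanged, since $\gamma'^+=\gamma^+$ and $x,y$ are unchanged.
\item Loop length: we have $d(y,p)\le d(y,\gamma(T/4))\le d(y,\gamma^-)+T/4\le d(y,\gamma^-)+s$. Hence
\[
\ell'=d(x,\gamma^+)+d(x,y)+d(y,p)+(T-s)\le \ell .
\]
\item Length of $\gamma'$: we have $\abs{\gamma'}=T-s\le 3T/4$, so candidate A succeeds exactly when $s\le 3T/4$.
\end{itemize}

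\emph{Candidate B (cut from the right).} Let $q=\gamma(u)$ be a closest point projection of $x$ onto $\gamma[0,3T/4]$, and set $\gamma'=\gamma[0,u]$, $x'=x$, $y'=y$.
\begin{itemize}
\item Projections: $y$ still projects to $\gamma^-$, and $x$ projects to $q=\gamma'^+$.
\item Condition (2): since $\gamma^+$ is closest to $x$ on all of $\gamma$, we get $d(x,q)\ge d(x,\gamma^+)\ge d(x,y)$.
\item Loop length: $d(x,q)\le d(x,\gamma(3T/4))\le d(x,\gamma^+)+T/4\le d(x,\gamma^+)+(T-u)$, so $\ell'\le\ell$.
\item Length of $\gamma'$: candidate B succeeds exactly when $u\ge T/4$.
\end{itemize}

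The main obstacle is the remaining case $s>3T/4$ and $u<T/4$. Here $y$ is closest (on the right portion) to a point near $\gamma^+$, and $x$ is closest (on the left portion) to a point near $\gamma^-$. I would first try to rule this case out by a triangle-inequality argument. It should yield the following chain:
\begin{itemize}
\item $d(x,\gamma^+)\le d(x,\gamma(u))$;
\item $d(y,\gamma^-)\le d(y,\gamma(s))$;
\item $d(x,\gamma(u))\ge d(x,\gamma^+)\ge d(x,y)$.
\end{itemize}
Combining these with $d(\gamma(u),\gamma(s))>T/2$ should force $d(\gamma^-,\gamma^+)$ to be smaller than $T$, which is absurd.

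If this does not close the case, I would fall back on a different construction. Keep $x'=x$, $y'=y$, and use the middle segment between the two projections, again projecting onto restricted subsegments. I would vary the cut point $c\in[T/4,3T/4]$, using $\gamma[c,T]$ in A and $\gamma[0,c]$ in B, in place of the fixed thresholds. The aim is to find some $c$ at which one of the two estimates above lands in the window $[T/4,3T/4]$; the loop-length bound $\ell'\le\ell$ is obtained exactly as in A and B.
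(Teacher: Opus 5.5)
Your candidates A and B are correct, but they only handle the easy cases. The case you leave open ($s>3T/4$, $u<T/4$) is the heart of the lemma, not a degenerate case that can be ruled out.

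It occurs in the paper's own motivating example. Let $\gamma$ be an arc of length $T$ on a circle of length $P\ge 2T$, and let $x=y$ be the point antipodal to the midpoint of $\gamma$. Then $d(x,\gamma(t))=\min\{\frac{P+T}{2}-t,\frac{P-T}{2}+t\}$. So for every $c\in(0,T)$, the closest point to $y$ on $\gamma[c,T]$ is $\gamma^+$, and the closest point to $x$ on $\gamma[0,c]$ is $\gamma^-$. Every inequality in your chain holds there, with $d(\gamma^-,\gamma^+)=T$, so no contradiction can be derived. Your fallback of varying the cut point $c$ only ever returns trivial segments.

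In fact, no choice of a subsegment $\gamma[a,b]$ (oriented like $\gamma$) together with $x'=x$, $y'=y$ works in general. Take $T=1$, $0<\epsilon<1/2$, $D\ge 1$, $d(x,y)=\epsilon$, $d(x,\gamma(t))=D+\min\{\frac{\epsilon t}{1-\epsilon},1-t\}$ and $d(y,\gamma(t))=D+\min\{\frac{\epsilon(1-t)}{1-\epsilon},t\}$. These distances define a metric on $\gamma\cup\{x,y\}$, which embeds isometrically in the geodesic space $\ell^\infty$, and $(x,y,\gamma)$ is a projective loop. Now take $a<b$:
\begin{itemize}
\item Requiring $\gamma(a)$ to be closest to $y$ on $\gamma[a,b]$ forces $a<\epsilon$.
\item Requiring $\gamma(b)$ to be closest to $x$ then forces $b\ge 1-\frac{\epsilon a}{1-\epsilon}>1-\epsilon$.
\item The first requirement then also gives $b\le 1-\frac{(1-\epsilon)a}{\epsilon}$.
\end{itemize}
Together these force $[a,b]=[0,1]$, so $\gamma$ itself is the only subsegment of positive length that works.

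The missing idea is that the apex points must move. As in the paper, take $z\in[x,y]$ and a closest point $p_z\in\gamma$ to $z$. Then $d(z,\gamma)\ge d(z,y)$, since otherwise $d(x,\gamma)<d(x,y)$. Hence both $(x,z,[p_z,\gamma^+]_\gamma)$ and $(z,y,[\gamma^-,p_z]_\gamma)$ are projective loops, with loop lengths at most $\ell-d(\gamma^-,p_z)$ and $\ell-d(p_z,\gamma^+)$ respectively. The longer of the two pieces has length in $[\abs{\gamma}/2,3\abs{\gamma}/4]$ unless $p_z$ lies within $\abs{\gamma}/4$ of an endpoint of $\gamma$.

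Suppose this happens for every $z$. Since $x$ projects to $\gamma^+$ and $y$ to $\gamma^-$, connectedness of $[x,y]$ gives a point $z$ with two closest points: $p_1$ within $\abs{\gamma}/4$ of $\gamma^-$ and $p_2$ within $\abs{\gamma}/4$ of $\gamma^+$. Let $m$ be the midpoint of $\gamma$. Then $(z,p_1,[m,p_2]_\gamma)$ is a projective loop with loop length $2d(z,\gamma)+d(p_1,p_2)\le\ell$ and with $\abs{\gamma}/4\le d(m,p_2)\le\abs{\gamma}/2$. Note that here $y'=p_1$ lies on $\gamma$ itself. In the circle example this is the loop $(x,\gamma^-,[m,\gamma^+]_\gamma)$, which has loop length exactly $\ell$.
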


\begin{proof}
    Let $z\neq y$ be a point on $[x, y]$, and let $p_z$ be a closest point projection of $z$ onto $\gamma$. Observe that $d(z,\gamma)\geq d(z,y)$, for if not, then $d(x,y)=d(x,z)+d(z,y)>d(x,z)+d(z,\gamma) \geq d(x,\gamma)$, which is a contradiction.  Thus both $(x, z, [p_z, \gamma^+]_{\gamma})$ and $(z, y, [\gamma^-, p_z]_{\gamma})$ are projective loops whose loop lengths $L_x$ and $L_y$ satisfy $L_x \leq  \ell-d(\gamma^-,p_z)$ and $L_y \leq \ell - d(p_z,\gamma^+)$. 

    At least one of $d(\gamma^-, p_z)$ and $d(p_z, \gamma^+)$ is larger or equal to $\abs{\gamma}/2$. If it is also smaller than $3\abs{\gamma}/4$, we have found our desired loop $(x', y', \gamma')$ and can conclude the proof. If it is larger or equal to $3\abs{\gamma}/4$, then $p_z$ is in the (closed) $\abs{\gamma}/4$ neighborhood of $\gamma^+$ or $\gamma^-$. 

    It remains to consider the case that for each point $z\in [x, y]$, any closest point projection $p_z$ onto $\gamma$ is in the (closed) $ \abs{\gamma}/4$ neighborhood of one of $\gamma^-$ or $\gamma^+$. By continuity, there has to exist a point $z\in [x, y]$ and closest point projections $p_1, p_2$ of $z$ onto $\gamma$ such that $p_1, p_2$ are points on $\gamma$ in the $\abs{\gamma}/4$--neighborhood of $\gamma^-$ and $\gamma^+$, respectively. Let $m$ be the midpoint of $\gamma$ and consider the loop $\mathcal{L} = (z, p_1, [m ,p_2]_{\gamma})$. By the triangle inequality, the loop length of $\mathcal{L}$ is at most $\ell$. Hence choosing $(x', y', \gamma') = \mc{L}$ concludes the proof.
\end{proof}

\begin{lem}\label{lem:making-custom-loops}
    Let $X$ be a geodesic metric space, let $(x, y, \gamma)$ be an $\ell$--projective loop, and let $C\colon \R_{\geq_0} \to \R_{\geq 0}$ be a non-decreasing function with $C(0) > 0$ such that $0 < C(\ell)\leq \abs{\gamma}$. There exists an $\ell'$--projective loop $(x', y', \gamma')$ for some $\ell'\leq \ell$ such that $\gamma'$ is a subgeodesic of $\gamma$ of length $C(\ell')\leq \abs{\gamma'}\leq 4C(\ell')$.
\end{lem}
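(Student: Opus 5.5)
The plan is to iterate Lemma~\ref{lem:making-shorter-loops}, shrinking the projective geodesic step by step while keeping track of the loop length. Set $(x_0,y_0,\gamma_0)=(x,y,\gamma)$ and $\ell_0=\ell$. Suppose $(x_k,y_k,\gamma_k)$ is an $\ell_k$--projective loop with $\ell_k\le \ell$ and $\gamma_k$ a subgeodesic of $\gamma$. If $\abs{\gamma_k}\le 4C(\ell_k)$, we stop. Otherwise we apply Lemma~\ref{lem:making-shorter-loops}. This gives an $\ell_{k+1}$--projective loop $(x_{k+1},y_{k+1},\gamma_{k+1})$ with $\ell_{k+1}\le\ell_k$ and $\gamma_{k+1}\subseteq\gamma_k$, and
\[
\abs{\gamma_k}/4\le\abs{\gamma_{k+1}}\le 3\abs{\gamma_k}/4.
\]

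Two invariants need checking.

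\textbf{Upper bound and termination.} The upper bound holds at the stopping time by construction. The process terminates because $\abs{\gamma_k}\le (3/4)^k\abs{\gamma}$, while continuing requires $\abs{\gamma_k}>4C(\ell_k)\ge 4C(0)>0$. Here we use that $C$ is non-decreasing, $\ell_k\ge 0$, and $C(0)>0$. So after finitely many steps $\abs{\gamma_k}\le 4C(\ell_k)$.

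\textbf{Lower bound $\abs{\gamma_k}\ge C(\ell_k)$.} We prove this by induction.
\begin{itemize}
\item At $k=0$ it is the hypothesis $C(\ell)\le\abs{\gamma}$.
\item If step $k$ was performed, then $\abs{\gamma_k}>4C(\ell_k)$. Hence
\[
\abs{\gamma_{k+1}}\ge\abs{\gamma_k}/4> C(\ell_k)\ge C(\ell_{k+1}),
\]
using monotonicity of $C$ and $\ell_{k+1}\le\ell_k$.
\end{itemize}
At the stopping index we therefore have $C(\ell_k)\le\abs{\gamma_k}\le 4C(\ell_k)$ with $\ell_k\le\ell$. Since each $\gamma_{k+1}$ is a subgeodesic of $\gamma_k$, the final $\gamma_k$ is a subgeodesic of $\gamma$, as required.

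The main point to watch is that the upper bound is measured against the new loop length $\ell_{k+1}$, which may drop. This is why the stopping criterion uses $C(\ell_k)$ at the current stage rather than $C(\ell)$. The factor $1/4$ from Lemma~\ref{lem:making-shorter-loops} combined with the factor $4$ in the stopping criterion is exactly what keeps the lower bound alive. A minor issue is whether the loop produced by Lemma~\ref{lem:making-shorter-loops} has nonnegative loop length and is genuinely nondegenerate; this is immediate since loop length is a sum of distances and $\abs{\gamma_k}>0$ throughout.
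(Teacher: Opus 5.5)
Your proposal is correct and follows essentially the same route as the paper: iterate Lemma~\ref{lem:making-shorter-loops} until $\abs{\gamma_k}\leq 4C(\ell_k)$. The lower bound is kept via $\abs{\gamma_{k+1}}\geq \abs{\gamma_k}/4> C(\ell_k)\geq C(\ell_{k+1})$, and termination follows from the geometric decay $\abs{\gamma_k}\leq (3/4)^k\abs{\gamma}$ against $4C(0)>0$. Your write-up simply makes the paper's ``repeat until the process stops'' induction explicit.
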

\begin{proof}
    If $\abs{\gamma}\leq 4C(\ell)$, then we can take $(x', y', \gamma'):=(x, y, \gamma)$. Otherwise,  use Lemma~\ref{lem:making-shorter-loops} to replace the $\ell$--projective loop $(x, y, \gamma)$ by an $\ell'$--projective loop $(x', y', \gamma')$ with $\ell'\leq \ell$. It follows that $C(\ell')\leq C(\ell) \leq \abs{\gamma'}$. Repeat the above two steps until the process stops, which must occur after a finite number of steps since $C(0)> 0$ and $\abs{\gamma'}\leq 3\abs{\gamma}/4$.
\end{proof}

We are now ready to prove the main result of this section.

\begin{prop}\label{lem:sigma_cpct_implies_geosdesicMLTG}
	Let $G = \pres$ be a finitely generated group. If $\pres$ does not satisfy the IPSC condition, then $X = \cay$ satisfies the geodesic MLTG property. 
\end{prop}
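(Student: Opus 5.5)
We argue by contradiction with the failure of IPSC, following the proof of \cite[Proposition~3.2]{HeSprianoZbinden:sigma-compact} and using projective geodesics in place of relator subwords. Since $G$ does not satisfy the general IPSC condition, there is a sequence $(n_i)_i$ with the following property: for every non-decreasing sublinear $\rho$ there is $K=K(\rho)$ such that, for all $i\geq K$, no $\ell$--projective geodesic with $\ell\geq n_i$ and $\abs{\gamma}\geq \ell/i$ is $\rho$--intersecting.

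Fix a Morse gauge $M$. By Lemma~\ref{lem:sublinvsMorse} there is a non-decreasing sublinear $\rho_M$ such that all $M$--Morse geodesics are $\rho_M$--intersecting. The goal is a sublinear $\rho'$, depending only on $M$, and a scale $L$ such that every $L$--locally $M$--Morse geodesic $\gamma$ is $\rho'$--intersecting. Lemma~\ref{lem:sublinvsMorse} then makes $\gamma$ $M'$--Morse for some $M'$ depending only on $M$, which is the geodesic MLTG property.

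\textbf{Choice of $\rho'$.} I would build $\rho'$ from the sequence $(n_i)$ so that it is tailored to it. Concretely, $\rho'$ is at least $\rho_M$, it is bounded on $[0,n_K]$, and it equals roughly $4t/i$ for $t$ in a range $[N_i,N_{i+1})$, where the thresholds $N_i\geq n_i$ increase fast enough that $\rho'$ is sublinear and non-decreasing. Set $K=K(\rho')$, and take $L$ much larger than $4n_K$ and than $\rho'(n_K)$.

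\textbf{Minimal-counterexample argument.} Let $\gamma$ be $L$--locally $M$--Morse. Suppose some $\ell$--projective subsegment $\gamma'\subseteq\gamma$ has $\abs{\gamma'}>\rho'(\ell)$, and choose such a $\gamma'$ of minimal length.
\begin{enumerate}[(a)]
\item Every strictly shorter projective subsegment of $\gamma'$ then satisfies the $\rho'$ bound, so every proper subsegment of $\gamma'$ is $\rho'$--intersecting.
\item If $\abs{\gamma'}\leq L$, then $\gamma'$ is $M$--Morse. Hence $\abs{\gamma'}\leq\rho_M(\ell)\leq\rho'(\ell)$, a contradiction.
\item So $\abs{\gamma'}>L$. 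Since $\abs{\gamma'}>\rho'(\ell)$, we have $\abs{\gamma'}\geq \ell/i$ for the index $i$ with $\ell\in[N_i,N_{i+1})$, and we may take $i\geq K$ once $\ell\geq N_K$.
\item Apply Lemma~\ref{lem:making-custom-loops} with $C(t)=\max\{t/i,1\}$. This gives an $\ell''$--projective proper subsegment $\gamma''\subseteq\gamma'$ with $\ell''\leq \ell$ and $\ell''/i\leq\abs{\gamma''}\leq 4\ell''/i$.
\item By (a), $\gamma''$ is $\rho'$--intersecting. If $\ell''\geq n_i$, this contradicts the choice of $K$.
\item Otherwise $\ell''<n_i$. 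Then $\abs{\gamma''}\leq 4n_i/i$ is controlled, and one iterates Lemma~\ref{lem:making-shorter-loops} to push back into the case $\ell''\geq n_i$. Alternatively, the size of $L$ rules this case out directly.
\item The small-loop case $\ell<N_K$ is handled by (b), since then $\abs{\gamma'}\leq$ a constant $<L$ by the choice of $L$.
\end{enumerate}

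\textbf{Main obstacle.} I expect the hardest step to be the bookkeeping of indices. The function $\rho'$ must be fixed before $K(\rho')$ is known, so the thresholds $N_i$ have to be chosen uniformly so that $\ell\geq N_i$ forces $\ell''\geq n_i$ for the subloop in step (d). The issue is that Lemma~\ref{lem:making-custom-loops} only gives $\ell''\leq\ell$, not a lower bound on $\ell''$.

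What I would try first is to use that the loop length always satisfies $\ell''\geq\abs{\gamma''}\geq \ell''/i$, together with step (c), which gives $\abs{\gamma'}>L$. Iterating Lemma~\ref{lem:making-shorter-loops} only shrinks lengths geometrically, so one can stop while the segment length is still at least $\min\{L,\ell/i\}/4$. Choosing $L$ large and $N_i\geq 4i\,n_i$ should then force $\ell''\geq n_i$.
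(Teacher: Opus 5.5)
\textbf{Gap in steps (d)--(f).} The gap sits exactly at the obstacle you flag, and none of your patches, as stated, closes it. Lemma~\ref{lem:making-custom-loops} may shrink many times and gives no lower bound on $\ell''$, so $\ell''<n_i$ is a real possibility. Your three fixes each fail:
\begin{itemize}
\item $L$ is fixed once $K=K(\rho')$ is, while $i$ ranges over all indices $\geq K$ and $n_i$ is unbounded, so the size of $L$ cannot exclude $\ell''<n_i$.
\item Iterating Lemma~\ref{lem:making-shorter-loops} further only decreases loop lengths, so it cannot push you back to $\ell''\geq n_i$.
\item Stopping once the length is at least $\min\{L,\ell/i\}/4$ gives only $\ell''\geq L/4$ in the branch $L<\ell/i$, which says nothing for large $i$.
\end{itemize}
Separately, step (g) needs $L\geq N_K$, not merely $L\gg 4n_K$.

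\textbf{The missing observation.} A single application of Lemma~\ref{lem:making-shorter-loops} suffices, and the lower bound comes from $\rho'$, not from $L$. You already built the factor $4$ into $\rho'$, but in (c) you only use $\abs{\gamma'}\geq \ell/i$. Set things up as follows:
\begin{itemize}
\item take $N_i\geq i\,n_i$ increasing;
\item take $\rho'$ non-decreasing (a running maximum) with $\rho'(t)\geq\max\{\rho_M(t),4t/i\}$ for $t\in[N_i,N_{i+1})$;
\item set $K=K(\rho')$ and $L\geq N_K$.
\end{itemize}
For a minimal bad $\ell$--projective $\gamma'$, step (b) gives $\ell\geq\abs{\gamma'}>L\geq N_K$. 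So $\ceil{\ell}\in[N_i,N_{i+1})$ for some $i\geq K$, and $\abs{\gamma'}>4\ceil{\ell}/i$. One shrink then yields an $\ell_1$--projective $\gamma_1\subsetneq\gamma'$ with
\[
\abs{\gamma_1}\geq\frac{\abs{\gamma'}}{4}>\frac{\ceil{\ell}}{i}\geq\frac{\ell_1}{i}
\qquad\text{and}\qquad
\ell_1\geq\abs{\gamma_1}>\frac{\ceil{\ell}}{i}\geq\frac{N_i}{i}\geq n_i .
\]
Since $\abs{\gamma_1}\leq 3\abs{\gamma'}/4$, minimality makes $\gamma_1$ $\rho'$--intersecting. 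A minimum need not be attained, so pick $\gamma'$ of length below $4/3$ times the infimum of bad lengths. This contradicts the choice of $K$.

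\textbf{Comparison with the paper.} With this repair your direct argument is complete, and it is a genuinely different and simpler route than the paper's. The paper proves the contrapositive: from a failure of geodesic MLTG it selects minimal bad loops $\mathcal L_{L_i,\rho'}$ at growing scales. It must then build a single sublinear IPSC witness $g$ from the ratios $m_i$ and the functions $g_i$, checking case by case that each $\lambda_i$ is $g$--intersecting. Arguing directly, you feed your own $\rho'$ into the negated IPSC quantifier, so no witness function has to be constructed.
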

\begin{proof}

We will prove the contrapositive.  Assume that $G = \pres$ does not satisfy the geodesic MLTG property, so that there exists a Morse gauge $M_0$ such that for all Morse gauges $M\geq M_0$ and all scales $L>0$, there exists a geodesic that is $L$--locally $M_0$--Morse but not globally $M$--Morse. Using the relationship between Morse and sublinearly intersecting from Lemma~\ref{lem:sublinvsMorse}, this is equivalent to the existence of a sublinear and non-decreasing function $\rho_0$ such that, for all intersection functions $\rho\geq \rho_0$ and scales $L>0$, there exists a geodesic $\gamma$ which is $L$--locally $\rho_0$--intersecting but not globally $\rho$--intersecting. In particular, for some $\ell$, there has to exist an $\ell$--projective loop $(x', y', \gamma')$ such that $\gamma'$ is $L$--locally $\rho_0$--intersecting but $\abs{\gamma'} > \rho(\ell)$. Let $\mc{L}_{L, \rho}$ be such an $\ell$--projective loop that minimizes $\lceil \ell\rceil$.

We are now ready to prove that $G$ satisfies the general IPSC condition. Let $(n_i)_{i}$ be a sequence of integers. We will construct a sublinear function $g$, sequences of integers $k_i\geq k_{i-1}+1$ and $\ell_i\geq n_{k_i}$ and a sequence of $\ell_i$--projective geodesics $\gamma_i$ such that for all $i$,
\begin{enumerate}[(I)]
    \item $\abs{\gamma_i}\geq \ell_i/k_i$,\label{1}
    \item $\gamma_i$ is $g$--intersecting. \label{3}
\end{enumerate}

Since $k_i$ tends to $\infty$, the existence of such a sequence of geodesics will imply that $G$ satisfies the IPSC condition and hence conclude the proof. The fact that we do not require $g$ to be non-decreasing is not a problem: if \eqref{3} holds for $g$ it also holds for the sublinear and non-decreasing function $g'$ defined via $g'(t) = \min\{t, \max_{t'\leq t} \{g(t')\}\}$.

Define a function $\rho' \colon \N\to \R_{\geq 0}$ by
\begin{align*}
    \rho'(t) &= 1+\max\left\{\frac{t}{j} ,\rho_0 (t), \max_{0\leq t'<t}\{\rho'(t')\}\right\} \quad \text{for $n_j\leq t <n_{j+1}$ and $j\geq 1$, and}\\
    \rho'(t) &= \max\{\rho_0(t), t\} \quad \text{for $t < n_1$.}
\end{align*}
Note that $\rho'$ is sublinear and non-decreasing. Increasing terms in the sequence $(n_i)_i$ makes it harder to satisfy $\ell_i\geq n_{k_i}$ (and all other conditions stay the same),  so we may assume $n_{i+1}\geq n_i+1$ and, by possibly increasing $n_{i+1}$ further, 
\begin{align}\label{eq_niassumption}
    \frac{t}{4\rho'(t)} \geq  \frac{t'}{\rho'(t')}+1
\end{align}
for all $t'\leq n_i < n_{i+1}\leq t$. 

We now inductively construct the sequence of integers $k_i, \ell_i$ and $\ell_i$--projective geodesics $\lambda_i$. To start the induction, set  $k_{0} = 6$, and let $\lambda_0$ be a trivial geodesic so that $\abs{\lambda_0}= 0$. For $i \geq 1$, define
\[
L_i = n_{\max\{\abs{\lambda_{i-1}}, k_{i-1}+2\}}.
\]
Let $\ell_i'$ be such that $\mc{L}_{L_i, \rho'} = (\lambda_i', x', y')$ is an $\ell_i'$--projective loop. The geodesic $\lambda_i'$ is $L_i$--locally $\rho_0$--intersecting but not $\rho'$--intersecting. By minimality of $\ceil{\ell_i'}$, we must have  $\abs{\lambda_i'} > \rho'(\ceil{\ell_i'})$. By Lemma~\ref{lem:making-custom-loops}, there exists an $\ell_i$--projective loop $\lambda_i$ with $\ell_i\leq \ell_i'$ and
\begin{align}\label{eq:rho-sandwich}
    \rho'(\lceil\ell_i\rceil)\leq \abs{\lambda_i} \leq 4\rho'(\lceil \ell_i\rceil).
\end{align}

 Define $k_i$ such that $n_{k_i}\leq \ell_i < n_{k_{i+1}}$. Note that $\abs{\lambda_i}\geq \rho'(\lceil\ell_i\rceil)\geq \ell_i/k_i$. Moreover, $\lambda_i$ is not $\rho_0$--intersecting, since $\rho_0<\rho'$, but because $\lambda_i$ is a subsegment of $\lambda_i'$, it is $L_i$--locally $\rho_0$--intersecting.  Thus $\abs{\lambda_i'} > L_i$, which implies that
\begin{align}\label{eqn:bound_x_i} 
 \ell_i \geq \abs{\lambda_i} > L_i = n_{\max\{\abs{\lambda_{i-1}}, k_{i-1}+2\}} \geq \abs{\lambda_{i-1}},
\end{align}
and hence
\begin{equation}\label{eqn:lowerbound_k_i}
    k_i\geq\max\{\abs{\lambda_{i-1}}, k_{i-1}+2\}\geq k_{i-1}+2.
\end{equation}
With this, for all $i$, \eqref{1} is satisfied, and we have $\ell_i\geq n_{k_{i}}$. 

It remains to construct a sublinear function $g$ such that \eqref{3} is satisfied. To do so, we first define $m_i := \ceil{\ell_i}/\abs{\lambda_i}$. Using \eqref{eq_niassumption} and \eqref{eq:rho-sandwich}, we obtain 
\begin{align}\label{eq:m-i}
    m_i = \frac{\ceil{\ell_i}}{\abs{\lambda_i}} \geq \frac{\ceil{\ell_i}}{4\rho'(\ceil{\ell_i})} \geq 1+\frac{\ceil{\ell_{i+1}}}{\rho'(\ceil{\ell_{i+1}})}\geq \frac{\ceil{\ell_{i+1}}}{\abs{\lambda_{i-1}}} =m_{i-1} +1.
\end{align}
Next, for all $i$, we define the functions $g_i\colon\N\to \R_{\geq 0}$ by
\begin{align*}
        g_i(t) = \begin{cases}
            t &\text{if $t\leq \ceil{\ell_i}$}\\
            \frac{t}{m_i} &\text{otherwise.}
        \end{cases}
    \end{align*}
    Further, define 
    \[
    g_i'(t) = \max\{4\rho'(t), g_i(t)\} \qquad \text{and} \qquad  g(t) = \min_{i \in \N}\{g_i'(t)\} = \max\{4\rho'(t), \min_{i\in \N}\{g_i(t)\}\}.
    \]
    The function $\min_{i\in \N}\{g_i(t)\}$ is sublinear (see, for example, \cite[Lemma~3.3]{HeSprianoZbinden:sigma-compact}), and hence $g$, is sublinear.

    Fix $i$. We want to show that $\lambda_i$ is $g$--intersecting. Let $\gamma$ be an $\ell$--projective geodesic which is a subsegment of $\lambda_i$. It suffices to show that $\abs{\gamma}< g_j'(\lceil \ell\rceil )$ for all $j$. 

    \noindent\textbf{Case 1: $i<j$.}  If $\ceil{\ell_j}\leq \ceil{\ell}$, then 
        \begin{align*}
            \abs{\gamma}\leq \abs{\lambda_i}\leq \abs{\lambda_j} \leq 4\rho'(\ceil{\ell_j})\leq 4\rho'(\ceil{\ell}) \leq g_j'(\ceil{\ell}),
        \end{align*}
        where we used \eqref{eqn:bound_x_i} for the second step and \eqref{eq:rho-sandwich} for the third.
    On the other hand, if $\ceil{\ell}< \ceil{\ell_j}$, then $g_j(\ceil{\ell}) = \ceil{\ell}$, and so $\abs{\gamma} \leq \ceil{\ell} = g_j(\ceil{\ell}) \leq g_j'(\ceil{\ell})$.\\

    \noindent\textbf{Case 2: $i\geq j$.}  If $\ceil{\ell_i} \leq \ceil {\ell}$, then 
    \begin{align*}
        \abs{\gamma}\leq \abs{\lambda_i} = \frac{\ceil{\ell_i}}{m_i}\leq \frac{\ceil{\ell}}{m_i}\leq \frac{\ceil{\ell}}{m_j} < g_j(\ceil{\ell}) \leq g_j'(\ceil{\ell}),
    \end{align*}
    where the fourth inequality is due to \eqref{eq:m-i}, which implies that $m_j\leq m_i$.
    On the other hand, if $\ceil{\ell} < \ceil{\ell_i}$, then since $\gamma$ is a subsegment of $\lambda_i$, it is $L_i$--locally $\rho_0$--intersecting. In particular, by the choice of $\mc L_{L_i, \rho'}$, either $\gamma$ is $\rho'$--intersecting, implying that $\abs{\gamma}\leq \rho'(\ceil{\ell}) < g_j'(\ceil{\ell})$, or $\ceil{\ell} \geq \ceil{\ell_i'}\geq \ceil{\ell_i}$. Since the latter contradicts the case assumptions, this completes the proof.

\end{proof}

\subsection{Geodesic MLTG implies MLTG} \label{sec:MLTG}
In this section, we prove the implication Theorem~\ref{thm:main1}\eqref{item:GeodMLTG}$\implies$\eqref{item:MLTG} in Theorem~\ref{thm:geo_mltg-implies-mltg}.  We begin with a few preliminary lemmas. The first gives conditions under which a local Morse quasi-geodesic is contained in a uniform neighborhood of the geodesic between its endpoints.

\begin{lem}\label{lem:Morse_for_combing_lines}
    Let $X$ be a geodesic metric space, let $C\geq 0$, and let $M$ be a Morse gauge. There exists a constant $\mu_0$ such that for every $\mu$, there exists $L_{\mu}$ such that the following holds. If $\gamma\colon [0, T]\to X$ is a path that is $L_\mu$--locally an $M$--Morse $C$--quasi-geodesic and contained in the $\mu$--neighborhood of $[\gamma^-, \gamma^+]$, then $\gamma$  is contained in the $\mu_0$--neighbourhood of $[\gamma^-, \gamma^+]$.
\end{lem}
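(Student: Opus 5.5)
Fix $\mu$ and write $\beta=[\gamma^-,\gamma^+]$. The plan is to show that far-away excursions of $\gamma$ from $\beta$ are ruled out by the local Morse property. The mechanism is that a local window of $\gamma$ together with short connectors to $\beta$ forms a quasi-geodesic quadrangle, and Lemma~\ref{ref:delta} makes that quadrangle uniformly slim. Slimness will force the window to track $\beta$ within a constant depending only on $M$ and $C$, and never on $\mu$.

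First I choose a projection map. For each time $s$, pick a closest point $\pi(s)\in\beta$ to $\gamma(s)$, so that $d(\gamma(s),\pi(s))\le\mu$. Next I take a long window $\gamma[s,s+L]$ with $L=L_\mu$ large compared to $\mu$ and $C$. This window is an $M$--Morse $C$--quasi-geodesic, and its endpoints are within $\mu$ of $\beta$. Form the quadrangle with sides $\gamma[s,s+L]$, the geodesic $[\gamma(s+L),\pi(s+L)]$, the subsegment $[\pi(s+L),\pi(s)]_\beta$, and $[\pi(s),\gamma(s)]$. By the first part of Lemma~\ref{lem:Concat_with_npp}, concatenating a projection geodesic with a subsegment of $\beta$ gives a $3$--quasi-geodesic.

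The main obstacle is that Lemma~\ref{ref:delta} needs all four sides to be Morse with a gauge independent of $\mu$. The side $\beta$ is not known to be Morse at all, and the short sides have length up to $\mu$. I would sidestep this by working only with $\gamma$. Let $s$ be a time where $d(\gamma(s),\beta)$ is maximal, or nearly maximal, and equal to $D$; suppose $D>\mu_0$. Consider the times $s-L/2$ and $s+L/2$. The window $\gamma[s-L/2,s+L/2]$ is $M$--Morse, and Lemma~\ref{lem:reverse_inclusion_QG_nbhd} together with the Morse property give that the geodesic $\sigma=[\gamma(s-L/2),\gamma(s+L/2)]$ is uniformly Morse and uniformly close to the window. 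Since $\gamma(s\pm L/2)$ are within $\mu$ of $\beta$, and $L$ is large compared to $\mu$, the path formed by $[\gamma(s-L/2),\pi(s-L/2)]$, then $\beta$ between the projections, then $[\pi(s+L/2),\gamma(s+L/2)]$ is a $(3,0)$--quasi-geodesic; this uses the second part of Lemma~\ref{lem:Concat_with_npp}, which needs the projections to be far apart, $\gtrsim 9\mu$. The Morse property of $\sigma$, applied with $Q=3$, then puts this path within $M(3)$ of $\sigma$. So the middle part of $\beta$ lies uniformly close to $\sigma$, hence close to the window.

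Next I would run the reverse inclusion on the middle of the window, away from its endpoints by more than about $\mu$, which is exactly where $\gamma(s)$ sits. Either I use a coarse-connectedness argument along $\beta$, or I apply Lemma~\ref{lem:reverse_inclusion_QG_nbhd} inside the $M(3)$--neighborhood after trimming the pieces near the endpoints. This gives $d(\gamma(s),\beta)\le \mu_0$, where $\mu_0$ depends only on $M(3)$, $M(C)$ and $C$. This contradicts $D>\mu_0$.

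It remains to ensure the projections $\pi(s\pm L/2)$ really are far apart. This follows because the window is a $C$--quasi-geodesic of length $L$, so its endpoints are at distance at least $L/C-C$ from each other, while each is within $\mu$ of its projection. Hence choosing $L_\mu \ge C(C+100\mu)$ suffices. Near $t=0$ or $t=T$ the window must be one-sided, and there I instead use the actual endpoints $\gamma^\pm\in\beta$, which have zero projection distance. This boundary case is routine.
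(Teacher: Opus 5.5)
Your argument is correct and is essentially the paper's proof. Both take a window of parameter-length comparable to $L_\mu$ around $s$ (one-sided near $0$ or $T$), project its endpoints to $[\gamma^-,\gamma^+]$, and use the second part of Lemma~\ref{lem:Concat_with_npp} to get a $(3,0)$--quasi-geodesic with endpoints on the locally $M$--Morse window; the Morse property places it near the window, and Lemma~\ref{lem:reverse_inclusion_QG_nbhd} then places $\gamma(s)$ uniformly close to it. The detour through $\sigma$ and the slim-quadrangle framing are unnecessary, since the window's own Morse property suffices. Your observation that $\gamma(s)$ is far from the window's endpoints, and so cannot lie near the short connectors, makes explicit a step the paper leaves implicit.
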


\begin{proof}    
    We adapt the proof of \cite[Lemma 3.1]{DrutuSprianoZbinden:weak}. Let $\gamma$ be as in the statement, and let $\gamma(s)$ be a point at maximal distance $D\leq \mu$ from $[\gamma^-, \gamma^+]$. Let $\rho  = (8 + C/\mu)C$ be a constant and define $L_{\mu} = 2\rho \mu$. Let $t_1 = \max (0,  s -\rho \mu)$ and $t_2 = \min (T, s + \rho \mu)$. We can assume that $T\geq \rho \mu$, as otherwise $\gamma$ is an $M$--Morse $C$--quasi-geodesic, hence $[\gamma^-, \gamma^+]$ is contained in the $M(1, 0)$--neighbourhood of $\gamma$ and the statement follows directly from Lemma~\ref{lem:reverse_inclusion_QG_nbhd}. Hence $2\rho \mu \geq t_2 - t_1\geq \rho \mu$.

    For $i = 1, 2$, let $u_i\in [\gamma^-, \gamma^+]$ be a closest point to $\gamma(t_i)$. Since $d(u_i, \gamma(t_i)) \leq D$, we have that 
    \begin{align*}
        d(u_1, u_2)&\geq d (\gamma(t_1), \gamma(t_2)) - 2D \geq \frac{t_2 - t_1}{C} - C -   2D \geq 6D\geq 3\left(d(u_1, \gamma(t_1)) + d(u_2, \gamma(t_2))\right).
    \end{align*}
    
    Here we used that $\gamma[t_1, t_2]$ is a $C$--quasi-geodesic because $\rho\mu \leq t_2 - t_1\leq 2\rho\mu = L_{\mu}$. By Lemma \ref{lem:Concat_with_npp}, it follows that if $\alpha_i$ are geodesics connecting $\gamma(t_i)$ to $u_i$, then the concatenation $\eta = \alpha_1 \ast [u_1, u_2]_{[\gamma^-, \gamma^+]}\ast \alpha_2^{-1}$ is a $(3, 0)$-quasi-geodesic joining two points on the $C$--quasi-geodesic $\gamma[t_1,t_2]$, and so $\eta$ is contained in the $M(3, 0)$--neighbourhood of $\gamma[t_1, t_2]$. It follows from Lemma~\ref{lem:reverse_inclusion_QG_nbhd} that $\gamma[t_1, t_2]$, and, in particular, $\gamma(s)$, is in the $\mu_0$--neighbourhood of $\eta$ for some $\mu_0$ only depending on $M(3, 0)$ and $C$. 
\end{proof}

We are now ready to prove that the geodesic MLTG property implies the MLTG property.
\begin{thm}\label{thm:geo_mltg-implies-mltg}
    If a geodesic metric space $X$ satisfies the geodesic MLTG property, then it satisfies the MLTG property.
\end{thm}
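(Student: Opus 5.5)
Fix $Q\geq 1$ and a Morse gauge $M$. The plan is to use geodesic MLTG to turn a locally Morse local quasi-geodesic $\gamma$ into a statement about the geodesic $[\gamma^-,\gamma^+]$, and then use Lemma~\ref{lem:Morse_for_combing_lines} together with Lemma~\ref{lemma:close-to-geodesic-implies-quasi-geodesic} to conclude that $\gamma$ is a global Morse quasi-geodesic.

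\textbf{Step 1.} The geodesics connecting points of $\gamma$ are locally Morse. Let $\gamma$ be $L$--locally an $M$--Morse $Q$--quasi-geodesic. For $s<t$ with $t-s\leq L$, the segment $[\gamma(s),\gamma(t)]$ has endpoints on the $M$--Morse $Q$--quasi-geodesic $\gamma[s,t]$. Hence it lies in the $M(1,0)$--neighbourhood of $\gamma[s,t]$ and is $M_1$--Morse for a gauge $M_1$ depending only on $M$ and $Q$ (as in the proof of Lemma~\ref{ref:delta}). Apply geodesic MLTG to a slightly enlarged gauge $M_2\geq M_1$, obtaining a scale $L_0$ and a gauge $M_3$.

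\textbf{Step 2.} Fellow travelling by induction on length. Let $\mu_0$ be the constant from Lemma~\ref{lem:Morse_for_combing_lines} for the gauge $M$ and constant $C=Q$. Set $\mu=\mu_0+M_3(1,0)+1$ plus a buffer, and choose $L\geq \max\{L_\mu, Q(3\mu+Q+2), L_0Q+Q^2\}$. I then prove by induction on $\lfloor t-s\rfloor$ that every subpath $\gamma[s,t]$ lies in the $\mu_0$--neighbourhood of $[\gamma(s),\gamma(t)]$.

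For short subpaths, of length at most $L$, this holds because $\gamma[s,t]$ is a Morse quasi-geodesic. For the inductive step, split $\gamma[s,t]$ at a point $\gamma(m)$. By induction, each half lies within $\mu_0$ of its geodesic. By Lemma~\ref{lemma:close-to-geodesic-implies-quasi-geodesic}, each half is a $Q'$--quasi-geodesic, and each half's geodesic is $L_0$--locally $M_2$--Morse by Step 1 and Lemma~\ref{lem:close-to-local-implies-local}. Geodesic MLTG then makes both geodesics $M_3$--Morse. By Lemma~\ref{ref:delta}, the geodesic triangle with vertices $\gamma(s),\gamma(m),\gamma(t)$ is uniformly slim. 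The needed inequality is that $[\gamma(s),\gamma(t)]$ is itself $L_0$--locally $M_2$--Morse. Its local subsegments fellow travel pieces of $[\gamma(s),\gamma(m)]$ or $[\gamma(m),\gamma(t)]$, and near the vertex $\gamma(m)$ they fellow travel short pieces of $\gamma$; this is where the scale choices enter. Granting this, geodesic MLTG makes $[\gamma(s),\gamma(t)]$ globally $M_3$--Morse. Then $\gamma[s,t]$ lies in a uniform $\mu$--neighbourhood of it, and Lemma~\ref{lem:Morse_for_combing_lines} improves this to $\mu_0$, closing the induction with a constant that does not grow.

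\textbf{Step 3.} Conclusion. Once $\gamma$ lies in the $\mu_0$--neighbourhood of $[\gamma^-,\gamma^+]$, Lemma~\ref{lemma:close-to-geodesic-implies-quasi-geodesic} shows that $\gamma$ is a $Q'$--quasi-geodesic. By Step 2, the geodesic $[\gamma^-,\gamma^+]$ is $M_3$--Morse, so $\gamma$ is $M'$--Morse by standard closeness arguments.

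\textbf{Main obstacle.} The hard part is Step 2: showing that the long geodesic is locally Morse near the splitting vertex. The triangle need not be thin near $\gamma(m)$ before we know the two halves are Morse, so I must ensure the uniform $\mu$ bound is obtained independently of length. I will first try choosing $m$ so that both halves are long, and use slimness to locate the local window inside a bounded neighbourhood of $\gamma$ around $\gamma(m)$.
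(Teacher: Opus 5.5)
\textbf{There is a genuine gap in your inductive step (Step~2), and the route you sketch for it does not work as stated.}

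You split $\gamma[s,t]$ at an interior point $\gamma(m)$ with both halves long. This leaves you needing two things:
\begin{enumerate}
\item[(a)] that $[\gamma(s),\gamma(t)]$ is $L_0$--locally $M_2$--Morse;
\item[(b)] that $\gamma[s,t]$ lies in a uniform neighbourhood of $[\gamma(s),\gamma(t)]$.
\end{enumerate}
You grant (a) and assert (b). Your justification for (a) invokes Lemma~\ref{ref:delta} for the triangle $\gamma(s),\gamma(m),\gamma(t)$. That lemma requires all sides to be Morse, including $[\gamma(s),\gamma(t)]$, which is exactly what you are trying to establish, so the argument is circular. Claim (b) does not follow from slimness even once all three sides are Morse. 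The vertex $\gamma(m)$ lies on $\gamma$ but can be far from the opposite side (a long ``tripod leg''). You would therefore need a separate bound on backtracking of $\gamma$ near $\gamma(m)$, using local quasi-geodesicity, with $L$ chosen after the slimness constant. Your buffer $\mu=\mu_0+M_3(1,0)+1$ accounts for neither (a) nor (b).

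\textbf{The missing idea, which is what the paper does, is to peel off a piece of size independent of $L$ rather than split in the middle.}

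Take a counterexample $\gamma$ from $a$ to $b$ of minimal length. Let $b'$ be the last point of $\gamma$ with $d(b,b')=1$, and let $\gamma'$ be the subpath from $a$ to $b'$.

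By minimality, $\gamma'\subseteq\mathcal N_{\mu_0}([a,b'])$. Your own chain of Lemmas~\ref{lemma:close-to-geodesic-implies-quasi-geodesic}, \ref{lem:reverse_inclusion_QG_nbhd}, \ref{lem:close-to-local-implies-local} and geodesic MLTG then makes $[a,b']$ globally $M''$--Morse.

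Now $[a,b]\ast[b,b']$ is a $(1,2)$--quasi-geodesic with the same endpoints as the $M''$--Morse geodesic $[a,b']$. Morseness together with Lemma~\ref{lem:reverse_inclusion_QG_nbhd} bounds their Hausdorff distance by some $\mu_2$ depending only on $M''$. Hence $\gamma\subseteq\mathcal N_{\mu_0+\mu_2+1}([a,b])$.

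Apply Lemma~\ref{lem:Morse_for_combing_lines} with $\mu=\mu_0+\mu_2+1$; this $\mu$ is fixed before you choose $L\geq L_\mu$. It brings you back to $\mu_0$, which is a contradiction. This argument needs neither the Morseness of a long third side nor any analysis at a splitting vertex.

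Peeling off a piece of length up to $L$ would not work, because $\mu$ would then depend on $L$.

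If you prefer to keep the midpoint split, replace (a) by Lemma~\ref{ref:triangles} applied to the triangle viewed as a quadrangle with one constant side. This makes $[\gamma(s),\gamma(t)]$ globally Morse directly, with no second use of geodesic MLTG. You must then still prove the backtracking bound needed for (b).
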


\begin{proof}
    Let $M$ be a Morse gauge and $C\geq 1$ a constant. Let $\mu_0$ be the constant from Lemma~\ref{lem:Morse_for_combing_lines} applied to $M$ and $C$. Let $C' = Q'$ be the constant from Lemma~\ref{lemma:close-to-geodesic-implies-quasi-geodesic} applied to $C = \mu_0$ and $Q = C$. Let $\mu_1 = \mu'$ be the constant from Lemma~\ref{lem:reverse_inclusion_QG_nbhd} applied to $C = C'$ and $d = 0$ and $\mu = \mu_0$. Finally, let $M'$ be the Morse gauge from Lemma~\ref{lem:close-to-local-implies-local} applied to $M$ and $Q =\max\{\mu_1, C'\}$. Since $X$ satisfies the geodesic MLTG property, there exists a scale $L'$ and a Morse gauge $M''$ such that any geodesic which is $L'$--locally $M'$--Morse is globally $M''$--Morse. Further, let $L$ be the scale obtained from Lemma~\ref{lem:close-to-local-implies-local} applied to $M = M$, $Q = \max\{\mu_1, C'\}$ and $L' = L'$. Let $\mu_2$ be an upper bound of the Hausdorff distance between an $M''$--Morse geodesic and a $(1, 2)$--quasi-geodesic between its endpoints. Such a constant $\mu_2$ exists by Lemma~\ref{lem:reverse_inclusion_QG_nbhd}. Let $L_{\mu}$ be the constant from Lemma~\ref{lem:Morse_for_combing_lines} applied to $C, M$ and $\mu = \mu_0 + \mu_2+1$. Finally, let $L_0 = \max\{L, L_{\mu}, C'(3\mu_0 + C' + 2)+1\}$.

    \begin{claim}\label{claim:tight-containement}
        Every $L_0$--locally $M$--Morse $C$--quasi-geodesic segment $\gamma$ is contained in the $\mu_0$--neighbourhood of the geodesic connecting its endpoints.
    \end{claim}
    \begin{claimproof}
        Towards a contradiction, let $\gamma$ be a minimal length path that is $L_0$--locally an $M$--Morse $C$--quasi-geodesic such that $\gamma\not\subseteq \mathcal N_{\mu_0}([\gamma^-,\gamma^+])$. Let $a=\gamma^-$ and $b=\gamma^+$. 

        Let $b'\in \gamma$ the last point on $\gamma$ with $d(b,b')=1$, and let $\gamma'$ be the subpath of $\gamma$ from $a$ to $b'$.  Then $\gamma'$ is $L_0$--locally an $M$--Morse $C$--quasi-geodesic, and by the minimality of $\gamma$, we have $\gamma'\subseteq\mathcal N_{\mu_0}([a,b'])$. 
        Hence, by Lemma~\ref{lemma:close-to-geodesic-implies-quasi-geodesic}, $\gamma'$ is a $C'$--quasi-geoodesic. Thus, by Lemma~\ref{lem:reverse_inclusion_QG_nbhd}, $[a, b']$ is contained in the $\mu_1$--neighbourhood of $\gamma'$. Consequently, by Lemma~\ref{lem:close-to-local-implies-local}, $[a, b']$ is $L'$--locally $M'$--Morse. Since $X$ is geodesic MLTG and by the choice of $M'$ and $L'$, the geodesic $[a, b']$ is $M''$--Morse. In particular, by Lemma~\ref{lem:reverse_inclusion_QG_nbhd}, the Hausdorff distance between the $M''$--Morse geodesic $[a, b']$ and  the $(1, 2)$--quasi-geodesic $[a, b]\ast [b, b']$ between its endpoints is at most $\mu_2$. Consequently, $\gamma'$ is contained in the $\mu_0+\mu_2$ neighbourhood of $[a, b]$, implying that $\gamma$ is contained in the $\mu$--neighbourhood of $[a, b]$. Thus by Lemma~\ref{lem:Morse_for_combing_lines}, $\gamma$ is contained in the $\mu_0$--neighbourhood of $[a, b]$. This contradicts our assumption that $\gamma$ is not contained in the $\mu_0$--neighbourhood between its endpoints.
    \end{claimproof}

    Let $\gamma$ be an $L_0$--locally $M$--Morse $C$--quasi-geodesic segment. By Claim~\ref{claim:tight-containement}, $\gamma$ is contained in the $\mu_0$--neighbourhood of $[\gamma^-,\gamma^+]$, and, as we argued in the proof of Claim~\ref{claim:tight-containement} for the geodesic $[a,b']$, this implies that $[\gamma^-,\gamma^+]$ is $M''$--Morse. Since $\gamma$ is a $C'$--quasi-geodesic whose endpoints lie on a $M''$--Morse geodesic, it is  $M'''$--Morse for a Morse gauge $M'''$ only depending on $M''$ and $C'$. Since $M''$ and $C'$ depend only on $M$ and $C$, this concludes the proof.
\end{proof}

\begin{remark}
    Instead of the ``geodesic MLTG'' property, we could define the \emph{special path MLTG} property as follows: between each pair of points, fix a uniform quasi-geodesic. Then replace the geodesics in the geodesic MLTG property with the special paths to get the definition of special path MLTG. An analogue of the proofs above will show that any geodesic metric space satisfying the special path MLTG property satisfies the MLTG property.
\end{remark}

\subsection{$\sigma$--compact Morse boundary implies strongly $\sigma$--compact}\label{sec:CptToStrCpt}
In this section, we show Theorem~\ref{thm:main1}\eqref{item:SigmaCpt}$\implies$\eqref{item:StrSigmaCpt}.

\begin{thm}\label{thm:sigma-compact-implies-strongly-sigma-compact}
   If the Morse boundary of a finitely generated group is $\sigma$-compact, then it is strongly $\sigma$-compact.
\end{thm}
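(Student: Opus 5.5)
The plan is to argue by contradiction using Lemma~\ref{lem:key-lemma2}, which is stated in the introduction and which we take as available. We first dispose of degenerate cases. If $\mb G$ is empty the statement is vacuous. If $G$ is hyperbolic, every geodesic is $M_\delta$--Morse for a single gauge depending only on the hyperbolicity constant, so the constant sequence $M_n = M_\delta$ witnesses strong $\sigma$--compactness. We may therefore assume $G$ is non-hyperbolic with non-empty Morse boundary, so that Lemma~\ref{lem:key-lemma2} applies with its constant $C$.

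Next, we fix a sequence $(M_n)_n$ of Morse gauges as in Lemma~\ref{lem:sigma_cpt2}, applied with $q = Q = C$. Then every Morse $C$--quasi-geodesic (segment or ray) is $M_n$--Morse for some $n$. We may take this sequence increasing and, as in the proof of Lemma~\ref{lem:sigma_cpt2}, arrange that it also handles geodesic segments. We will verify the criterion of Lemma~\ref{lem:strongly-sigma-compact} for a sequence derived from $(M_n)_n$; concretely, for the sequence $(M_n)_n$ itself, after enlarging each $M_n$ by the gauge change needed to pass from a $C$--quasi-geodesic to its subsegments and their geodesic replacements (using Lemma~\ref{ref:triangles} and the standard facts from \cite{Z:manifold}).

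Suppose the criterion fails. Then there is a Morse gauge $M$ such that for every $n$ there is an $M$--Morse geodesic segment $\gamma_n$ that is not $M_n$--Morse. We feed $(\gamma_n)_n$ into Lemma~\ref{lem:key-lemma2}. This yields a $C$--quasi-geodesic $\eta$ containing a translate of every $\gamma_n$ as a subsegment, and $\eta$ is $M'$--Morse with $M'$ depending only on $M$. In particular $\eta$ is a Morse $C$--quasi-geodesic. If $\eta$ is finite we extend the construction so that it is bi-infinite or a ray, or we simply apply Lemma~\ref{lem:sigma_cpt2} directly to segments. Either way, $\eta$ is $M_N$--Morse for some fixed $N$.

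Subsegments of an $M_N$--Morse quasi-geodesic are $M_N$--Morse. So each translate of $\gamma_n$ is $M_N$--Morse, and, since the group acts by isometries, so is $\gamma_n$. Taking $n \ge N$ and using that the sequence is increasing, $\gamma_n$ is $M_n$--Morse, a contradiction. Lemma~\ref{lem:strongly-sigma-compact} then gives strong $\sigma$--compactness.

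The main obstacle is not the argument above but Lemma~\ref{lem:key-lemma2} itself, if it must be proved here. That requires concatenating the $\gamma_n$ via connecting segments along a fixed Morse element, or a Morse ray, so that the result is a uniform quasi-geodesic. The first thing to try is to join consecutive pieces by long powers of a Morse element $g$ with inserted ``turning'' letters chosen so that the Gromov products at the junctions are bounded (non-hyperbolicity and finite generation supply enough directions). The resulting path is then a uniform quasi-geodesic by a local-to-global argument for Morse pieces with bounded backtracking, and Morseness with gauge depending only on $M$ follows from Lemma~\ref{ref:triangles}-type polygon estimates. Controlling the quasi-geodesic constant independently of the $\gamma_n$ at the junctions is where the real difficulty lies.
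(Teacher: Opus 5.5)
Your proof is correct and is essentially the paper's argument: you dispose of the hyperbolic and empty-boundary cases, take the sequence from Lemma~\ref{lem:sigma_cpt2} for the constant $C$ of Lemma~\ref{lem:concatenations}, and contradict the failure of the criterion in Lemma~\ref{lem:strongly-sigma-compact} by concatenating the bad segments $\gamma_n$ into a single Morse $C$--quasi-geodesic. The paper reuses the same index $i$ (so monotonicity and your extra gauge enlargements are unnecessary), and in your side sketch of Lemma~\ref{lem:key-lemma2} note that $G$ need not contain a Morse element (e.g.\ the torsion groups with Morse geodesics mentioned in the introduction), which is why the paper builds connecting segments from a bi-infinite Morse geodesic and a non-Morse geodesic rather than from powers of an element.
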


The proof of Theorem~\ref{thm:sigma-compact-implies-strongly-sigma-compact} relies on the following lemma.

\begin{lem}\label{lem:concatenations}
    Let $G$ be a non-hyperbolic finitely generated group with non-empty Morse boundary, and let $X = \cay$ for some finite generating set $S$ of $G$. There exists a constant $C$ such that for any sequence $(\gamma_n)_n$ of geodesic segments in $X$, there exists a $C$--quasi-geodesic $\eta$ such that
    \begin{enumerate}
        \item for all $n\in \N$, there exists a translate of $\gamma_n$ which is a subsegment of $\eta$; and
        \item if $\gamma_n$ is $M$--Morse for all $n\in \N$, then $\eta$ is $M'$--Morse for a Morse gauge $M'$ only depending on $M$.
    \end{enumerate}
\end{lem}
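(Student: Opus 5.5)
We build $\eta$ by concatenating translates of the $\gamma_n$, separated by fixed Morse ``spacer'' segments that force the concatenation to be a uniform quasi-geodesic. Since $\partial_* G\neq\emptyset$ and $G$ is not hyperbolic, hence not virtually cyclic, $G$ contains a Morse geodesic ray. We first want a Morse element $g$ of infinite order, or at least a long Morse geodesic segment $\sigma$ whose endpoints can be chosen to have useful projections. If an infinite-order Morse element is not available directly, we fall back on a fixed bi-infinite Morse geodesic $\beta$, which exists by a standard limiting argument using translates of a Morse ray and cocompactness; it is $M_0$--Morse for some fixed $M_0$. The spacer is then a long subsegment $\sigma=\beta[-R,R]$, with $R$ a constant depending only on $G$ and $S$.

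\textbf{Construction.} For each $n$, translate $\gamma_n$ so that it ends exactly where a copy $h_n\sigma$ of the spacer begins, and so that $\gamma_{n+1}$ begins where $h_n\sigma$ ends. This gives $\eta=\gamma_1*h_1\sigma*\gamma_2*h_2\sigma*\cdots$. A naive concatenation may backtrack, since $\gamma_n$ could run back along the spacer. To prevent this, we use the fact that $G$ is not virtually cyclic and that $G$ acts with finitely many orbits of directions. Concretely, we choose among finitely many candidate orientations/translates of the spacer (e.g.\ $\sigma$ and $g'\sigma$ for a few group elements $g'$), so that the closest-point projection of $\gamma_n$ onto the spacer lies within a uniform distance $D$ of the spacer's initial point, and likewise for $\gamma_{n+1}$ at its terminal point.

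Since $\sigma$ is Morse, it is strongly contracting in the sublinear sense of Lemma~\ref{lem:sublinearandMorse}. A geodesic that starts at an endpoint of $\sigma$ and projects far into $\sigma$ must fellow-travel $\sigma$ for a while. Among finitely many spacer choices that fellow-travel each other only boundedly, which follows from non-virtual-cyclicity and Morseness, at least one choice has small projection. With projections bounded by $D$ and $R\gg D$, we apply Lemma~\ref{lem:Concat_with_npp} iteratively. The projection conditions give the hypotheses of part (2), and a standard local-to-global argument for concatenations along contracting segments (the spacers are long Morse pieces) shows that $\eta$ is a $C$--quasi-geodesic. Here $C$ depends only on $R$, $D$ and $M_0$, so it is uniform over all sequences. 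Property (1) then holds by construction.

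\textbf{Morseness.} Assume every $\gamma_n$ is $M$--Morse. We show $\eta$ is $M'$--Morse by checking that $\eta$ is $\rho$--intersecting in the sense of Lemma~\ref{lem:sublinvsMorse}, or alternatively that any $Q$--quasi-geodesic with endpoints on $\eta$ stays close. For a subpath $\eta[s,t]$, we form the polygon with vertices at the junction points. Every side is either a subsegment of some $\gamma_n$ (which is $M$--Morse), a spacer (which is $M_0$--Morse), or, by the quasi-geodesicity of $\eta$ and the bounded projections, a geodesic between far-apart junction points. The last kind of side fellow-travels the concatenation, and is uniformly Morse by Lemma~\ref{ref:triangles} applied to quadrangles built from the two end pieces and the intermediate stretch. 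To avoid an unbounded number of sides, we argue by contraction: each spacer separates $\eta$ contractingly, so a quasi-geodesic $\lambda$ with endpoints on $\eta[s,t]$ must pass near every spacer in between. This cuts $\lambda$ into pieces with endpoints near a single $\gamma_n$ together with its two adjacent spacers. Lemmas~\ref{ref:triangles} and~\ref{ref:delta} then bound each piece with a gauge depending only on $M$ and $M_0$.

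The main obstacle is the construction step: guaranteeing uniformly bounded projections of arbitrary geodesics $\gamma_n$ onto the chosen spacer, using only non-hyperbolicity and a nonempty Morse boundary. This is exactly where the hypotheses are needed. Non-hyperbolicity excludes the degenerate situations, in particular virtually cyclic groups, and supplies two spacers with bounded mutual projection. I would first try to obtain these from the Morse ray by translating it by an element that moves its endpoint in $\partial_* G$, using that $G$ is not virtually cyclic.
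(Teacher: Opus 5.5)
\textbf{The failing step.} Your argument breaks where bounded projections onto a spacer of \emph{fixed} length $2R$ are supposed to make $\gamma_1\ast h_1\sigma\ast\gamma_2\ast\cdots$ a uniform quasi-geodesic. It breaks again later, where each spacer is said to ``separate $\eta$ contractingly''. That local-to-global principle holds for strongly contracting segments. A Morse geodesic is only sublinearly contracting, while your $\gamma_n$ are arbitrary geodesics of unbounded length, and Lemma~\ref{lem:Concat_with_npp}(2) only controls points at distance of order $R$ from the spacer.

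The inference is in fact false. Take a $C'(1/6)$ group $\langle a,b\mid r_1,r_2,\dots\rangle$ in which $r_n$ contains $a^n$ and $\abs{r_n}\approx n^2$. The axis of $a$ is Morse, since its intersections with relators are sublinear. Yet for every $R$ the segment $\sigma=a^{2R}$ lies on an isometrically embedded relator of length much larger than $R$. Let $\gamma_n$ and $\gamma_{n+1}$ be the two halves of the complementary arc of that relator. Every point of $\gamma_n$ (resp.\ $\gamma_{n+1}$) has $\sigma^-$ (resp.\ $\sigma^+$) as a closest point on $\sigma$, so your projection test passes with $D=0$. But $\gamma_n\ast\sigma\ast\gamma_{n+1}$ is a closed loop.

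The same phenomenon lets quasi-geodesics bypass short Morse spacers, so your Morseness argument fails for the same reason. Separately, you never show how a single spacer from a finite list can be compatible with both of its ends at once. The role you assign to non-hyperbolicity (``two spacers with bounded mutual projection'') is also not substantiated.

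\textbf{The missing idea.} Each spacer must be chosen after the pieces it joins are known, must be much longer than them, and must still be uniformly Morse.

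In Lemma~\ref{lem:single-concatenation}, the connector $[\beta^+,g\cdot\gamma^-]$ has length at least $L+2(\diam(\beta)+\abs{\beta}+\diam(\gamma)+\abs{\gamma})$. So the quasi-geodesic inequality across it reduces to the triangle inequality (Claims~\ref{claim:full-q-geo} and~\ref{claim:total-full-q-geo}). In the iteration, the lengths $L_i=2(i-1)(\abs{\eta_{i-2}}+1)$ dominate everything built earlier. This drives both the $4C_2$--quasi-geodesic estimate and the inductive Morse estimate of Claim~\ref{claim:con_morse2}, where only the consecutive blocks $\beta_{i-1}\ast\beta_i$ matter.

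To make arbitrarily long connectors uniformly Morse, with good directions at both ends, the paper proceeds as follows. It attaches rays $\eta_1$ at $\beta^+$ and $\eta_2$ at $\gamma^-$, each a half of one fixed bi-infinite $M_1$--Morse geodesic. The half is chosen by Lemma~\ref{claim:Morse-projection2}, which is the rigorous form of your ``some candidate is good'': two candidates per end suffice because the two ends are decoupled. It then joins $\eta_1(L')$ to $g\cdot\eta_2(L')$ by a translate of a fixed geodesic $\alpha$ that is not $M_2$--Morse.

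This is exactly where non-hyperbolicity enters. The connector is a side of a quadrangle whose other sides are two $M_1$--Morse segments and a translate of $\alpha$, so it is Morse with a gauge independent of $L'$. The non-Morseness of $\alpha$ keeps $\eta_1$ and $g\cdot\eta_2$ apart, which forces the connector to fellow-travel each ray for a long stretch.
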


We postpone the proof of Lemma~\ref{lem:concatenations}, which is technical, until after the proof of Theorem~\ref{thm:sigma-compact-implies-strongly-sigma-compact}.
\begin{proof}[Proof of Theorem~\ref{thm:sigma-compact-implies-strongly-sigma-compact}]
    Let $G$ be a finitely generated group with $\sigma$-compact Morse boundary. If $G$ is hyperbolic or has empty Morse boundary, then its Morse boundary is strongly $\sigma$-compact and hence the statement trivially holds. Thus, we assume that $G$ is non-hyperbolic with non-empty Morse boundary.

    Let $C$ be the constant from Lemma~\ref{lem:concatenations}. By Lemma~\ref{lem:sigma_cpt2} there exists a sequence $(M_n)_n$ of Morse gauges such that any $C$--quasi-geodesic which is Morse is $M_n$--Morse for some $n$ by Lemma~\ref{lem:sigma_cpt2}. Suppose toward a contradiction that $\mb G$ is not strongly $\sigma$-compact. Then by Lemma~\ref{lem:strongly-sigma-compact} there exists a Morse gauge $M$ and a sequence of $M$--Morse geodesic segments $(\gamma_n)_n$ such that for all $n$, the geodesic $\gamma_n$ is not $M_{n}$--Morse.  

    Applying Lemma~\ref{lem:concatenations} to $(\gamma_n)_n$, we obtain a $C$--quasi-geodesic $\eta$ containing a translate of each the $\gamma_n$. Since each $\gamma_n$ is $M$--Morse, $\eta$ is Morse by Lemma~\ref{lem:concatenations}. In particular, by $\sigma$-compactness, there exists an integer $i\geq 0$ such that $\eta$ is $M_i$--Morse. Consequently, $\gamma_i$, which is a subsegment of a translate of $\eta$, must be $M_i$--Morse. This contradicts  the choice of $\gamma_i$ and concludes the proof.
\end{proof}

Intuitively, to prove Lemma~\ref{lem:concatenations}, we would like to build $\eta$ as a concatenation of translates of the segments $\gamma_n$.  That is, we would like to choose elements $h_n\in G$ such that $h_n\gamma_n^+=h_{n+1}\gamma_{n+1}^-$. However, when concatenating  arbitrary quasi-geodesics, the quasi-geodesic constants of the concatenation cannot be controlled.  Moreover, when each $\gamma_n$ is $M$--Morse, the Morse constant of the concatenation may grow as we concatenate more segments.

To solve this problem, we choose translates of $\gamma_n$ that have larger and larger gaps between them, that is, so that the distance from $g_n\gamma_n^+$ to $g_{n+1}\gamma_{n+1}^-$ goes to infinity.  Then we build $\eta$ by concatenating the translates $g_n\gamma_n$ and the geodesics $[g_n\gamma_n^+, g_{n+1}\gamma_{n+1}^-]$ between them. The key feature here is that the lengths of the extra geodesics we add go to infinity, and this allows us to gain uniform control on the quasi-geodesic constants of the resulting infinite path. 

In the next two technical lemmas, we analyze the Morse and quasi-geodesic properties of increasingly complicated concatenation structures.  First, in Lemma~\ref{claim:Morse-projection2}, we study the possible concatenations of a bi-infinite Morse quasi-geodesic with a finite quasi-geodesic.  Then, given a quasi-geodesic $\beta$ and a translate $g\gamma$ of a fixed geodesic $\gamma$, we analyze the concatenation $\beta\ast [\beta^+,g\gamma^-]\ast g\gamma$ in Lemma~\ref{lem:single-concatenation}.  With these in hand, we are then able to prove Lemma~\ref{lem:concatenations}.

\begin{lem}
\label{claim:Morse-projection2}
        Let $X$ be a metric space, let $\zeta \colon [0, T]\to X$ be a $K$--quasi-geodesic, and let $\eta$ be a bi-infinite $M$--Morse geodesic with $\zeta(0) = \eta(0)$. Then at least one of $\eta(-\infty, 0]\ast \zeta$ and $\eta(\infty, 0]\ast \zeta$ is a $(K\cdot C')$--quasi-geodesic, where $C' = C'(K) = 16 + 3M(3K)$.
\end{lem}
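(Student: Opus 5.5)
The plan is to argue by the position of $\zeta$ relative to the two rays of $\eta$, using the Morse property of $\eta$ to show that $\zeta$ cannot ``backtrack'' along both rays. Write $\eta_- = \eta(-\infty,0]$ and $\eta_+ = \eta[0,\infty)$. Suppose, towards a contradiction, that neither $\eta_-\ast\zeta$ nor $\eta_+\ast\zeta$ (each read as coming in from infinity along the ray to $\eta(0)=\zeta(0)$ and then following $\zeta$) is a $KC'$--quasi-geodesic. Since $\eta_\pm$ and $\zeta$ are individually a geodesic and a $K$--quasi-geodesic, the upper bound in the quasi-geodesic inequality is automatic (constant $K$). So failure means there are points $p_\pm=\eta(\mp s_\pm)$ on the two rays and times $t_\pm\in[0,T]$ with
\[
d(p_\pm,\zeta(t_\pm)) < \frac{s_\pm + t_\pm}{KC'} - KC'.
\]
In words, $\zeta(t_\pm)$ comes back close to the ray $\eta_\pm$, much closer than the path length from $p_\pm$ through $\eta(0)$.

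The first step is to convert this into the statement that $\zeta$ passes near a point of $\eta_\pm$ at distance comparable to $t_\pm$ from $\eta(0)$. Let $q_\pm$ be a point of $\eta$ closest to $\zeta(t_\pm)$. Set up a geodesic triangle or quadrangle with vertices $\eta(0)$, $\zeta(t_\pm)$ and $q_\pm$, using $\zeta[0,t_\pm]$ as one side. Then apply Lemma~\ref{lem:Concat_with_npp}(1): the path $[\zeta(t_\pm),q_\pm]\ast\eta$ (in the appropriate direction) is a $(3,0)$--quasi-geodesic. Its endpoints can be joined by the $K$--quasi-geodesic $\zeta[0,t_\pm]$, or by $[\eta(0),\zeta(t_\pm)]$. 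Applying the Morse property of $\eta$ with gauge value $M(3K)$ to the relevant concatenation, we get that the segment of $\eta$ between $\eta(0)$ and $q_\pm$ lies near $\zeta$, and conversely. The constant $3M(3K)$ in $C'$ should come from exactly this step. The conclusion to aim for is: if $q_+$ lies on $\eta_+$ and is far from $\eta(0)$ (compared with $d(\zeta(t),q_+)$), then $\zeta[0,t_+]$ fellow-travels $\eta[0,d(\eta(0),q_+)]$ within roughly $M(3K)$.

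Next, combine the two sides. If $\zeta$ makes definite progress along $\eta_+$ (reaching within $O(M(3K))$ of $\eta(a)$ with $a$ large) and also along $\eta_-$ (reaching near $\eta(-b)$), then by the previous step the $K$--quasi-geodesic $\zeta$ passes near $\eta(a)$ at some time $\tau_a\approx Ka$ and near $\eta(-b)$ at some time $\tau_b\approx Kb$. Suppose $\tau_a<\tau_b$. The subpath $\zeta[\tau_a,\tau_b]$ travels from near $\eta(a)$ to near $\eta(-b)$, so it passes near $\eta(0)=\zeta(0)$ at some time $\tau\ge\tau_a\gtrsim a/K$. This contradicts the lower quasi-geodesic bound for $\zeta$ between times $0$ and $\tau$ once $a$ exceeds a constant times $K\,M(3K)$. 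Hence at most one ray can be ``used'' by $\zeta$ beyond a bounded amount, and the concatenation with the other ray is a quasi-geodesic. To finish, verify the lower bound for $\eta_\mp\ast\zeta$ directly. The distance from $\eta(\mp s)$ to $\zeta(t)$ is at least roughly $s+d(\eta(0),\zeta(t))$ minus a constant times $M(3K)$: the projection of $\zeta(t)$ to $\eta$ is near $\eta_\pm$ or near $\eta(0)$, and projections to Morse geodesics are coarsely Lipschitz. Combined with $d(\eta(0),\zeta(t))\ge t/K-K$, this gives the $(KC',KC')$ bound.

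The main obstacle is the bookkeeping in this final lower bound. We need a clean estimate $d(\eta(-s),x)\ge s+d(\eta(0),x)-c\,M(3K)$ whenever the closest-point projection of $x$ to $\eta$ lies on $\eta_+$ or within $O(M(3K))$ of $\eta(0)$. I would get this from Lemma~\ref{lem:Concat_with_npp}(1) together with the Morse property, by approximating geodesics from $\eta(-s)$ to $x$ by the concatenation through the projection point. Then I would check that the constants fit into $C'=16+3M(3K)$.
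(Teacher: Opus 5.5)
Your overall plan matches the paper's: if both rays fail, $\zeta$ must fellow-travel both rays for a definite distance, which the Morse property forbids. The problem is that the step meant to produce the fellow-travelling does not work as written. You project $\zeta(t_\pm)$ onto $\eta$ and use Lemma~\ref{lem:Concat_with_npp}(1) to get the $(3,0)$--quasi-geodesic $[\zeta(t_\pm),q_\pm]\ast\eta$. The Morse property of $\eta$ only controls quasi-geodesics with \emph{both} endpoints on $\eta$. This path starts off $\eta$ and contains the segment $[\zeta(t_\pm),q_\pm]$, which need not be Morse, so nothing forces $\zeta[0,t_\pm]$ to stay near it. The path that does have both endpoints on $\eta$ is $\zeta[0,t_\pm]\ast[\zeta(t_\pm),q_\pm]$, and Lemma~\ref{lem:Concat_with_npp} does not cover it. That lemma needs $\zeta(t_\pm)$ to be a closest point \emph{on $\zeta$} to $q_\pm$, and $\zeta$ may pass near $q_\pm$ and then move away, so the path backtracks. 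The missing idea is to project in the opposite direction, as the paper does. Suppose $d(\eta(s),\zeta(t))<d(\eta(0),\zeta(t))/4-C'/2$, and let $\zeta(t')$ be a point of $\zeta$ closest to $\eta(s)$. Then $\zeta[0,t']\ast[\zeta(t'),\eta(s)]$ is a $3K$--quasi-geodesic with both endpoints on $\eta$. Hence $\zeta[0,t']\subset\mathcal N_{M(3K)}(\eta[0,s])$, and the triangle inequality gives $d(\zeta(t'),\eta(0))\ge C'$. If this occurs for some $s_1\le 0$ and some $s_2\ge 0$, take $t'=\min\{t_1',t_2'\}$. Then $\zeta(t')$ is within $M(3K)$ of both $\eta[0,s_1]$ and $\eta[0,s_2]$, hence within $3M(3K)<C'$ of $\eta(0)$, a contradiction. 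This is where the term $3M(3K)$ in $C'$ comes from, and it also replaces your second step.

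Your closing step rests on two claims that are false for Morse geodesics in general, which are only sublinearly contracting (Lemma~\ref{lem:sublinearandMorse}):
\begin{itemize}
\item closest-point projection to $\eta$ is coarsely Lipschitz;
\item the additive bound $d(\eta(-s),x)\ge s+d(\eta(0),x)-c\,M(3K)$ holds.
\end{itemize}
For a counterexample, glue to a line $\eta$, at widely spaced places, paths of length $L$ whose endpoints are $2\sqrt L$ apart on $\eta$, for a sequence $L\to\infty$. Then $\eta$ is Morse. Yet two points at distance $2$ near the middle of such a path project $2\sqrt L$ apart. Taking $\eta(0)$ midway between that path's endpoints, the additive bound fails by about $2\sqrt L$. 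Moreover, knowing that $\zeta$ never comes close to the far part of one ray says nothing about where far-away points $\zeta(t)$ project.

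None of this is needed. The paper fixes $(s,t)$ and handles three situations directly:
\begin{itemize}
\item $\abs{s}>2d(\zeta(t),\eta(0))$;
\item $\abs{s}<d(\zeta(t),\eta(0))/2$;
\item $d(\eta(s),\zeta(t))\ge d(\eta(0),\zeta(t))/4-C'/2$.
\end{itemize}
In each of these the required lower bound follows from the triangle inequality and the quasi-geodesic bound for $\zeta$ alone. The Morse argument above is needed only in the complementary case, which it shows can occur on at most one side. If you prefer a projection-based ending, Lemma~\ref{lem:Concat_with_npp}(1) gives $d(x,\eta(-s))\ge\frac13\bigl(d(x,\pi(x))+d(\pi(x),\eta(-s))\bigr)$ whenever $\pi(x)$ lies on $\eta[0,\infty)$. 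That multiplicative bound suffices there, but you would still need the case split for points projecting far out on the other ray.
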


Note that for the lemma to hold it is essential that $\eta$ is Morse. Indeed, take $\zeta$ to be the log-spiral quasi-geodesic around the origin in $\Z^2$, and let $\eta$ be any bi-infinite geodesic containing the origin. The longer the subpath of $\zeta$ we choose (which are all are uniform quality quasi-geodesics), the more often both sides of $\eta$ intersect $\zeta$, contradicting the lemma for any choice of $C'(K)$. 

\begin{figure}
    \centering
    \includegraphics[width=0.6\linewidth]{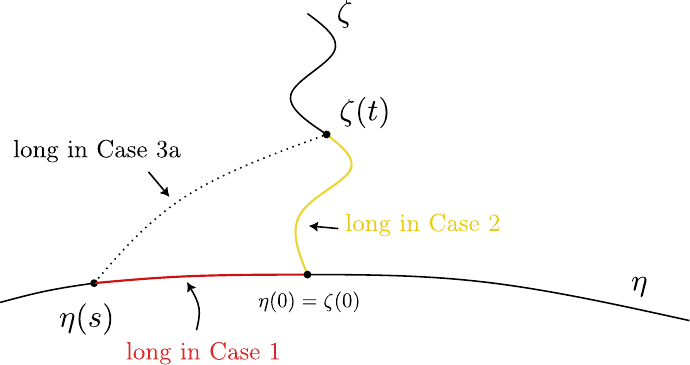}
    \caption{The different cases represent different subsegments being long.}
    \label{fig:anti-spiral1}
\end{figure}

\begin{proof}
    It suffices to prove that
    \begin{align}\label{eq:upper-bound1}
        \frac{\abs{t}+\abs{s}}{KC'} - KC'\leq d(\eta(s), \zeta(t))
    \end{align}
    either for all $t\in [0, T]$ and $s\in [0,\infty)$ or for all $t\in [0, T]$ and $s\in [0, -\infty)$. To do so, we divide the proof into cases. In particular, we show that if $s\in (-\infty, \infty)$ and $t\in [0, T]$ satisfy the assumptions of Case 1, 2 or 3a, then \eqref{eq:upper-bound1} holds. Finally, we show that if Case 3b occurs for positive $s$, it cannot occur for negative $s$ and vice versa. It is in this case where we use that $\eta$ is $M$--Morse. Cases 1, 2 and 3a are depicted in Figure~\ref{fig:anti-spiral1}.\\

    \noindent\textbf{Case 1: $\abs{s} = d(\eta(0), \eta(s)) > 2 d(\zeta(t), \eta(0))$.} In this case, using the triangle inequality (in the first step), the case assumption (in the second step) and the fact that $\eta$ is a geodesic and $\zeta$ is a $K$--quasi-geodesic (in the third step), we obtain \eqref{eq:upper-bound1}:
    \begin{align*}
        d(\eta(s), \zeta(t)) &\geq d(\eta(s), \eta(0)) - d(\eta(0), \zeta(t)) \geq \frac{d(\eta(0), \eta(s))}{4} + \frac{d(\eta(0), \zeta(t))}{2}\\ &\geq \frac{\abs{s}}{4} + \frac{\abs{t}}{2K} - \frac{K}{2} \geq \frac{\abs{s}+\abs{t}}{4K} - 4K.
    \end{align*}
    
    \noindent\textbf{Case 2: $\abs{s} = d(\eta(0), \eta(s)) < d(\zeta(t), \eta(0))/2$.}  In this case, using the triangle inequality (in the first step), the case assumption (in the second step) and the fact that $\eta$ is a geodesic and $\zeta$ is a $K$--quasi-geodesic (in the third step), we obtain \eqref{eq:upper-bound1}:
    \begin{align*}
        d(\zeta(t), \eta(s)) &\geq d(\zeta(t), \eta(0)) - d(\eta(0), \eta(s)) \geq \frac{d(\zeta(t), \eta(0))}{4} + \frac{d(\eta(0), \eta(s))}{2}\\
        &\geq \frac{\abs{t}}{4K} - \frac{K}{4} +\frac{\abs{s}}{2} \geq \frac{\abs{t}+\abs{s}}{4K} - 4K.
    \end{align*}

    \noindent\textbf{Case 3:  Neither Case 1 nor Case 2 holds.}  That is,
    \begin{align*}
        \frac{d(\zeta(t), \eta(0))}{2}\leq d(\eta(0), \eta(s))\leq 2d(\zeta(t), \eta(0)). 
    \end{align*}
    
    We divide this case into two subcases.\\
    
    \noindent\textbf{Case 3a: $d(\eta(s), \zeta(t))\geq d(\eta(0), \zeta(t))/4 - C'/2$. }
    In this case, we can again use the case assumptions (in the first and second step) and the fact that $\eta$ and $\zeta$ are a geodesic and a $K$--quasi-geodesic (in the third step) to obtain \eqref{eq:upper-bound1}:
    \begin{align*}
        d(\zeta(t), \eta(s))&\geq \frac{d(\eta(0), \zeta(t))}{4} - \frac{C'}{2}\geq \frac{d(\eta(0), \zeta(t))}{8} + \frac{d(\eta(0), \eta(s))}{16} - \frac{C'}{2}\\
        &\geq \frac{\abs{t}}{8K} - \frac{K}{8} +\frac{\abs{s}}{16} - \frac{C'}{2} \geq \frac{\abs{s}+\abs{t}}{C'K} - C'K.
    \end{align*}

    \begin{figure}
        \centering
        \includegraphics[width=0.6\linewidth]{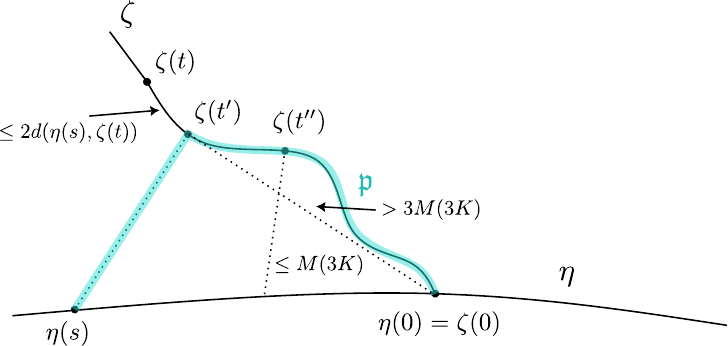}
        \caption{The point $\zeta(t')$ is the point on $\zeta$ closest to $\eta(s)$ in Case 3b. Since $\mathfrak{p}$ is a $3K$--quasi-geodesic, $\mathfrak{p}$, and specifically $\zeta(t'')$, is in the $M(3K)$--neighborhood of $\eta[0,s]$.}
        \label{fig:snti-spiral2}
    \end{figure}
    
    \noindent\textbf{Case 3b: $d(\eta(s), \zeta(t)) < d(\eta(0), \zeta(t))/4 - C'/2$.} Let $\zeta(t')$ be the point on $\zeta$ closest to $\eta(s)$. The path $\mathfrak{p} = \zeta[0, t'] \ast [\zeta(t'), \eta(s)]$ is a $3K$--quasi-geodesic by Lemma~\ref{lem:Concat_with_npp}. By the triangle inequality and the fact that $d(\zeta(t'), \eta(s))\leq d(\zeta(t), \eta(s))$ by our choice of $t'$, $d(\zeta(t), \zeta(t'))\leq 2 d(\eta(s), \zeta(t))$.  Using the case assumption and the triangle inequality, this yields $d(\zeta(t'),\eta(0))\geq d(\zeta(t),\eta(0))-d(\zeta(t),\zeta(t')) >4(d(\eta(s),\zeta(t))+C'/2)-2d(\eta(s),\zeta(t))=2d(\eta(s),\zeta(t))+2C'\geq C'$.
    
    Moreover, since $\eta$ is $M$--Morse, $d(\zeta(t''), \eta[0, s])\leq M(3K)$ for all $0\leq t''\leq t'$. We will now show that this case cannot happen for both positive and negative $s$. Assume it does. Then there exist $s_1\leq 0\leq s_2$ and $t_1', t_2'\in [0, T]$ such that for $i = 1, 2$
    \begin{align}\label{eq:sandwich}
        \zeta[0, t_i'] \subset \mc N_{M(3K)} \left( \eta[0, s_i]\right)\qquad \text{and}\qquad d(\zeta(t_i'), \eta(0))\geq C' > 3M(3K).
    \end{align}

    \begin{figure}
        \centering
        \includegraphics[width=0.6\linewidth]{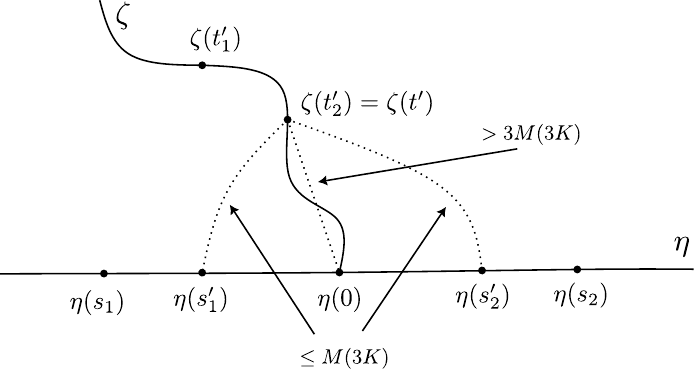}
        \caption{In this picture, $t_1' > t_2'$ and hence $t' = t_2'$. If $t_2' > t_1'$ we instead have $t' = t_1'$.}
        \label{fig:anti-spiral3}
    \end{figure}

    Let $t'$ be the smaller of $t_1'$ and $t_2'$. By \eqref{eq:sandwich} there exists $s_1'  \leq 0 \leq s_2'$ with $d(\zeta(t_i'), \eta(s_i'))\leq M(3K)$ for $i = 1,2$. This is depicted in Figure~\ref{fig:anti-spiral3}. Since $\eta(s_1')$ and $\eta(s_2')$ are on a geodesic containing $\eta(0)$, we obtain $d(\eta(s_1'), \eta(0))\leq d(\eta(s_1'), \eta(s_2'))\leq 2M(3K)$. Thus, by the triangle inequality, $d(\zeta(t'), \eta(0))\leq 3M(3K)$, contradicting \eqref{eq:sandwich}. 

    Hence, Case 3b cannot happen for both $s<0$ and $s >0$. In all other cases, \eqref{eq:upper-bound1} is satisfied, concluding the proof of the claim. 
\end{proof}
  
The next lemma deals with a more complicated concatenation.
\begin{lem}\label{lem:single-concatenation}
    Let $G$ be a non-hyperbolic finitely generated group with non-empty Morse boundary, and let $X = \cay$ for some finite generating set $S$ of $G$. There exist constants $C_1, C_2\geq 1$ and a Morse gauge $M$ such that the following holds for all constants $L$, all $C_1$--quasi-geodesics $\beta$, and all geodesics $\gamma$ in $X$. There exists an element $g\in G$ such that 
    \begin{enumerate}[(\roman*)]
        \item the path $\beta\ast [\beta^+, g\cdot \gamma^-] \ast g\cdot \gamma$ is a $C_2$--quasi-geodesic; 
        \item the path $[\beta^+, g\cdot \gamma^-]\ast g\cdot \gamma$ is a $C_1$--quasi-geodesic; and
        \item the geodesic $[\beta^+, g\cdot \gamma^-]$ is $M$--Morse and has length at least $L$.
    \end{enumerate}
\end{lem}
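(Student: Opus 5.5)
The plan is to use a bi-infinite Morse geodesic $\eta_0$ in $X$ as a uniform ``spacer''. Since $G$ has non-empty Morse boundary, the usual arguments give a Morse element $h$, and hence a bi-infinite $M_0$--Morse geodesic $\eta_0$ through the identity that is $h$--periodic up to bounded distance. Non-hyperbolicity is presumably used only to rule out degenerate cases; in the hyperbolic case any geodesic would do. Set $C_1 := K\cdot C'(K)$, where $C'(K)=16+3M_0(3K)$ is the constant from Lemma~\ref{claim:Morse-projection2} for a suitable $K$; since $C'$ depends on $K$, I fix $K$ first. The gauge $M$ in (iii) will be the gauge of subsegments of translates of $\eta_0$ (up to a bounded perturbation), so $M\approx M_0$.

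\textbf{Step 1: attach $\gamma$ to a spacer.} Apply Lemma~\ref{claim:Morse-projection2} to $\zeta=\gamma$ (a geodesic, so $K=1$), after translating so that $\gamma^-$ lies on $\eta_0$. One of the two half-rays of $\eta_0$, reversed and concatenated with $\gamma$, is a $C'(1)$--quasi-geodesic. This yields a ray $r_\gamma$ ending at $\gamma^-$ such that $r_\gamma\ast\gamma$ is a uniform quasi-geodesic.

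\textbf{Step 2: attach $\beta$ to a spacer.} Apply Lemma~\ref{claim:Morse-projection2} at $\beta^+$, using the reversal of $\beta$ as the $C_1$--quasi-geodesic $\zeta$ and a translate of $\eta_0$ through $\beta^+$. This gives a half-ray $r_\beta$ starting at $\beta^+$ with $\beta\ast r_\beta$ a $C_1C'(C_1)$--quasi-geodesic. There is a circularity between $C_1$ and $C'(C_1)$. To avoid it, I would take $C_1$ to be the constant produced in Step 1, and let $C_2$ absorb the larger constant $C_1C'(C_1)$ arising here.

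\textbf{Step 3: join the two spacers.} Choose $g$ so that $g\gamma^-$ is a point on $r_\beta$ at distance at least $L$ from $\beta^+$, and so that $g r_\gamma$ coincides with $r_\beta$ reversed. This is possible if both rays are subrays of translates of the same periodic line $\eta_0$, taking $g$ to be a suitable product of a translation with a power of $h$. The orientations may need matching, which I would handle by using $\eta_0$ or its reverse; $h^{-1}$ is also Morse. With this choice, $[\beta^+,g\gamma^-]$ is (a bounded perturbation of) a subsegment of $\eta_0$, hence $M$--Morse of length $\ge L$, which is (iii). Moreover $[\beta^+,g\gamma^-]\ast g\gamma$ is a subpath of $g(r_\gamma\ast\gamma)$, which gives (ii).

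\textbf{Step 4 (main obstacle): the full concatenation.} The hard part is (i), the quasi-geodesicity of $\beta\ast[\beta^+,g\gamma^-]\ast g\gamma$. Each consecutive pair is a quasi-geodesic, but the outer pieces $\beta$ and $g\gamma$ could come back close to each other. I would argue that the middle segment $\sigma$ is $M$--Morse and that both $\beta\ast\sigma$ and $\sigma\ast g\gamma$ are quasi-geodesics. Take points $x\in\beta$ and $y\in g\gamma$. Their projections to $\sigma$ lie near $\beta^+$ and near $g\gamma^-$ respectively, up to a bound linear in the distances to $\sigma$; this follows from contraction via Lemma~\ref{lem:sublinearandMorse}. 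Then $d(x,y)$ is at least, up to constants, $d(x,\sigma)+|\sigma|+d(y,\sigma)$, which is comparable to the parameter length. Making this rigorous requires the sublinear contraction estimates and choosing the spacer length large compared with those constants. I expect this to be the delicate step; if it fails for short spacers, I would simply increase $L$, since the statement allows the length to be arbitrarily large.
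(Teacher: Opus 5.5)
There is a genuine gap in Step~3. Lemma~\ref{claim:Morse-projection2} chooses, outside your control, which end of the line through $\beta^+$ the ray $r_\beta$ runs to. It likewise chooses which end of the line through $\gamma^-$ the ray $r_\gamma$ comes from. For $[\beta^+,g\gamma^-]\ast g\gamma$ to be a subpath of $g(r_\gamma\ast\gamma)$, $g$ must send the incoming end of $r_\gamma$ to the end \emph{opposite} to the one $r_\beta$ runs to. The elements $g=ah^kb^{-1}$ carrying one translate of the axis to another preserve its orientation, so when the lemma's two outputs point to corresponding ends you are stuck. Reparametrizing $\eta_0$ or using $h^{-1}$ changes nothing, since the lemma only sees the two geometric half-lines, and an element of $G$ flipping the axis need not exist. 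This is not a technicality. For $G=\mathbb{Z}$, take $\beta=[0,n]$ and $\gamma$ running from $0$ to $-n$: every path $\beta\ast[\beta^+,g\gamma^-]\ast g\gamma$ backtracks by at least $n$, so the lemma is false there. Your argument never uses non-hyperbolicity (it is not there merely to exclude degenerate cases), so it would also prove the lemma for $\mathbb{Z}$. Independently, non-empty Morse boundary gives a bi-infinite Morse geodesic but not a Morse element; the paper recalls torsion groups containing Morse geodesics. So the periodicity of $\eta_0$ that you use to slide $g\gamma^-$ along a fixed line is not available either.

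The missing idea, which is how the paper proceeds, is to keep two different lines. Let $\eta_1'$ leave $\beta^+$ along its good half-ray and $\eta_2'=g\eta_2$ enter $g\gamma^-$ along its good half-ray. Join $\eta_1'(L')$ to $\eta_2'(L')$ by a translate $\alpha'$ of a fixed geodesic $\alpha$ that is \emph{not} $M_2$--Morse; such an $alpha$ exists exactly because $G$ is not hyperbolic. Here $L'$ is chosen large compared with $L$, $\abs{\alpha}$, $\delta$ and $\diam(\beta)+\abs{\beta}+\diam(\gamma)+\abs{\gamma}$. Then $[\beta^+,g\gamma^-]$ is the fourth side of a quadrangle whose other sides are $M_1$--Morse or equal to $\alpha'$, which gives the gauge $M$ in (iii) and $\delta$--slimness. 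The non-Morseness of $\alpha$ rules out your bad configuration. If $\eta_1'$ and $\eta_2'$ came close far from $\alpha'$, then $\alpha'$ would be a side of a quadrangle whose other sides are $M_1$--Morse or of length at most $M_1(3)$, hence $\alpha'$ would be $M_2$--Morse. Consequently $[\beta^+,g\gamma^-]$ stays $M_1(3)$--close to $\eta_1'$ for length at least $L+K$ after $\beta^+$, and likewise to $\eta_2'$ before $g\gamma^-$. Combined with the good half-rays, this gives (ii), the length bound in (iii), and quasi-geodesicity of $\beta$ followed by that initial piece. Item (i) is then the trivial estimate you hint at in Step~4, because the middle segment is longer than $\beta$ and $\gamma$ combined; no contraction estimates are needed.
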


\begin{proof}

    Let $\beta$ and $\gamma$ be as in the statement. By potentially increasing $C_1, C_2$ and $M$ slightly at the end, it suffices to prove the lemma in the case where the endpoints of $\beta \colon [0, T_1]\to X$ and $\gamma \colon  [0, T_2]\to X$ are  vertices.
    
    Since $G$ has non-empty Morse boundary, there is a biinfinite geodesic $\eta \colon (-\infty, \infty)\to X$ which is $M_1$--Morse for some Morse gauge $M_1$. We may assume that $\eta(0)$ is a vertex. 

    \begin{figure}
        \centering
        \includegraphics[width=0.6\linewidth]{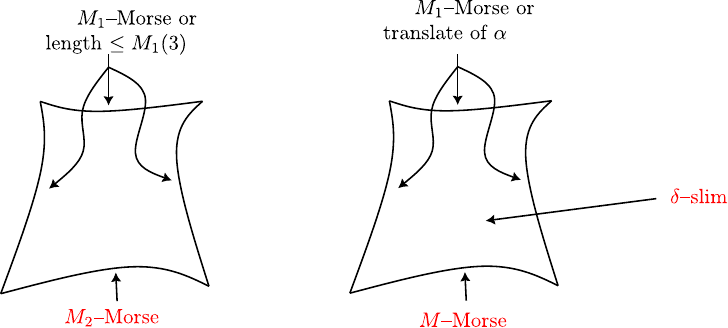}
        \caption{If any quadrangles satisfies the properties written in black, then the properties written in red follow.}
        \label{fig:dependency}
    \end{figure}

    Let $M_2$ be a Morse gauge such that in a geodesic quadrangle where three of the four sides are $M_1$--Morse or have length at most $M_1(3)$, the fourth is $M_2$--Morse; such a constant exists by noting that a finite geodesic is Morse for some Morse gauge depending only on its length and then applying Lemma~\ref{ref:triangles}. Let $\alpha$ be a geodesic segment that is not $M_2$--Morse whose endpoints are vertices. Since $G$ is not hyperbolic, such a geodesic $\alpha$ exists. Further, let $M$ be a Morse gauge and $\delta$ an integer such that a geodesic quadrangle where two of the four sides are $M_1$--Morse and the third side is (a translate of) $\alpha$ is $\delta$--slim and its fourth side is $M$--Morse.  The Morse gauge $M$ exists again by Lemma~\ref{ref:triangles}, and $\delta$ exists by Lemma~\ref{ref:delta}. Note that $M$ and $\delta$ only depend on $\alpha$ and $M_1$ and not on $\beta$ or $\gamma$. This dependency is depicted in Figure~\ref{fig:dependency}.

    Let $C_1' =16+3M_1(1)$, $C_1 = C_1' + 2M_1(3)+4, C_2' = 16 +3M_1(3C_1)$ and $C_2 = C_2' + 2M_1(3)+4$. Let $\eta_1, \eta_2$ be translates of $\eta$ such that $\eta_1(0) = \beta^+$ and $\eta_2(0) = \gamma^-$. Lemma~\ref{claim:Morse-projection2} implies that, up reparameterizing $\eta_1$ and/or $\eta_2$, the concatenation $\beta\ast \eta_1[0, \infty)$ is a $C_2'$--quasi-geodesic and the concatenation $\eta_2(\infty, 0]\ast \gamma$ is a $C_1'$--quasi-geodesic.

    \smallskip

    \begin{figure}
        \centering
        \includegraphics[width=0.7\linewidth]{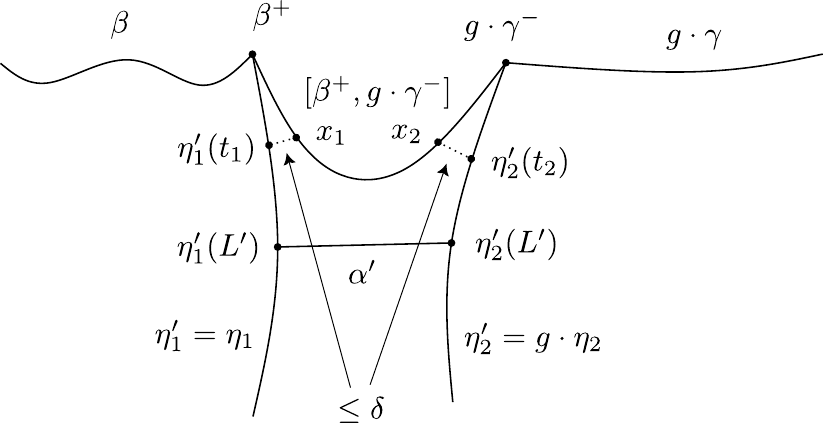}
        \caption{Setup of Lemma~\ref{lem:single-concatenation}.}
        \label{fig:complex-concatenation}
    \end{figure}
    Finally, let $L' \geq \abs{\alpha}+101\delta + L + K$, where $K = 2(\diam(\beta)+\abs{\beta}+\diam(\gamma)+\abs{\gamma})$ and $\abs{\beta}$ denotes the domain length of $\beta$. Let $g\in G$ be such that $\alpha' = [\eta_1(L'), g\cdot \eta_2(L')]$ is a translate of $\alpha$.  Such a $g$ exists because $G$ acts transitively on the vertices of $X=\Cay(G,S)$. Let $\eta_1' = \eta_1$ and $\eta_2' = g\cdot \eta_2$. This is depicted in Figure~\ref{fig:complex-concatenation}.

    Observe that $[\beta^+, g\cdot \gamma^-]$ is $M$--Morse by construction of $M$ and $g$: it is one side of a geodesic quadrangle whose other three sides are $\eta_1'[0,L']$, $\eta_2'[L', 0]$, which are $M_1$--Morse, and $\alpha'$, a translate of $\alpha$. 

    \begin{claim}\label{claim:close-enough}
        For $i = 1, 2$, there exists a point $x_i\in [\beta^+, g\cdot \gamma^-]$ and $t_i \in [L+\delta + K, L+100\delta+K]$ such that $d(x_i, \eta_i'(t_i))\leq \delta$. Moreover, one can choose $x_1, x_2$ such that every point on $[\beta^+, x_1]$ and $[x_2, g\cdot \gamma^-]$ is in the $M_1(3)$--neighbourhood of $\eta_1'[0, \infty)$ and $\eta_2'[0, \infty)$, respectively.
    \end{claim}

    The claim above implies $d(\beta^+, x_1)\geq L$ and $d(x_2, g\gamma^-)\geq L$. In particular, it shows that the geodesic $[\beta^+, g\cdot \gamma^-]$ has length at least $L$.  This shows (iii) holds.

    \begin{claimproof}[Proof of Claim]
        We show this for $i = 1$, as the proof for $i = 2$ is analogous. By construction and the definition of $\delta$, there exist points $y_1, y_2$ on $[\beta^+, g\cdot \gamma^-], \eta_2'$ or $\alpha'$ in the $\delta$--neighborhood of $\eta_1'(L+\delta+K)$ and $\eta_1'(L+100\delta+K)$, respectively. Since $L' - L - 100\delta - K> \abs{\alpha'}+\delta$, the point $y_i$ cannot lie on $\alpha'$ for $i=1,2$. If one of $y_1, y_2$ lies on $[\beta^+, g\cdot \gamma^-]$, then we can define $x_1$ as the closest point on $[\beta^+, g\cdot \gamma^-]$ to $\eta_1'(L+\delta+K)$, if $y_1$ lies on $[\beta^+, g\cdot \gamma^-]$, or as the closest point to $\eta_1'(L+100\delta+K)$, if $y_2$ lies on $[\beta^+, g\cdot \gamma^-]$. In this case $x_1$ clearly satisfies the first part of the statement. The moreover part follows from the fact that, by Lemma~\ref{lem:Concat_with_npp}, $[\eta_1'(L+\delta+K, x_1] \ast [x_1, \beta^+]$ is a $(3, 0)$--quasi-geodesic with endpoints on the $M_1$--Morse geodesic $\eta_1'$.
        
        Next, toward a contradiction, we assume that both $y_1$ and $y_2$ lie on $\eta_2'$. Let $y_1', y_2'$ points on $\eta_2'$ closest to $\eta_1'(L+\delta+K)$ and to $\eta_2'(L+100\delta+K)$, respectively. Applying  Lemma~\ref{lem:Concat_with_npp}, we see that $[\eta_1'(L+\delta+K), y_1']\ast [y_1', y_2']_{\eta_2'}\ast [y_2', \eta_1'(L+100\delta+K)]$ is a $(3, 0)$--quasi-geodesic with endpoints on $\eta_1'$. In particular, $y_1'$ is in the $M_1(3)$--neighborhood of $\eta_1'$. This is a contradiction to $\alpha$, and hence $\alpha'$, not being $M_2$--Morse, as it places $\alpha'$ in a quadrangle where all other sides are either subsegments of $\eta_1'$ or $\eta_2'$, and hence $M_1$--Morse, or have length at most $M_1(3)$. 
    \end{claimproof}

    \begin{claim}\label{claim:q-geo}
    The concatenations $\beta\ast[\beta^+, x_1]$ and $[x_2, g\cdot \gamma^-]\ast g \gamma$ are a $C_2$--quasi-geodesic and a $C_1$--quasi-geodesic, respectively. 
    \end{claim}
    \begin{claimproof}
        We prove this for $\beta\ast [\beta^+, x_1]$; the proof for $[x_2, g\cdot \gamma^-]\ast g \gamma$ is analogous. By Claim~\ref{claim:close-enough}, any point $z$ on $[\beta^+, x_1]$ has distance at most $M_1(3)$ to $\eta_1'[0, \infty)$ and hence, by the triangle inequality, distance at most $2M_1(3)$ to $\eta_1'(d(\beta^+, z))$. Since $\beta\ast \eta_1'[0, \infty)$ is a $C_2'$--quasi-geodesic, the above implies that $\beta\ast [\beta^+, x_1]$ is a $C_2 \geq C_2'+2M_1(3)$--quasi-geodesic.
    \end{claimproof}

    \begin{claim}\label{claim:full-q-geo}
        The concatenations $\beta\ast[\beta^+, g\cdot \gamma^-]$ and $[\beta^+, g\cdot \gamma^-]\ast g\gamma$ are a $C_2$--quasi-geodesic and a $C_1$--quasi-geodesic, respectively. 
    \end{claim}
    \begin{claimproof}
        We prove this for $\xi = \beta\ast [\beta^+, g\cdot \gamma^-]$; the proof for $[\beta^+, g\cdot \gamma^-]\ast g\cdot \gamma$ is analogous. By Claim~\ref{claim:q-geo}, it is enough to prove that the lower quasi-geodesic inequality holds for points $p = \xi(s)$ on $\beta$ and $p' = \xi(s')$ on $[\beta^+, g\cdot \gamma^-]$ with $\ell = d(\beta^+, p')\geq L+K$. By the triangle inequality we have that $d(p, p')\geq \ell - \diam(\beta) \geq \ell/2$. We also know that $\abs{s'- s}\leq \ell + \abs{\beta}\leq 2\ell$, hence $d(p, p')\geq \abs{s'-s}/4\geq \abs{s'-s}/C_2 - C_2$. 
    \end{claimproof}

    \begin{claim}\label{claim:total-full-q-geo}
        The concatenation $\xi = \beta\ast[\beta^+, g\cdot \gamma^-]\ast g\gamma$ is a $C_2$--quasi-geodesic. 
    \end{claim}
    \begin{claimproof}
        By Claim~\ref{claim:full-q-geo}, it is enough to prove that the lower quasi-geodesic inequality holds for points $p = \xi(s)$ on $\beta$ and $p' = \xi(s')$ on $g\cdot \gamma$ with $\ell = d(\beta^+, g\cdot \gamma^-)\geq L+K = L+2\diam(\beta) + 2\abs{\beta}+2\diam(\gamma)+2\abs{\gamma}$. By the triangle inequality we have that $d(p, p')\geq \ell - \diam(\beta) -\diam(\gamma) \geq \ell/2$. We also know that $\abs{s'- s}\leq \ell + \abs{\beta}+\abs{\gamma}\leq 2\ell$, hence $d(p, p')\geq \abs{s'-s}/4\geq \abs{s'-s}/C_2 - C_2$. 
    \end{claimproof}

\end{proof}

We are now ready to prove Lemma~\ref{lem:concatenations}.
\begin{proof}[Proof of Lemma~\ref{lem:concatenations}]
    Let $(\gamma_n)_n$ be a sequence of geodesic segments in $X$, as in the statement.  Let $C_1, C_2, M_0$ be the constants and Morse gauge from Lemma~\ref{lem:single-concatenation}. We prove Lemma~\ref{lem:concatenations} for $C = 4C_2$. Let $g_0 = 1$, and define $\eta_0 = \beta_0 =\gamma_0$. For $i\geq 0$, inductively apply Lemma~\ref{lem:single-concatenation} to $\beta = \beta_i$, $\gamma = \gamma_{i+1}$ and $L = L_i =  2(i-1)(\abs{\eta_{i-2}}+1)$ to get an element $g_{i+1}\in G$ such that $\beta_{i+1} = [\beta_i^+, g_{i+1}\cdot \gamma_{i+1}^-]\ast g_{i+1}\cdot\gamma_{i+1}$ is a $C_1$--quasi-geodesic, $\beta_i \ast \beta_{i+1}$ is a $C_2$--quasi-geodesic, and $[\beta_i^+, g_{i+1}\cdot \gamma_{i+1}^-]$ is $M_0$--Morse and has length at least $L_i$. Lastly, define $\eta_{i+1} = \eta_i \ast \beta_{i+1}$ and
    \begin{align*}
        \eta = \prod_{i = 0}^\infty g_i\cdot \gamma_i\ast [g_i\cdot\gamma_i^+, g_{i+1}\cdot \gamma_{i+1}^-] = \prod_{i=0}^\infty \beta_i.
    \end{align*}

    \begin{claim}\label{claim:con_qg}
        The path $\eta$ is a $4C_2$--quasi-geodesic.
    \end{claim}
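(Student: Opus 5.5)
We need to show $\eta=\prod_i\beta_i$ is a $4C_2$--quasi-geodesic. The upper bound is automatic, since $\eta$ is a concatenation of paths each of which is $C_1$--Lipschitz up to additive constant; in fact, parametrizing by arc length in the Cayley graph, $d(\eta(s),\eta(s'))\le |s-s'|$. So the whole content is the lower bound $d(\eta(s),\eta(s'))\ge |s-s'|/(4C_2)-4C_2$. I would take $\eta(s)\in\beta_i$ and $\eta(s')\in\beta_j$ with $i\le j$ and split into two cases.

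\textbf{Case $j\le i+1$.} Both points lie on $\beta_i\ast\beta_{i+1}$, which is a $C_2$--quasi-geodesic by the inductive application of Lemma~\ref{lem:single-concatenation}. This gives the bound with constant $C_2$ directly.

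\textbf{Case $j\ge i+2$.} Here I would exploit the growing gaps. The segment $\beta_{j}$ begins with the geodesic $[\beta_{j-1}^+, g_j\gamma_j^-]$, of length at least $L_{j-1}=2(j-2)(\abs{\eta_{j-3}}+1)$. This length dominates everything built before stage $j-2$. The plan is to compare $\eta(s')$ with the prefix $\eta_{j-1}$ via the $C_2$--quasi-geodesic $\beta_{j-1}\ast\beta_j$, and then to show that the portion of $\eta$ up to $\beta_{j-2}$ is negligible in size relative to the gap, in the same way as Claims~\ref{claim:full-q-geo} and \ref{claim:total-full-q-geo} were proved.

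Concretely, I would argue by induction on $j$ that $\eta_j$ is a $4C_2$--quasi-geodesic. Write $\eta_j=\eta_{j-2}\ast(\beta_{j-1}\ast\beta_j)$. For $p\in\eta_{j-2}$ and $p'\in\beta_{j-1}\ast\beta_j$ I would reduce to the following two situations.
\begin{enumerate}[(a)]
\item $p'$ lies on $\beta_{j-1}$ or in the initial part of $\beta_j$, within distance $\approx\abs{\eta_{j-2}}$ of $\beta_{j-1}^+$. Then I use the inductive hypothesis on $\eta_{j-1}$ together with the $C_2$ bound on $\beta_{j-1}\ast\beta_j$, losing at most a factor $2$.
\item $p'$ lies far along $\beta_j$, at distance $\ell$ beyond that region. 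Then $d(p,p')\ge \ell-\diam(\eta_{j-1})\ge \ell/2$ by the triangle inequality, while the parameter difference is at most $\ell+\abs{\eta_{j-1}}\le 2\ell$. This yields the constant $4$.
\end{enumerate}

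The main obstacle is the bookkeeping. One must check that $L_{j-1}$ actually exceeds a fixed multiple of $\abs{\eta_{j-2}}$ and $\diam(\eta_{j-2})$, so that the constants do not degrade with $j$. The indices in $L_i$ were chosen with a lag, so it is necessary to verify carefully which prefix the gap dominates. If the lag is off by one, I would instead apply the far-point argument with $\eta_{j-3}$ and handle the middle pieces $\beta_{j-2}\ast\beta_{j-1}\ast\beta_j$ via the overlapping $C_2$--quasi-geodesics $\beta_{k}\ast\beta_{k+1}$. The bound $4C_2$ should then emerge from combining the factor $2$ losses with $C_2$.
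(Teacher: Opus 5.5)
Your preliminary reductions match the paper: the upper bound is free, pairs on $\beta_i\ast\beta_{i+1}$ follow from the $C_2$ bound, and the real case is $\eta(s)\in\eta_{j-2}$, $\eta(t)\in\beta_j$. Your first sentence for that case also states the paper's idea. The gap is in the concrete scheme you then commit to.

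\textbf{Case (a).} You invoke the inductive $4C_2$ bound on $\eta_{j-1}$ and then ``lose at most a factor $2$''. That yields $8C_2$ for $\eta_j$, so the induction does not close. This is exactly the degradation you flag but do not resolve. It cannot be repaired along this route: let $P,D$ be the parameter lengths from $p$ to $\beta_{j-1}^+$ and from $\beta_{j-1}^+$ to $p'$. From $d(p,\beta_{j-1}^+)\ge P/(4C_2)-4C_2$ and $d(\beta_{j-1}^+,p')\le D$ you never recover $(P+D)/(4C_2)-4C_2$.

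\textbf{Case (b).} The estimate $d(p,p')\ge \ell-\diam(\eta_{j-1})\ge \ell/2$ needs $\ell\ge 2\diam(\eta_{j-1})$. Your region in (a) only has size about $\abs{\eta_{j-2}}$, and $\diam(\eta_{j-1})$ includes the translate of $\gamma_{j-1}$, whose length $\abs{\eta_{j-2}}$ does not control. Also, ``parameter difference $\le \ell+\abs{\eta_{j-1}}$'' presumes $p'$ lies on the geodesic gap. So points of $g_j\gamma_j$ are covered by neither case.

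\textbf{Fallback.} The fallback for an index lag is invalid. Knowing that $\beta_{j-2}\ast\beta_{j-1}$ and $\beta_{j-1}\ast\beta_j$ are $C_2$--quasi-geodesics says nothing about $\beta_{j-2}\ast\beta_{j-1}\ast\beta_j$ when $\beta_{j-1}$ is short relative to its neighbours. Two right-angle turns in $\R^2$ make a U-turn.

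\textbf{The missing idea.} You need neither induction nor a near/far split. As the paper does, anchor at the single junction point $\eta(s_0)=\beta_{j-2}^+=\beta_{j-1}^-$. For every $\eta(s)\in\eta_{j-2}$ and \emph{every} $\eta(t)\in\beta_j$,
\[
d(\eta(s),\eta(t))\ \ge\ d(\eta(s_0),\eta(t))-\abs{s-s_0}\ \ge\ \frac{\abs{t-s_0}}{C_2}-C_2-\abs{\eta_{j-2}}.
\]
This uses only that $\beta_{j-1}\ast\beta_j$ is a $C_2$--quasi-geodesic starting at $\eta(s_0)$. Moreover $\abs{t-s_0}\ge\abs{\beta_{j-1}}$, which is at least the length of the gap geodesic at the start of $\beta_{j-1}$. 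So it suffices that this gap has length at least $2C_2(\abs{\eta_{j-2}}+1)$. Then $\abs{s-s_0}\le \abs{t-s_0}/(2C_2)$ and $\abs{t-s}\le \tfrac32\abs{t-s_0}$, which give $d(\eta(s),\eta(t))\ge \abs{t-s}/(3C_2)-C_2$.

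\textbf{On the bookkeeping you worried about.}
\begin{enumerate}
\item The lag in the printed $L_i$ is not something to work around. The parameter $L$ in Lemma~\ref{lem:single-concatenation} is arbitrary, and $\eta_i$ is already built when $g_{i+1}$ is chosen. So one simply takes the gap in $\beta_{i+1}$ to have length at least $2C_2(\abs{\eta_i}+1)$. The paper's proof implicitly uses that the gap in $\beta_{i-1}$ dominates $\eta_{i-2}$.
\item The term subtracted at the end must be small relative to $\abs{t-s_0}/C_2$. Bounding it merely by $\abs{t-s}/2$, as in the paper's displayed chain, kills the linear term.
\end{enumerate}
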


    \begin{claimproof}
    Let $\eta(s), \eta(t)$ be points on $\eta$. Since $\eta$ is a concatenation of geodesics, it suffices to prove that 
    \begin{align}\label{eq:lowerines}
        d(\eta(s), \eta(t)) \geq \frac{\abs{t -s}}{4C_2}-4C_2.
    \end{align}
    By definition, $\beta_i\ast \beta_{i+1}$ is a $C_2$--quasi-geodesic for all $i\geq 0$. It thus suffices to show \eqref{eq:lowerines} holds in the case where $\eta(t)$ lies on $\beta_i$ and $\eta(s)$ lies on $\eta_{i-2}$ for some $i\geq 2$. Let $s_0$ be such that $\eta(s_0) = \beta_{i-2}^+ = \beta_{i-1}^-$.

    \begin{figure}
        \centering
        \includegraphics[width=0.5\linewidth]{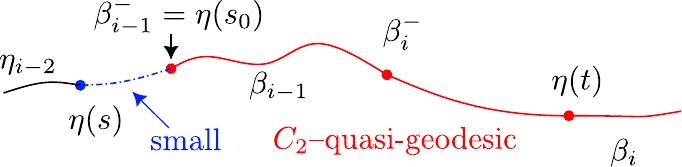}
        \caption{The path $\eta$ is a $4C_2$--quasi-geodesic.}
        \label{fig:multi-concatenation1}
    \end{figure}
    
    Since $\eta(s)$ lies on $\eta_{i-2}$, we have that $\abs{s-s_0} \leq \abs{\eta_{i-2}} \leq L_{i-2}/2\leq L_{i-1}/2$. This is depicted in Figure~\ref{fig:multi-concatenation1}. In particular, $d(\eta(s), \eta(s_0))\leq \abs{s-s_0}\leq L_{i-1}/2$. On the other hand, since $\eta(t)$ lies on $\beta_i$, $\abs{s_0 - t}\geq \abs{\beta_{i-1}} \geq L_{i-1}$, implying $\abs{s_0 - t}\geq \abs{s - t}/2$ and $\abs{s-s_0}\leq \abs{s-t}/2$. Using this and the fact that $\beta_{i-1}\ast \beta_i$ is a $C_2$--quasi-geodesic, we obtain
    \begin{align*}
        d(\eta(s), \eta(t))&\geq d(\eta(s_0), \eta(t)) - d(\eta(s), \eta(s_0)) \geq \frac{\abs{t - s_0}}{C_2} - C_2 - \abs{s-t}/2\\ 
        &\geq \frac{\abs{t - s}}{2C_2} - C_2 - \frac{\abs{t-s}}{2}\geq \frac{\abs{t-s}}{4C_2} - 4C_2,
    \end{align*}
    which concludes the proof of the claim.
    \end{claimproof}

    \smallskip

    Assume that all $\gamma_n$ are $M$--Morse for some Morse gauge $M$. Then $\beta_{i}\ast \beta_{i+1}$ is the concatenation of 4 geodesics that are all $\max\{M,M_0\}$--Morse. Since $\beta_i\ast \beta_{i+1}$ is a $C_2$--quasi-geodesic, it is $M'$--Morse for some Morse gauge $M'$ depending only on $M$, $M_0$, and $C_2$ \cite[Lemma~2.8 v)]{Z:manifold}.

    \begin{claim}\label{claim:con_morse2}
        There exists a Morse gauge $N$ only depending on $M'$ such that the following holds for all constants $K\geq 1$. Let $\lambda \colon [0, T]\to X$ be a $K$--quasi-geodesic with $\lambda^-=\eta(t_1)$ on $\beta_j$ and $\lambda^+$ at distance at most $M'(3K)$ from $\eta(t_2)\in \beta_i$ for some $j\leq i$. Then $\lambda$ is contained in the $N(K)$--neighbourhood of $\eta[t_1, t_2]$.
    \end{claim}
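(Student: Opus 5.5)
We argue by induction on $i-j$, the number of pieces $\beta_k$ between $\lambda^-$ and the point $\eta(t_2)$ near $\lambda^+$. Two facts set this up. First, every $\beta_k\ast\beta_{k+1}$ is an $M'$--Morse $C_2$--quasi-geodesic. Second, by the choice of the lengths $L_k$, the connecting geodesics $[\beta_k^+, g_{k+1}\gamma_{k+1}^-]$ are so long that $\eta$ is a $4C_2$--quasi-geodesic in which each junction point $\beta_k^+$ is far from everything preceding it.

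\textbf{Base case $i\in\{j,j+1\}$.} The segment $\eta[t_1,t_2]$ lies inside $\beta_j\ast\beta_{j+1}$, which is $M'$--Morse. Join $\lambda^+$ to $\eta(t_2)$ by a geodesic of length at most $M'(3K)$. By the analogue of Lemma~\ref{lem:Concat_with_npp}, appending this short geodesic gives a quasi-geodesic whose constant depends only on $K$ and $M'(3K)$, and whose endpoints lie on the $M'$--Morse path. Hence $\lambda$ lies in an $N_0(K)$--neighbourhood of $\eta[t_1,t_2]$, with $N_0$ depending only on $M'$.

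\textbf{Inductive step.} The idea is to make $\lambda$ pass close to a junction point. Take $k=i-1$, let $p=\eta(s_0)=\beta_{k}^-$ be the start of $\beta_k$, and split into cases.

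\emph{Case 1: $\lambda$ comes within $M'(3K)$ of some point of $\eta$ near $p$.} Cut $\lambda$ at the first such point. The initial piece has endpoints on $\beta_j$ and within $M'(3K)$ of a point on an earlier piece, so the inductive hypothesis applies to it. The final piece falls under the base case.

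\emph{Case 2: $\lambda$ stays far from $\eta$ near $p$.} Use that $\beta_{i-1}\ast\beta_i$ is $M'$--Morse. Concatenate the part of $\eta$ from $p$ back toward $\lambda^-$ with $\lambda$, in the spirit of Lemma~\ref{lem:Concat_with_npp}, closest point projections, and Lemma~\ref{claim:Morse-projection2}. Then the length lower bound $L_{i-1}\geq 2|\eta_{i-2}|$ forces $\lambda$ to pass within $M'(3K)$ of $\beta_{i-1}$ or $\beta_i$. We then proceed as in Case 1.

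\textbf{Main obstacle.} The constant must not grow with $i-j$. Iterating the naive induction adds a fixed amount per piece, so it fails. Instead, the plan is to show directly that $\lambda$ passes within $M'(3K)$ of each junction point $\beta_k^-$ for $j<k\le i$. Cutting $\lambda$ at those passage points reduces everything to the base case with uniform constant $N(K)=N_0(K)+M'(3K)$.

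To prove $\lambda$ passes near every junction, argue by contradiction. Suppose $\lambda$ avoids the $M'(3K)$--ball around $\beta_k^-$. Then $\eta$ splits into two sides of that junction, and by the Morse property of $\beta_{k-1}\ast\beta_k$ and the long middle geodesics, the set of points of $\lambda$ close to the earlier side and the set close to the later side are separated. By continuity, some point of $\lambda$ must jump between them. Along with the $4C_2$--quasi-geodesic lower bound from Claim~\ref{claim:con_qg}, this contradicts $\lambda$ being a $K$--quasi-geodesic. This step is where I expect the real difficulty to lie.
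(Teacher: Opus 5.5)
Your base case, and the shape of your Case~1, agree with the paper. In Case~1 you cut $\lambda$ at a point $\lambda(s)$ that is $M'(3K)$--close to $\beta_{i-1}$, apply the inductive hypothesis to $\lambda[0,s]$, and use the Morse property of $\beta_{i-1}\ast\beta_i$ for $\lambda[s,T]$. The gap is the step that carries all the content: producing such a point $\lambda(s)$. Your Case~2 only gestures at it, and the input you cite, $L_{i-1}\geq 2\abs{\eta_{i-2}}$, is too weak.

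The paper takes $\lambda(s)$ to be a point of $\lambda$ closest to the junction $p=\beta_{i-1}^-$, and $\lambda(T')$ a point closest to $\eta(t_2)$. Since $\lambda^-\in\eta_{i-2}$, one has $d(p,\lambda(s))\leq\abs{\eta_{i-2}}$. On the other hand, $d(p,\eta(t_2))$ is at least of order $\abs{\beta_{i-1}}/C_2$. Lemma~\ref{lem:Concat_with_npp}(2) then makes $[p,\lambda(s)]\ast\lambda[s,T']\ast[\lambda(T'),\eta(t_2)]$ a $3K$--quasi-geodesic with endpoints on the $M'$--Morse path $\beta_{i-1}\ast\beta_i$. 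This holds \emph{provided} $\abs{T'-s}\geq 3K\bigl(\abs{\eta_{i-2}}+M'(3K)\bigr)$, which requires the gap to dominate $\abs{\eta_{i-2}}$ by a factor growing with $K$. That is why $L_{i-1}=2(i-1)(\abs{\eta_{i-2}}+1)$ carries the factor $i-1$, and why the argument only runs for $i>i_0(K)$.

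The finitely many indices $i\leq i_0(K)$ are handled separately. There, $\eta_{i_0(K)}$ is a quasi-geodesic made of at most $2i_0(K)$ $M'$--Morse geodesics, hence Morse with a gauge depending on $K$, which is allowed. Finally, $d(\lambda(s),p)\ll L_{i-1}$ forces the point of $\beta_{i-1}\ast\beta_i$ near $\lambda(s)$ to lie on $\beta_{i-1}$ rather than on $\beta_i$; without this the induction makes no progress. Your proposal contains none of these three ingredients: the closest point to the junction, the $K$--dependent threshold on $i$, and the separate small-$i$ case.

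Your ``main obstacle'' is a misdiagnosis, and the replacement argument does not work. The Case~1 induction does not accumulate constants. The inductive hypothesis places $\lambda[0,s]$ in the $N(K)$--neighbourhood of $\eta[t_1,t']\subseteq\eta[t_1,t_2]$ with the \emph{same} $N(K)$. The tail $\lambda[s,T]$ lies in a neighbourhood whose radius depends only on $K$ and $M'$. Choosing $N(K)$ at least that radius closes the induction, and this is exactly how the paper proceeds.

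By contrast, your continuity argument for passing near every junction is circular. Sorting the points of $\lambda$ into those close to the earlier side of $\eta$ and those close to the later side presupposes that every point of $\lambda$ is close to $\eta$, which is the conclusion of the claim. Without it, $\lambda$ may leave every bounded neighbourhood of $\eta$ near the junction, and there is no ``jump'' to contradict Claim~\ref{claim:con_qg}. Note also that what is needed is only a point of $\lambda$ that is $M'(3K)$--close to $\beta_{i-1}$, not to the junction point $\beta_{i-1}^-$ itself.
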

    \begin{claimproof}
        Fix a constant $K$. We will prove this by induction on $i$. That is, assume that Claim~\ref{claim:con_morse2} holds for $i < n$. We will show that it holds for $i = n$. Let $\lambda$ be as in the statement with $i = n$. Let $i_0 = i_0(K)$ be a large enough index.

        We can choose $N(K)$ large enough (compared to $K$ and $i_0(K)$) such that the statement for $i\leq i_0(K)$ follows from $\lambda$ being a $K$--quasi-geodesic whose endpoints are in the $M'(3K)$ neighbourhood of $\eta_{i_0(K)}$. The latter is the concatenation of at most $2i_0(K)$ geodesics which are $M'$--Morse and hence $M''$--Morse for some $M''$ only depending on $M'$ and $i_0(K)$. Thus, we may assume that $i> i_0(K)$. 
        
        If $j\geq i-1$, then define $\lambda' = \lambda\ast [\lambda(T), \eta(t_2)]$. This is a $K' = (3K + 2M'(3K))$--quasi-geodesic with endpoints on the $M'$--quasi-geodesic $\beta_{i-1}\ast \beta_i$. Hence the statement follows for $N(K)\geq M'(K')$.

        \begin{figure}
            \centering
            \includegraphics[width=0.5\linewidth]{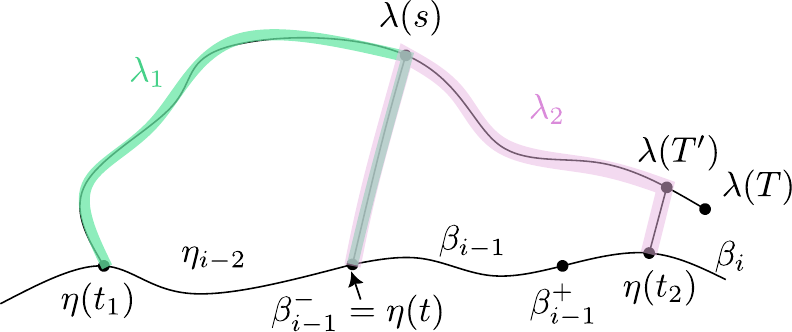}
            \caption{Proof of Claim~\ref{claim:con_morse2}. The paths $\lambda_1$ and $\lambda_2$ are $(3K)$-- and $K'$--quasi-geodeiscs respectively.}
            \label{fig:multi-concatenation2}
        \end{figure}

        From now on, assume $j < i-1$. Let $\lambda(s)$ be a point on $\lambda$ closest to $\beta_{i-1}^- = \eta(t)$ and let $\lambda(T')$ be a point on $\lambda$ closest to $\eta(t_2)$. This is depicted in Figure~\ref{fig:multi-concatenation2}. Define 
        \begin{align*}
            \lambda_1 = \lambda[0, s]\ast [\lambda(s), \beta_{i-1}^-] \qquad \text{and}\qquad \lambda_2 =[\beta_{i-1}^-, \lambda(s)]\ast \lambda[s, T']\ast [\lambda(T'), \eta(t_2)]. 
        \end{align*}
        Because we used closest point projections, Lemma~\ref{lem:Concat_with_npp} ensures that $\lambda_1$ is a $(3K)$--quasi-geodesic and $\lambda_2$ is a $K'$--quasi-geodesic. Since $\lambda_2$ has endpoints on $\beta_{i-1}\ast \beta_i$, it is contained in the $M'(K')$ neighbourhood of $\eta[t, t_2]$. Moreover, $\lambda[T', T]$ is a $K$--quasi-geodesic with endpoints in the $M'(3K)$--neighbourhood of $\eta(t_2)$. Hence, for $N(K)$ large enough compared to $M'(3K')$, we have that $\lambda_2$ and $\lambda[T', T
        ]$ are contained in the $N(K)$--neighbourhood of $\eta[t, t_2]$. 

        It remains to show that $\lambda[0, s]$ is contained in the $N(K)$--neighbourhood of $\eta[t_1, t_2]$. In fact, it suffices to show that $d(\beta_{i-1}, \lambda(s))\leq M'(3K)$, as then $\lambda[0, s]$ is contained in the $N(K)$ neighbourhood of $\eta[t_1, t_2]$ by induction.

        By Lemma~\ref{lem:Concat_with_npp}, if
        \begin{align}\label{eq:3K-quasi}
            \abs{T' - s}\geq 3K\left(d(\beta_{i-1}^-, \lambda(s)) + d(\lambda(T'), \eta(t_2))\right),
        \end{align} 
        then $\lambda_2$ is not only a $K'$--quasi-geodesic but actually a $3K$--quasi-geodesic. To show \eqref{eq:3K-quasi} holds, choose $i_0(K)$ large enough such that $L_{i-1}$, and hence $\abs{T' - s}$, is much larger than  $d(\lambda(T'), \eta(t_2))$ and $d(\beta_{i-1}, \lambda(s))$. The former is bounded by $M'(3K)$ and the latter is bounded by the maximal diameter of a $(3K)$--quasi-geodesic with endpoints on $\eta_{i-2}$, whereas $L_{i-1} = 2(i-1)(\abs{\eta_{i-2}}+1)$. 

        Thus, $\lambda_2$ is indeed a $(3K)$--quasi-geodesic, implying that $d(\lambda(s), \beta_{i-1}\ast \beta_i)\leq M'(3K)$. Again using that $i_0(K)$ is large enough and $i\geq i_0(K)$, we obtain that $d(\lambda(s), \beta_{i-1}^-)$ is much smaller than $L_{i-1}$, implying that $d(\lambda(s), \beta_i) > M'(3K)$. Hence $d(\lambda(s), \beta_{i-1})\leq M'(3K)$, concluding the induction.
    \end{claimproof}
    
    \smallskip
    
    Claim~\ref{claim:con_morse2} yields Item \ref{prop:morse-carries}, and hence concludes the proof.
\end{proof}

\section{A MLTG group that is not acylindrically hyperbolic}\label{sec:nonahgroup}
The goal of this section is to construct a finitely generated MLTG group with an infinite order Morse element that is not acylindrically hyperbolic, proving Theorem~\ref{thm:main2}.  Such a group is necessarily not a $C'(1/N)$ small cancellation group for any $N$, since such groups are acylindrically hyperbolic.  However, the group we construct satisfies a property related to \textit{graded small cancellation}, which itself is a generalization of a small cancellation group.  Our group is what we call an \textit{expanding graded small cancellation} group; see Definition~\ref{def:ExpandingGSC}. 

In Section~\ref{sec:GSC}, we review the definition of a graded small cancellation group.  In Section~\ref{sec:ExpandingGSC}, we define an expanding graded small cancellation group and explore their basic properties, in particular proving a version of a Greendlinger lemma for such groups in Lemma~\ref{lem:upgraded-contiguity-diagrams}.  In Section~\ref{sec:IntFcnsAndEGSC}, we investigate the relationship between the intersection function as defined in Definition~\ref{def:IntFcn} and a generalization of the usual intersection function in small cancellation groups, the \textit{relator intersection function} defined in Definition~\ref{defn:relator-intersection-function}, which, roughly speaking, measures the length of subwords of the label of a geodesic that are contained in a relator of length at most $n$.  

Finally, in Section~\ref{sec:nonAHgp}, we construct the desired non-acylindrically hyperbolic MLTG group.  The construction is inspired by the authors' construction of a finitely generated MLTG group that contains a Morse element that is not loxodromic in any isometric action of the group on a hyperbolic space \cite{AZ}.

\subsection{The $C(\eps, \mu, \rho)$--condition and contiguity diagrams}\label{sec:GSC}
In this section, we review some basic definitions related to graded small cancellation and recall the definition of a contiguity diagram.  For a more detailed discussion of this material, we refer the reader to \cite{OlshankiiOsinSapir:lacunary}.

Every group that appears in this section has a generating set attached to it. We identify the group with the Cayley graph associated to that generating set. In particular, we say that a word $w$ over this generating set is a geodesic in the group if it some (equivalently any) path labeled by $w$ in the Cayley graph with respect to this generating set is a geodesic.

We first recall the notion of an $\eps$--piece, which, roughly speaking, is a subpath of a relator that $\eps$--fellow travels a subpath of another relator. Let $H$ be a group generated by a set $S$.
\begin{defn}
     Let $\mathcal R$ be a symmetrized set of reduced words in $S^{\pm 1}$.  For $\eps>0$, a subword $U$ of a word $R\in \mathcal R$ is called an \textit{$\eps$--piece} if there exists a word $R'\in \mathcal R$ such that:
        \begin{enumerate}
            \item $R\equiv UV$ and $R'\equiv U'V'$ for some $V,U', V'$;
            \item $U'=YUZ$ in $H$ for some words $Y,Z$ such that $\max\{|Y|,|Z|\}\leq \eps$; and
            \item $YRY^{-1}\neq R'$ in the group $H$.
        \end{enumerate}
    Note that if $U$ is an $\eps$--piece, then $U'$ is an $\eps$--piece, as well.
\end{defn}

\begin{defn}
    Let $\eps\geq 0$, $\mu\in (0,1)$, and $\rho>0$.  A symmetrized set $\mathcal R$ of words over the alphabet $S^{\pm 1}$ satisfies the \emph{$C(\eps,\mu,\rho)$--condition} over the group $H$ if
        \begin{enumerate}[(C1)]
            \item all words from $\mathcal R$ are geodesic in $H$;\label{cond:geodesics-in-minus-1}
            \item $|R|\geq \rho$ for any $R\in \mathcal R$; and \label{C2}
            \item the length of any $\eps$--piece contained in any word $R\in \mathcal R$ is smaller than $\mu|R|$.\label{C3}
        \end{enumerate}
\end{defn}

Suppose $H$ is a group defined by $H=\langle S\mid \mathcal O\rangle$. Given a symmetrized set of words $\mathcal R$, we consider the quotient group 
\begin{equation}\label{eqn:gpH_1}
    H_1=\langle H\mid \mathcal R\rangle = \langle S \mid \mathcal O\cup \mathcal R\rangle.
\end{equation}

A cell in a van Kampen diagram over \eqref{eqn:gpH_1} is called an \textit{$\mc R$--cell} (respectively, an \textit{$\mc O$--cell}) if its boundary label is a word in $\mc R$ (respectively, $\mc O$).  

Let $\Delta$ be a van Kampen diagram over \eqref{eqn:gpH_1}, let $q$ be a subpath of its boundary $\partial \Delta$, and let $\Pi$ be an $\mc R$--cell of $\Delta$. Suppose that there exists a simple closed path $p=s_1q_1s_2q_2$ in $\Delta$, where $q_1$ and $q_2$ are subpaths of $\partial \Pi$ and $q$, respectively, and $\max\{\abs{s_1},\abs{s_2}\}\leq \eps$ for some constant $\eps$. Let $\Gamma$ denote the subdiagram of $\Delta$ bounded by $p$. If $\Gamma$ contains no $\mc R$--cells, then $\Gamma$ is an \emph{$\eps$--contiguity subdiagram} of $\Pi$ to $q$. We note that this is a special case of the definition of an $\eps$--contiguity diagram in \cite{OlshankiiOsinSapir:lacunary}, where $q_2$ is allowed to be a subpath of the boundary of another $\mc R$--cell of $\Delta$. This situation will not occur in this paper, so we focus on the special case that $q_2\subseteq \partial \Delta$. The subpaths $q_1,q_2$ are called \emph{contiguity arcs} of $\Gamma$, and to distinguish them, we call $q_1\subset \partial \Pi$ the \emph{interior contiguity arc} and $q_2\subset \partial \Delta$  the \emph{boundary contiguity arc}.  The ratio $\abs{q_1}/\abs{\partial \Pi}$ is called the \emph{contiguity degree} of $\Pi$ to $\partial \Delta$ and is denoted by $(\Pi,\Gamma, q_2)$.

The following lemmas from \cite{OlshankiiOsinSapir:lacunary} give control over the contiguity degree of an $\mathcal R$--cell $\Pi$ when $H$ is a hyperbolic group\footnote{We note that in \cite{OlshankiiOsinSapir:lacunary}, the set $\mathcal O$ in the definition of the group $H$ consists of \textit{all} relators of $H$, not just the defining ones. The Lemmas~\ref{lem:OOSLem4.4} and \ref{lem:OOSLem4.6} still hold when $\mathcal O$ is the set of defining relators.}. 
\begin{lem}[{\cite[Lemma~4.4]{OlshankiiOsinSapir:lacunary}}]\label{lem:OOSLem4.4}
    Suppose that the group $H$ is hyperbolic.  Let $\mathcal R$ be a set of words that are geodesic in $H$, let $\Delta$ be a diagram over \eqref{eqn:gpH_1}, and let $q$ be a subpath of $\partial \Delta$ whose label is geodesic in $H_1$.  Then for any $\eps \geq 0$, no $\mathcal R$--cell in $\Delta$ has an $\eps$--contiguity subdiagram $\Gamma$ to $q$ such that $(\Pi,\Gamma,q)>1/2 + 2\eps/|\partial \Pi|$.
\end{lem}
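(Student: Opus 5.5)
The plan is to argue by contradiction, comparing lengths along the geodesic $q$ in $H_1$. Suppose some $\mathcal R$--cell $\Pi$ has an $\eps$--contiguity subdiagram $\Gamma$ to $q$. Write its boundary as $s_1q_1s_2q_2$, with $q_1\subseteq\partial\Pi$, $q_2\subseteq q$ and $\max\{\abs{s_1},\abs{s_2}\}\leq\eps$. Assume $(\Pi,\Gamma,q)=\abs{q_1}/\abs{\partial\Pi}>1/2+2\eps/\abs{\partial\Pi}$, which is the same as $\abs{q_1}>\abs{\partial\Pi}/2+2\eps$.

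Let $\bar q_1$ be the complementary arc of $\partial\Pi$, so $\abs{\bar q_1}=\abs{\partial\Pi}-\abs{q_1}$. The label of $\partial\Pi$ is a relator in $\mathcal R$, so the label of $q_1$ equals in $H_1$ the label of $\bar q_1^{\pm1}$. The path $s_1q_1s_2$ has the same endpoints as $q_2$ in the diagram, hence in $H_1$. Replacing $q_1$ by $\bar q_1$ gives a path $s_1\bar q_1^{\pm1}s_2$ from $q_2^-$ to $q_2^+$ in $\Cay(H_1)$ of length at most
\[
\abs{\partial\Pi}-\abs{q_1}+2\eps<\abs{q_1}-2\eps .
\]
The main step is the lower bound $\abs{q_2}\geq\abs{q_1}-2\eps$, and this is where the hypotheses are used. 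Since $\Gamma$ has no $\mathcal R$--cells, it is a diagram over $H$. So the label of $q_2$ equals in $H$ the label of $s_1q_1s_2$ (up to orientation). The label of $q_1$ is a subword of a relator, and relators are geodesic in $H$. Hence $q_1$ is geodesic in $H$, and the triangle inequality in $\Cay(H,S)$ gives
\[
\abs{q_2}\geq d_H(q_2^-,q_2^+)\geq\abs{q_1}-\abs{s_1}-\abs{s_2}\geq\abs{q_1}-2\eps .
\]
This step needs care: one must check that subwords of geodesic words are geodesic in $H$ (immediate), and that $\Gamma$, being a subdiagram without $\mathcal R$--cells, really certifies equality in $H$ and not only in $H_1$.

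Combining the two bounds, the two endpoints of $q_2$ are joined in $H_1$ by a path strictly shorter than $\abs{q_2}$. This contradicts the assumption that the label of $q$ (and hence of its subpath $q_2$) is geodesic in $H_1$.

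The hyperbolicity of $H$ does not seem needed in this argument. It presumably matters in the original source only for the general version, where contiguity is to other cells.
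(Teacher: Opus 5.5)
Your proof is correct. The paper gives no argument of its own, since it quotes the lemma from Olshanskii--Osin--Sapir, and yours is the standard short proof: you squeeze $|q_2|$ between $|q_1|-2\eps$, which comes from $\Gamma$ being a diagram over $H$ in which the arc $q_1$ of $\partial\Pi$ is $H$-geodesic, and $|\partial\Pi|-|q_1|+2\eps$, which comes from $q_2$ being $H_1$-geodesic and the detour through the complementary arc of $\partial\Pi$.

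Your remark that hyperbolicity of $H$ is not used is right, and since you only use that $\mathcal O$-cells certify equalities in $H$, the argument also justifies the paper's footnote that the lemma holds with $\mathcal O$ the defining relators. The one implicit input is that $\mathcal R$ is symmetrized, as in the $C(\eps,\mu,\rho)$ setting, so that every arc of $\partial\Pi$ reads a subword of a word of $\mathcal R$.
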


\begin{lem}[{\cite[Lemma~4.6]{OlshankiiOsinSapir:lacunary}}]\label{lem:OOSLem4.6}
    Suppose $H = \langle S\mid \mathcal O\rangle$ is a $\delta$--hyperbolic group, $0<\mu\leq 0.01$, and $\rho$ is large enough (it suffices to choose $\rho>10^6\eps/\mu$).  Let $H$ be given by a presentation
    \[
    H_1=\langle H\mid \mathcal R\rangle = \langle S\mid \mathcal O\cup \mathcal R\rangle
    \]
    as in \eqref{eqn:gpH_1}, where $\mathcal R$ is a finite symmetrized set of words in $S^{\pm 1}$ satisfying the $C(\eps,\mu,\rho)$--condition.  Then the following statements hold.
        \begin{enumerate}
            \item Let $\Delta$ be a minimal disk diagram over \eqref{eqn:gpH_1}. Suppose that $\partial \Delta=q^1\cdots q^t$, where the labels of $q^1,\dots, q^t$ are geodesic in $H$ and $t\leq 12$.  Then, provided that $\Delta$ has an $\mathcal R$--cell, there exists an $\mathcal R$--cell $\Pi$ in $\Delta$ and disjoint $\eps$--contiguity subdiagrams $\Gamma_1,\dots, \Gamma_t$ (some of them may be absent) of $\Pi$ to $q^1,\dots, q^t$ respectively such that 
            \[
            (\Pi,\Gamma_1,q^1)+\cdots +(\Pi, \Gamma_t,q^t)>1-23\mu.
            \]
            \item $H_1$ is a $\delta_1$--hyperbolic group with $\delta_1\leq 4\max\{|R|\mid R\in\mathcal R\}$.
        \end{enumerate}
\end{lem}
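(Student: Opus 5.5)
Since this is \cite[Lemma~4.6]{OlshankiiOsinSapir:lacunary}, the plan follows Olshanskii's theory of diagrams over hyperbolic groups. The idea is to treat $\mathcal O$--cells as ``hyperbolic filler'' and to run a classical small cancellation Greendlinger-type count on the $\mathcal R$--cells.

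\textbf{Part (1).} I would prove it by induction on the number of $\mathcal R$--cells of a minimal diagram $\Delta$. The first step is a \emph{reduction property}. Suppose two $\mathcal R$--cells $\Pi,\Pi'$ have an $\eps$--contiguity subdiagram of degree at least $\mu$ between them. If $YRY^{-1}=R'$ in $H$, one cuts out the pair and reglues, contradicting minimality. Otherwise the contiguity arc is an $\eps$--piece, and by (C3) it is shorter than $\mu|R|$. Hence in a minimal diagram all mutual contiguities between $\mathcal R$--cells have degree less than $\mu$.

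The second step uses the $\delta$--hyperbolicity of $H$ together with $\rho>10^6\eps/\mu$. Any $\mathcal O$--subdiagram bounded by geodesic-in-$H$ pieces and relator arcs is thin, in the sense of slim geodesic polygons. So after choosing maximal contiguity subdiagrams, the complement of the $\mathcal R$--cells and their contiguity subdiagrams contributes only a boundary length proportional to $\eps\cdot(\#\text{cells})$. This is negligible compared with $\mu\rho$.

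The third step is an Euler characteristic count on the planar graph whose vertices are the $\mathcal R$--cells and the $t\le 12$ boundary segments, and whose edges are the contiguity subdiagrams. This graph has average degree less than $6$. Combining this with the bound that each $\mathcal R$--$\mathcal R$ contiguity has degree less than $\mu$ yields a cell $\Pi$ with few neighbours. The total degree of its contiguities to $q^1,\dots,q^t$ is then at least $1-(\text{const})\mu$, and tracking the constants gives $23\mu$. The main obstacle is this step: controlling the ``gaps'' of $\partial\Pi$ that are contiguous to nothing. For these I would use the thinness of hyperbolic $\mathcal O$--regions to show that every arc of $\partial\Pi$ of length at least roughly $\mu|\partial\Pi|$ lies within $\eps$ of either another cell or $\partial\Delta$.

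\textbf{Part (2).} I would apply (1) to geodesic triangles in $H_1$, written as words that are geodesic in $H$ (taking $t\le 12$ after subdividing). By Lemma~\ref{lem:OOSLem4.4}, each geodesic side takes contiguity degree at most $1/2+2\eps/|\partial\Pi|$. So the cell $\Pi$ produced by (1) must be contiguous to at least two sides. Removing $\Pi$ and inducting shows that the sides stay within roughly $\max|R|$ plus the $\delta$ of $H$ of each other, which gives $\delta_1\le 4\max\{|R|\mid R\in\mathcal R\}$.
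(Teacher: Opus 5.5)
The paper gives no proof of this lemma; it imports it from \cite{OlshankiiOsinSapir:lacunary}, where it is extracted from Olshanskii's much longer theory of diagrams over hyperbolic groups. Your sketch does not replace that argument. In part (1), the step you flag as the main obstacle is essentially the whole content of the lemma, and slimness of polygons does not resolve it. The Euler characteristic count only produces a cell with large contiguity to $\partial\Delta$ if you have a \emph{per-cell} bound: the part of $\partial\Pi$ not covered by $\eps$--contiguity subdiagrams must be $O(\mu\abs{\partial\Pi})$. Otherwise a low-degree cell can have most of its boundary facing a fat $\mathcal O$--region, and your global estimate $O(\eps\cdot\#\text{cells})$ simply presupposes the per-cell bound. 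This bound does not follow from slimness, for three reasons. The $\mathcal O$--regions between cells are geodesic polygons in $H$ with unboundedly many sides, so they are only thin at a scale like $\delta\log(\#\text{sides})$. Closeness in $\mathrm{Cay}(H,S)$ is not the same as a path of length $\le\eps$ inside the given diagram $\Delta$. Most importantly, nothing you use relates $\eps$ to $\delta$ at all. The graph step also needs more than ``average degree $<6$'': the $t\le 12$ boundary vertices may be the low-degree ones, and two cells can be contiguous along several arcs.

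The missing relation between $\eps$ and $\delta$ is not cosmetic. In part (2), your argument gives at best $\delta_1\lesssim\max\abs{R}+O(\delta)$, which is not $\le 4\max\abs{R}$. In \cite{OlshankiiOsinSapir:lacunary} the lemma carries the hypothesis $\eps\ge 2\delta$. Combined with $\abs{R}\ge\rho>10^6\eps/\mu$, this makes $\delta$ negligible compared with relator lengths. That hypothesis has been dropped from the version stated here, and without it (2) is false. Take $H=\langle c,z\mid c^m\rangle\cong\mathbb{Z}/m\ast\mathbb{Z}$, $\mathcal R=\{z^k,z^{-k}\}$ and $\eps=0$. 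There are no nonempty $0$--pieces, so $C(0,\mu,k)$ holds. Yet $H_1\cong\mathbb{Z}/m\ast\mathbb{Z}/k$ contains the $m$--cycle isometrically, so $\delta_1\ge m/4>4k=4\max\abs{R}$ for $m\in 4\mathbb{Z}$ with $m>16k$. A correct proof must therefore use $\eps\ge 2\delta$ essentially, in part (1) as well. Even with that hypothesis, controlling the many-sided $\mathcal O$--regions is exactly what makes Olshanskii's argument substantial.
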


\if0
\begin{defn}
    Let $\alpha=0.01$ and $K=10^6$.  The presentation 
    \[
    \pres = \gpres 
    \]
    of a group $G$ satisfies the \emph{graded small cancellation condition} if the following conditions hold for some sequences $(\eps_n)_n$, $(\mu_n)_n$, and $(\rho_n)_n$ of positive real numbers ($n=1,2,\dots$).
        \begin{enumerate}[(Q0)]
            \item The group $G_0=\langle S\mid R_0\rangle$ is $\delta_0$--hyperbolic for some $\delta_0$.
            \item For every $n\geq 1$, $R_n$ satisfies $C(\eps_n,\mu_n,\rho_n)$ over $G_{n-1}=\langle S\mid \cup_{i=0}^{n-1} R_i\rangle$.
            \item $\mu_n=\mathcal{o}(1)$, $\mu_n\leq \alpha$, and $\mu_n\rho_n>K\eps_n$ for any $n\geq 1$.
            \item $\eps_{n+1}>\max\{|r|\mid r\in R_n\}=\mathcal{O}(\rho_n)$.
        \end{enumerate}
\end{defn}
\fi

\subsection{Expanding graded small cancellation}\label{sec:ExpandingGSC}
In this section, we define an expanding graded small cancellation group and investigate the basic properties of such groups.  The notion of expanding graded small cancellation depends on a choice of a sublinear function that is unbounded and non-decreasing.

\begin{defn}[Expanding graded small cancellation]\label{def:ExpandingGSC}

    A presentation 
    \[
    \pres = \gpres 
    \]
    of a group $G$ satisfies the $(f,\mu, (\eps_n)_n)$--\emph{expanding graded small cancellation condition} if the following conditions hold for an unbounded and non-decreasing sublinear function $f$, a constant $\mu \leq 0.001$, and a non-decreasing sequence $(\eps_n)_{n}$ of positive real numbers with $\eps_1 = 0$.
        \begin{enumerate}[(T1)]
            \item For every $n\geq 1$, the set $R_n$ satisfies the $C(\eps_n,\mu,10^6\eps_n/\mu+1)$ condition over $G_{n-1}:=\langle S\mid \bigcup_{i=0}^{n-1} R_i\rangle$, where $R_0:=\emptyset$. \label{T:C-cond}
            \item For every $n\geq 1$, $r_{n+1}\in R_{n+1}$ and $r_n\in R_n$, we have $f(\abs{r_{n+1}}) > \eps_{n+1} \geq 4\abs{r_n}$. \label{T2}
            \item \label{cond:T4} Every relator $r\in R$ has a cyclic subsegment $r_f$ with $\abs{r_f}\leq \abs{r} -  f(\abs{r})$ such that $\abs{r_f\cap r'}\leq \abs{r'}/100$ for all $r'\in \bigcup_{i=1}^\infty R_i$.
            \item \label{cond:T5} There is the following compatibility between $f$ and $\mu$: $f(x)\leq \mu x/1000$ for all $x$.
        \end{enumerate} 
\end{defn}

\begin{defn}[rank]\label{def:rank}
    Let $\gpres = \pres$ be a presentation satisfying the $(f,\mu, (\eps_n)_n)$--expanding graded small cancellation condition. The \emph{rank} of a relator $r\in R$ is the index $i$ for which $r\in R_i$.
\end{defn}

We will be interested in reduced words whose longest common subword with a relator of rank at most $i$ is small compared to the length of that relator. 
\begin{defn}\label{defn:small-intersection}
    Let $\gpres$ be a presentation satisfying the $(f,\mu, (\eps_n)_n)$--expanding graded small cancellation condition, and let $w$ be a word over $S$. If $w$ is reduced and the length of the longest common subword of $w$ and $r$ is at most $|r|/100$ for all $r\in \bigcup_{j=1}^iR_j$ for some $i$, then we say that $w$ is \textit{$i$--good}.  A path is $i$--good if its label is an $i$--good word. %If a reduced word $w$ (resp. path) is $i$--good for all $i$, we say it is \emph{$\infty$-good}.
\end{defn}

We now list some direct consequences of the definition of an expanding graded small-cancellation presentation.

\begin{lem}
    If $\gpres = \pres$ is a presentation satisfying the $(f,\mu, (\eps_n)_n)$--expanding graded small cancellation condition, then the following hold.
    \begin{enumerate}[(P1)]
    \item \label{P:hyperbolicity} For every $n\geq 0$, the group $G_n$ is $\eps_{n+1}$--hyperbolic.
    \item \label{P:smaller-rank-shorter-relator} If $r, r'\in R$ are relators such that the rank of $r$ is less than the rank of $r'$, then $\abs{r} < \mu \abs{r'}$. 
    \item \label{P:small-cancellation-on-level} If embedded relators $r\neq  r'\in R$ have the same rank, then their intersection has length less than $\mu \abs{r}$. 
    \item \label{P:good-segments-of-relators} For every $n\geq 1$, each relator $r\in  R_n$ has a cyclic subsegment $r_f$ of length at least $\abs{r} - f(\abs{r})$ which is $(n-1)$--good.
    \item \label{P:smalleps} For every $n\geq 1$ and $r\in R_n$, we have that $\abs{r} >  1000 \eps_n /\mu \geq 1000 \eps_n$.
\end{enumerate}
\end{lem}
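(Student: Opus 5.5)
Each of (P1)--(P5) can be read off from the axioms (T1)--(T4), together with Lemma~\ref{lem:OOSLem4.6} and Lemma~\ref{lem:OOSLem4.4}. I will do them in the order (P5), (P2), (P3), (P4), (P1), since (P1) is the only one that needs an inductive argument.

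\textbf{(P5).} By (T1), every $r\in R_n$ has $\abs{r}\geq 10^6\eps_n/\mu+1>1000\eps_n/\mu$. Since $\mu\leq 0.001<1$, this gives $1000\eps_n/\mu\geq 1000\eps_n$.

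\textbf{(P2).} Let $r\in R_i$ and $r'\in R_j$ with $i<j$. By (T2) applied at index $j-1$, together with monotonicity of $(\eps_n)$, we get $\eps_j\geq 4\abs{r_{j-1}}$ for every $r_{j-1}\in R_{j-1}$. Chaining down the ranks, using (T2) and (P5), shows that the relator lengths increase with rank, so $\eps_j\geq 4\abs{r}$. Then (P5) for $r'$ gives $\abs{r'}>1000\eps_j/\mu\geq 4000\abs{r}/\mu$, and hence $\abs{r}<\mu\abs{r'}$.

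\textbf{(P3).} Two distinct embedded relators of the same rank $n$ that share a common subpath give a $0$--piece, because we may take $Y,Z$ empty and $\eps_n\geq 0$. One has to check the nondegeneracy condition $YRY^{-1}\neq R'$ in $G_{n-1}$. This follows because the embedded relators are distinct, so their labels, read from the common subpath, differ as cyclic words, or as elements once the base point is accounted for. Condition (C3) of (T1) then bounds the overlap by $\mu\abs{r}$.

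\textbf{(P4).} Take the cyclic subsegment $r_f$ given by (T3). I believe ``at least $\abs{r}-f(\abs{r})$'' in (P4) should read ``at most'', matching (T3); if it really means ``at least'', I would instead use the complementary arc, whose length is at least $f(\abs{r})$, and adjust accordingly. In either case, (T3) bounds the overlap of $r_f$ with every relator $r'$ of any rank by $\abs{r'}/100$. Reducedness holds because relators are geodesic in $G_{n-1}$ by (C1), hence reduced. So $r_f$ is $(n-1)$--good, and in fact $i$--good for every $i$.

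\textbf{(P1).} I argue by induction on $n$. For $n=0$, $G_0$ is the free group on $S$, which is $0$--hyperbolic, and $\eps_1=0$. For the inductive step, assume $G_{n-1}$ is hyperbolic. Then (T1), with $\mu\leq 0.01$ and $\rho=10^6\eps_n/\mu+1>10^6\eps_n/\mu$, meets the hypotheses of Lemma~\ref{lem:OOSLem4.6}. Part 2 of that lemma gives that $G_n$ is $\delta$--hyperbolic with $\delta\leq 4\max\{\abs{r}\mid r\in R_n\}\leq\eps_{n+1}$, the last inequality by (T2). The induction never actually uses the value of the hyperbolicity constant of $G_{n-1}$, only that $G_{n-1}$ is hyperbolic, so the bound $\eps_{n+1}$ at each stage comes from (T2) alone.

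The main obstacle is finiteness. Lemma~\ref{lem:OOSLem4.6} requires $R_n$ to be finite, and Definition~\ref{def:ExpandingGSC} does not state this. I will assume each $R_n$ is finite, as is implicit since $\max_{r\in R_n}\abs{r}$ appears in (T2).
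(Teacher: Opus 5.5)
The gap is in (P4). You correctly spot that (T3) and (P4) disagree, but you resolve it in the wrong direction. Read literally, (T3) is vacuous: the empty subsegment satisfies $|r_f|\le |r|-f(|r|)$ and has trivial overlap with every relator. So (P4) with ``at most'' would carry no information, and it could not serve the paper's later uses. For example, in Lemma~\ref{lem:upgraded-contiguity-diagrams} a contiguity arc on $\partial\Pi'$ is split into two $(m-1)$--good pieces and a middle piece of length at most $f(|\partial\Pi'|)$, which needs the long version.

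The typo is in (T3), which should read $|r_f|\ge |r|-f(|r|)$. This is what ($\tilde{C}$4) in Lemma~\ref{lem:remove-eps-pieces} says, and the paper later calls (T3) the same condition as ($\tilde{C}$4). Your fallback of passing to the complementary arc does not help: that arc has length only at least $f(|r|)$, far below $|r|-f(|r|)$, and (T3) says nothing about its overlaps.

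Your remark that $r_f$ is $i$--good for every $i$ is also wrong for the intended long $r_f$. Taking $r'=r$ would force $|r_f|\le |r|/100$, whereas $|r_f|\ge |r|-f(|r|)\ge (1-\mu/1000)|r|$ by (T4). So in (T3) the relators $r'$ must range only over ranks below that of $r$. With (T3) read this way, each $r\in R_n$ has a cyclic subword $r_f$ of length at least $|r|-f(|r|)$ whose overlap with every relator $r'$ of rank at most $n-1$ is at most $|r'|/100$. Since $r$ is cyclically reduced, $r_f$ is reduced, hence $(n-1)$--good. This is exactly the paper's one-line proof.

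Everything else is correct and essentially follows the paper.
\begin{itemize}
\item (P1) is the same induction via part~2 of Lemma~\ref{lem:OOSLem4.6} and (T2).
\item For (P5) you take a different and slightly cleaner route, reading $|r|\ge 10^6\eps_n/\mu+1$ directly off (C2) in (T1). The paper instead combines $\eps_n<f(|r|)$ from (T2) with $f(x)\le \mu x/1000$ from (T4), and treats $n=1$ separately via $\eps_1=0$. Your version needs neither (T2), (T4), nor the case split.
\item In (P2) the chaining through intermediate ranks is unnecessary, and would need the intermediate $R_m$ to be nonempty. Applying (T2) at the rank $i$ of $r$ and using monotonicity gives $\eps_j\ge\eps_{i+1}\ge 4|r|$ directly, as the paper does.
\item In (P3), your justification of the nondegeneracy condition $YRY^{-1}\ne R'$ is not really an argument, since distinct words $UV$ and $UV'$ may still be equal in $G_{n-1}$. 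The paper's proof simply cites (C3) and does not address this point either.
\end{itemize}
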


\begin{proof}

Property (P\ref{P:hyperbolicity}) holds by induction. Namely, $G_0$ is a free group and hence $0$--hyperbolic. If $G_{n-1}$ is $\eps_n$--hyperbolic, then $G_{n}$ is $\eps_{n+1} \geq 4\max\{\abs{r_n}\mid r_n\in R_n\}$ hyperbolic by Lemma~\ref{lem:OOSLem4.6}. 

   Property (P\ref{P:small-cancellation-on-level}) holds because $G_n$ satisfies the $C(\eps_n, \mu, 10^6\eps_n/\mu +1)$ condition over $G_{n-1}$ for all $n\geq 1$. 
   
   Property (P\ref{P:good-segments-of-relators}) follows directly from (T\ref{cond:T4}) and the definition of $i$--good. 

   For property (P\ref{P:smalleps}), the case $n = 1$  follows as $\eps_1 = 0$; see Definition~\ref{def:ExpandingGSC}.  For $n\geq 2$, property (P\ref{P:smalleps}) follows by combining the left inequality of (T\ref{T2}) and (T\ref{cond:T5}). 

   For property (P\ref{P:smaller-rank-shorter-relator}), let $n$ be the rank of $r'$. By (P\ref{P:smalleps}) we have that $\abs{r'}\geq 1000\eps_n/\mu$. Since the rank of $r$ is less than $n$, we have that $\abs{r} < \eps_n$. The statement follows by combining the two inequalities. 
\end{proof}

The next proposition further investigates the properties of $i$--good paths.  
\begin{prop}\label{prop:good-geodesics}
    Let $G = \gpres$ satisfy the  $(f,\mu, (\eps_n)_n)$--expanding graded small-cancellation condition. The following hold for all $i\geq 0$.
    \begin{enumerate}[(1)]
        \item \label{prop1:geo}
        An $i$--good path $\gamma$ is the unique geodesic in $G_i$ between its endpoints.
        \item \label{prop2_contiguity-diagrams}
        Let $\gamma\ast\alpha_1\ast\beta\ast\alpha_2$ be a loop in $G_{i}$ such that $\gamma$ is $i$--good and $\alpha_1, \alpha_2$ and $\beta$ are geodesics in $G_{i-1}$, where $G_{-1}$ is defined to be $G_0$. Then either $\abs{\gamma}\leq 5(\abs{\alpha_1}+\abs{\alpha_2})$ or there exists a subgeodesic $\gamma'\subset \gamma \cap \beta$ of length $\abs{\gamma'}\geq \abs{\gamma} - 5\left(\abs{\alpha_1} +  \abs{\alpha_2}\right)$.
    \end{enumerate}
\end{prop}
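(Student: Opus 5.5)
We will prove both items simultaneously by induction on $i$, with the base case $i=0$ being immediate: $G_0$ is the free group on $S$, a reduced word is the unique geodesic between its endpoints, and in a loop in a tree made of the reduced paths $\gamma,\alpha_1,\beta,\alpha_2$ every edge of $\gamma$ not cancelled against $\alpha_1$ or $\alpha_2$ must be traversed by $\beta$. This gives $\gamma'\subseteq\gamma\cap\beta$ with $\abs{\gamma'}\geq\abs{\gamma}-\abs{\alpha_1}-\abs{\alpha_2}$.

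For the inductive step of (1), let $\gamma$ be $i$--good and let $\gamma''$ be a geodesic in $G_i$ with the same endpoints. Both are geodesic in $G_{i-1}$: $\gamma$ because an $i$--good path is $(i-1)$--good, and $\gamma''$ because distances in $G_i$ are at most those in $G_{i-1}$. If the loop $\gamma\ast(\gamma'')^{-1}$ is null-homotopic in $G_{i-1}$, then $\gamma=\gamma''$ by the inductive uniqueness. Otherwise a minimal diagram over $G_i=\langle G_{i-1}\mid R_i\rangle$ with boundary $\gamma\ast(\gamma'')^{-1}$ contains an $R_i$--cell. By Lemma~\ref{lem:OOSLem4.6}(1) with $t=2$, some cell $\Pi$ has contiguity degrees to $\gamma$ and $\gamma''$ summing to more than $1-23\mu$. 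By Lemma~\ref{lem:OOSLem4.4}, the degree to the $G_i$--geodesic $\gamma''$ is at most $1/2+2\eps_i/\abs{\partial\Pi}$, which is close to $1/2$ by (P\ref{P:smalleps}). So the degree to $\gamma$ is at least about $1/2-23\mu-\text{small}>1/100+$(error). The contiguity arc on $\partial\Pi$ and the arc on $\gamma$ are $\eps_i$--close geodesics in the $\eps_i$--hyperbolic group $G_{i-1}$ (P\ref{P:hyperbolicity}). Applying (2) at level $i-1$ to this contiguity subdiagram (a loop with sides of length at most $\eps_i$) shows that $\gamma$ and $\partial\Pi$ share a common subword of length at least the arc length minus $10\eps_i$, which is more than $\abs{\partial\Pi}/100$. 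This contradicts $i$--goodness.

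For (2) at level $i$, take a minimal diagram over $G_i$ for the loop. If it has no $R_i$--cells, the loop lives in $G_{i-1}$ with $\gamma$ $(i-1)$--good, and we apply the inductive hypothesis. This needs care, because $\alpha_1,\alpha_2,\beta$ are only geodesic in $G_{i-2}$ in the inductive statement. We will therefore cut the sides into pieces geodesic at the lower level, or instead induct on the statement ``sides geodesic in $G_{i-1}$ for every lower level'', using the fact that geodesics of $G_{i-1}$ are quasi-geodesics of uniform quality in lower levels. This reduction is the step I expect to be the main obstacle. I would first try to handle it with hyperbolicity of $G_{i-1}$ and thin quadrilaterals, which gives the linear bound with constant $5$ after absorbing $\delta$--terms via $\eps_i\leq\abs{r}/1000$.

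If $R_i$--cells are present, Lemma~\ref{lem:OOSLem4.6}(1) with $t=4$ yields a cell $\Pi$ with total contiguity exceeding $1-23\mu$. Its contiguity to $\gamma$ is small by the argument from (1), at most about $1/100$ plus error. Its contiguity to $\beta,\alpha_1,\alpha_2$ is at most about $1/2$ each by Lemma~\ref{lem:OOSLem4.4}, since these are geodesic in $G_{i-1}$ and $\Pi$ is a relator geodesic in $G_{i-1}$. Hence $\Pi$ has substantial contiguity to $\alpha_1\cup\alpha_2$, so $\abs{\alpha_1}+\abs{\alpha_2}\gtrsim\abs{\partial\Pi}/3$. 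We then cut off $\Pi$, replace the sides by the complementary arcs of $\partial\Pi$, and induct on the number of $R_i$--cells. Each removal costs a bounded multiple of the length of $\alpha$ it consumes, and we keep the bookkeeping so that the total loss on $\gamma$ stays below $5(\abs{\alpha_1}+\abs{\alpha_2})$.
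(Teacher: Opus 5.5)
The genuine gap is the case of (2) in which the minimal diagram contains $R_i$--cells. The plan ``cut off $\Pi$, replace the sides by complementary arcs, induct on the number of cells, and keep the bookkeeping'' is not yet an argument, for three reasons:
\begin{itemize}
\item after removing $\Pi$ and its contiguity subdiagrams, the boundary acquires new sides (arcs of $\partial\Pi$ and $\eps_i$--connectors), and these arcs need not be $G_i$--geodesic;
\item the number of sides grows, so Lemma~\ref{lem:OOSLem4.6} (which needs $t\le 12$) cannot be iterated indefinitely;
\item nothing charges a step to $\abs{\alpha_1}+\abs{\alpha_2}$.
\end{itemize}
Your estimate $\abs{\alpha_1}+\abs{\alpha_2}\gtrsim\abs{\partial\Pi}/3$ also says nothing about $\abs{\gamma}$ when $\Pi$ sits in the corner between $\alpha_2$ and $\beta$ and never touches $\alpha_1$.

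The paper needs no induction on cells. The steps are as follows.
\begin{enumerate}
\item Normalize: replace $\beta$ by its subsegment between closest points on $\beta$ to $\gamma^+$ and $\gamma^-$, and replace $\alpha_1,\alpha_2$ by the resulting shorter geodesics. This only strengthens the conclusion.
\item Use the bounds you already have: contiguity to $\gamma$ is small (at most $1/10$), and contiguity to $\beta$ is at most $1/2+2\eps_i/\abs{\partial\Pi}$. Together these give total contiguity at least $1/4+6\eps_i/\abs{\partial\Pi}$ to $\alpha_1\cup\alpha_2$.
\item The normalization forces both $\Gamma_{\alpha_1}$ and $\Gamma_{\alpha_2}$ to exist. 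Suppose, say, $\Gamma_{\alpha_1}$ were absent. Then $\Gamma_{\alpha_2}$ and $\Gamma_\beta$ carry more than $3/4$ of $\partial\Pi$. Let $x$ be the end of the contiguity arc on $\alpha_2$ farthest from $\beta$. A connector, the leftover arc of $\partial\Pi$ and another connector give a path from $x$ to $\beta$ of length less than $\abs{\partial\Pi}/4$. But $\beta^+$ is a closest point of $\beta$ to $x$, and $d(x,\beta^+)\ge\abs{\partial\Pi}/4$, a contradiction.
\item With both contiguity subdiagrams present, $\gamma^+$ and $\gamma^-$ are joined by a path running along part of $\alpha_1$, a connector, an arc of $\partial\Pi$, a connector, and part of $\alpha_2$. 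Its length is at most $\abs{\alpha_1}+\abs{\alpha_2}+\abs{\partial\Pi}+2\eps_i\le 5(\abs{\alpha_1}+\abs{\alpha_2})$, using $\abs{\partial\Pi}+8\eps_i\le 4(\abs{\alpha_1}+\abs{\alpha_2})$ from the contiguity bound.
\item Since $\gamma$ is a $G_i$--geodesic by (1) at level $i$, the first alternative of (2) holds.
\end{enumerate}

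The rest of your plan is sound or easily repaired.
\begin{itemize}
\item Your direct proof of (1) at level $i$ is correct: Lemma~\ref{lem:OOSLem4.6} with $t=2$, then Lemma~\ref{lem:OOSLem4.4} for the $G_i$--geodesic $\gamma''$, then (2) at level $i-1$ on the contiguity subdiagram to get a common subword of $\gamma$ and $\partial\Pi$ longer than $\abs{\partial\Pi}/100$. Proving (1) before (2) is exactly what makes step 5 above legitimate.
\item The ``main obstacle'' you flag in the cell-free case is not one. A word that is geodesic in a quotient $G_k$ is geodesic in $G_j$ for every $j\le k$. This is the same observation you already use for $\gamma''$, so the inductive hypothesis applies verbatim.
\item Lemma~\ref{lem:OOSLem4.4} needs the boundary path to be geodesic in $H_1=G_i$, not in $G_{i-1}$ as you write. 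For a side that is merely $G_{i-1}$--geodesic the $1/2$ bound fails. For example, take $\gamma$ a short $i$--good arc of a rank-$i$ relator, $\beta$ the complementary arc, and $\alpha_1,\alpha_2$ trivial; then neither alternative of (2) holds.
\item So $\alpha_1,\alpha_2,\beta$ must be taken geodesic in $G_i$. The paper's argument uses the same Lemma~\ref{lem:OOSLem4.4} bound for $\beta$ and $\alpha_2$, and this is the form in which the proposition is applied elsewhere in the paper.
\end{itemize}
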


\begin{proof}
    We will prove both statements simultaneously by induction on $i$. For $i= 0$, the statements follow because $G_0$ is a free group. Fix $i\geq 1$, and assume that \eqref{prop1:geo} and \eqref{prop2_contiguity-diagrams} hold for $i-1$.

    We first prove \eqref{prop2_contiguity-diagrams} for $i$. By possibly passing to a subsegment of $\beta$, we may assume that $\beta^-$ and $\beta^+$ are points on $\beta$ closest to $\gamma^+$ and $\gamma^-$ respectively. 
    
    Let $\Delta$ be a minimal diagram over $G_{i}$ with $\partial \Delta = \gamma\ast\alpha_1\ast\beta\ast\alpha_2$. If $\Delta$ does not contain an $R_i$--cell, then $\gamma\ast \alpha_1\ast \beta \ast \alpha_2$ is a loop in $G_{i-1}$ and hence satisfies all the assumptions of \eqref{prop2_contiguity-diagrams} for $i-1$. The statement follows from the induction hypothesis that \eqref{prop2_contiguity-diagrams} holds for $i-1$. From now on, we assume that $\Delta$ contains an $R_i$--cell and aim to show that $\abs{\gamma}\leq 5(\abs{\alpha_1}+ \abs{\alpha_2})$ which would conclude the proof of \eqref{prop2_contiguity-diagrams} for $i$. 
    
    Since $\gamma$ is a geodesic in $G_{i-1}$ by \eqref{prop1:geo} for $i-1$, Lemma~\ref{lem:OOSLem4.6} yields an $R_i$--cell $\Pi$ of $\Delta$ and disjoint $\eps_i$--contiguity diagrams $\Gamma_\gamma, \Gamma_{\alpha_1}, \Gamma_{\alpha_2}$ and $\Gamma_\beta$ of $\Pi$, the sum of whose contiguity degrees is at least $1 - 23\mu$. As in Lemma~\ref{lem:OOSLem4.6}, not all such diagrams have to exist; we consider the contiguity degree of those that do not exist to be 0.

    \begin{claim}\label{claim:cont-degree}
        The contiguity degree of $\Gamma_\gamma$ is at most $1/10$.
    \end{claim}

    \begin{figure}
        \centering
        \includegraphics[width=0.5\linewidth]{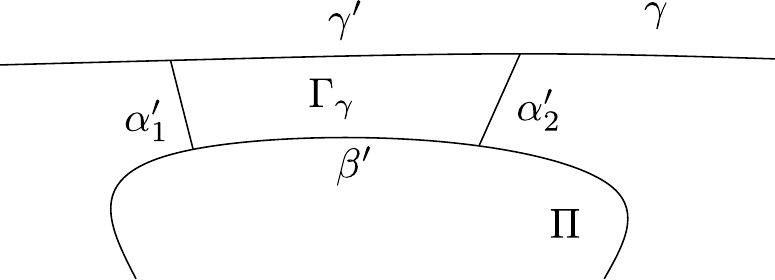}
        \caption{The contiguity diagram $\Gamma_\gamma$.}
        \label{fig:cont0}
    \end{figure}
    \begin{claimproof}
        Let $\gamma'\subset \gamma$ and $\beta'\subset \partial \Pi$ be the boundary and interior contiguity arcs of $\Gamma_\gamma$, respectively. Let $\alpha_1'$ and $\alpha_2'$ be geodesics in $G_{i-1}$ between the respective endpoints of $\gamma'$ and $\beta'$, as depicted in Figure~\ref{fig:cont0}, so that $\abs{\alpha_j'}\leq \eps_i$ for $j = 1, 2$. Applying \eqref{prop2_contiguity-diagrams} to $\gamma'\ast \alpha_1'\ast \beta'\ast \alpha_2'$ for $i-1$ yields two possibilities. If  $\abs{\gamma'}\leq 5(\abs{\alpha_1'}+\abs{\alpha_2'})\leq 10\eps_i$, then the triangle inequality and (P\ref{P:smalleps}) imply that  $(\Pi, \Gamma_{\gamma}, \gamma)\abs{\partial \Pi}=\abs{\beta'}\leq 12\eps_i< 12\abs{\partial\Pi}/1000$. Otherwise, there exists a subgeodesic $\gamma''$ of $\gamma'\cap \beta'$ with 
        \begin{align}\label{eq:0}
            \abs{\gamma''}\geq \abs{\gamma'} - 10\eps_i\geq \abs{\beta'} - 12\eps_i  =  (\Pi, \Gamma_{\gamma}, \gamma)\abs{\partial \Pi} - 12 \eps_i.
        \end{align}
        Here we used the triangle inequality for the second step and the definition of the contiguity degree for the third step. Since $\gamma$ is $i$-good and $\gamma''$ is a subsegment of both $\gamma$ and $\beta'\subset \partial \Pi$, 
        \begin{align}\label{eq0.1}
            \frac{\abs{\partial \Pi}}{100}\geq \abs{\gamma''}.
        \end{align}
        Inequalities \eqref{eq:0} and \eqref{eq0.1}, combined with the fact that $\eps_i\leq \abs{\partial \Pi}/1000$ by (P\ref{P:smalleps}), imply $\abs{\partial \Pi}/100\geq  (\Pi, \Gamma_{\gamma}, \gamma)\abs{\partial \Pi} - 12 \eps_i \geq (\Pi, \Gamma_{\gamma}, \gamma)\abs{\partial \Pi} - 12\abs{\partial\Pi}/1000$.  Hence, for either possibility, we  have  $(\Pi, \Gamma_{\gamma}, \gamma)\leq 1/10$. 
     \end{claimproof}
    
    \smallskip
     By Lemma~\ref{lem:OOSLem4.4}, the contiguity degree of any contiguity diagram is at most $1/2  + 2\eps_i/\abs{\partial \Pi}$, and by Lemma~\ref{lem:OOSLem4.6}, the sum of all contiguity degrees is at least $1-23\mu$. Together with Claim~\ref{claim:cont-degree}, this implies 
    \begin{align}\label{eq:degrees}
          (\Pi, \Gamma_{\alpha_1}, \alpha_1) + (\Pi, \Gamma_{\alpha_2}, \alpha_2) \geq  \frac{1}{2}-\frac{1}{10}- 23\mu - 2\frac{\eps_i}{\abs{\partial\Pi}}\geq \frac{1}{4}+6\frac{\eps_i}{\abs{\partial \Pi}}.
    \end{align}

    Here we used that $\mu\leq 1/230$ by Definition~\ref{def:ExpandingGSC} and $\eps_i\leq \abs{\partial \Pi}/1000$ by (P\ref{P:smalleps}).

    \begin{claim}\label{cliam:sides-exist}
        The contiguity diagrams $\Gamma_{\alpha_1}$ and $\Gamma_{\alpha_2}$ both exist.
    \end{claim}
    \begin{claimproof}
    Without loss of generality it suffices to prove that $\Gamma_{\alpha_1}$ exists. Assume by contradiction that it does not. Then,
    \begin{align}\label{eq:deg2}
        (\Pi, \Gamma_{\alpha_2}, \alpha_2) + (\Pi, \Gamma_\beta, \beta) \geq 1 - \frac{1}{10} - 23\mu > \frac{3}{4} + 2\frac{\eps_i}{\abs{\partial \Pi}}.
    \end{align}

    \begin{figure}
        \centering
        \includegraphics[width=0.3\linewidth]{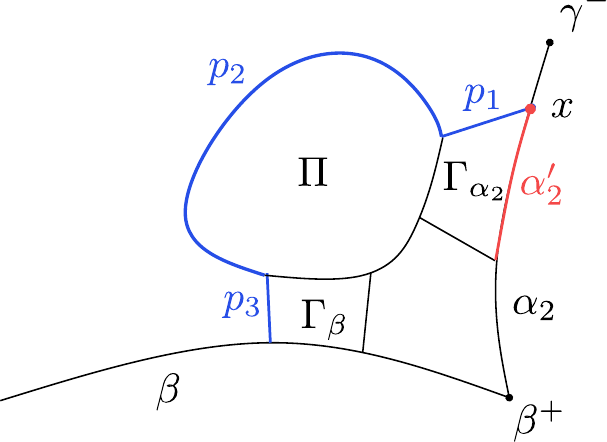}
        \caption{A short path $p = p_1 \ast p_2\ast p_3$ from $x$ to $\beta$.}
        \label{fig:cont1}
    \end{figure}
    In particular, both $\Gamma_\beta$ and $\Gamma_{\alpha_2}$ have to exist by Lemma~\ref{lem:OOSLem4.4}. Let $x$ be the point on $\alpha_2\cap \partial \Gamma_{\alpha_2}$ farthest away from $\beta$. Let $\alpha_2'$ be the boundary contiguity arc of $\Gamma_{\alpha_2}$. By the triangle inequality, \eqref{eq:degrees} and the non-existence of $\Gamma_{\alpha_1}$, we have 
    \begin{align*}
        d(x, \beta^+) \geq \abs{\alpha_2'} \geq (\Pi, \Gamma_{\alpha_2}, \alpha_2)\abs{\partial \Pi} - 2\eps_i\geq \frac{\abs{\partial \Pi}}{4}.
    \end{align*}
    Since the $\eps_i$--contiguity diagrams $\Gamma_\beta$ and $\Gamma_{\alpha_2}$ exist, there exists a path $p = p_1\ast p_2\ast p_3$ from $x$ to $\beta$ where $\abs{p_j}\leq \eps_i$ for $j=1, 3$ and $p_2$ is a subsegment of $\partial \Pi$ whose interior is disjoint from $\partial \Gamma_{\beta}$ and $\partial \Gamma_{\alpha_2}$. This is depicted in Figure~\ref{fig:cont1}. In particular by \eqref{eq:deg2},
    \begin{align*}
        \abs{p} <2\eps_i + \left(\frac{1}{4}-2\frac{\eps_i}{\abs{\partial \Pi}}\right)\abs{\partial \Pi} = \frac{\abs{\partial \Pi}}{4}.
    \end{align*}
    This contradicts our assumption that $\beta^+$ is a closest point on $\beta$ to $\gamma^-$ and hence is a closest point to $x$ on $\beta$.
    \end{claimproof}

    \smallskip
    \begin{figure}
        \centering
        \includegraphics[width=0.5\linewidth]{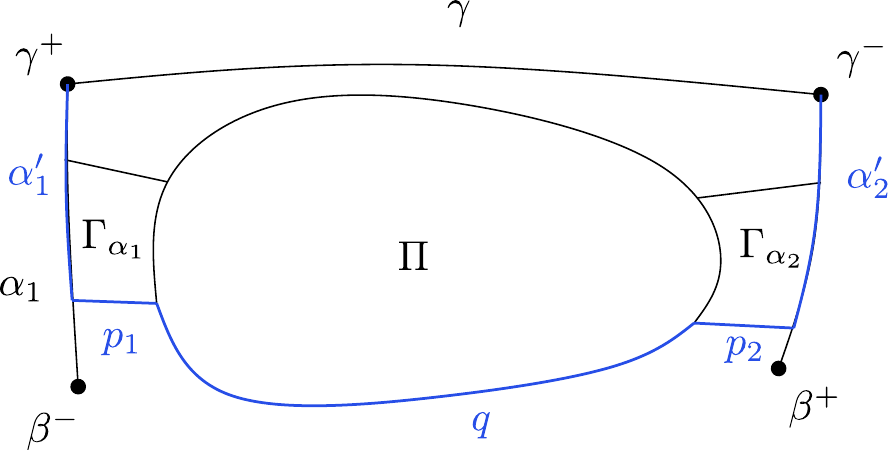}
        \caption{A short path $p = \alpha_1'\ast p_1 \ast q \ast p_2 \ast \alpha_2'$ from $\gamma^+$ to $\gamma^-$.}
        \label{fig:cont2}
    \end{figure}

    By Claim~\ref{cliam:sides-exist}, $\Gamma_{\alpha_1}$ and $\Gamma_{\alpha_2}$ both exist. In particular, as depicted in Figure~\ref{fig:cont2}, there exists a path $p = \alpha_1'\ast p_1 \ast q \ast p_2 \ast \alpha_2'$ from $\gamma^+$ to $\gamma^-$ such that $\alpha_1'$ and $\alpha_2'$ are subsegments of $\alpha_1$ and $\alpha_2$, $p_1$ and $p_2$ have length at most $\eps_i$ and $q$ is a subsegment of $\partial \Pi$. Consequently
    \begin{align}\label{eq:deg3}
        \abs{\gamma} \leq \abs{p}\leq \abs{\alpha_1} + \eps_i + \abs{\partial \Pi}+\eps_i +\abs{\alpha_2} \leq 5 (\abs{\alpha_1} + \abs{\alpha_2}).
    \end{align}

    Here we used \eqref{eq:degrees} which implies that $4(\abs{\alpha_1} +\abs{\alpha_2}) \geq \abs{\partial \Pi} + 8 \eps_i$. Inequality \eqref{eq:deg3} concludes the proof of \eqref{prop2_contiguity-diagrams} for $i$.

    Finally, we prove \eqref{prop1:geo} for $i$. Let $\gamma$ be $i$-good, and let $\beta$ be a geodesic in $G_i$ from $\gamma^+$ to $\gamma^-$. Applying \eqref{prop2_contiguity-diagrams} for $i$ to the loop $\gamma\ast \beta$ yields that $\gamma$ is a subsegment of $\beta$. Since $\beta$ is a geodesic, this implies $\gamma = \beta$ and concludes the proof.
\end{proof}

We next turn our attention to contiguity diagrams in a group with an expanding graded small cancellation presentation.  The following can be thought of a version of Greendlinger's lemma.
\begin{lem}[Upgraded contiguity diagrams]\label{lem:upgraded-contiguity-diagrams}
        
        Let $G = \pres = \gpres$ be a group satisfying the $(f, \mu, (\eps_n)_n)$--expanding graded small cancellation condition. 
        
        Let $\Delta$ be a minimal disk diagram over $G_n$. Suppose that $\partial \Delta=q^1\cdots q^t$, where the labels of $q^1,\dots, q^t$ are geodesics in $G_{n-1}$ and $t\leq 12$. Then either $\Delta$ does not contain any cells, in which case $\partial \Delta$ freely reduces to the trivial word, or there exists an $R_m$--cell $\Pi$ of $\Delta$ for some $m\leq n$ such that the following hold.
        \begin{enumerate}
            \item Every cell $\Pi'$ of $\Delta$ is an $R_i$--cell for some $i\leq m$.\label{cond:minimality}
            \item There exist disjoint subsegments $p_1, \ldots, p_{2t}$ of $\partial \Pi$ such that $\abs{p_1}+\ldots +\abs{p_{2t}}\geq \abs{\partial \Pi}(1 - 24 \mu )$ and such that $p_\ell$ is a subsegment of $q^{\ceil{\ell/2}}$ for all $1\leq \ell \leq 2t$.\label{cond:subsegments}
        \end{enumerate}
\end{lem}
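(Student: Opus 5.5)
Let $m$ be the largest rank of an $\mathcal R$--cell appearing in $\Delta$. If $\Delta$ has no cells, then $\partial\Delta$ freely reduces to the trivial word, so assume $m\ge 1$. The idea is to view $\Delta$ as a diagram over $G_m=\langle G_{m-1}\mid R_m\rangle$. Every cell of $\Delta$ has rank at most $m$, so condition~\ref{cond:minimality} holds automatically once we choose $\Pi$ among the $R_m$--cells. We will then apply Lemma~\ref{lem:OOSLem4.6} with $H=G_{m-1}$, $\mathcal R=R_m$, $\eps=\eps_m$ and $\rho=10^6\eps_m/\mu+1$.

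\textbf{Step 1: checking the hypotheses.} The hypotheses of Lemma~\ref{lem:OOSLem4.6} are supplied by (T\ref{T:C-cond}) together with the hyperbolicity of $G_{m-1}$ from (P\ref{P:hyperbolicity}). The delicate point is that the labels of the $q^j$ are only assumed geodesic in $G_{n-1}$, while the lemma requires them to be geodesic in $H=G_{m-1}$, and $m\le n$. I would deal with this as follows. The subdiagram obtained by contracting all cells of rank $<m$ is a diagram over $G_{m-1}$ in which the $R_m$--cells are the only $\mathcal R$--cells, and minimality of $\Delta$ passes to minimality of this diagram over $G_m$. To recover geodesicity I would argue by induction on $n$. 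If $m<n$, then $\Delta$ is in fact a diagram over $G_m\subseteq G_{n-1}$ in which the sides are geodesic in $G_{n-1}$. I would try to replace each $q^j$ by a $G_{m-1}$--geodesic: the label of $q^j$ is a $G_{n-1}$--geodesic, and the relators of rank between $m$ and $n-1$ do not appear, so $q^j$ together with a $G_{m-1}$--geodesic between its endpoints bounds a diagram containing only cells of rank $\le m-1$. Alternatively, and more cleanly, I would first prove the case $m=n$, which is exactly Lemma~\ref{lem:OOSLem4.6}. For $m<n$, I would then invoke the statement one level down, using that a $G_{n-1}$--geodesic whose diagram with a $G_{m-1}$--geodesic involves only rank-$\le m$ cells is still a $G_{m}$--quasi-geodesic close to the $G_{m-1}$--geodesic, by Proposition~\ref{prop:good-geodesics}\eqref{prop2_contiguity-diagrams}. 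This reduction is the step I expect to be the main obstacle, and I would invest most care here.

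\textbf{Step 2: contiguity arcs become genuine subsegments.} Lemma~\ref{lem:OOSLem4.6} yields an $R_m$--cell $\Pi$ and disjoint $\eps_m$--contiguity subdiagrams $\Gamma_j$ to $q^j$ with total degree $>1-23\mu$. For each $j$, consider the loop formed by the interior arc $\beta_j\subset\partial\Pi$, the boundary arc $q^j_0\subset q^j$, and two sides of length at most $\eps_m$. The diagram $\Gamma_j$ contains no $R_m$--cells, so this is a loop in $G_{m-1}$. Moreover, $\beta_j$, being a subpath of a relator, is $(m-1)$--good up to the short portion outside $r_f$ by (P\ref{P:good-segments-of-relators}).

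Concretely, I would split $\beta_j$ into its intersection with $r_f$ and the remainder, whose length is at most $f(\abs{\partial\Pi})\le\mu\abs{\partial\Pi}/1000$ by (T\ref{cond:T5}). Since $r_f$ is an arc of the cyclic word, this splits $\beta_j$ into at most two pieces lying in $r_f$, which is why $2t$ segments appear in the statement. Applying Proposition~\ref{prop:good-geodesics}\eqref{prop2_contiguity-diagrams} at level $m-1$ to each good piece $\gamma$, with $\beta=q^j_0$ and $\alpha$'s of length at most $\eps_m$, gives a common subsegment $p$ of $\partial\Pi$ and $q^j$. Its length is at least $\abs{\gamma}-10\eps_m$; in the degenerate alternative the piece has length at most $10\eps_m$ and we simply discard it.

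\textbf{Step 3: summing the losses.} The total loss is at most $2t\cdot 12\eps_m$ from the contiguity sides, plus $f(\abs{\partial\Pi})$ from the part outside $r_f$, plus $23\mu\abs{\partial\Pi}$. Using (P\ref{P:smalleps}), i.e.\ $\eps_m<\mu\abs{\partial\Pi}/1000$, together with $t\le 12$, this loss is at most $24\mu\abs{\partial\Pi}$. This gives condition~\ref{cond:subsegments}.
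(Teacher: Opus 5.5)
Your overall architecture matches the paper's: take the maximal rank $m$, apply Lemma~\ref{lem:OOSLem4.6} with $H=G_{m-1}$ and $\mathcal R=R_m$, split each interior contiguity arc using the good arc $r_f$ from (P\ref{P:good-segments-of-relators}), upgrade contiguity to genuine common subsegments with Proposition~\ref{prop:good-geodesics}\eqref{prop2_contiguity-diagrams} at level $m-1$, and absorb the losses with (P\ref{P:smalleps}) and (T\ref{cond:T5}).

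There is, however, a genuine gap in Step~2. Suppose the interior arc $\beta_j$ contains part of $\partial\Pi\setminus r_f$. Then each good piece $\gamma$ has only one endpoint joined to $q^j_0$ by a side of length at most $\eps_m$. Its other endpoint lies in the middle of $\beta_j$, and the contiguity subdiagram gives you no short path from it to $q^j$. So you cannot apply Proposition~\ref{prop:good-geodesics}\eqref{prop2_contiguity-diagrams} ``with $\alpha$'s of length at most $\eps_m$'', and the loss of $10\eps_m$ per piece is unjustified.

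The missing ingredient is the hyperbolicity of $G_{m-1}$ from (P\ref{P:hyperbolicity}). After replacing the two short sides by $G_{m-1}$--geodesics, $\partial\Gamma_j$ is a geodesic quadrangle in the $\eps_m$--hyperbolic group $G_{m-1}$. Hence every point of $\beta_j$ is within $2\eps_m$ of one of the other three sides, and so within $3\eps_m$ of $q^j_0$. This gives connectors of length at most $3\eps_m$ at the inner endpoints, and the proposition then costs at most $5(\eps_m+3\eps_m)=20\eps_m$ per piece. The paper uses connectors of length $4\eps_m$ and a loss of $25\eps_m$. Your budget still closes: $2t\cdot 25\eps_m\le 600\eps_m<0.6\,\mu\abs{\partial\Pi}$ by (P\ref{P:smalleps}), and the part outside $r_f$ costs at most $f(\abs{\partial\Pi})\le\mu\abs{\partial\Pi}/1000$.

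The point you flag as the main obstacle in Step~1 is not actually an obstacle. Moreover, the workarounds you sketch would not supply the hypothesis. Replacing $q^j$ by a $G_{m-1}$--geodesic changes $\partial\Delta$, while the conclusion concerns subsegments of the original $q^j$. Knowing that $q^j$ is a quasi-geodesic close to a $G_{m-1}$--geodesic is not the geodesicity that Lemma~\ref{lem:OOSLem4.6} requires.

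The needed fact is one line. For $m\le n$, the group $G_{n-1}$ is a quotient of $G_{m-1}$, so $|w|_{G_{m-1}}\ge|w|_{G_{n-1}}$ for every word $w$ (here $|w|_{G_k}$ is the length of a shortest word representing the same element of $G_k$). Hence a word that is geodesic in $G_{n-1}$ is geodesic in $G_{m-1}$; this is exactly the sentence the paper uses. The same observation also makes the subpaths of $q^j_0$ and the connectors geodesic in $G_{m-2}$, which you need when applying Proposition~\ref{prop:good-geodesics}\eqref{prop2_contiguity-diagrams} at level $m-1$.
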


\begin{proof}
    Let $\Delta$ be as in the statement. If $\Delta$ does not contain any cells, then the statement holds, so assume that $\Delta$ contains at least one cell. Let $m$ be the maximal index for which $\Delta$ contains an $R_m$--cell. Note that $\Delta$ is a diagram in $G_n$, and hence $m\leq n$. This shows that the first conclusion holds.  Moreover, since $q^1, \ldots , q^t$ are geodesics in $G_{n-1}$, they are geodesic in $G_{m-1}$.
    
    Since $G = \gpres$ satisfies the $(f, \mu , (\eps_n)_n)$--expanding graded small cancellation condition, the group $G_m$ satisfies the $C(\eps_m, \mu, 1006\eps_m/\mu +1)$ condition over $G_{m-1}$, and thus we can apply Lemma~\ref{lem:OOSLem4.6} to obtain an $R_m$--cell $\Pi'$ and disjoint $\eps_m$--contiguity diagrams $\Gamma_1, \ldots, \Gamma_t$ (some of which may be absent) with 
    \[
        (\Pi',\Gamma_1,q^1)+\cdots +(\Pi', \Gamma_t,q^t)>1-23\mu.
    \]

    Let $1\leq k \leq t$ be an index such that the contiguity diagram $\Gamma_k$ exists, and let $\alpha\subset \partial \Pi'$ and $\beta\subset q^k$ be its interior and boundary contiguity arcs, respectively. By (P\ref{P:good-segments-of-relators}), $\alpha$ can be written as $\alpha_1 \ast \alpha'\ast \alpha_2$, where $\alpha_1$ and $\alpha_2$ are $(m-1)$--good and $\abs{\alpha'}\leq f(\abs{\partial\Pi'})$; some of these paths may be empty. Since $\Gamma_k$ is an $\eps_m$--contiguity diagram, its boundary $\partial \Gamma_k$ can be viewed as a quadrangle in the $\eps_m$--hyperbolic group $G_{m-1}$. In particular, there are points $x_1, x_2$ on $\partial \Gamma_k \setminus \alpha$ which satisfy $d(\alpha_1^+, x_1)\leq 2 \eps_m$ and $d(\alpha_2^-, x_2)\leq 2\eps_m$. For $\ell = 1, 2$, if $\abs{\alpha_\ell}> 3 \eps_m$, then by the triangle inequality, these points have to lie on $\beta$. Thus, in any case, we can write $\beta = \beta_1\ast \beta'\ast \beta_2$, where for $\ell = 1, 2$, the endpoints of $\beta_\ell$ and $\alpha_\ell$ have distance at most $4\eps_m$. See Figure~\ref{fig:upgraded-contiguity-diagrams}. Proposition~\ref{prop:good-geodesics}\eqref{prop2_contiguity-diagrams} applied with $i = m-1$ implies that  $\alpha_\ell$ and $\beta_\ell$ have a common subsegment $\gamma^k_\ell$ of length at least $\abs{\alpha_\ell} - 25\eps_m$ for $\ell = 1, 2$; if $\abs{\alpha_\ell}<25\eps_m$, then we take $\gamma^k_\ell$ to be empty. Hence $\abs{\gamma^k_1} + \abs{\gamma^k_2}\geq \abs{\alpha} - \abs{\alpha'} - 50\eps_m \geq \abs{\partial \Pi'}(\Pi', \Gamma_k, q^k) - 50\eps_m - f(\abs{\partial \Pi'})$.
    \begin{figure}
        \centering
        \includegraphics[width=0.7\linewidth]{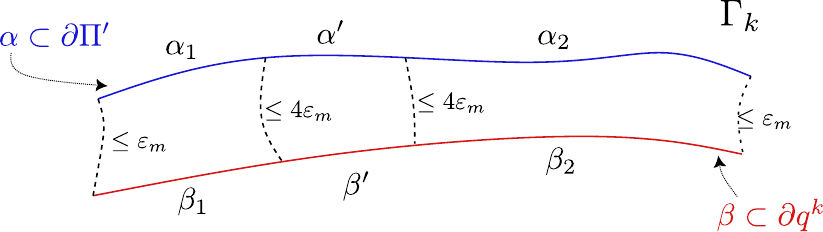}
        \caption{The $\eps_j$--continguity diagram $\Gamma_k$.}
        \label{fig:upgraded-contiguity-diagrams}
    \end{figure}

    Since this holds for all $k$ for which the $\eps_m$--contiguity diagram $\Gamma_k$ exists, and the contiguity diagrams are disjoint, we have that 
    \begin{align*}
       \sum_{k = 1 }^t\left(\abs{\gamma^k_1}+\abs{\gamma^k_2}\right) &\geq \abs{\partial \Pi'}\left((\Pi',\Gamma_1,q^1)+\cdots +(\Pi', \Gamma_t,q^t)\right) - 12\cdot 50\cdot\eps_m - 12 f(\abs{\partial \Pi'})\\
        &\geq \abs{\partial \Pi'}(1 - 23 \mu) - 12\cdot50 \cdot\eps_m - 12 f(\abs{\partial \Pi'}).
    \end{align*}

    By (P\ref{P:smalleps}), $ 12\cdot 50 \cdot \eps_m \leq 3\abs{\partial \Pi'}\mu/4$, by (T\ref{cond:T5}), $12f(\abs{\partial \Pi'})\leq \abs{\partial \Pi'}\mu/4$ and therefore, item \ref{cond:subsegments} holds for $\Pi = \Pi'$, concluding the proof.  
\end{proof}

 We call  a cell $\Pi$ satisfying conditions 1 and 2 of Lemma~\ref{lem:upgraded-contiguity-diagrams} a \emph{Greendlinger cell} of $\Delta$. 
\begin{remark}\label{rem:greendlinger-in-G}
    Suppose $q^1, \ldots , q^t$ are geodesics in $G$ such that the path $ p = q^1\cdots q^t$ is a loop in $G$. Note that the paths $q^k$ are geodesics in $G_i$ for all $i$. Choose $i$ such that the path $p$ is a loop in $G_i$, and let $\Delta$ be a minimal diagram over $G_i$ with $\partial \Delta= p$. By Lemma~\ref{lem:upgraded-contiguity-diagrams}, if $t\leq 12$, then $\Delta$ contains a Greendlinger cell. 
\end{remark}

The next lemma compares the length of the boundary of a minimal disk diagram over $G_i$ to the length of any cell of the diagram.
\begin{lem}\label{lem:boundary-proportion}
        Let $C\geq 1$ be a constant, and let $G = \pres = \gpres$ be an $(f, \mu, (\eps_n)_n)$--expanding graded small cancellation group satisfying $\abs{r}\leq C\abs{r'}$ for all $r, r'\in R$ of the same rank. 
        Let $\Delta$ be a minimal disk diagram over $G_i$ (or $G$). Suppose that $\partial \Delta=q^1\cdots q^t$, where the labels of $q^1,\dots, q^t$ are geodesics in $G_{i-1}$ (or $G$) and $t\leq 12$. Then for all cells $\Pi$ of $\Delta$, 
        \begin{align*}
          C \cdot K\cdot  \abs{\partial \Delta}\geq \abs{\partial \Pi},  
        \end{align*}
        where $K = 1/(1 - 24\mu)$.
\end{lem}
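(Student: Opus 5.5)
The plan is to apply Lemma~\ref{lem:upgraded-contiguity-diagrams} to $\Delta$ and read off the bound from the Greendlinger cell. If $\Delta$ has no cells, there is nothing to prove. Otherwise, Lemma~\ref{lem:upgraded-contiguity-diagrams} gives a Greendlinger cell $\Pi_0$, which is an $R_m$--cell with $m\leq i$. In the case over $G$ we first pass to $G_i$ for some $i$ large enough that the boundary loop already bounds a diagram there, as in Remark~\ref{rem:greendlinger-in-G}; a minimal diagram over $G$ is then a minimal diagram over $G_i$. Two properties of $\Pi_0$ will be used.

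First, by item~\ref{cond:minimality}, every cell $\Pi$ of $\Delta$ is an $R_j$--cell with $j\leq m$. Second, by item~\ref{cond:subsegments}, there are disjoint subsegments $p_1,\dots,p_{2t}$ of $\partial\Pi_0$ with
\[
\sum_\ell \abs{p_\ell}\geq (1-24\mu)\abs{\partial\Pi_0},
\]
and each $p_\ell$ is a subsegment of some $q^{\lceil \ell/2\rceil}\subseteq\partial\Delta$.

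The key step is to show that $\sum_\ell\abs{p_\ell}\leq\abs{\partial\Delta}$, which gives $\abs{\partial\Pi_0}\leq K\abs{\partial\Delta}$. The subtlety is that the $p_\ell$ are disjoint on $\partial\Pi_0$, but we need them to occupy disjoint portions of $\partial\Delta$. I would argue this from the construction in the proof of Lemma~\ref{lem:upgraded-contiguity-diagrams}. There, the $p_\ell$ are the common segments $\gamma^k_1,\gamma^k_2$ lying in distinct, disjoint contiguity subdiagrams $\Gamma_k$. The two segments in a single $\Gamma_k$ are disjoint subarcs of the boundary contiguity arc $\beta\subseteq q^k$, and the boundary arcs of disjoint contiguity subdiagrams are disjoint subpaths of $\partial\Delta$. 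Hence the $p_\ell$ sit as disjoint subpaths of $\partial\Delta$, and their total length is at most $\abs{\partial\Delta}$. This identification of the $p_\ell$ with disjoint pieces of $\partial\Delta$ (and not merely of $\partial\Pi_0$) is the step I expect to need the most care.

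Finally, let $\Pi$ be any cell of $\Delta$, say of rank $j\leq m$. If $j=m$, the hypothesis on same-rank relators gives $\abs{\partial\Pi}\leq C\abs{\partial\Pi_0}$. If $j<m$, then (P\ref{P:smaller-rank-shorter-relator}) gives $\abs{\partial\Pi}<\mu\abs{\partial\Pi_0}\leq C\abs{\partial\Pi_0}$. In either case,
\[
\abs{\partial\Pi}\leq C\abs{\partial\Pi_0}\leq C K\abs{\partial\Delta},
\]
as claimed.
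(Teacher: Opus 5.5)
Your proposal is correct and follows the paper's proof. A Greendlinger cell $\Pi_0$ gives $\abs{\partial\Delta}\geq(1-24\mu)\abs{\partial\Pi_0}$, and every other cell is compared to $\Pi_0$ using the maximality of its rank together with either the balance hypothesis or (P\ref{P:smaller-rank-shorter-relator}); your check that the $p_\ell$ occupy disjoint portions of $\partial\Delta$ makes explicit a step the paper uses silently. One small fix for the case over $G$: choose $i$ at least the maximal rank of a cell of $\Delta$ itself, not merely large enough that the boundary bounds some diagram over $G_i$, so that $\Delta$ really is a minimal diagram over $G_i$.
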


\begin{proof}
    If $\Delta$ has no cells, the statement vacuously holds, so assume that $\Delta$ has at least one cell. By Lemma~\ref{lem:upgraded-contiguity-diagrams}, there is a Greendlinger cell $\Pi'$ of $\Delta$, which implies that
    \begin{align}\label{eq1}
        \abs{\partial \Delta} \geq (1 - 24 \mu)\abs{ \partial \Pi'} = 1/K \abs{\partial \Pi'}.
    \end{align}
    Let $\Pi$ be any cell of $\Delta$. Since Greendlinger cells have maximal rank, the rank of $\Pi$ is at most the rank of $\Pi'$. If $\Pi$ has the same rank as $\Pi'$, then by assumption $\abs{\Pi}\leq C\abs{\Pi'}$. If the rank of $\Pi$ is less than the rank of $\Pi'$, then $\abs{\Pi}\leq \abs{\Pi'}$ by (P\ref{P:smaller-rank-shorter-relator}).  In either case,  the statement follows from \eqref{eq1}. 
\end{proof}

We next show that images of relators are isometrically embedded in $G$. 
\begin{lem}\label{lem:relators-are-isometrically-embedded}
    Let $G = \pres = \gpres$ be a $(f, \mu, (\eps_n)_n)$--expanding graded small cancellation group. The images of all relators $r\in R$ are isometrically embedded in $G$.
\end{lem}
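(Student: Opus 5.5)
Fix a relator $r\in R_n$ and two vertices $u,v$ on its embedded image, and let $\alpha$ be the shorter of the two arcs of $r$ between them, so $\abs{\alpha}\leq \abs{r}/2$. The plan is to show that $\alpha$ is a geodesic in $G$. Since $G$ is the direct limit of the $G_i$, it suffices to show that $\alpha$ is geodesic in $G_i$ for every $i\geq n$; we will show this by contradiction. Suppose $\beta$ is a geodesic in $G$ from $v$ to $u$ with $\abs{\beta}<\abs{\alpha}$. Choose $i\geq n$ large enough that $\alpha\ast\beta$ is a loop in $G_i$, and let $\Delta$ be a minimal disk diagram over $G_i$ with $\partial\Delta=\alpha\ast\beta$. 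The label of $\beta$ is geodesic in $G$, hence in $G_{i-1}$. By (C\ref{cond:geodesics-in-minus-1}) the label of $\alpha$, being a subword of a relator in $R_n$, is geodesic in $G_{n-1}$. The first obstacle is upgrading this to geodesicity in $G_{i-1}$, so that Lemma~\ref{lem:upgraded-contiguity-diagrams} applies with $t=2$.

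The main tool is the decomposition from (P\ref{P:good-segments-of-relators}). The arc $\alpha$ splits as $\alpha_1\ast\alpha'\ast\alpha_2$, where $\alpha_1,\alpha_2$ are subsegments of $r_f$, hence $(n-1)$--good, and $\abs{\alpha'}\leq f(\abs{r})$. Condition (T\ref{cond:T4}) actually says that $r_f$ meets \emph{every} relator, of any rank, in at most $1/100$ of that relator. So $\alpha_1$ and $\alpha_2$ are $j$--good for all $j$, and by Proposition~\ref{prop:good-geodesics}\eqref{prop1:geo} they are the unique geodesics in every $G_j$, hence in $G$. Rather than insisting that $\alpha$ itself be geodesic in $G_{i-1}$, I would treat $\partial\Delta$ as the polygon $\alpha_1,\alpha',\alpha_2,\beta$ and replace $\alpha'$ by a $G_{i-1}$--geodesic $\alpha''$ of length at most $f(\abs r)$. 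This turns $\partial\Delta$ into a loop of four $G_{i-1}$--geodesics, so $t=4\leq 12$.

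Apply Lemma~\ref{lem:upgraded-contiguity-diagrams} to get a Greendlinger cell $\Pi$ of rank $m$. At least $(1-24\mu)\abs{\partial\Pi}$ of $\partial\Pi$ is covered by at most $8$ common subsegments with the sides. Common subsegments with $\alpha_1$ or $\alpha_2$ have length at most $\abs{\partial\Pi}/100$ each, unless $\Pi$ is $r$ itself, where the $1/100$ bound from (T\ref{cond:T4}) still applies because it holds for all $r'$. So at most $4\abs{\partial\Pi}/100$ of $\partial\Pi$ comes from $\alpha_1$ and $\alpha_2$. The part from $\alpha''$ is at most $2f(\abs r)$. Hence $\beta$ shares a subsegment covering at least roughly $(1-24\mu-0.04)\abs{\partial\Pi}-2f(\abs r)$ of $\partial\Pi$. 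Since $\beta$ is geodesic, Lemma~\ref{lem:OOSLem4.4} forces this fraction to be at most $1/2+2\eps_m/\abs{\partial\Pi}$. That is a contradiction whenever $f(\abs r)$ is small compared to $\abs{\partial\Pi}$. The case $m\leq n$ with $\abs{\partial\Pi}$ tiny is excluded, since then $\Pi$ could not absorb the $f$--term.

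To make this rigorous, I would run an induction on the total length $\abs{\alpha}+\abs{\beta}$ or on the number of cells. If $\Pi$ is small relative to $f(\abs r)$, the bulk of $\partial\Pi$ lies on $\beta$ and $\alpha''$, and a \emph{short} piece lies on $\alpha''$. In that case I would substitute the complementary arc of $\partial\Pi$, shortening $\beta$ and contradicting its geodesicity, or reducing to a smaller diagram. I expect this bookkeeping between $f(\abs r)$ and the size of the Greendlinger cell to be the main obstacle.

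A fallback avoiding the issue is to apply the argument to each of $\alpha_1$ and $\alpha_2$ separately, which are genuine geodesics. We then compare $d(u,v)$ with $\abs{\alpha_1}+\abs{\alpha_2}-2f(\abs r)$ using Proposition~\ref{prop:good-geodesics}\eqref{prop2_contiguity-diagrams}. This gives isometric embedding up to an additive $f(\abs r)$ error. It would then need to be sharpened by choosing $r_f$, via cyclic rotation, to avoid the endpoints $u,v$.
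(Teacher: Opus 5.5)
There is a genuine gap, and it sits exactly in the case that carries the content of the lemma.

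\textbf{The misreading of (T\ref{cond:T4}).} You read (T\ref{cond:T4}) as saying that $r_f$ meets \emph{every} relator, including $r$ itself, in at most $1/100$ of its length. From this you conclude that $\alpha_1,\alpha_2$ are $j$--good for all $j$, and that the $1/100$ bound survives when the Greendlinger cell $\Pi$ is a copy of $r$. That reading is inconsistent. For $r'=r$ it would force $\abs{r_f}\leq\abs{r}/100$, contradicting $\abs{r_f}\geq\abs{r}-f(\abs{r})$ from (P\ref{P:good-segments-of-relators}). The condition only has content against relators of lower rank. This is how the paper states it in (P\ref{P:good-segments-of-relators}) and verifies it in ($\tilde{C}$\ref{tildeC4}): $r_f$ is $(n-1)$--good. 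A subarc of $r$ longer than $\abs{r}/100$ is never $n$--good, so Proposition~\ref{prop:good-geodesics}\eqref{prop1:geo} gives you nothing beyond $G_{n-1}$.

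\textbf{The case this leaves open.} Some cells are indeed harmless:
\begin{itemize}
\item cells of rank $<n$, by goodness;
\item cells of rank $>n$, via (P\ref{P:smaller-rank-shorter-relator});
\item other rank-$n$ relators, via (P\ref{P:small-cancellation-on-level}).
\end{itemize}
But a cell labelled by $r$ and glued to $\alpha$ in the aligned position can share all of $\alpha$ with $\partial\Delta$ and the rest of its boundary with $\beta$. Your count of $\partial\Pi$ then yields no contradiction. The paper treats precisely this case separately. It removes that cell and obtains a diagram bounded by the complementary arc $\gamma_2$ (geodesic in $G_{n-1}$ by (C\ref{cond:geodesics-in-minus-1})) and $\beta$. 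This diagram is non-empty, since $\abs{\beta}<\abs{\alpha}\leq\abs{\gamma_2}$. Its Greendlinger cell again has rank $n$ and shares at least $(1/2-24\mu)\abs{\partial\Pi'}$ with $\gamma_2$. That contradicts (P\ref{P:small-cancellation-on-level}) if the cell is a different relator, and minimality of the original diagram (a cancelling pair) if it is $r$ again.

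\textbf{The ``first obstacle'' is self-inflicted.} Instead of an arbitrary large $i$, take $j\geq n$ minimal such that the embedded relator is not isometric in $G_j$, and work over $G_j$. Then $\alpha$ and $\beta$ are geodesic in $G_{j-1}$: by (C\ref{cond:geodesics-in-minus-1}) if $j=n$, and by minimality if $j>n$. So Lemma~\ref{lem:upgraded-contiguity-diagrams} applies directly with $t=2$. The Greendlinger cell must have rank exactly $j$, since otherwise the diagram lives over $G_{j-1}$. For $j>n$, (P\ref{P:smaller-rank-shorter-relator}) gives $\abs{\alpha}<\mu\abs{\partial\Pi}$. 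Together with at most half of $\partial\Pi$ on $\beta$, this contradicts the Greendlinger bound, leaving only $j=n$ and the case above.

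\textbf{Problems with your substitution and fallback.} Replacing $\alpha'$ by $\alpha''$ creates the small-cell case you leave unresolved. It also forgets the information $\abs{\beta}<\abs{\alpha}$. Unless you already know $\alpha'$ is geodesic in $G_{i-1}$, which is an instance of the claim, $\abs{\alpha_1}+\abs{\alpha''}+\abs{\alpha_2}$ may be smaller than $\abs{\alpha}$, so even an empty diagram gives no contradiction. The fallback does not help either: $r_f$ is supplied by (T\ref{cond:T4}) and cannot be rotated to avoid $u,v$. Moreover, an additive $f(\abs{r})$ error is not the statement.
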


\begin{figure}
    \centering
    \includegraphics[width=0.2\linewidth]{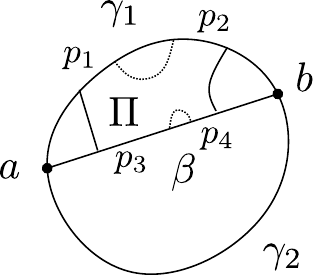}
    \caption{The embedded relator $\gamma$ subdivided into $\gamma_1$ and $\gamma_2$ is isometrically embedded.}
    \label{fig:isom-relators}
\end{figure}
\begin{proof}
    Fix $r\in R_i$ for some $i$. By definition, $r$ labels a geodesic in $G_{i-1}$. Toward a contradiction, assume that $r$ is not isometrically embedded in $G$, and let $j\geq i$ be the smallest index such that the embedded relator $\gamma$ labelled by $r$ is not isometrically embedded in $G_j$. Let $a, b$ be two points on $\gamma$, and let $\gamma_1$ and $\gamma_2$ be the two paths with $\abs{\gamma_1}\leq \abs{\gamma_2}$ from $a$ to $b$ that arise by following along $\gamma$ and such that the geodesic $\beta$ in $G_j$ from $b$ to $a$ satisfies $\abs{\beta} < \abs{\gamma_1}$. This is depicted in Figure~\ref{fig:isom-relators}.
    
    Let $\Delta_1$ be a minimal diagram over $G_j$ with $\partial \Delta_1 = \gamma_1\ast \beta$; by the choice of $a$ and $b$, the diagram $\Delta_1$ cannot be empty. Moreover, the sides $\gamma_1$ and $\beta$ of $\Delta_1$ are geodesics in $G_{j-1}$, and hence $\Delta_1$ has a Greendlinger cell $\Pi$ by Lemma~\ref{lem:upgraded-contiguity-diagrams}. By the minimality of $j$, the rank of the Greeendlinger cell $\Pi$ has to be equal to $j$. Since $\Pi$ is a Greendlinger cell of $\Delta_1$, there exist disjoint subsegments $p_1,p_2,p_3,p_4$ of $\partial \Pi$ such that $\abs{p_1}+\abs{p_2}+\abs{p_3}+\abs{p_4}\geq \abs{\partial\Pi}(1-24\mu)$ and $p_1,p_2$ are subsegments of $\gamma_1$ while $p_3,p_4$ are subsegments of $\beta$. Since $\beta$ is a geodesic, we must have $\abs{p_3}+\abs{p_4}\leq \abs{\partial\Pi}/2$, and hence 
    \begin{align}\label{eq:geodesic}
        \abs{p_1} + \abs{p_2} \geq \abs{\partial \Pi} \left(\frac{1}{2} - 24\mu\right).
    \end{align}
    
    If $j > i$, then by (P\ref{P:smaller-rank-shorter-relator}), $\abs{\gamma_1}\leq \mu \abs{\Pi}$. In particular $\abs{p_1}+\abs{p_2}\leq \mu \abs{\Pi}$, contradicting \eqref{eq:geodesic}.  On the other hand, if $j = i$ and $\Pi\neq \gamma$, then by (P\ref{P:small-cancellation-on-level}), $\abs{p_1}$ and $\abs{p_2}$ can be at most $\mu\abs{\Pi}$, again contradicting \eqref{eq:geodesic}. Thus assume from now on that $i=j$ and $\Pi = \gamma$. Let $\Delta_2$ be the subdiagram of $\Delta_1$ obtained by removing $\Pi = \gamma$ from $\Delta_1$, so that $\partial \Delta_2 = \gamma_2\ast \beta$.  Note that by (C\ref{cond:geodesics-in-minus-1}), $\gamma_{2}$ is a geodesic in $G_{i-1}$. If $\Delta_2$ were empty, then $\abs{\beta} = \abs{\gamma_2} >\abs{\gamma_1}$, contradicting $\abs{\gamma_1}\leq \abs{\gamma_2}$. In particular, Lemma~\ref{lem:upgraded-contiguity-diagrams} implies that $\Delta_2$ has a Greendlinger cell $\Pi'$.  Since $\gamma_2$ is a geodesic in $G_{i-1}$, such a cell $\Pi'$ cannot have rank less than $i$ and thus has to have rank $i$. Again by Lemma~\ref{lem:upgraded-contiguity-diagrams}, there exist disjoint subsegments $p_1',p_2',p_3',p_4'$ of $\partial \Pi'$ such that $\abs{p_1'}+\abs{p_2'}+\abs{p_3'}+\abs{p_4'}\geq \abs{\partial\Pi'}(1-24\mu)$ and $p_1',p_2'$ are subsegments of $\gamma_2$ while $p_3',p_4'$ are subsegments of $\beta$. As before, $\abs{p_1'}+\abs{p_2'}\geq \abs{\partial \Pi'}\left(1/2 - 24 \mu\right)$. If $\Pi'\neq \gamma$, we have a contradiction with (P\ref{P:small-cancellation-on-level}), while if $\Pi' = \gamma$, we contradict the minimality of $\Delta_1$. 
\end{proof}

We end this section with two technical lemmas that will be useful for proving that groups satisfy the expanding graded small-cancellation condition, which we do in Section~\ref{sec:nonAHgp}. 

\begin{lem}\label{lem:remove-eps-pieces}
    Let $n\geq 1$ and let $G_{n-1} = \langle S\mid \bigcup_{i=1}^{n-1} R_i \rangle$ be an $(f, \mu, (\eps_n)_n)$--expanding graded small cancellation group. Let $Q$ be a symmetrized set of reduced words over $S^{\pm}$. Suppose $Q$ satisfies the following conditions.
    \begin{enumerate}[($\tilde{C}$1)]
        \item all words from $Q$ are geodesics in $G_{n-1}$,\label{tildeC1}
        \item $\abs{q}\geq 10^6\eps_n/\mu +1$ for all words $q\in Q$, \label{tildeC2}
        \item $Q$ satisfies the $C'(\mu/3)$--condition, and \label{tildeC3}
        \item each relator $q\in Q$ has a subsegment $q_g\subset q$ of length at least $\abs{q_g}\geq \abs{q} - f(\abs{q})$ which is $(n-1)$--good.\label{tildeC4}
    \end{enumerate}
    Then $Q$ satisfies the $C(\eps_n, \mu, 10^6\eps_n/\mu +1)$--condition over $G_{n-1}$.
\end{lem}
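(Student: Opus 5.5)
Conditions (C\ref{cond:geodesics-in-minus-1}) and (C\ref{C2}) are exactly ($\tilde C$\ref{tildeC1}) and ($\tilde C$\ref{tildeC2}), so the plan is to verify only (C\ref{C3}). We must show that every $\eps_n$--piece $U$ of a word $q\in Q$ has $\abs{U}<\mu\abs{q}$.

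Fix such a piece. Then there are $q'\in Q$ and a subword $U'$ of $q'$ with $U'=YUZ$ in $G_{n-1}$, where $\max\{\abs{Y},\abs{Z}\}\le\eps_n$ and $YqY^{-1}\neq q'$ in $G_{n-1}$. We split $U$ into three parts. Write $U = U_1 U_0 U_2$, where $U_1,U_2$ lie in the $(n-1)$--good subsegment $q_g$ of ($\tilde C$\ref{tildeC4}) and $U_0$ lies in the complement, so $\abs{U_0}\le f(\abs{q})$. If $U$ meets the complement twice, cyclically, we get at most two good pieces plus the complement, and the argument is the same. Do the same on the $q'$ side.

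Now consider the quadrangle with sides $U$, $Z$, $U'^{-1}$, $Y$ in $G_{n-1}$. Its sides $U,U'$ are geodesic in $G_{n-1}$ by ($\tilde C$\ref{tildeC1}), and $Y,Z$ may be taken geodesic. By $\eps_n$--hyperbolicity (P\ref{P:hyperbolicity}), the endpoints of $U_1$ and $U_2$ are within about $2\eps_n$ of points of $U'$. So each $U_\ell$ with $\abs{U_\ell}>4\eps_n$ sits in a loop $U_\ell\ast\alpha_1\ast\beta\ast\alpha_2$ with $\beta\subset U'$ and $\abs{\alpha_j}\le 4\eps_n$. Apply Proposition~\ref{prop:good-geodesics}\eqref{prop2_contiguity-diagrams} with $i=n-1$ (the $\alpha_j,\beta$ being geodesic in $G_{n-1}$, hence in $G_{n-2}$ as required after shortening). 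This gives that $U_\ell$ and $\beta\subset U'$ share a common subword, a literal equality of paths in the Cayley graph, of length at least $\abs{U_\ell}-40\eps_n$.

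This common subword $W$ is a subword of both $q$ and $q'$. We claim it is a genuine piece of $Q$ in the classical sense, i.e.\ not coming from the same cyclic position of the same relator. Granting that, the $C'(\mu/3)$ condition ($\tilde C$\ref{tildeC3}) gives $\abs{W}<\mu\abs{q}/3$. Summing,
\[
\abs{U}\le \abs{U_0}+\abs{U_1}+\abs{U_2}< f(\abs{q}) + 2\mu\abs{q}/3 + 80\eps_n + 8\eps_n .
\]
By (T\ref{cond:T5}), $f(\abs{q})\le\mu\abs{q}/1000$, and by ($\tilde C$\ref{tildeC2}), $\eps_n\le \mu\abs{q}/10^6$. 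Hence $\abs{U}<\mu\abs{q}$. Short $U_\ell$ (of length at most $4\eps_n$) are absorbed in the same way.

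The main obstacle is the claim that $W$ is a genuine piece. If $W$ were a coincidence in which $q'$ is $q$ read with the same alignment, then following the paths shows that $Y$ equals the group element that shifts $q$ onto $q'$ along $W$. That would force $YqY^{-1}=q'$ in $G_{n-1}$, contradicting the definition of an $\eps_n$--piece. Making this precise requires tracking the base points: the common subword occupies positions in $q$ and $q'$ joined by a path of the form $Y$ followed by a subpath of $U'$, versus a subpath of $U$. Some care is also needed when the Proposition hands $W$ inside $\beta$ rather than aligning directly, and I would handle this by reading the equality of paths off the Cayley graph of $G_{n-1}$. A minor point: $W$ has length at least $\abs{U_\ell}-40\eps_n$ and is a subword of $q'$, so if $W$ is long, the relators $q$ and $q'$ must coincide up to this cyclic shift. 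This is exactly the contradiction described above.
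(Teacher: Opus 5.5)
Your proposal is correct and follows essentially the same route as the paper. You split $U$ via ($\tilde{C}$\ref{tildeC4}) into at most two $(n-1)$--good pieces plus at most $f(\abs{q})$ of leftover, and use $\eps_n$--hyperbolicity of $G_{n-1}$ to put the endpoints of the good pieces within $3\eps_n$ of $U'$. You then apply Proposition~\ref{prop:good-geodesics}\eqref{prop2_contiguity-diagrams} to get a common subword of length at least $\abs{U_\ell}-O(\eps_n)$, show it is a genuine piece of $Q$ because otherwise $YqY^{-1}=q'$ in $G_{n-1}$, and conclude with ($\tilde{C}$\ref{tildeC3}), (T\ref{cond:T5}) and the lower bound on $\abs{q}$. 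Decomposing the $q'$ side as well is unnecessary, and your use of ($\tilde{C}$\ref{tildeC2}) rather than (P\ref{P:smalleps}) to get $\eps_n\le\mu\abs{q}/10^6$ is the more appropriate justification, since $q\in Q$ need not be a relator of $R_n$.
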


\begin{proof}
    We need to show that conditions (C\ref{cond:geodesics-in-minus-1})--(C\ref{C3}) hold. Conditions (C\ref{cond:geodesics-in-minus-1}) and (C\ref{C2}) are the same as conditions ($\tilde{C}$\ref{tildeC1}) and ($\tilde{C}$\ref{tildeC2}), so it remains to show (C\ref{C3}) holds. 

    Let $q\in Q$, and let $u\subset q$ be an $\eps_{n}$--piece. That is, there exists $q' = u'v'\in Q$ and words words $y, z$, which we can assume to be geodesic in $G_{n-1}$, of length at most $\eps_n$ such that $yuz(u')^{-1}$ represents the identity in $G_{n-1}$ and such that $q'$ and $yqy^{-1}$ represent different elements in $G_{n-1}$. Let $\ell = \alpha_1\ast \gamma\ast \alpha_2\ast (\gamma')^{-1}$ be a loop in $G_{n-1}$ such that $\alpha_1, \gamma, \alpha_2$ and $\gamma'$ are labeled by $y, u, z$ and $u'$, respectively. The paths $\gamma$ and $\gamma'$ are geodesics by ($\tilde{C}$\ref{tildeC1}) and the paths $\alpha_1, \alpha_2$ are geodesics by the choice of $y$ and $z$. 

    By ($\tilde{C}$\ref{tildeC4}), $\gamma$ has disjoint $(n-1)$--good subpaths $\beta_1, \beta_2$ such that $\abs{\gamma} - \abs{\beta_1} - \abs{\beta_2}\leq f(\abs{q})$. The group $G_{n-1}$ is $\eps_n$--hyperbolic by (P\ref{P:hyperbolicity}). Hence, any point on $\gamma$ is in the $2\eps_n$--neighbourhood of some point in $\alpha_2\ast (\gamma')^{-1}\ast \alpha_1$. Since $\alpha_1, \alpha_2$ have length at most $\eps_n$, any point on $\gamma$, and in particular the endpoints of $\beta_1$ and $\beta_2$ have distance at most $3\eps_n$ from $\gamma'$. For $i = 1, 2$, Proposition~\ref{prop:good-geodesics} yields that either $\abs{\beta_i}\leq 30\eps_n$ or that there is a subpath $\beta_i'$ of $\beta_i\cap \gamma'$ of length at least $\abs{\beta_i} - 30\eps_n$. 

    Assume we are in the latter case. We will obtain an upper bound on $\abs{\beta_i}$ by arguing that the word $b_i'$ labeling $\beta_i'$ is a piece in $Q$.  By construction, we can write $q = x_1b_i'x_2$, $q' = x_1'b_i'x_2'$ such that the word $yx_1(x_1')^{-1}$ represents the identity in $G_{n-1}$. Moreover, if $x_1^{-1}qx_1$ and $x_1'^{-1}q'x_1'$ do not reduce to the same word, then $b_i'$ is a piece, as desired. So assume they do. Then since $y$ and $x_1'x_1^{-1}$ represent the same element in $H$, the words $yqy^{-1}$ and $x_1'x_1^{-1}qx_1(x_1')^{-1}$ represent the same element in $H$. However, $x_1'x_1^{-1}qx_1(x_1')^{-1}$ reduces to the same word as $q'$. Hence $yqy^{-1}$ and $q'$ represent the same word in $H$, which contradicts the definition of $y$. Thus, $b_i'$ is indeed a piece. Using ($\tilde{C}\ref{tildeC3})$ we obtain $\abs{b_i'} < \mu\abs{q}/3$, and so $\abs{\beta_i}\leq 30\eps_n + \mu\abs{q}/3$.

     Hence, in either case, $\abs{u}< 2\mu\abs{q}/3 + 60\eps_n+f(\abs{q}) \leq \mu\abs{q}$. For the last step we used (T\ref{cond:T5}) and (P\ref{P:smalleps}). This concludes the proof of (C\ref{C3}).
\end{proof}

\begin{lem}\label{lem:IgoodShort}
    Let $M> 0$ be a constant. Let $\Delta$ be a minimal diagram over $G_n$ with $\partial\Delta=q_1\cdots q_k$, with $k\leq 6$, such that the sides $q_i$ are geodesics in $G_n$ and the following hold:
        \begin{itemize}
            \item when $1\leq i\leq k-1$, the sides alternate between $n$--good sides whose length is at least $10M$ and \emph{short} sides, that is, sides $q_i$ with $\abs{q_i}\leq M$; and 
            \item the label of $q_1\cdots q_{k-1}$ is a reduced word.
        \end{itemize}   
        Then any point on an $n$--good side that is at least $50M$ from its endpoints lies on $q_k$.
\end{lem}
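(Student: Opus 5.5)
I would argue by induction on $n$, in the same spirit as Proposition~\ref{prop:good-geodesics}, with the Greendlinger cells of Lemma~\ref{lem:upgraded-contiguity-diagrams} as the main tool. The sides $q_i$ are geodesics in $G_n$, hence in $G_{n-1}$, and $k\le 6\le 12$. So either $\Delta$ has no cells, or it has a Greendlinger cell $\Pi$ of maximal rank $m\le n$.

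\textbf{No cells.} Then the boundary word freely reduces to the trivial word. The word $q_1\cdots q_{k-1}$ is reduced, so cancellation can only happen against $q_k$. Hence $q_k^{-1}$ and $q_1\cdots q_{k-1}$ agree after free reduction, and $q_k$ is the reduced form of that word. Any point of a good side $q_i$ is then on $q_k$. The paper's phrase ``lies on $q_k$'' allows for the hyperbolic-style slack only in the case with cells; in the cell-free case this conclusion is immediate.

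\textbf{With a Greendlinger cell $\Pi$.} Lemma~\ref{lem:upgraded-contiguity-diagrams} gives disjoint subsegments $p_1,\dots,p_{2k}$ of $\partial\Pi$ with total length at least $(1-24\mu)\abs{\partial\Pi}$, each lying on some side. I bound each side's share of $\abs{\partial\Pi}$:
\begin{itemize}
\item Each good side $q_i$ is $n$-good, hence $m$-good. Its two segments are common subwords with the relator, so each has length at most $\abs{\partial\Pi}/100$.
\item Each short side contributes at most $2M$.
\item $q_k$ is a geodesic in $G_n\supseteq G_m$, so it contributes at most $\abs{\partial\Pi}/2+O(\eps_m)$, as in the proof of Lemma~\ref{lem:relators-are-isometrically-embedded}.
\end{itemize}
There are at most three good sides and at most two short sides, so
\[(1-24\mu)\abs{\partial\Pi}\le 6\abs{\partial\Pi}/100+4M+\abs{\partial\Pi}/2.\]
This forces $\abs{\partial\Pi}\lesssim 10M$.

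\textbf{Removing the cell.} A relator this small has every segment on a good side of length at most $\abs{\partial\Pi}/100\lesssim M/10$. Its contact with a good side $q_i$ is therefore confined to a region near the ends of $q_i$, or can be excised. Concretely, I would cut $\Pi$ out and replace the segments it touches by the complementary arc of $\partial\Pi$. This produces a new diagram with fewer cells whose short sides have grown by at most a bounded multiple of $M$, while staying well below $10M$. I would then apply induction on the number of cells, not on $n$.

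The $50M$ buffer is what absorbs this drift. Short sides change by $O(M)$, and any point at distance at least $50M$ from the endpoints of its good side still lies in the middle region of the new good side. That new side is a subsegment of the old one, so it is still good. In the final cell-free diagram, the middle of each good side then lies on $q_k$.

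\textbf{Main obstacle.} The hard step is making this surgery rigorous. The removal must keep the sides geodesic and the word $q_1\cdots q_{k-1}$ reduced, and the bookkeeping must stay within $50M$ throughout. If the surgery fails, my fallback is a contiguity-diagram argument via Proposition~\ref{prop:good-geodesics}\eqref{prop2_contiguity-diagrams}, using the loop formed by a good side, the two adjacent short sides and the rest. This gives a common subsegment of a good side $q_i$ and the complement of length at least $\abs{q_i}-10M$. Since the neighbours of $q_i$ are short, I would then argue that this common part lies on $q_k$.
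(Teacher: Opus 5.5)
Your cell-free case is correct, and your perimeter count for a Greendlinger cell is essentially the paper's Case~2 estimate (the paper gets $\abs{\partial\Pi}\le 8M$). The gap is the surgery step. It is not merely a matter of rigor: the induction on the number of cells does not close as you have set it up.

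First, your own count shows more than smallness. The good sides and $q_k$ together carry at most $(3/50+1/2)\abs{\partial\Pi}$, so $\Pi$ must meet a short side. It cannot meet two, since the good side between them would then have endpoints closer than $10M$. A single short side, being a geodesic, carries at most half of $\partial\Pi$, so $\Pi$ must also meet $q_k$. Hence deleting $\Pi$ disconnects $\Delta$ into two subdiagrams; it does not produce one diagram with slightly longer short sides.

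Second, the new boundary pieces are arcs of $\partial\Pi$ of length up to about $5M$. They are neither short nor $n$-good, and absorbing them into the short sides destroys the hypotheses (geodesic sides of length $\le M$). Since $\Delta$ may have arbitrarily many cells and each surgery adds another $O(M)$, no fixed buffer such as $50M$ can absorb the drift.

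Third, at the next step a Greendlinger cell of a piece may lie largely along such an arc, for example a lower-rank cell running along the higher-rank cell you removed. In that situation your bound $\abs{\partial\Pi}\lesssim 10M$ is simply unavailable.

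The missing idea is a strengthened induction hypothesis, which the paper sets up in Claim~\ref{claim:Igoodshort-detail}. It allows a third kind of side, ``very short'' sides: arcs of $\partial\Pi'$ for cells $\Pi'$ outside the current subdiagram with $\abs{\partial\Pi'}\le 8M$. Conditions (D1)--(D3) control how the side types alternate. The argument then splits into two cases.
\begin{itemize}
\item If every very short side meets the Greendlinger cell in less than $\abs{\partial\Pi}/20$, your count applies. One cuts along the two arcs $\gamma_1,\gamma_2$ of $\partial\Pi$ running from the short side to $q_k$. Both pieces have fewer cells and are of the same type, with $\gamma_1,\gamma_2$ as very short sides. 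The old short side is truncated rather than extended, so nothing accumulates.
\item If some very short side meets $\Pi$ in at least $\abs{\partial\Pi}/20$, then (P\ref{P:smaller-rank-shorter-relator})--(P\ref{P:small-cancellation-on-level}) force $\Pi$ to have lower rank $r$ than the outside cell $\Pi_i$. So the diagram lives in $G_r$, which is $\eps_{r+1}$-hyperbolic with $\eps_{r+1}\le f(\abs{\partial\Pi_i})\le M/4$. A direct argument using Proposition~\ref{prop:good-geodesics}\eqref{prop2_contiguity-diagrams} and Claim~\ref{claim:3igood+geod} (a diagram bounded by three geodesics, two of them $n$-good, is empty) finishes this case.
\end{itemize}

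Your fallback does not substitute for this. Proposition~\ref{prop:good-geodesics}\eqref{prop2_contiguity-diagrams} needs the fourth side to be a single geodesic. If you replace ``the rest'' by a geodesic $\beta$, you are left with a quadrilateral formed by two good sides, $q_k$ and $\beta$. Showing that the common part of $q_i$ and $\beta$ lies on $q_k$ is then the original problem again. The fallback also does not apply at all to the good sides adjacent to $q_k$, which do not have two short neighbours. Finally, ``lies on $q_k$'' means literal containment in every case; there is no hyperbolic slack to exploit when the diagram has cells.
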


    \begin{proof}

        We first prove the following useful claim:
        
        \begin{claim}\label{claim:3igood+geod}
            Any diagram whose boundary consists of 3 geodesics, at least two of which are $n$--good, must be empty.
        \end{claim}

        \begin{claimproof}
            Let $\Delta'$ be a diagram as in the assumptions of the claim.  If $\Delta'$ is not empty, then it contains a Greendlinger cell $\Pi'$. Each subsegment of the intersection of an $n$--good side with $\partial \Pi'$ has length at most $\frac{1}{100}\abs{\partial \Pi'}$. If there exists a side that is not $n$--good, it is unique, and its intersection with $\partial \Pi'$ has length at most $\frac12\abs{\partial\Pi'}$. Thus the total intersection of $\partial \Pi'$ with $\partial\Delta'$ has length at most $(4\cdot \frac{1}{100}+\frac{1}{2})\abs{\partial\Pi'}<(1-24\mu)\abs{\partial\Pi'}$. This contradicts \Cref{lem:upgraded-contiguity-diagrams}. 
        \end{claimproof}

    For the following claim, we consider a minimal diagram $\Delta'$ over $G_n$ and a subdiagram $\Delta$, which is also a minimal diagram over $G_n$.  We say that a geodesic $q\subseteq \partial \Delta$ is \emph{short} if $\abs{q}\leq M$; \emph{long} if it is $n$--good and satisfies $\abs{q}>10M$; and \emph{very short} if $q$ is a subsegment of $\partial \Pi'$ for some cell $\Pi'$ of $\Delta' -\Delta$ with $\abs{\partial \Pi'}\leq 8M$.  The lemma follows directly from the following claim applied to $\Delta$ as a subdiagram of itself, in which there are no very short sides.
        
        \begin{claim}\label{claim:Igoodshort-detail}
            Let $k\geq 1$ be an integer, and let $\Delta\subset \Delta'$ be two minimal diagrams over $G_n$.  Suppose $\partial \Delta = q_1\cdots q_k$ such that $q_i$ are (potentially trivial) geodesics, each geodesic $q_i$ for $1\leq i \leq k-1$ is either long, short, or very short, and there is a subsegment $q_R$ of $q_1\cdots q_k$ containing all long sides whose label is reduced. Further suppose the following hold.
            \begin{enumerate}[(D1)]
                \item If for $1\leq i, j \leq k-1$ the geodesics $q_i, q_j$ are not short, then $\abs{i-j} > 1$.
                \item If for $1\leq i < j \leq k-1$ the geodesics $q_i$ and $q_j$ are short, then $i = j+2$ and $q_{i+1}$ is long.\label{alternate}
                \item At most two sides are short.\label{few-short}
            \end{enumerate}
            Then each point $x$ that lies on a long side and has distance at least $50M$ from its endpoints also lies on $q_k$. 
        \end{claim}

        \begin{claimproof}
            The three conditions imply that $k\leq 6$.
            Assume $\Delta = \emptyset$. Since the label of $q_R$ is reduced, $q_R$ has to be a subset of $\partial \Delta- q_R$. Since $q_R$ contains all the long sides, the claim follows. From now on we assume that $\Delta\neq \emptyset$.

            Assume by induction that Claim~\ref{claim:Igoodshort-detail} holds for all diagrams with fewer cells than $\Delta$.
            
            Let $\Pi$ be a Greendlinger cell of $\Delta$, and let $r$ be its rank. By Lemma~\ref{lem:upgraded-contiguity-diagrams}, there exist disjoint subsegments $p_1, \ldots, p_{2k}$ of $\partial \Pi$ with
            \begin{align}\label{eq:greendlinger-detail}
                \abs{p_1}+\cdots +\abs{p_{2k}} \geq \abs{\partial \Pi} (1 - 24\mu)
            \end{align}
             such that $p_{2i}$ and $p_{2i-1}$ are either empty or a subsegment of $q_i$ for $1\leq i \leq k$. 
            
            For $1\leq i \leq k$ we have $\abs{p_{2i}} + \abs{p_{2i-1}}\leq \abs{\partial \Pi}/2$, because $q_i$ is a geodesic. Moreover, if $q_i$ is long, it is $n$--good by definition, and hence $\abs{p_{2i}}+\abs{p_{2i-1}}\leq \abs{\partial \Pi}/50$. 
            
            We divide the proof into two cases, depending on the very short sides.\\

            \textbf{Case 1:} There exist $1\leq i < k$ such that $q_i$ is very short and $\abs{p_{2i}} + \abs{p_{2i-1}}\geq \abs{\partial\Pi}/20$. Let $\Pi_i$ be the cell of $\Delta'-\Delta$ such that $q_i\subseteq \partial \Pi_i$ and $\abs{\partial \Pi_i}\leq 8M$.  By (P\ref{P:smaller-rank-shorter-relator}) and (P\ref{P:small-cancellation-on-level}), this can only happen if the rank $r$ of $\Pi$ is less than the rank of $\Pi_i$. In particular, $q_1\cdots q_k$ can be viewed as a $6$--gon in a $\eps_{r+1}$--hyperbolic space with $\eps_{r+1}\leq f(\abs{\partial\Pi_i}) \leq \abs{\partial \Pi_i}/100\leq M/4$; here the first inequality holds by (T\ref{T2}), the second by (T\ref{cond:T5}), and the last as $q_i$ is very short. In particular, any long side $q_j$ is contained in the $M$--neighborhood of the other sides. Moreover, since we assume in this case that there is a very short side, there are at most two long sides and they are adjacent to a common short side. 

            Let $x_0$ be one of the endpoints of $q_j$, and for $1\leq \ell \leq 4$ let $x_\ell$ be a point on $q_j$ at distance $10M\ell$ from $x_0$. Consider $q_h$, where $h\neq j,k$.  If $q_h$ is short or very short, then, by the triangle inequality, there exists at most one $\ell$ such that $d(x_\ell, q_h)\leq M$. On the other hand, if $q_h$ is the other long side and if $x_\ell$ and $x_{\ell'}$ for $\ell\neq \ell'$ are both at distance at most $M$ from $q_h$, then $q_h$ and $q_j$ intersect by Lemma~\ref{prop:good-geodesics}. In particular, there exists a subdiagram $\Delta ''$ of $\Delta$ whose boundary is labelled by a concatenation of two $n$--good geodesics, one of which is non-trivial, and a short side. By Claim~\ref{claim:3igood+geod}, such a diagram has to be empty. However, the label of its boundary is a subword of the label of $q_1\cdots q_{k-1}$ and is reduced, which implies that $\abs{\partial\Delta''} =0$, a contradiction. 

            We have shown that each side $q_h$ for $h\neq j,k$ has distance more than $M$ to all but at most one $x_0, \ldots, x_4$. Consequently, $d(q_k, x_\ell)\leq M$ for some $0\leq \ell\leq 4$. In paticular, $x_\ell$ is in the $40M$--neighbourhood of one of the endpoints of $q_j$ and in the $M$--neighbourhood of $q_k$.  Repeating the same argument choosing $x_0$ to be the other endpoints of $q_j$, we find a second point that is in the $40M$--neighbourhood of the other endpoint of $q_j$ and in the $M$--neighbourhood $q_k$.  Lemma~\ref{prop:good-geodesics} concludes the proof in this case. \\

            \textbf{Case 2:} For all $1\leq i < k$ for which $q_i$ is very short, we have $\abs{p_{2i}}+\abs{p_{2i-1}}<\abs{\partial\Pi}/20$. Since there are at most 2 short sides, there are at most 3 long sides and at most 3 very short sides. In order to satisfy \eqref{eq:greendlinger-detail}, we must have that the subsegments that lie on $\partial \Pi\cap q_k$ or on $\partial \Pi$ and short sides have combined length at least $\abs{\partial \Pi}(1 - 24\mu - 3/50 - 3/20)\geq 3\abs{\partial \Pi}/4$.  Since $q_k$ is geodesic, at least $\abs{\partial \Pi}/4$ of this length has to lie on short sides, which implies that $\abs{\partial\Pi}\leq 8M$.

            If there are two short sides, at most one can intersect $\partial \Pi$. To see this, note that if there were two short sides, say $q_i$ and $q_j$ for $i< j$, it must be the case that $j = i+2$ and  $q_{i+1}$ is long by (D\ref{alternate}). If the two short sides both intersect $\partial \Pi$, then the endpoints of $q_{i+1}$ lie at distance at most $\abs{q_i} + \abs{q_j} +\abs{\partial\Pi}\leq 10M$ from each other, which contradicts that $q_{i+1}$ is long.

            \begin{figure}
                \centering
                \includegraphics[width=0.5\linewidth]{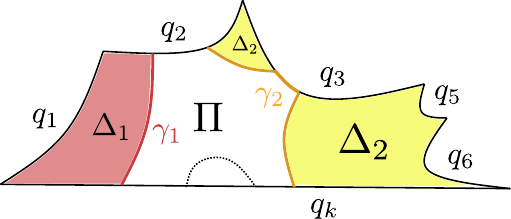}
                \caption{One possible configuration of subdiagrams $\Delta_1$ and $\Delta_2$. Here, $q_2$ is a short side and $\Delta_2$ is a non-simple diagram.}
                \label{fig:short-good}
            \end{figure}

            Since a short side can only contribute $\abs{\partial \Pi}/2$ to \eqref{eq:greendlinger-detail}, $\partial\Pi$ has to intersect $q_k$. Let $i_0$ be the index of a short side intersecting $\partial \Pi$. Let $\gamma_1, \gamma_2$ be the two subsegments of $\partial \Pi$ from $q_{i_0}$ to $q_k$ that intersect $q_k$ and $q_{i_0}$ only in their endpoints. Both $\gamma_1$ and $\gamma_2$ have length less than $\abs{\partial \Pi}/2$ and are thus geodesics by Lemma~\ref{lem:relators-are-isometrically-embedded}.
            
            Consider the two (not necessarily simple) subdiagrams $\Delta_1$ and $\Delta_2$ of $\Delta$ (and hence of $\Delta'$) with $\partial \Delta_1 = q_1 \cdots q_{i_0 -1} \ast [q_{i_0}^-, \gamma_1^-]_{q_{i_0}} \ast \gamma_1 \ast [\gamma_1^+, q_k^+]$ and $\partial \Delta_2 = \gamma_2^{-1}\ast [\gamma_2^-, q_{i_0}^+]_{q_{i_0}}\ast q_{i_0+1}\cdots q_{k-1} \ast [q_k^-, \gamma_2^+]_{q_k}$. This is depicted in Figure~\ref{fig:short-good}.

            Those two diagrams both satisfy the assumptions of Claim~\ref{claim:Igoodshort-detail}; note that $\gamma_1$ and $\gamma_2$ are very short sides and the other sides inherit their label from their label in $\Delta$. Moreover, both diagrams $\Delta_1$ and $\Delta_2$ have fewer cells than $\Delta$. Thus we can apply the induction assumption that Claim~\ref{claim:Igoodshort-detail} holds for $\Delta_1$ and $\Delta_2$, which concludes the proof.
         \end{claimproof}
\end{proof}

\subsection{Intersection functions in expanding graded small-cancellation groups}\label{sec:IntFcnsAndEGSC}

In this section, we prove Proposition~\ref{prop:connection-intersecting-relator-intersecting}, which states that in expanding graded small cancellation groups in which the relators of the same rank have approximately the same length, the relator intersection function (see Definition~\ref{defn:relator-intersection-function}) and the intersection function from Defintion~\ref{def:IntFcn} are linked.  We later use Proposition~\ref{prop:connection-intersecting-relator-intersecting} to show that the group we construct in Section~\ref{sec:nonAHgp} has $\sigma$--compact Morse boundary. 

We first show that projective geodesics in expanding graded small cancellation groups have long subpaths labeled by a subword of a relator.

\begin{prop}\label{prop:projective-from-relators}
    Let $C\geq 1$ be a constant and let $G = \pres = \gpres$ be an $(f, \mu, (\eps_n)_n)$--expanding graded small cancellation group satisfying $\abs{r}\leq C\abs{r'}$ for all $r, r'\in R$ of the same rank. Any $\ell$--projective geodesic $\gamma$ has a subsegment $\gamma'$ such that 
    \begin{enumerate}[(i)]
        \item  $\gamma'$ is labelled by a subword of a relator $r\in R$ with $\abs{r}\leq 2 C\ell$, and \label{prop:proj-from-rel1}
        \item  $\abs{\gamma'}\geq \abs{\gamma} /4- 100f(2C\ell)$. \label{prop:proj-form-rel2}
    \end{enumerate}
\end{prop}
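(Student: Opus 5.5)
Given an $\ell$--projective loop $(x,y,\gamma)$, we close it up into a geodesic quadrangle in $\cay$ and study a minimal van Kampen diagram for it. Let $\alpha_1=[\gamma^+,x]$, $\alpha_2=[x,y]$ and $\alpha_3=[y,\gamma^-]$. The loop $\gamma\ast\alpha_1\ast\alpha_2\ast\alpha_3$ has length $\ell$ and is a loop in $G_i$ for some $i$. Fix a minimal diagram $\Delta$ over $G_i$ with this boundary. By Remark~\ref{rem:greendlinger-in-G} all sides are geodesic in every $G_j$, so Lemma~\ref{lem:upgraded-contiguity-diagrams} and Lemma~\ref{lem:boundary-proportion} apply with $t=4$. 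In particular every cell $\Pi$ of $\Delta$ satisfies $\abs{\partial\Pi}\le CK\ell\le 2C\ell$, since $K=1/(1-24\mu)<2$. This already gives the bound in \eqref{prop:proj-from-rel1} for any relator we extract from $\Delta$.

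The core is a case analysis driven by the projection conditions. If $\Delta$ is empty, the boundary word freely reduces, so $\gamma$ is covered by the other sides. Condition 3 and 4 (closest point projections) then force $\abs{\gamma}$ to be essentially zero, since a point of $\gamma$ lying on $\alpha_1$ would be a closer point to $x$. Otherwise I would argue by induction on the number of cells, in the style of Claim~\ref{claim:Igoodshort-detail}, taking a Greendlinger cell $\Pi$. Its boundary has disjoint arcs $p_1,\dots,p_8$ on the four sides, of total length at least $(1-24\mu)\abs{\partial\Pi}$.

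The key geometric input is that closest-point projection limits how much of $\partial\Pi$ can run along $\alpha_1$ (and $\alpha_3$) next to a long arc on $\gamma$. By Lemma~\ref{lem:relators-are-isometrically-embedded} relators are isometrically embedded. So if $\partial\Pi$ contains an arc on $\alpha_1$ ending far from $\gamma^+$ and an arc on $\gamma$, going around $\partial\Pi$ gives a path from $x$ to $\gamma$. This path must not be shorter than $d(x,\gamma^+)$, and that bounds the arcs on $\alpha_1$ by roughly the arcs on $\gamma$. Condition 2, $d(\gamma^+,x)\ge d(x,y)$, plays the same role for $\alpha_2$. With these bounds there are two outcomes. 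Either $\Pi$ shares a long arc with $\gamma$, say of length comparable to $\abs{\gamma}/4$, and that arc is the desired $\gamma'$. Or $\Pi$ meets $\gamma$ only slightly, in which case we cut $\Pi$ off and pass to a smaller diagram whose boundary is again a (possibly degenerate) projective-type loop of length at most $\ell$. That smaller loop contains a subsegment of $\gamma$ of length at least a fixed fraction of $\abs{\gamma}$. To keep this inductive statement honest, I would use Lemma~\ref{lem:making-shorter-loops} to renormalize loops, which accounts for the factor $1/4$.

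The error term $100f(2C\ell)$ comes from (P\ref{P:good-segments-of-relators}). Each relator $r$ is $(n-1)$--good away from a piece of length at most $f(\abs r)\le f(2C\ell)$. Good pieces can overlap any lower-rank relator in at most $1/100$ of its length, and Proposition~\ref{prop:good-geodesics}\eqref{prop2_contiguity-diagrams} converts fellow-travelling within $\eps_m\le f(\abs{r})$ (by (T\ref{T2})) into genuine overlap, with a loss that is a bounded multiple of $\eps_m$. Summing these bounded losses over the boundedly many arcs gives the constant $100$.

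The main obstacle is the middle step: making the projection argument rigorous inside the diagram. The boundary arcs of the Greendlinger cell lie on sides only up to $\eps_m$--contiguity, and the induction must not lose more than a constant fraction of $\abs{\gamma}$. I would first attempt it directly, without induction, by showing that a maximal-rank cell must cover at least $\abs{\gamma}/4-O(f)$ of $\gamma$. If that fails, I would fall back to the Claim~\ref{claim:Igoodshort-detail}-style induction.
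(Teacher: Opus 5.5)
\textbf{There is a genuine gap: the central step is missing, and the induction you sketch does not work.}

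Your setup matches the paper: the quadrangle $\gamma\ast\alpha_1\ast\alpha_2\ast\alpha_3$, a minimal diagram, and $\abs{\partial\Pi}\le 2C\ell$ via Lemma~\ref{lem:boundary-proportion}. Several ingredients you list also appear there:
\begin{itemize}
\item a cell cannot run along $\gamma$ and one projection side for more than about $\frac34$ of its boundary;
\item condition~2 gives $d(z,y)\le d(z,\gamma)$ for $z\in\alpha_2$;
\item (P\ref{P:good-segments-of-relators}) with Proposition~\ref{prop:good-geodesics}\eqref{prop2_contiguity-diagrams} produce the error term.
\end{itemize}

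The induction fails for several reasons.
\begin{itemize}
\item A Greendlinger cell of $\Delta$ need not touch $\gamma$ at all. It may lie along $\alpha_1,\alpha_2,\alpha_3$, beyond cells that separate it from $\gamma$.
\item Cutting such a cell off replaces parts of the outer sides by arcs of $\partial\Pi$. These arcs satisfy no closest-point-projection condition, so the smaller loop is not ``projective-type'' and no inductive hypothesis applies.
\item Your dichotomy is not exhaustive. Lemma~\ref{lem:upgraded-contiguity-diagrams} measures arcs against $\abs{\partial\Pi}$, not $\abs{\gamma}$, so nothing forces the arc on $\gamma$ to be either about $\abs{\gamma}/4$ or negligible.
\item Losing a fixed fraction of $\abs{\gamma}$ per step compounds over unboundedly many cells.
\item Lemma~\ref{lem:making-shorter-loops} produces a new loop with new $x',y'$ and an unrelated diagram, so it cannot be the source of the factor $1/4$.
\end{itemize}

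\textbf{The missing idea is a rank descent around the cells adjacent to $\gamma$.}

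The paper removes the cell nearest to $\gamma$ that touches both $\alpha_1$ and $\alpha_3$. If none exists, it instead removes the cells nearest to $\gamma$ touching $\alpha_2$ and $\alpha_1$, respectively $\alpha_2$ and $\alpha_3$. Call these $\Pi_1,\Pi_2$, and let $\mathfrak r$ be their maximal rank.

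The remaining diagram $D'$ is bounded by $\gamma$, by subsegments of the outer sides, and by arcs of $\partial\Pi_1,\partial\Pi_2$. The outer subsegments keep their projection properties, and no cell of $D'$ meets two of them.

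Using these properties, the paper shows that a Greendlinger cell of $D'$ of rank at least $\mathfrak r$ would have to meet $\alpha_2$ substantially and also meet $\gamma$. That contradicts $d(z,y)\le d(z,\gamma)$. Hence $D'$ is a diagram over $G_{\mathfrak r-1}$, which is $\eps_{\mathfrak r}$--hyperbolic with $\eps_{\mathfrak r}\le f(\abs{\partial\Pi_j})\le f(2C\ell)$ by (T\ref{T2}).

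Thinness of this octagon, together with the projection properties, forces all of $\gamma$ except $O(\eps_{\mathfrak r})$ to fellow-travel the arcs of $\partial\Pi_1\cup\partial\Pi_2$. Only at this point do (P\ref{P:good-segments-of-relators}) and Proposition~\ref{prop:good-geodesics} convert fellow-travelling into four genuine overlaps with these arcs. The factor $1/4$ is pigeonhole over these four pieces.

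Without singling out $\Pi_1,\Pi_2$ and bounding the rank of everything between them and $\gamma$, you have neither the relator carrying $\gamma'$ nor a reason the loss is only $O(f(2C\ell))$.
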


\begin{proof}
    Let $\gamma$ be an $\ell$--projective geodesic, so that there exist $x_1, x_2\in X$ such that $\gamma^+ = p_1$ and $\gamma^- = p_2$ are closest points on $\gamma$ to $x_1$ and $x_2$, respectively, and such that $d(x_1, x_2)\leq d(x_1, p_1)$. Let $\beta_1=[p_1, x_1]$, $\eta=[x_1, x_2]$, and $\beta_2=[x_2, p_2]$. Any point $z\in \eta$ satisfies $d(z, x_2)\leq d(z, \gamma)$, as otherwise $d(x_1,x_2)=d(x_1,z)+d(z,x_2) > d(x_1,z) + d(z,\gamma) \geq d(x_1,\gamma)$, which is a contradiction. Moreover, for $i = 1, 2$, any point $z\in \beta_i$ satisfies $d(z, \gamma) = d(z, p_i)$. 
    
    Let $D$ be a minimal diagram over $G$ with $\partial D = \gamma \ast \beta_1\ast \eta \ast \beta_2$. Note that $\abs{\partial D} = \ell$. In particular, by Lemma~\ref{lem:boundary-proportion}, 
    \begin{align}\label{eq:diameter-cell}
        \abs{\partial \Pi}\leq 2C\abs{\partial D} \leq 2 C\ell
    \end{align}
    for all cells $\Pi$ of $D$. 

    We perform a case distinction to construct a subdiagram $D'$ of $D$ whose Greendlinger cells we understand.

    \textbf{Case 1: There exists at least one cell of $D$ whose boundary intersects $\partial D$ in both $\beta_1$ and $\beta_2$.} Let $\Pi_1$ be the cell with this property that intersects $\beta_1$ closest to $p_1$, as in Figure~\ref{fig:projective-1}. For $i = 1,2$, let $x_i'$ be the point closest to $p_i$ on $\partial \Pi_1\cap \beta_i$.

    \begin{figure}
        \centering
        \includegraphics[width=0.9\linewidth]{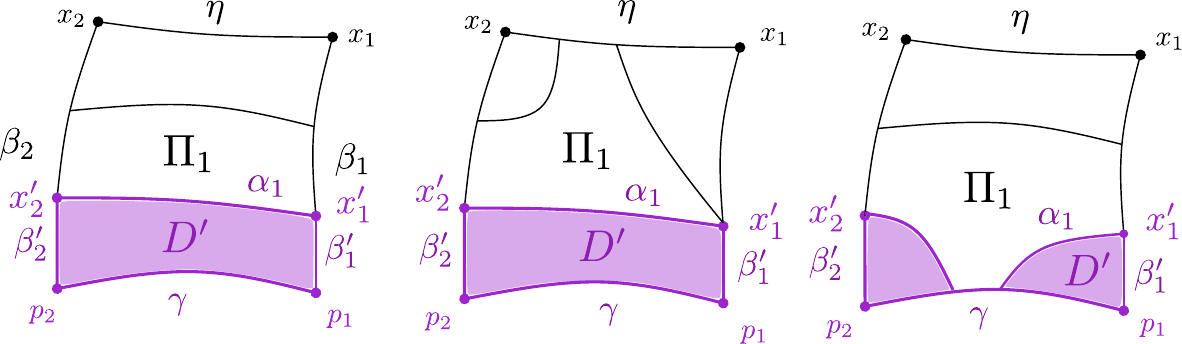}
        \caption{The different possibilities for the diagram $D'$ in Case 1.}
        \label{fig:projective-1}
    \end{figure}
    
    Let $\beta_1'$ and $\beta_2'$ be the subsegments of $\beta_1$ and $\beta_2$ from $p_1$ to $x_1'$ and from $x_2'$ to $p_2$, respectively. Furthermore, let $D'$ be the subdiagram of $D\setminus \Pi_1$ that contains $\gamma$, so that
    \begin{align*}
        \partial D' = [x_1', x_2']_{\partial \Pi_1} \ast \beta_2' \ast \gamma \ast \beta_1'. 
    \end{align*}

    Note that, there are two choices for $[x_1', x_2']_{\partial \Pi_1}=:\alpha_1$; we choose the one for which $\Pi_1$ is not in $D'$.

    \begin{figure}
        \centering
        \includegraphics[width=0.9\linewidth]{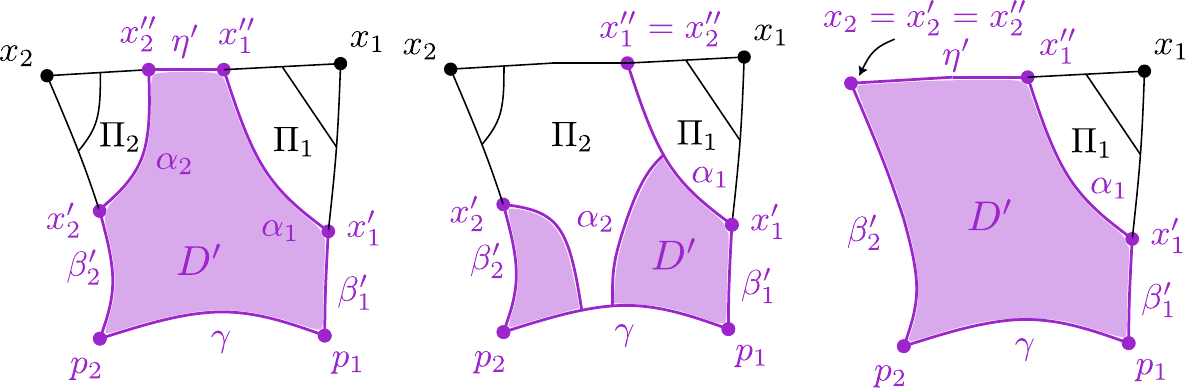}
        \caption{The different possibilities for the diagram $D'$ in Case 2.}
        \label{fig:projective-2}
    \end{figure}
    
    \textbf{Case 2: No cell of $D$ has boundary that intersects both $\beta_1$ and $\beta_2$.} In this case,  let $\Pi_i$ be a cell whose boundary intersects $\partial D$ in both $\eta$ and $\beta_i$ for $i=1,2$, if such cells exist. If there are multiple such cells, we choose the one which intersects $\beta_i$ closest to $p_i$. 
    If $\Pi_i$ exists, let $x_i', x_i''$ be the points on $\beta_i\cap \partial \Pi$ and, respectively, $\eta \cap \partial \Pi_i$ furthest from $x_i$. Further,  let $\alpha_1 = [x_1', x_1'']_{\partial \Pi_1}$ and $\alpha_2 = [x_2'', x_2']_{\partial \Pi_2}$. There are two choices for $\alpha_i$; we choose the one which is in the same component of $D - \Pi_i$ as $\gamma$.
    On the other hand, if $\Pi_i$ does not exist, let $x_i' = x_i'' = x_i$, and let $\alpha_i$ be the trivial path. This is depicted in Figure~\ref{fig:projective-2}.
    Further, define $\beta_1'$ and $\beta_2'$ as the subsegments of $\beta_1$ and $\beta_2$ from $p_1$ to $x_1'$ and from $x_2'$ to $p_2$, respectively, and $\eta'$ as the subsegment of $\eta$ from $x_1''$ to $x_2''$.
   
    Finally, let $D'$ be the subdiagram of $D$ with 
    \begin{align*}
        \partial D' = \beta_2'\ast \gamma \ast \beta_1'\ast \alpha_1 \ast \eta' \ast \alpha_2. 
    \end{align*}

    Hence, both in Case 1 and Case 2, $\partial D' = \beta \ast \alpha$, where $\beta = \beta_2' \ast \gamma \ast \beta_1'$, $\alpha = \alpha_1\ast \eta' \ast \alpha_2$, $\eta'\subset \eta$ and for $i=1, 2$, $\alpha_i$ is either trivial or is contained in the boundary of a cell $\Pi_i$ of $D$; in Case 1, $\eta'$ and $\alpha_2$ are both trivial. Let $\mathfrak r$ be the maximum of the ranks of $\Pi_1$ and $\Pi_2$, where we use the convention that the rank of $\Pi_i$ is 0 if $\alpha_i$ is trivial.

    By construction, no cell in $D'$ has boundary that intersects more than one of $\eta', \beta_1'$ and $\beta_2'$ non-trivially. Moreover, the paths $\alpha_i$ might not necessarily be geodesics, as they could have length more than half of $\abs{\partial \Pi_i}$, but they are the concatenation of at most two geodesics by Lemma~\ref{lem:relators-are-isometrically-embedded}.

    \begin{claim}\label{claim:two-projections-at-most-half}
        If $\Pi'$ is any cell of $D'$, then $\abs{\partial \Pi' \cap \beta}\leq 3\abs{\partial \Pi'} /4$.
    \end{claim}

    \begin{claimproof}
    Since $\partial \Pi'$ cannot intersect both $\beta_1'$ and $\beta_2'$ non-trivially, it suffices to prove that $\partial \Pi' \cap(\gamma \ast \beta_1')$ and $\partial \Pi' \cap (\beta_2' \ast \gamma)$ have length at most $3\abs{\partial \Pi'}/4$. We will only prove the former, as the proof of the latter is analogous.

    If $\partial \Pi'$ does not intersect $\beta_1'$, the statement follows from $\gamma$ being a geodesic, which implies $\abs{\partial \Pi'\cap \gamma}\leq \abs{\partial \Pi'}/2$. If $\partial \Pi'$ does intersect $\beta_1'$, let $z\in \beta_1'$ be the point on $\partial \Pi'$ furthest away from $p_1$, let $L_{\beta}= \abs{\beta_1'\cap \partial \Pi'}$ and let $L_{\gamma} = \abs{\gamma \cap \partial \Pi'}$. There is a path from $z$ to $\gamma$ that follows along $\partial \Pi'$ that only trivially intersects $\beta_1'$ and $\gamma$. We denote the length of this path by $L_{\emptyset}$. 

    Since $\beta_1'$ is a subsegment of $\beta_1$, $d(z, \gamma) \geq d(z, p_1) \geq L_{\beta}$. In particular, $L_{\emptyset}\geq d(z, \gamma) \geq L_{\beta}$. As $\gamma$ is a geodesic, we have  $L_{\gamma}\leq \abs{\partial \Pi'}/2$.
    If $L_{\beta}\leq \abs{\partial \Pi'}/4$, the statement follows. If, instead, $L_{\beta}\geq \abs{\partial \Pi'}/4$, then so is $L_{\emptyset}$. Since $L_{\gamma}+L_{\emptyset} + L_{\beta}\leq \abs{\partial \Pi'}$, the statement again follows.
    \end{claimproof}

    \smallskip
    
    Claim~\ref{claim:two-projections-at-most-half} implies that any Greendlinger cell $\Pi'$ of $D'$ has to satisfy 
    \begin{align}\label{eq:alpha-int}
        \abs{\alpha\cap \partial \Pi'} \geq (1/4 - 24 \mu) \abs{\partial \Pi'}.
    \end{align}

    \begin{claim}\label{claim:small-rank-ofgreendlinger}
        Either $D'$ contains no cell or the rank of its Greendlinger cells is less than $\mathfrak r$.
    \end{claim}    

    Recall that $\mathfrak r$ is the maximum of the ranks of $\Pi_1$ and $\Pi_2$, where if $\Pi_i$ does not exist, we say its rank is $0$, and $\Pi_1, \Pi_2$ are the cells of $D$ containing $\alpha_1$ and $\alpha_2$, unless $\alpha_1$ or $\alpha_2$ is trivial, in which case $\Pi_1$, respectively $\Pi_2$, does not exist).

    \begin{figure}
        \centering
        \includegraphics[width=0.3\linewidth]{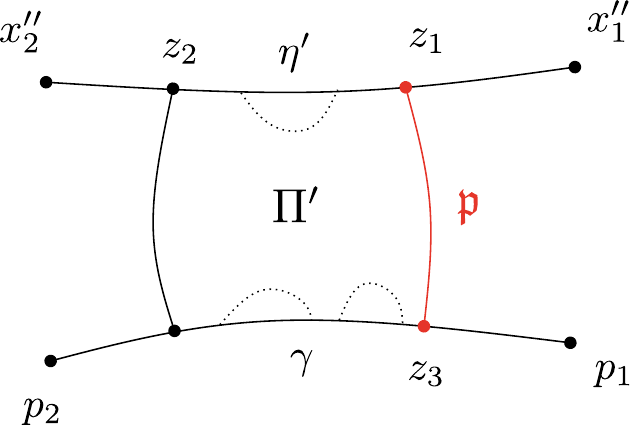}
        \caption{Definition of the path $\mathfrak{p}$ in Claim~\ref{claim:small-rank-ofgreendlinger}. The dotted lines are there to suggest that $\partial \Pi'\cap \gamma$ and $\partial \Pi'\cap \gamma$ do not necessarily need to be contiguous.}
        \label{fig:projective:claim2}
    \end{figure}
    \begin{claimproof}
        Assume toward a contradiction that $D'$ contains at least one cell and that the rank $k$ of a Greendlinger cell $\Pi'$ of $D'$ is at least $\mathfrak r$. Observe that for $i = 1, 2$
        \begin{align}\label{eq:small-prop1}
            \abs{\alpha_i\cap\partial \Pi'}\leq \mu \abs{\partial \Pi'}.
        \end{align}
          To see this, note that if $\alpha_i$ is trivial, this is immediate. If $\alpha_i$ is not trivial, then $\Pi_i$ exists and by assumption its rank is either equal to $k$ or smaller than $k$. If the rank of $\Pi_i$ is equal to $k$, then \eqref{eq:small-prop1} follows from (P\ref{P:small-cancellation-on-level}). If the rank of $\Pi_i$ is smaller than $k$, then \eqref{eq:small-prop1} follows from (P\ref{P:smaller-rank-shorter-relator}).

        Since $\Pi'$ is a Greendlinger cell, \eqref{eq:alpha-int} combined with \eqref{eq:small-prop1} implies that $\abs{\eta' \cap \partial \Pi'}\geq \abs{\partial \Pi'} (1/4 - 26 \mu) > 0$. In particular, the intersection of $\eta'$ and $\partial \Pi'$ is non-trivial, implying that the intersection of $\partial \Pi'$ with $\beta_1'$ and $\beta_2'$ has to be trivial. Consequently, $\abs{\partial \Pi' \cap \gamma}\geq\left ( (1 - 24 \mu ) - 2\mu - 1/2\right) \abs{\partial \Pi'} > 0$, and so $\partial \Pi'$ must intersect $\gamma$.

        Let $z_1, z_2 \in \eta' \subset \eta $ and $z_3 \in \gamma$ be the points contained in $\partial \Pi'$ closest to $x_1'', x_2''$ and $p_1$ respectively. This is depicted in Figure~\ref{fig:projective:claim2}. There is a path $\mathfrak p$ from $z_1$ to $z_3$ following along $\partial \Pi'$ whose intersection with $\eta'$ and $\gamma$ is trivial.  In particular, $\abs{\mathfrak p}\leq 26 \mu \abs{\partial \Pi'}$ and hence
        \begin{align*}
            d(z_1, \gamma) \leq d(z_1, z_3)\leq \abs{\mathfrak p} \leq 26 \mu \abs{\partial \Pi'}.
        \end{align*}
        On the other hand,
        \begin{align*}
            d (z_1, x_2) &\geq d(z_1, z_2) \geq \abs{\partial \Pi' \cap \eta'} \geq (1 -24\mu)\abs{\partial \Pi'} - \abs{\partial \Pi' \cap \beta} - \abs{\partial \Pi' \cap \alpha_1} - \abs{\partial \Pi' \cap \alpha_2}\\
            &\geq (1 - 24 \mu - 1/2 - \mu - \mu )\abs{\partial \Pi'} \geq (1/2 - 26\mu) \abs{\partial \Pi'}.
        \end{align*}
        Since $\mu < 1/1000$ by (P\ref{cond:T5}), this is a contradiction to $d(z_1, x_2)\leq d(z_1, \gamma)$, which has to hold since $z_1$ lies on $\eta'$ and hence lies on $\eta$.
    \end{claimproof}

    \smallskip
    
    By Claim~\ref{claim:small-rank-ofgreendlinger}, the diagram $D'$ can be viewed as a diagram over $G_{\mathfrak r -1}$ and $\partial D'$ can be viewed as a geodesic octagon in the $\eps_{\mathfrak r}$ hyperbolic space $G_{\mathfrak{r}-1}$. Here we have octagon instead of hexagon because a priori the $\alpha_i$ might only be the concatenation of two geodesics instead of actual geodesics.

    Consequently, we can partition $\gamma$ into five possibly empty subsegments $\gamma_1, \ldots ,\gamma_5$ such that $\gamma_5$ is contained in the $6\eps_{\mathfrak r}$--neighbourhood of $\eta'$ and for $j=1,2$ $\gamma_j$ and $\gamma_{j+2}$ are contained in the $6\eps_{\mathfrak r}$--neighbourhoods of $\beta_j'$ and $\alpha_j$, respectively. 

    \begin{claim}\label{claim:small-gamma-j's}
        For $j = 1, 2$ and $5$, $\gamma_j$ has length at most $24\eps_{\mathfrak r}$.
    \end{claim}

    \begin{claimproof}
        For $j= 1, 2$: Let $z$ be a point on $\beta_j'$ with $d(z, \gamma_j)\leq 6\eps_{\mathfrak r}$. Since $\beta_j'$ is a subsegment of $\beta_j$, we have that $d(z, p_j) \leq d(z, \gamma)\leq 6 \eps_{\mathfrak r}$. Hence $\gamma_j$ has to be contained in the $12\eps_{\mathfrak r}$--neighbourhood of $p_j$ and so has length at most $12\eps_{\mathfrak r}$. 
    
        For $j = 5$: Let $z$ be a point on $\eta'$ with $d(z, \gamma_5)\leq 6\eps_{\mathfrak r}$. Since $\eta'$ is a subsegment of $\eta$, $6\eps_{\mathfrak r}\geq d(z, \gamma)\geq d(z, x_2)$, any point on $\gamma_5$ has distance at most $12\eps_{\mathfrak r}$ from $x_2$, implying that the length of $\gamma_5$ is at most $24\eps_{\mathfrak r}$.
    \end{claimproof}
    
    \smallskip
    
    \begin{claim}\label{claim:good-gamma-j's}
        For $j = 3, 4$, $\gamma_{j}$ has two disjoint subsegments ${\gamma'}_j^1$ and ${\gamma'}_j^3$, each of which is either trivial or a subsegment of the boundary of a cell in $D$ and such that $\abs{\gamma_j} - \abs{{\gamma'}_j^1} - \abs{{\gamma'}_j^3} \leq 132\eps_{\mathfrak r} - f(2C\ell)$.
    \end{claim}

    \begin{claimproof}
    Let $j = 3$ or $j = 4$. If $\partial \Pi_{j-2}$ does not exist, then $\alpha_{j-2}$ is trivial and the statement follows immediately. If the rank of $\partial \Pi_{j-2}$ is less than $\mathfrak r$, then $\abs{\alpha_{j-2}}\leq \abs{\partial \Pi_{j - 2}} \eps_{\mathfrak r}$ by (T\ref{T2}), and the statement follows immediately. Assume that the rank of $\Pi_{j-2}$ is $\mathfrak r > 0$. By (P\ref{P:good-segments-of-relators}), we can write $\alpha_{j-2}$ as $\alpha_{j-2}^1\ast \alpha_{j-2}^2\ast \alpha_{j-2}^3$, where $\alpha_{j-2}^1$ and $\alpha_{j-2}^3$ are $(\mathfrak r - 1)$--good and $\alpha_{j-2}^2$ has length at most $f(\abs{\partial \Pi_{j-2}})$. Moreover, we can write $\gamma_j$ as $\gamma_j^1\ast \gamma_j^2 \ast \gamma_j^3$ where for $o = 1, 2, 3$, the endpoints of $\gamma_j^o$ lie in the $6\eps_{\mathfrak r}$--neighbourhood of $\alpha_{j-2}^o$. For $o = 1, 3$, we can apply Proposition~\ref{prop:good-geodesics} to obtain $\abs{\gamma_j^o}\leq 60 \eps_{\mathfrak r}$ or to get a subsegment ${\gamma'}_j^o$ of $\gamma_j^o$ of length at least $\abs{\gamma_j^o} - 60\eps_{\mathfrak r}$ which is a subsegment of $\partial \Pi_{j-2}$. For $o = 2$, we have that $\abs{{\gamma'}_j^o}\leq 12\eps_{\mathfrak r} + f(\abs{\partial\Pi_{j-2}})\leq f(2C\ell)$ by \eqref{eq:diameter-cell}, concluding the statement.
    \end{claimproof}

    \smallskip
    Claims~\ref{claim:small-gamma-j's} and \ref{claim:good-gamma-j's} imply that $\gamma$ has four subsegments $\zeta_1, \ldots, \zeta_4$ which are subsegments of the boundary of a cell of $D$ (or trivial) and such that $\abs{\gamma} - \sum_{i=1}^4\abs{\zeta_i}\leq 3\cdot 24\eps_{\mathfrak r} + 2(132\eps_{\mathfrak r} + f(2C\ell))$. Let $\gamma'$ be the longest of $\zeta_1, \ldots, \zeta_4$, and let $\Pi$ be a cell of $D$ such that $\gamma'\subset \partial \Pi$. With this definition of $\gamma'$, \eqref{prop:proj-from-rel1} follows from \eqref{eq:diameter-cell}. By maximality of the length of $\gamma'$ amongst $\zeta_1, \ldots, \zeta_4$, we have
    \begin{align}\label{eq:proj-final}
        \abs{\gamma'}\geq \left(\abs{\gamma} - 3\cdot 24\eps_{\mathfrak r} - 2(132\eps_{\mathfrak r} + f(2C\ell))\right)/4.
    \end{align}
    
    To show \eqref{prop:proj-form-rel2} holds, it suffices to show that $\eps_{\mathfrak r}\leq f(2C\ell)$. If $\mathfrak r = 0$, then $\eps_{\mathfrak r} = 0$ by (T\ref{cond:T5}) and the statement follows. If $\mathfrak r > 0$, it is equal to the rank of $\Pi_j$ for $j = 1$ or $j = 2$. We have by (T\ref{T2}) and \eqref{eq:diameter-cell} that $\eps_{\mathfrak r}\leq f(\abs{\partial \Pi_j})\leq f(2C\ell)$, concluding the proof.
\end{proof}

The above proposition shows that in expanding graded small-cancellation groups, understanding the intersection function of a geodesic, and hence whether the geodesic is Morse, is related to understanding its intersection with relators. We thus introduce the \emph{relator intersection function} that measures the intersection of geodesics and relators.

\begin{defn}\label{defn:relator-intersection-function}
    Let $G = \pres$ be a group and  $\gamma$ a geodesic in $\cay$. The \emph{relator intersection function $\rint{\gamma} \colon \N \to \N$ of $\gamma$}, is defined as follows: $\rint{\gamma}(n)$ is the maximal length of any word over $S^\pm$ that is both the label of a subpath of $\gamma$ and a subword of a relator $r\in R$ with $\abs{r}\leq n$. The geodesic $\gamma$ is \emph{$\rho$--relator intersecting} for a function $\rho$ if $\rint{\gamma}\leq \rho$.
\end{defn}

The following proposition formalizes the relationship between the intersection function and the relator intersection function of a geodesic in an expanding graded small cancellation group.
\begin{prop}\label{prop:connection-intersecting-relator-intersecting}
    Let $C\geq 1$ be a constant, and let $G = \pres = \gpres$ be an $(f, \mu, (\eps_n)_n)$--expanding graded small cancellation group satisfying $\abs{r}\leq C\abs{r'}$ for all $r, r'\in R$ of the same rank. For every non-decreasing sublinear function $\rho$, there exists a non-decreasing sublinear function $\rho'$ such that the following hold for all geodesics $\gamma$ in $G$.
    \begin{enumerate}
        \item If $\gamma$ is $\rho$--intersecting, then $\gamma$ is $\rho$--relator intersecting.\label{C:int-to-relint}
        \item If $\gamma$ is $\rho$--relator intersecting, then $\gamma$ is $\rho'$--intersecting.\label{C:relint-to-int}
    \end{enumerate}
\end{prop}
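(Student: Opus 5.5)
For item \ref{C:int-to-relint}, I would show directly that every subpath of $\gamma$ whose label is a subword of a relator $r$ is itself projective with loop length at most $\abs{r}$; then $\rint{\gamma}(n)\le \rho_\gamma(n)\le\rho(n)$. So let $\gamma'\subseteq\gamma$ have label a subword of $r\in R$ with $\abs{r}\le n$, and view $\gamma'$ as a subpath of an embedded copy of $r$ in $\cay$. By Lemma~\ref{lem:relators-are-isometrically-embedded} this copy is isometrically embedded, so in particular $\abs{\gamma'}\le\abs{r}/2$. Following Figure~\ref{fig:loopvsrelator}, I take $x=y$ to be a point on the relator loop antipodal to the midpoint of $\gamma'$. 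Isometric embedding gives $d(x,\gamma'^{\pm})=\abs{r}/2-\abs{\gamma'}/2$, and this is the distance from $x$ to $\gamma'$, since the distance from $x$ to any point of $\gamma'$ is realised along the loop. The endpoints are therefore closest point projections (one can choose $\gamma'^+$ for $x$ and $\gamma'^-$ for $y$). Moreover $d(x,y)=0\le d(\gamma'^+,x)$, and the loop length is $\abs{r}-\abs{\gamma'}+\abs{\gamma'}=\abs{r}\le n$. Hence $\abs{\gamma'}\le\rho_\gamma(n)\le\rho(n)$. The only thing to verify is that the loop-length bound $r\le t$ in Definition~\ref{def:IntFcn} permits this, which it does.

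For item \ref{C:relint-to-int}, I would use Proposition~\ref{prop:projective-from-relators}. Let $\gamma'$ be an $\ell$--projective subpath of $\gamma$ with $\ell\le t$. The proposition gives a subsegment $\gamma''\subseteq\gamma'$ labelled by a subword of a relator of length at most $2C\ell$, with $\abs{\gamma''}\ge\abs{\gamma'}/4-100f(2C\ell)$. Relator intersection then gives $\abs{\gamma''}\le\rho(2C\ell)$, so
\[
\abs{\gamma'}\le 4\rho(2Ct)+400f(2Ct).
\]
I would therefore set $\rho'(t)=4\rho(2Ct)+400f(2Ct)$. This is non-decreasing because $\rho$ and $f$ are, and sublinear because $\rho$ and $f$ are sublinear and $C$ is fixed. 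It depends only on $\rho$, $C$ and $f$, as required.

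The main obstacle is the first item, namely checking that the antipodal-point construction really yields closest point projections at the endpoints of $\gamma'$. A point of $\gamma$ outside the relator could a priori be closer to $x$, but the conditions in Definition~\ref{def:ProjQuadrangle} only concern projection onto $\gamma'$ itself, and isometric embedding of the relator handles that. A second small check is parity or rounding, since $x$ should be a vertex. If the exact antipodal point is not a vertex, I would choose $x$ and $y$ as the two adjacent vertices (or $x=y$), which changes the loop length by at most $1$. To absorb this $+1$, I would prove item \ref{C:int-to-relint} with $\rho$ evaluated at $n$ by keeping the loop length at most $\abs{r}$ via the choice $x\ne y$ adjacent, where $d(x,y)=1$.
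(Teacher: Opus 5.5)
Your proposal is correct and follows the paper's proof essentially verbatim. For item 1 the paper also takes $x=y$ to be the point of the isometrically embedded relator furthest from $\gamma'$, which exhibits $\gamma'$ as $\abs{r}$--projective. For item 2 it applies Proposition~\ref{prop:projective-from-relators} with exactly your $\rho'(t)=4\rho(2Ct)+400f(2Ct)$. Your parity adjustment is harmless but not needed, since Definition~\ref{def:ProjQuadrangle} allows $x,y$ to be arbitrary points of the Cayley graph, including midpoints of edges.
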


\begin{proof}
    We first show that item \ref{C:int-to-relint} holds. Let $\gamma$ be a geodesic which is $\rho$--intersecting, and let $\Gamma$ be an embedded relator labelled by some $r\in R$. Let $\gamma'\subset \gamma \cap \Gamma$ be a subgeodesic of $\gamma$. Define $x = y$ as the point on $\Gamma$ furthest away from $\gamma'$. Since relators are isometrically embedded in $G$ by Lemma~\ref{lem:relators-are-isometrically-embedded}, we have that $\gamma'^+$ and $\gamma'^-$ are closest points on $\gamma'$ to $x$ and $y$. Consequently, $\gamma'$ is $\abs{r}$--projective. Since $\gamma$ is $\rho$--intersecting, we have $\abs{\gamma'}\leq \rho(\abs{r})$, and so $\gamma$ is $\rho$--relator intersecting.
   
    \smallskip
    We now show that item \ref{C:relint-to-int} holds for the function $\rho'(x) =4 \rho(2Cx) + 400f(2Cx)$. Let $\gamma'\subset \gamma$ be an $\ell$--projective geodesic. By Proposition~\ref{prop:projective-from-relators}, there exists a subsegment $\gamma''\subset \gamma'$ with $\abs{\gamma''}\geq \abs{\gamma'}/ 4 - 100f(2C\ell)$ and such that $\gamma''$ is labelled by a subword of a relator $r\in R$ with $\abs{r}\leq 2C\ell$. Since $\gamma$ is $\rho$--relator intersecting, we have $\abs{\gamma''}\leq \rho(2C\ell)$. Consequently, $\abs{\gamma'}\leq 4\rho(2C\ell) + 400f(2C\ell) = \rho'(\ell)$.
\end{proof}

If an $(f, \mu, (\eps_n)_n)$--expanding graded small cancellation group satisfies $\abs{r}\leq C\abs{r'}$ for all $r, r'\in R$ of the same rank, then we say the group is \textit{balanced}.  Using this terminology, Proposition~\ref{prop:connection-intersecting-relator-intersecting} and Lemma~\ref{lem:sublinvsMorse} prove Proposition~\ref{prop:morse-geodesics} from the introduction.

\subsection{Construction of a MLTG group that is not acylindrically hyperbolic}\label{sec:nonAHgp}
In this section, we construct a MLTG group with an infinite-order Morse element that is not acylindrically hyperbolic, proving Theorem~\ref{thm:main2}.  This construction is inspired by the construction of the authors in \cite{AZ} of a finitely generated MLTG group with an infinite-order Morse element that is not loxodromic in any isometric action of the group on a hyperbolic space.  We begin by recalling a key result from that paper, 
which shows that if a cycle $C$ is embedded in a hyperbolic space, then there is a subpath of $C$ with definite length whose endpoints are much closer than its length.

\begin{lem}[{\cite[Lemma~3.4]{AZ}}]\label{lem:shortsubpaths}
    Let $g$ be a sublinear function that is superlogarithmic. For all integers $U\geq 1$ there exists an integer $L\geq U$ such that for all $\delta\geq 0$ there exists $K = K(\delta, U, L)$ such that the following holds. If $C$ is an embedded cycle in a $\delta$-hyperbolic space and $\abs{C}\geq K$, then there exists a subsegment $\lambda$ of $C$ with endpoints $\lambda^-$ and $\lambda^+$ such that   
    \begin{align}\label{eqn:UL}
        \frac{\abs{C}}{L}\leq \abs{\lambda}\leq \frac{\abs{C}}{U}
    \end{align}
    and 
    \begin{align}\label{eqn:CloseEndpoints}
          d(\lambda^-, \lambda^+)\leq g(\abs{C}).
    \end{align}
\end{lem}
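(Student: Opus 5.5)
The plan is to approximate the cycle by a finite configuration in a tree and use the fact that a closed curve in a tree must fold back on itself at every scale. Write $n=\abs{C}$ and parametrize $C$ by arclength, $c\colon [0,n]\to X$ with $c(0)=c(n)$; $c$ is $1$--Lipschitz. Fix $U$. I would take $L=U^{k}$ for a large $k$ depending only on $U$, say $L= 100U^2$, and set $m=L$. Consider the $m$ sample points $x_i=c(in/m)$, $0\le i\le m$, with $x_m=x_0$; consecutive points satisfy $d(x_i,x_{i+1})\le n/m$.

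\textbf{Tree approximation.} By the standard tree approximation for finitely many points in a $\delta$--hyperbolic space, there is a metric tree $T$ and a map $\{x_i\}\to T$, $x_i\mapsto \bar x_i$, distorting distances by at most $E=O(\delta\log m)$. The constant $E$ depends only on $\delta$ and $L$, hence only on $\delta$ and $U$. The assumption that $g$ is superlogarithmic is used exactly here, and only weakly: since $g$ is unbounded, we may choose $K=K(\delta,U,L)$ so large that $g(n)\ge 10E + 10n/m\cdot 0+ 10E$ for $n\ge K$, i.e.\ $g(n)$ dominates every additive error. (Sublinearity is not really needed for the conclusion; it only makes the statement meaningful.) Join consecutive $\bar x_i$ by geodesics in $T$ to get a closed path $\bar c$ in $T$.

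\textbf{Folding in the tree.} This is the combinatorial heart of the argument.
\begin{itemize}
\item Let $v$ be the sample point $\bar x_{i_0}$ farthest from $\bar x_0$. Removing the open arc $[\bar x_0, v]$ near $v$ shows that the closed walk must traverse the segment $[\bar x_0,v]$ both before and after index $i_0$.
\item For each point $p\in[\bar x_0,v]$, let $a(p)$ be the last index at or before $i_0$ and $b(p)$ the first index at or after $i_0$ with $\bar x_{a(p)}$, respectively $\bar x_{b(p)}$, within $n/m$ of $p$. Then $d(x_{a(p)},x_{b(p)})\le 2n/m+2E$.
\item The gap $b(p)-a(p)$ is monotone as $p$ moves from $v$ toward $\bar x_0$. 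It starts at $0$ and ends at $m$.
\end{itemize}
Because the walk moves at most $n/m$ per step, one hopes the gap changes by a bounded amount at each step. The difficulty is that the gap can jump, since the walk may wander into side branches. The remedy is to allow recursion: if a jump skips over the window $[m/L', m/U]$, the sub-walk between the jump indices is itself a closed-ish loop of length between $n/U$ and $n$. One then reapplies the argument inside it at a smaller scale, using that $L$ is a large power of $U$ so that a window survives after boundedly many recursion steps. This produces indices $i<j$ with $m/L\le j-i\le m/U$ and $d(x_i,x_j)\le 2n/m+2E$.

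\textbf{Step I expect to fail as written, and the fix.} The bound $2n/m$ is linear in $n$, not $\le g(n)$. So the sampling must be adapted: the argument should be run with sample points chosen at spacing $\sim g(n)/4$ rather than $n/m$, with $m\sim n/g(n)$ points. Tree approximation then costs $O(\delta\log(n/g(n)))=O(\delta\log n)$, and superlogarithmicity ensures this is $\le g(n)/4$ for $n\ge K$. The combinatorial folding above only uses the ratio $[1/L,1/U]$ of index gaps, so it is independent of $m$. With this version the chord between $\lambda^-=c(in/m)$ and $\lambda^+=c(jn/m)$ has length at most $g(n)/2+2E\le g(n)$, and $\abs{\lambda}=(j-i)n/m$ lies in $[\abs C/L,\abs C/U]$, giving \eqref{eqn:UL} and \eqref{eqn:CloseEndpoints}.

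I expect the main obstacle to be making the monotone-gap and recursion argument rigorous in the tree, uniformly in the number of sample points, so that $L$ depends only on $U$ and not on $\delta$ or $n$.
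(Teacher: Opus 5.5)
The paper does not prove this lemma; it quotes it from \cite{AZ}, so I am judging your argument on its own merits.

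\textbf{What works.} Your outer reduction is sound. You sample $C$ at spacing $h\approx g(n)/4$, apply the tree approximation to the $m\approx 4n/g(n)$ sample points at cost $E=O(\delta\log n)$, and use superlogarithmicity to get $2h+E\le g(n)$ for $n\ge K$. This reduces everything to a statement about a closed walk in a tree. You are also right that sublinearity is inessential: one may replace $g$ by $\min\{g(x),\sqrt{x}\}$, and that replacement is what makes $m\to\infty$, so rounding costs only $O(1)$ indices.

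\textbf{The gap.} The tree step, which carries all the content, does not work as written, for two reasons.
First, the claim that the gap ``starts at $0$ and ends at $m$'' is false. Suppose the walk is a flower of many petals based near $\bar x_0$, each of index-length less than $m/L$. Then $v$ is the tip of one petal, $a(p)$ and $b(p)$ stay inside that petal, and $b(p)-a(p)<m/L$ on all of $[\bar x_0,v]$. So neither your intermediate-value step nor your jump step ever fires, although a run of consecutive petals obviously does the job.
Second, nothing bounds the depth of the recursion ``at a smaller scale, with $L$ a large power of $U$'', since the subloop produced by a jump can be almost as long as its parent. Moreover, with $a(p),b(p)$ defined only up to tolerance $h$, the subloops are only approximately closed, and even the monotonicity of $b(p)-a(p)$ is unclear.

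\textbf{The missing idea.} Use exact return times of the tree path $\bar c$ (tree geodesics $[\bar x_i,\bar x_{i+1}]$ parametrised on $[i,i+1]$), and interpolate each jump by single excursions.
Let $\theta^-(p)$ be the last visit to $p$ before $i_0$ and $\theta^+(p)$ the first visit after. Then $\theta^+-\theta^-$ is genuinely monotone along $[\bar x_0,v]$ and continuous away from vertices.
At a vertex $q$ it jumps from $\sigma_1-\tau_k$ to $\sigma_l-\tau_1$, where $\tau_1<\dots<\tau_k\le i_0\le\sigma_1<\dots<\sigma_l$ are the relevant visits to $q$. The return pairs $(\tau_k,\sigma_1),(\tau_{k-1},\sigma_1),\dots,(\tau_1,\sigma_1),(\tau_1,\sigma_2),\dots,(\tau_1,\sigma_l)$ fill in this jump in increments, each equal to the length of one excursion from $q$. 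At $q=\bar x_0$ this carries the gap up to $m$, which repairs the first problem.

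So there are three outcomes. Either some exact return pair lands in $[m/L,m/U]$, or some single excursion (an exactly closed subloop) is longer than the width of the window. In the latter case, for $L\ge 2U$ that excursion is either in the window or longer than $m/U$, and if longer you recurse into it with the same absolute window.
Subloops strictly shrink and there are finitely many excursions, so this terminates. Nothing accumulates because all returns are exact. Only at the end do you round the final return pair $t<t'$ down to sample indices, at a cost of at most $1$ in index and $2h+E$ in distance. This gives $L$ a fixed multiple of $U$ (e.g.\ $L=3U$), with no powers of $U$ needed.
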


Let $g(x) = \log^2(x)$, let $\mu \leq 0.001$ and let $U\geq 1000/\mu$ be a constant. Let $L\geq U$ be as in Lemma~\ref{lem:shortsubpaths} applied to $g$ and $U$.

\begin{defn}
    Let $w$ be a cyclically reduced word. We say that a cyclic subword $u$ of $w$ is a \emph{proper fraction} of $w$ if 
    \begin{align*}
        \frac{\abs{w}}{L}\leq \abs{u}\leq\frac{\abs{w}}{U}.
    \end{align*}
\end{defn}

Let $N\geq2L$, and let $S = \{s_1, \ldots s_{4N}\}$ be a set of formal variables. For $1\leq i \leq 3$ define $r_i^1 = s_{Ni + 1}\cdots s_{Ni +N}$. Let $H=\langle S \mid R\rangle$ with $R=\bigcup_{i=1}^\infty R_i$ be the group constructed in \cite[Section~4]{Zbinden:small-cancellation} with starting words $r_1^1, r_2^1$ and $r_3^1$. We use this group as a basegroup as it has the following properties; any other group satisfying these would also work.
\begin{enumerate}[(H1)]
    \item $H$ is a $C'(4/N)$--small cancellation group.\footnote{It says $C'(1/(4N))$ in \cite{Zbinden:small-cancellation} but this is a typo; it should be $C'(4/N)$.}\label{S:sc}
    \item The set $R_i$ consists of 3 relators of equal length and whose length increases as $i$ increases. This holds because each relator $r^i_k\in R_i$ is a concatenation of $N^2$ many subwords $y$ of relators from $R_{i-1}$, where each $y \subset r^{i_1}_{k'}\in R_{i-1}$ has length $\abs{y} = \abs{r^{i-1}_{k'}}/N$, and the starting relators $r_1^1, r_2^1, r_3^1$ have the same length. \label{S:reslator-set}
    \item Let $r\in R_i$ be a relator, $w\subset r$ a cyclic subword and $j< i$. If $\abs{w} > 2\abs{r'}/N$ for some (equivalently, any) $r'\in R_j$, then there exists $r''\in R_j$ and a common subword $v$ of $r''$ and $w$ with $\abs{v}\geq \abs{r''}/N$. This is due to \cite[Lemma~4.4]{Zbinden:small-cancellation}; in the terminology of that paper, the word $v$ has the form $y^k_{(i, \ell)}$.
    \label{S:relator-intersections}
\end{enumerate}

Fix a non-decreasing sublinear function $f'(x) := \sqrt{\frac{\mu\sqrt{x}}{1000\cdot 101}}$, and let $f(x) := 101(f'(x))^2 = \frac{\mu\sqrt{x}}{1000}$. Let $T = \{t_1, t_2\}$ be a set of two formal variables that are distinct from those in $S$. For $M\geq 1$, let $\mathcal T_M=\{\eta_1,\eta_2,\dots, \eta_{2^M}\}$ be the set of (ordered) words over $T$ of length $M$. Fix an ordering $\alpha$ on the countable set $(S^\pm\cup T^\pm)^* \times \mathbb N$.

Define $k_0 = 0$, $\eps_1 = 0$, $Q_0 = \emptyset$, and $M_0 = 0$. Assume that we have defined $k_j$, $\eps_{j+1}$, $Q_j$, and $M_j$ for all $0\leq j < i$, and let $G_j=\langle S\cup T \mid \bigcup_{m=0}^j Q_m\rangle$. For $i\geq 1$, we will inductively define $k_i$, $\eps_{i+1}, Q_i$ and $M_i$.

Let $\alpha(i) = ( \chi,t)$. Define an integer $k_i > k_{i-1}$  such that the following hold for all $r\in R_{k_i}$.
\begin{enumerate}[(G1)]
    \item  $\abs{r}/L -1\geq M_i:= \log{(6N\abs{r}^2)} > M_{i-1}$;\label{G:lower-bound-on-M}
    \item the word $\chi$ satisfies $\abs{\chi}+1\leq f'(\abs{r}/L)$;\label{G:bound-on-w}
    \item  $f(\abs{r}) > \eps_{i}$; and \label{cond:f-eps-relationship}
    \item $\abs{r}\geq 10^6\eps_i/\mu +1$.\label{G:minimum}
\end{enumerate}

Let $\mc U$ be the set of proper fractions of any of the three relators $r\in R_{k_i}$. For each proper fraction $u\in \mc U$, let $d_u$ be a geodesic word in $G_{i-1}$ representing the same element as $\chi^{f'(\abs{r}/L)}$ in $G_{i-1}$. Define
\begin{align}\label{eqn:relator-defn}
    b_u^L = \left(\prod_{m=1}^N u \eta_{m_u+m}\right), \quad   b_u^R = \left(\prod_{m=1}^N u \eta_{m_u+m+N}\right), \quad b_u=b_u^Lb_u^R , \quad c_u =  b_u^Rxd_uyb_u^L,
\end{align}
where $\eta_{m_u+1},\dots, \eta_{m_u+2N}$ are the lowest indexed words in $\mathcal T_{M_i}$ that have not yet been used and $x=x(u),y=y(u)\in T^\pm$ are letters so that $yb_ux$ and $xd_uy$ are reduced. Finally, let $a_u$ be a geodesic word in $G_{i-1}$ representing the same element as $c_u$ in $G_{i-1}$.

Define $Q_i$ to be the set of all cyclic conjugates and their inverses of words in $R_{k_i}\cup \left (\bigcup_{u\in \mc U} a_u\right )$. Finally, let $\eps_{i+1}=4\cdot\max\{\abs{r}\mid r\in Q_i\}$.

\begin{prop}\label{prop:expgsc}
   The group $G=\npres$ is a well-defined $(f, \mu, (\eps_n)_n)$--expanding graded small cancellation group.
\end{prop}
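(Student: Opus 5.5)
We argue by induction on $i$, verifying the four conditions (T1)--(T4) of Definition~\ref{def:ExpandingGSC} for the presentation with $R_i := Q_i$. Condition (T4) is immediate from the choice $f(x)=\mu\sqrt{x}/1000\le \mu x/1000$. Well-definedness means two things: the integers $k_i$ satisfying (G1)--(G4) exist, and there are enough unused words in $\mathcal T_{M_i}$. The first holds because relator lengths in $R_k$ increase to infinity by (H2) while $f,f'$ are unbounded and $\log$ is sublinear. For the second, the number of proper fractions in $\mc U$ is at most $3\abs{r}^2$, each uses $2N$ words, and $2^{M_i}=6N\abs{r}^2$ by (G1).

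For (T2), the right inequality $\eps_{i+1}\ge 4\abs{q}$ for $q\in Q_i$ holds by definition. The left inequality $f(\abs{q})>\eps_i$ for $q\in Q_i$ follows from (G3) for $q\in R_{k_i}$. For $q=a_u$ we need a lower bound on $\abs{a_u}$. Since $a_u$ is the $G_{i-1}$-geodesic representative of $c_u$, the plan is to show that $c_u$ is already nearly geodesic in $G_{i-1}$. The $t$-letters $\eta_m$ have length $M_i>M_{i-1}$ and are unused, so they cannot be cancelled by lower-rank relators. Concretely, we apply Lemma~\ref{lem:IgoodShort} (or Lemma~\ref{lem:upgraded-contiguity-diagrams}) to a diagram whose boundary is $c_u a_u^{-1}$. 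The segments $b_u^L,b_u^R$ are $(i-1)$-good, because any common subword with an earlier relator either lies inside a copy of $u$ (short compared to relators of $R_{k_i}$, and controlled by (H3) against earlier $R_k$) or contains a fresh $\eta$ word. The short sides are $x,y$ and the short geodesic $d_u$, whose length is at most $\abs{\chi}f'(\abs{r}/L)\le f'^2$ by (G2). We conclude that $\abs{a_u}\ge \abs{c_u}-O(f(\abs{r}))\ge 2N\abs{u}$, which gives $\abs{a_u}\gtrsim \abs{r}$. Together with (G3), and with $a_u$ having length comparable to $\abs{r}$, this yields $f(\abs{a_u})>\eps_i$. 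The same estimate gives (G4)-type length bounds, that is, condition ($\tilde C$2).

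For (T3), we take $r_f$ to be $b_u^L b_u^R$ (a subword of $a_u$ after the above comparison, of length $\ge \abs{a_u}-f(\abs{a_u})$, since the discarded part $xd_uy$ plus the cancellation has length $\lesssim f'^2 =f/101$). For $q\in R_{k_i}$, we use (H1) and the fact that the complement of a short arc in $r$ meets later and earlier relators in at most $1/100$ of their length, by small cancellation and (H3). The bound $\abs{r_f\cap r'}\le \abs{r'}/100$ for all relators $r'$ of any rank then follows from three observations. Higher-rank relators are much longer by (P2). The fresh $\eta$'s prevent long overlaps between distinct $a_u$'s. Overlaps with $R_{k_i}$ are at most one copy of $u$ plus bits, and $\abs{u}\le\abs{r}/U\le \mu\abs{r}/1000$.

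Finally, (T1) follows from Lemma~\ref{lem:remove-eps-pieces} applied to $Q=Q_i$ over $G_{i-1}$. Conditions ($\tilde C$1) and ($\tilde C$2) hold by construction, via geodesic representatives, (G4) and the length estimate above. Condition ($\tilde C$4) is exactly the (T3) argument. The main obstacle is ($\tilde C$3), the $C'(\mu/3)$ condition, which requires a case analysis of pieces. Pieces between two elements of $R_{k_i}$ are short by (H1), since $4/N\le \mu/3$. Pieces involving some $a_u$ must avoid containing a full $\eta$ word, since each $\eta$ occurs only once among all relators, and $\eta$'s are spaced by copies of $u$ of length at most $\abs{r}/U$. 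So a piece has length at most about $2\abs{u}+2M_i+\abs{d_u}\le 3\abs{r}/U<\mu\abs{a_u}/3$ because $U\ge 1000/\mu$. Care is needed because $a_u$ differs from $c_u$ near $xd_uy$; we handle this by the near-geodesicity established above, which pins down $a_u$ except on an $O(f)$-length window.
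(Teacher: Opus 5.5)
Your outline follows the same route as the paper. You use induction on the level, $(i-1)$--goodness of $r\in R_{k_i}$ and of the $b_u$--pieces, Lemma~\ref{lem:IgoodShort} to compare $a_u$ with $c_u$, and Lemma~\ref{lem:remove-eps-pieces} with the $C'(\mu/3)$ condition coming from freshness of the $\eta$'s. Your explicit count of $\mathcal T_{M_i}$ for well-definedness is a welcome addition. But there is a genuine gap at ($\tilde{C}$1), which you say holds ``by construction, via geodesic representatives''. The set $Q_i$ is symmetrized, so ($\tilde{C}$1) requires every cyclic conjugate of $a_u$ to be geodesic in $G_{i-1}$. This is the same as (C1) in (T1), on which Lemma~\ref{lem:OOSLem4.6}, and hence everything at later levels, rests. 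Picking $a_u$ as a geodesic representative of $c_u$ only makes $a_u$ itself geodesic. Your single-loop comparison of $c_u$ with $a_u$ only shows that $a_u=b'zb''$, with $b'$, $b''$ long pieces of $b_u^R$, $b_u^L$ and $z$ a short window. It gives no control over words like $z_2b''b'z_1$ (where $z=z_1z_2$), which also lie in $Q_i$.

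The missing idea is to double $c_u$. Apply Lemma~\ref{lem:IgoodShort} with $M=\abs{d_u}$ to the hexagon whose first five sides are labelled $b_u^Rx$, $d_u$, $yb_ux$, $d_u$, $yb_u^L$ (together they read $c_u^2$) and whose sixth side is a geodesic. That sixth side must pass through the midpoint $m$ of $yb_ux$. Both the path from $q_1^-$ to $m$ and the path from $m$ to $q_5^+$ read $c_u$, so the sixth side has length $2\abs{a_u}$. Hence $a_u^2$ is geodesic, and every cyclic conjugate of $a_u^{\pm1}$ is geodesic, being a subword of $a_u^{\pm2}$.

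The same picture directly gives a cyclic shift $b_u'v_u$ of $a_u$ with $b_u'\subset b_u$ and $\abs{v_u}\le 101\abs{d_u}+2\le f(\abs{u})$. This is what you need for ($\tilde{C}$4) and for your window argument in ($\tilde{C}$3). Likewise, for $r\in\overline{R_{k_i}}$, ($\tilde{C}$1) does not hold by construction. You need all of $r$, not just the complement of a short arc, to be $(i-1)$--good, and then you apply Proposition~\ref{prop:good-geodesics}(1).
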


\begin{proof}
    We have $\eps_1 = 0$ and $\mu \leq 0.001$. Moreover, $f$ is a non-decreasing sublinear function which satisfies (T\ref{cond:T5}) by definition. Thus the group $G_0 = \langle S\cup T \mid \emptyset \rangle$ satisfies the expanding graded small-cancellation condition. 

    We will use induction on $n$ to prove the claims below. Claim~\ref{claim:expanding-graded} holding for all $n$ implies that $G=\npres$ satisfies the $(f, \mu, (\eps_n)_n)$--expanding graded small-cancellation condition.
    
    Let $n\geq 1$, and assume that the claims below hold for all $j <n$. We will prove that they hold for $n$.

    \begin{claim}\label{claim:intersection-bu}
        Let $e= \pm 1$, let $j\leq n$, let $u_1,  u_2$ be proper fractions of relators $r_1\in R_{k_n}$ and $r_2\in R_{k_j}$ and let $w$ be a common subword of $b_{u_1}^e$ and $b_{u_2}$. If $u_1 = u_2$ and $e = 1$, we require that $w$ is a subword of $b_{u_1}$ in two different ways. We have $\abs{w}\leq 2\abs{r_i}/U$ for $i = 1, 2$.
    \end{claim}
    \begin{claimproof}
        Since all the words $\eta_h$ used in the definitions of $b_{u_1}$ and $b_{u_2}$ are distinct and words over $T$ (as opposed to words over $T^\pm$), we know that $w$ has to be a subword of $(\eta_hu_1\eta_{h+1})^e$ and $\eta_{h'}u_2\eta_{h'+1}$ for some $h, h'$. The statement follows.
    \end{claimproof}

    \begin{claim}\label{claim:b_u-are-good}
        Let $r\in \overline{R_{k_n}}$, and let $u$ be a proper fraction of $r$. The words $yb_ux$ and $r$ are $(n-1)$--good.
    \end{claim}
    \begin{claimproof}
        By the choice of $x, y$, the word $yb_ux$ is reduced. The word $r$ is reduced since it is in $\overline{R_{k_n}}$. Let $j<n$, let $s\in Q_j$, let $w$ be a common subword of $r$ and $s$ and let $v$ be a common subword of $xb_uy$ and $s$. 
        
        \smallskip
        
        Case 1: $s\in \overline{R_{k_j}}$. Since $H$ satisfies the $C'(4/N)$--condition, we have $\abs{w}\leq 4\abs{s}/N\leq \abs{s}/100$. Moreover, $v$ has to be a subword of $u$ and hence also has to satisfy $\abs{v} \leq \abs{s}/100$.
        
        \smallskip
        
        Case 2: $s \in \overline{a_{u'}}=\overline{b_{u'}'v_{u'}}$ for some proper fraction $u'$ of a relator $r'\in R_{k_j}$ and $b_{u'}',v_{u'}$ as in Claim~\ref{claim:a_u-is-almost-b_u}. By Claim~\ref{claim:a_u-is-almost-b_u} for $j$, we know that $w = w_1w_2w_3$ and $v = v_1v_2v_3$, where $w_1, w_3$ are common cyclic subwords of $b_{u'}$ and $ b_u$, $v_1, v_3$ are common cyclic subwords of  $b_{u'}$ and $r$, and $w_2, v_2$ have length at most $\mu\abs{s}/1000$. Using Claim~\ref{claim:intersection-bu}, we obtain $\abs{v_1}\leq 2\abs{r'}/U$, implying $\abs{v}\leq \abs{s}/100$. For $w$, observe that any common subword of $b_{u'}$ and $r$ has to be a common subword of $u'$ and $r$. Hence $\abs{w_1}\leq \abs{s}/N$ and $\abs{w_3}\leq \abs{s}/N$, yielding $\abs{w}\leq \abs{s}/100$.      
    \end{claimproof}

    \begin{claim}\label{claim:a_u-is-almost-b_u}
        Let $u$ be a proper fraction of a relator $r\in R_{k_n}$. The word $a_u$ is a cyclic geodesic in $G_{n-1}$ and satisfies $4N\abs{u}\geq \abs{a_u}\geq \abs{r}$. Moreover, there exists a cyclic shift $a_u' = b_u'v_u$ of $a_u$ such that $b_u'$ is a subword of $b_u$ and $\abs{v_u}\leq f(\abs{u})\leq \mu\abs{a_u}/1000$.
    \end{claim}
    \begin{claimproof}
        Define a loop $q_1\cdots q_6$ in $G_{n-1}$ such that $q_1, q_2, q_3, q_4, q_5$ are labeled by $b_u^Rx, d_u, yb_ux, d_u$ and $yb_u^L$, respectively, and $q_6$ is a geodesic. Note that the label of $q_6$ represents the same element as $a_u^2$.  This fits the setting of Lemma~\ref{lem:IgoodShort} with $\abs{d_u}$ playing the role of $M$, as Claim~\ref{claim:b_u-are-good} implies that $b_u^Rx$, $yb_ux$, and $yb_u^L$ are all $(n-1)$--good and, as each contains $u$ as a subword, have length at least $100\abs{d_u}$. Hence, Lemma~\ref{lem:IgoodShort} implies that $q_6$ goes through the midpoint $m$ of $q_3$. In particular, $\abs{q_6} = d(q_1^-, m) + d(m, q_5^+) = 2\abs{a_u}$.  Thus, $a_u^2$ is geodesic, implying that $a_u$ is a cyclic geodesic. 

        Thus, we can assume that $q_6$ is labeled by $a_u^2$. Lemma~\ref{lem:IgoodShort} then further implies that there exists a word $b_u'\subset b_u$ of length at least $\abs{b_u} - 100\abs{d_u}$ which is a subword of $a_u^2$, implying the existence of a cyclic shift $a_u'$ as desired. Consequently, $\abs{b_u}+2+\abs{d_u}\geq \abs{a_u}\geq \abs{b_u} - 100\abs{d_u}$. The conditions in the construction yield $4N\abs{u}\geq \abs{a_u}\geq \abs{r}$ and  $\abs{v_u} = \abs{a_u} - \abs{b_u'}\leq 101\abs{d_u}+2\leq f(\abs{u})\leq \mu\abs{a_u}/1000$. 
    \end{claimproof}

    \begin{claim}\label{claim:C'(1/3mu)}
        The set $Q_n$ satisfies the $C'(\mu/3)$--condition.
    \end{claim}
    \begin{claimproof}
        Let $s \neq s'\in Q_n$, and let $w$ be a common prefix of $s$ and $s'$. We want to show that $\abs{w}< \mu\abs{s}/3$. 
       
        If $s, s'\in \overline{R_{k_n}}$, this follows from $H$ satisfying the $C'(4/N)$ condition, since $N\geq 12/\mu$. 
        
        If $s\in \overline{R_{k_n}}$ and $s'\in \overline{a_u}$ for some proper fraction $u$ of a relator $r\in R_{k_n}$, then $w$ cannot contain letters from $T^{\pm}$. Thus the longest common subword of $b_u$ and $s$ has length at most $\abs{u}$. By Claim~\ref{claim:a_u-is-almost-b_u}, $w = w_1w_2w_3$, where $w_2$ is a subword of $v_u$ and $w_1, w_3$ are subwords of $b_u$. The length estimates from Claim~\ref{claim:a_u-is-almost-b_u} and the fact that $U\geq 1000/\mu$ yield $\abs{w}\leq 2\abs{u}+\abs{v_u}\leq \mu\abs{s}/3\leq \mu\abs{s'}/3$.

        Lastly, if $s\in \overline{a_u}$ and $s'\in \overline{a_{u'}}$ for some proper fractions $u, u'$ of relators $r, r'\in R_{k_n}$, then by Claim~\ref{claim:a_u-is-almost-b_u},  $w$ can be written as $w = w_1w_2w_3 = w_1'w_2'w_3'$, where $w_2, w_2'$ are subwords of $v_u, v_u'$, $w_1, w_3$ are subwords of $b_u^\pm$ and $w_1',w_3'$ are subwords of $b_{u'}^\pm$. By Claim~\ref{claim:intersection-bu}, any common subword of $w_i, w_j'$ for $i, j \in\{ 1,3\}$ has length at most $3\abs{r}/U$. Hence $\abs{w}\leq 12 \abs{r}/U + \abs{v_u}+\abs{v_u'}$. Thus, by the length estimates in Claim~\ref{claim:a_u-is-almost-b_u}, we have $\abs{w}\leq \mu\abs{a_u}/3$.
    \end{claimproof}

    \begin{claim}\label{claim:complicated-C}
        The group $G_n$ satisfies the $C(\eps_{n}, \mu, 10^6\eps_n/\mu +1)$--condition over $G_{n-1}$.
    \end{claim}
    \begin{claimproof}
        We will use Lemma~\ref{lem:remove-eps-pieces}, so it suffices to prove conditions ($\tilde{C}$\ref{tildeC1})-($\tilde{C}$\ref{tildeC4}). 

        ($\tilde{C}$\ref{tildeC1}): Let $s\in Q_n$. If $s\in \overline{R_{k_n}}$,then $s$ is $(n-1)$--good by Claim~\ref{claim:b_u-are-good} and hence a geodesic in $G_{n-1}$ by Proposition~\ref{prop:good-geodesics}(i). Otherwise, $s\in \overline{a_u}$ for some proper fraction $u$ of a relator $r\in R_{k_n}$, in which case $s$ is geodesic by Claim~\ref{claim:a_u-is-almost-b_u}.
        
        ($\tilde{C}$\ref{tildeC2}): Let $s\in Q_n$. We have $\abs{s}\geq \abs{r}$ for some $r\in R_{k_n}$: if $s\in \overline{R_{k_n}}$ this is immediate, and otherwise this follows from Claim~\ref{claim:a_u-is-almost-b_u}. Hence $\abs{s}\geq 10^6\eps_n/\mu +1$ by (G\ref{G:minimum}).
        
        ($\tilde{C}$\ref{tildeC3}): This holds by Claim~\ref{claim:C'(1/3mu)}.
        
        ($\tilde{C}$\ref{tildeC4}): If $s\in \overline{R_{k_n}}$, then $s$ is $(n-1)$--good by Claim~\ref{claim:b_u-are-good}, so we can set $s_g = s$. Otherwise, $s\in \overline{a_u}$ for some proper fraction $u$, and using the notation from Claim~\ref{claim:a_u-is-almost-b_u}, we choose $s_g = b_u'$. By Claim~\ref{claim:b_u-are-good}, $b_u'$ is $(n-1)$--good, and $\abs{a_u} - \abs{b_u'} = \abs{v_u}\leq f(\abs{u})$ by Claim~\ref{claim:a_u-is-almost-b_u}.
    \end{claimproof}

    The following claim concludes the induction step: 
    \begin{claim}\label{claim:expanding-graded}
        The group $G_n$ satisfies the $(f, \mu, (\eps_n)_n)$--expanding graded small cancellation condition. 
    \end{claim}
    \begin{claimproof}
        The group $G_{n-1}$ satisfies the $(f, \mu, (\eps_n)_n)$--expanding graded small cancellation condition: if $n = 1$, we only only have to show that (T\ref{cond:T4}) holds, which it does by construction, while if $n  >1$, this is true by induction. The relators of the groups $G_n$ and $G_{n-1}$ only differ at level $n$. It thus suffices to show that $\eps_{n+1}\geq 4\abs{q}$ and $f(\abs{q}) > \eps_n$ for all relators $q\in Q_n$, and that (T\ref{T:C-cond}) and (T\ref{cond:T4}) hold for $n$.

        Let $q_{n}\in Q_{n}$. Then $\abs{q_n}\geq \abs{r_n}$ for some $r_n\in R_{k_n}$: this follows from Claim~\ref{claim:a_u-is-almost-b_u} if $q_n\not \in \overline{R_{k_n}}$ and is immediate if $q_n\in \overline{R_{k_n}}$. Hence, (G\ref{cond:f-eps-relationship}) implies $f(\abs{q_n}) > \eps_n$. The inequality $\eps_{n+1}\geq 4\abs{q_n}$ holds by definition of $\eps_{n+1}$.

        (T\ref{cond:T4}): This is the same condition as ($\tilde{C}$\ref{tildeC4}) and proven in Claim~\ref{claim:complicated-C}. 
        
        (T\ref{T:C-cond}): This holds by Claim~\ref{claim:complicated-C}.
    \end{claimproof}
\end{proof}

\begin{prop}
    The group $G=\npres$ is MLTG and contains a Morse element. 
\end{prop}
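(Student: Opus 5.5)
We plan to deduce both conclusions from Proposition~\ref{prop:morse-geodesics} (Proposition~\ref{prop:connection-intersecting-relator-intersecting} together with Lemma~\ref{lem:sublinvsMorse}) and Theorem~\ref{thm:main1}. First we check that the presentation is balanced. By (H\ref{S:reslator-set}) the three relators of $R_{k_i}$ have equal length $\ell_i$. By Claim~\ref{claim:a_u-is-almost-b_u}, every $a_u$ satisfies $\ell_i\le \abs{a_u}\le 4N\abs{u}\le 4N\ell_i/U$. Hence all relators in $Q_i$ have length within a factor $C=4N$ of one another. So Proposition~\ref{prop:morse-geodesics} applies: a geodesic is Morse if and only if its relator intersection function is sublinear, and this holds quantitatively.

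For the Morse element we take $s_1$, or more safely a generator of $T$, say $t_1$, and show that the geodesic labelled $t_1^{\,k}$ has bounded relator intersection. The words $t_1^k$ are reduced. A subword of a relator in $Q_i$ that is a power of $t_1$ must lie inside some $\eta_h$, inside $x d_u y$, or inside $v_u$. The $\eta_h$ have length $M_i=\log(6N\ell_i^2)$, and $\abs{d_u}$ and $\abs{v_u}$ are bounded by $f(\abs{u})$. All of these are sublinear in the relator length. In addition we need $t_1^k$ to be geodesic in $G$. Since it is $i$--good for every $i$ once its intersections are small, Proposition~\ref{prop:good-geodesics}(1) gives this. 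To guarantee that the intersection is small, it may be cleaner to choose an element not appearing as a long block, for instance $s_1t_1$. We pick whichever makes the relator intersection visibly at most $\max\{M_i,f(\abs{r})\}$, which is sublinear. Lemma~\ref{lem:sublinvsMorse} then shows that the axis is Morse, and the element has infinite order since the axis is a geodesic line.

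For MLTG we use Theorem~\ref{thm:main1}(5)$\Rightarrow$(4), so it suffices to show that $\partial_*G$ is $\sigma$--compact. For this we want countably many sublinear functions $\rho_m$ such that every sublinearly relator-intersecting geodesic is $\rho_m$--relator intersecting for some $m$; Proposition~\ref{prop:connection-intersecting-relator-intersecting} and Lemma~\ref{lem:sublinvsMorse} then turn these into countably many Morse gauges. The key dichotomy comes from the structure of the relators. Suppose a geodesic $\gamma$ shares a subword $w$ with a relator $r\in Q_i$ of length at least a fixed fraction $\abs{r}/m$. If $r\in R_{k_i}$, then $w$ contains a proper fraction $u$ (using $N\ge 2L$ and $U\le L$). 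The relator $a_u$ then lets us replace $u$ by the short detour through the $\eta$'s and $d_u$, and iterating via (H\ref{S:relator-intersections}) down the ranks should show that $\gamma$ meets relators of every smaller rank in a definite fraction, contradicting sublinearity. If instead $r=a_u$, then $w$ contains $\approx\abs{r}/(4Nm)$ letters of $b_u$, hence copies of $u$ itself, and we reduce to the previous case. Consequently a Morse geodesic has relator intersection at most $\abs{r}/m$ beyond some rank. The natural candidates are therefore functions $\rho_m$ equal to $t$ below rank $m$ and to $f$-type bounds, or linear bounds with slope $1/m$ that decrease to sublinear ones, above rank $m$.

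The main obstacle is this last dichotomy. We must show that a Morse geodesic cannot share an intermediate fraction (between $f(\abs r)$ and $\abs r/L$) with high-rank relators unless it fails to be Morse. This is exactly where the construction of $a_u$ with the spiral of powers $\chi^{f'}$ and Lemma~\ref{lem:IgoodShort} must be used to produce long projective geodesics. We would first attempt to bound, for each Morse $\gamma$, its intersection with $r\in Q_i$ by $f'(\abs{r})$-scale quantities plus pieces of $R_{k_j}$ for $j<i$. This would give the explicit countable family $\rho_m(t)=\max\{t\,\mathbf 1_{t\le \ell_m},\,C f(t)\}$.
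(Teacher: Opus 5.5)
The balanced check ($C=4N$, from (H2) and Claim~\ref{claim:a_u-is-almost-b_u}) is right. So is the reduction via Theorem~\ref{thm:main1} and Proposition~\ref{prop:connection-intersecting-relator-intersecting} to finding countably many sublinear functions that bound every sublinearly relator-intersecting geodesic. Taking a generator of $T$ for the Morse element is also right: its powers meet $a_u$ only inside a stretch $\eta\,v_u\,\eta$, of length $O(M_i+f(\abs{u}))$. That is sublinear and below $\abs{a_u}/100$. There is no need to hedge between $s_1$, $t_1$ and $s_1t_1$.

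The MLTG half, however, is not proved. You leave the ``intermediate fraction'' dichotomy open. Moreover, your conclusion that intersections are at most $\abs{r}/m$ beyond some rank is just a restatement of sublinearity and gives no uniform family. The tools you propose to close the gap (projective geodesics, the powers $\chi^{f'}$, Lemma~\ref{lem:IgoodShort}) are beside the point; $\chi^{f'}$ only matters for property (NL).

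The missing idea is that (H3) yields a \emph{constant} bound, depending only on the geodesic. Fix $\lambda$ with sublinear relator intersection function $\rho_\lambda$. Choose $T_0$ with $\rho_\lambda(t)<t/N$ for all $t\ge T_0$, and fix a single rank $k_{i_0}$ whose relators have length $T_1>T_0$. Suppose $\lambda$ shared a subword of length $>T_1$ with an $H$-relator of any rank. Then one application of (H3), directly at rank $k_{i_0}$ with no iteration down the ranks, gives a common subword of $\lambda$ and some $r'\in R_{k_{i_0}}\subset Q_{i_0}$ of length $\ge T_1/N$. That means $\rho_\lambda(T_1)\ge T_1/N$, a contradiction. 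So $\lambda$ meets every $H$-relator in at most $T_1$ letters.

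This constant bound also disposes of the relators $a_u$ purely combinatorially. If $\abs{u}\le T_1$, then $\abs{a_u}\le 4NT_1$. If $\abs{u}>T_1$, a common subword of $\lambda$ and $a_u$ cannot contain a whole copy of $u$. By the cyclic shift $a_u'=b_u'v_u$ of Claim~\ref{claim:a_u-is-almost-b_u}, it therefore lies in a stretch $u\,\eta\,v_u\,\eta\,u$ or $u\,\eta\,u$, and has length at most $2T_1+2M_j+f(\abs{u})$. Hence $\lambda$ is $\rho_{4NT_1}$--relator intersecting for the countable family $\rho_i(t)=i+2\log(6N^3t^2)+f(t)$, which is what the paper uses.

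Your candidate family $\max\{t\mathbf 1_{t\le \ell_m},Cf(t)\}$ would also work once this constant bound is known, after being made non-decreasing. Without the constant bound, though, nothing in your sketch produces a bound that is uniform across a countable family.
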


\begin{proof}
    To show $G$ is MLTG, we will show that it has $\sigma$--compact Morse boundary, which suffices by~\Cref{thm:main1}. For $i\geq 1$, define the sublinear functions 
    \begin{align*}
        \rho_i(t) = i + 2\log(6N^3t^2) + f(t).
    \end{align*}

    Recall that by the construction of $H$, all relators of $R_{k_i}$ have the same length. Thus by Claim~\ref{claim:a_u-is-almost-b_u} we have $4N\abs{r}\geq \abs{q}\geq \abs{r}$ for all $r\in R_{k_i}$ and $q\in Q_i - \overline{R_{k_i}}$. Hence the assumptions of Proposition~\ref{prop:connection-intersecting-relator-intersecting} are satisfied. Therefore, by Proposition~\ref{prop:connection-intersecting-relator-intersecting} and Lemmas~\ref{lem:sublinvsMorse} and \ref{lem:sigma_cpt2}, to prove that $G$ is Morse local-to-global, it suffices to show that any geodesic in $G$ with sublinear relator intersection function is $\rho_i$--relator intersecting for some $i$.
    
    Let $\lambda$ be a geodesic in $G$ with sublinear relator intersection function $\rho_\lambda$. Then there exists $T_0 > 0$ such that $\rho_\lambda(t) <  t/ N$ for all $t\geq T_0$. Let $i_0$ be such that $\abs{r_0} > T_0$ for any (equivalently, all) $r_0\in R_{k_{i_0}}$, and let $T_1 = \abs{r_0}$. We claim that $\lambda$ is $\rho_{4NT_1}$--relator intersecting.

    \begin{claim}\label{claim:constant-intersection}
         If $r\in \overline{R_j}$ and $w$ is a common subword of $r$ and $\lambda$, then $\abs{w}\leq T_1$.
    \end{claim}
    \begin{claimproof}
        Assume $\abs{w} > T_1$. Then $j>k_{i_0}$, and so by (H\ref{S:relator-intersections}), there exists $r'\in R_{k_{i_0}}$ and a common subword $v$ of $w$ and $r'$ or $(r')^{-1}$ such that $\abs{v}\geq \abs{r'}/N = T_1/N$. Hence $\rho_{\lambda}(T_1) \geq \abs{v} \geq T_1/N$, a contradiction to the definition of $T_0$.
    \end{claimproof}

    \smallskip
    
    Let $q \in \overline{a_u}$ for some proper fraction $u$ of a relator $r\in R_{k_j}$ for some $j$, and let $w$ be a common subword of $q$ and $\lambda$. We have $\abs{a_u}\leq 4N\abs{u}$ by Claim~\ref{claim:a_u-is-almost-b_u}. Thus, if $\abs{u}\leq T_1$, then $\abs{w}\leq \rho_{4NT_1}(\abs{q})$.

    Suppose $\abs{u} > T_1$. Claim~\ref{claim:constant-intersection} implies that the longest common subword of $u$ and $\lambda$ has length at most $T_1$, and so $u$ cannot be a subword of $w$. Using Claim~\ref{claim:a_u-is-almost-b_u} and the notation thereof, we obtain that $w$ has to be contained in a word of the form $u\eta_1v_u\eta_2u$, or $u\eta_3u$, where the words $\eta_k$ have length at most $M_j$ and $\abs{v_u}\leq f(\abs{u})$. Consequently, $\abs{w}\leq 2T_1 + 2M_j + f(\abs{u})\leq \rho_{4NT_1}(\abs{a_u})$. Hence, $\lambda$ is $\rho_{4NT_1}$--relator-intersecting, concluding the proof that $G$ is MLTG.

    \smallskip
    
    We next show that any generator $x\in T$ is an infinite order Morse element by showing that $\gamma$, the path starting at the identity and labeled by $x^\infty$ is a Morse geodesic. Let $\rho$ be the relator intersection function of $\gamma$. To prove that $\gamma$ is a geodesic it suffices to show that $\gamma$ is $\infty$--good, that is, that $\rho(t)\leq t/100$ for all $t$. If $\rho$ is also sublinear, then $\gamma$ is Morse, and hence $x$ is a Morse element.

    Let $q\in Q_i$ be a relator. Then either $q\in \overline{R_{k_i}}$, in which case its intersection with $\gamma$ is trivial, or $q \in  \overline{a_u}$ for some proper fraction $u$ of a relator $r\in R_{k_i}$. Using the notation of Claim~\ref{claim:a_u-is-almost-b_u} and the construction of $G$, we obtain that any common subword $w$ of $q$ and $\gamma$ is contained in $\eta_1v_u\eta_2$, where $\eta_1, \eta_2\in \mc T_{M_i}$ and $\abs{v_u}\leq f(\abs{u})\leq \mu\abs{a_u}/1000$. The upper bounds on $M_i$ and $f$ yield the desired upper bounds on $\abs{w}$.
\end{proof}

The proof of the next proposition is similar to the proof of \cite[Lemma~4.6]{AZ}.
\begin{prop}
    The group $G=\npres$ is not acylindrically hyperbolic.
\end{prop}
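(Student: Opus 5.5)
We will show the stronger property (NL): in every isometric action of $G=\npres$ on a hyperbolic space, no element acts loxodromically. This suffices because an acylindrically hyperbolic group has a non-elementary acylindrical action on a hyperbolic space, and such an action always contains loxodromic elements. The argument follows \cite[Lemma~4.6]{AZ}. We fix an action of $G$ on a $\delta$--hyperbolic space $X$ and a basepoint $x_0$. As in the preliminaries, we rescale the Cayley graph so that the orbit map is $1$--Lipschitz, and we view every relator as an embedded cycle in $X$. Since the element $\chi$ enumerated by $\alpha$ ranges over all words, and each word appears with infinitely many indices, it suffices to fix a word $\chi$ and show that it is not loxodromic. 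So we fix infinitely many stages $i$ with $\alpha(i)=(\chi,t)$.

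At such a stage $i$, the relators $r\in R_{k_i}$ have length tending to infinity with $i$. Hence, for large $i$, their images satisfy $\abs{r}\geq K(\delta,U,L)$. Lemma~\ref{lem:shortsubpaths}, applied with $g=\log^2$, then gives a subsegment $\lambda$ of the embedded cycle $r$ with $\abs{r}/L\leq\abs{\lambda}\leq\abs{r}/U$ and $d_X(\lambda^-,\lambda^+)\leq\log^2\abs{r}$. The label $u$ of $\lambda$ is a proper fraction, so $u\in\mc U$ and $a_u$ is a relator. In $G$ this relator gives the identity $b_u^R x\,\chi^{f'(\abs{r}/L)}\,y\,b_u^L=1$. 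Writing $k=f'(\abs{r}/L)$, we solve for the power:
\[
\chi^{k}=x^{-1}(b_u^R)^{-1}(b_u^L)^{-1}y^{-1}.
\]
Each of $b_u^L$ and $b_u^R$ is a product of $N$ copies of $u$, alternating with words $\eta_m$ of length $M_i$. Therefore the displacement of $x_0$ under $\chi^{k}$ is at most $2N\log^2\abs{r}+2NM_i+O(1)$. This uses that each translate of $u$ moves its starting point by at most $\log^2\abs{r}$, and that each $\eta_m$ moves by at most its length.

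By (G\ref{G:lower-bound-on-M}), $M_i=\log(6N\abs{r}^2)$, so $d_X(x_0,\chi^{k}x_0)=O(\log^2\abs{r})$. On the other hand, $k=f'(\abs{r}/L)$ grows like a power of $\abs{r}$, namely $\abs{r}^{1/4}$ up to constants. If $\chi$ were loxodromic with stable translation length $\tau>0$, then $d_X(x_0,\chi^kx_0)\geq k\tau$, which is polynomial in $\abs{r}$. This contradicts the polylogarithmic upper bound once $i$ is large. Hence $\chi$ is not loxodromic. Since $\chi$ was arbitrary, $G$ has property (NL) and is not acylindrically hyperbolic.

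The main obstacle is the displacement estimate for the product. Lemma~\ref{lem:shortsubpaths} controls the endpoints of $\lambda$ only for one particular embedding of the cycle $r$. We therefore need that every occurrence of $u$ in $b_u$, as a group element $u\in G$, moves $x_0$ by the same amount. This holds because the displacement of $x_0$ under the element $u$ is exactly $d_X(\lambda^-,\lambda^+)$, read off from the translate of the cycle starting at $x_0$, and so it is translation invariant. We also need that the letters $x,y$ and the generators contribute only a bounded amount, which follows from the rescaling by $\max_s d(x_0,sx_0)$. Both points need care but should go through.
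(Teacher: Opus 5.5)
Your proposal is correct and takes essentially the paper's route. You apply Lemma~\ref{lem:shortsubpaths} to the image of a long relator $r\in R_{k_i}$ to find a proper fraction $u$ with displacement $O(\log^2\abs{r})$, then use the relator $a_u$ to write the power $\chi^{f'(\abs{r}/L)}$ as a product of $2N$ copies of $u$, the words $\eta_m$ and two extra letters, and finally compare this polylogarithmic bound with the polynomial growth of $f'(\abs{r}/L)$. Your write-up is a little more careful than the paper's: you restrict to stages $i$ with $\alpha(i)=(\chi,t)$, justify that the displacement of $x_0$ under $u$ equals $d_X(\lambda^-,\lambda^+)$, and conclude via Osin's classification of acylindrical actions rather than the WPD characterization, which is equally valid.
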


\begin{proof}
Suppose $G$ acts on a $\delta$--hyperbolic space $X$, and fix a basepoint $x_0\in X$.  By \cite[Theorem~1.2]{O:acylindrical}, it suffices to show that no infinite-order element of $G$ is a loxodromic isometry of $X$.  To that end, let $w\in G$ be any infinite order element.  We will show that $d_X(x_0,w^nx_0)$ is sublinear in $n$, which implies that $w$ is not a loxodromic isometry of $X$.

Let $K$ be the constant from Lemma~\ref{lem:shortsubpaths}, and choose $n>K$.  There exists $r\in R$ such that $f'(\abs{r}/L)>n$ and $\abs{r}>N_0$.  We view $r$ as an embedded cycle in $X$. By Lemma~\ref{lem:shortsubpaths}, the relator $r$ has a proper fraction $u$ such that $d_X(u^-,u^+)\leq g(\abs{r})$. Corresponding to this proper fraction $u$, the group $G$ has a relator $a_u$, which is a geodesic word representing the same element as $c_u=b_u^Rxd_uyb_u^L$, where $d_u$ is a geodesic word representing the same element as $w^{f'(\abs{r}/L)}$ and $b_u^R,b_u^L$ are defined in \eqref{eqn:relator-defn}. Recall that each $\eta_{m_u+m}$ is chosen from $\mc T_{M_i}$, the set of words in $\mc T$ of length $M_i$, and $M_i=\log_2(6N\abs{r}^2)$ by (G\ref{G:lower-bound-on-M}).  Therefore, by the triangle inequality, the endpoints of $w^{f'(\abs{r}/L)}$ in $X$ are at distance at most
\begin{align*}
    \sum_{m=1}^{2N}\abs{\eta_{m_u+m}} + \sum_{i=1}^{2N} d_X(u^-,u^+)+ 2 
    \leq 2N\log(6N\abs{r^2}) + 2Ng(\abs{r}) + 2 :=A(\abs{r}).
\end{align*}
Since $f'$ is superlogarithmic and grows faster than $g$, the function $A(x)$ grows slower than $f'$.  Therefore, $d_X(x_0,w^nx_0)$ grows sublinearly, and so $w$ is not a loxodromic isometry of $X$, concluding the proof.
\end{proof}

\bibliographystyle{alpha}
\bibliography{mybib}

\end{document}